\numberwithin{equation}{section} 
\renewcommand\labelenumi{(\roman{enumi})}
\renewcommand\theenumi\labelenumi
\newtheorem{theorem}{Theorem}[section]
\newtheorem{proposition}[theorem]{Proposition}
\newtheorem{lemma}[theorem]{Lemma}
\newtheorem{corollary}[theorem]{Corollary}
\newtheorem{remark}[theorem]{Remark}
\theoremstyle{definition}
\newcommand{\C}{\mathbb{C}}
\newcommand{\E}{\mathbb{E}}
\newcommand{\N}{\mathbb{N}}
\renewcommand{\P}{\mathbb{P}}
\newcommand{\R}{\mathbb{R}}
\newcommand{\cC}{\mathcal{C}}
\newcommand{\cD}{\mathcal{D}}
\newcommand{\cE}{\mathcal{E}}
\newcommand{\cF}{\mathcal{F}}
\newcommand{\cG}{\mathcal{G}}
\newcommand{\cM}{\mathcal{M}}
\newcommand{\cN}{\mathcal{N}}
\newcommand{\cP}{\mathcal{P}}
\newcommand{\cQ}{\mathcal{Q}}
\newcommand{\cT}{\mathcal{T}}
\newcommand{\cU}{\mathcal{U}}
\renewcommand{\tilde}{\widetilde}
\renewcommand{\hat}{\widehat}
\newcommand{\ie}{i.e.\@ }
\newcommand{\eg}{e.g.\@ }
\renewcommand{\d}[1]{\, \mathrm{d}#1}
\newcommand{\Pd}[2]{#1(\mathrm{d}#2)}
\newcommand{\e}{\mathrm{e}}
\newcommand{\iu}{\mathrm{i}}
\newcommand{\Prob}[1]{\mathbb{P}\left(#1\right)}
\newcommand{\Expec}[1]{\mathbb{E}\left[#1\right]}
\newcommand{\condExpec}[2]{\mathbb{E}\left[#1 \middle| #2\right]}
\newcommand{\condLaw}[2]{\mathcal{L}\left(#1 \middle| #2\right)}
\DeclareMathOperator{\Log}{Log}
\DeclareMathOperator{\Supp}{Supp}
\let\originalleft\left
\let\originalright\right
\renewcommand{\left}{\mathopen{}\mathclose\bgroup\originalleft}
\renewcommand{\right}{\aftergroup\egroup\originalright}
\begin{document}

\selectlanguage{english}

\begin{titlepage}
   \begin{center}
        \vspace*{-1cm}
        
        \includegraphics[scale=0.58]{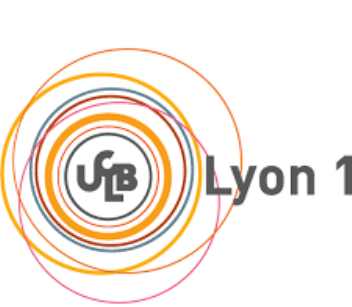} \hspace{1cm}
        \includegraphics[scale=0.19]{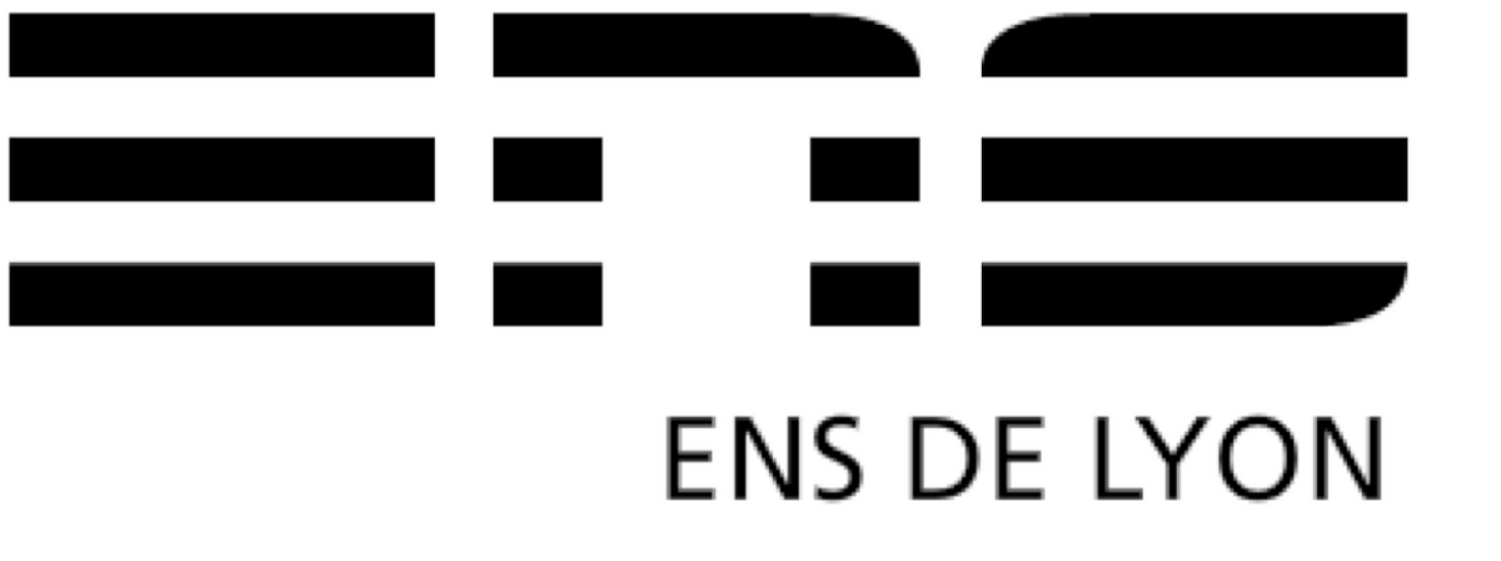} \hspace{1cm}
        \includegraphics[scale=0.16]{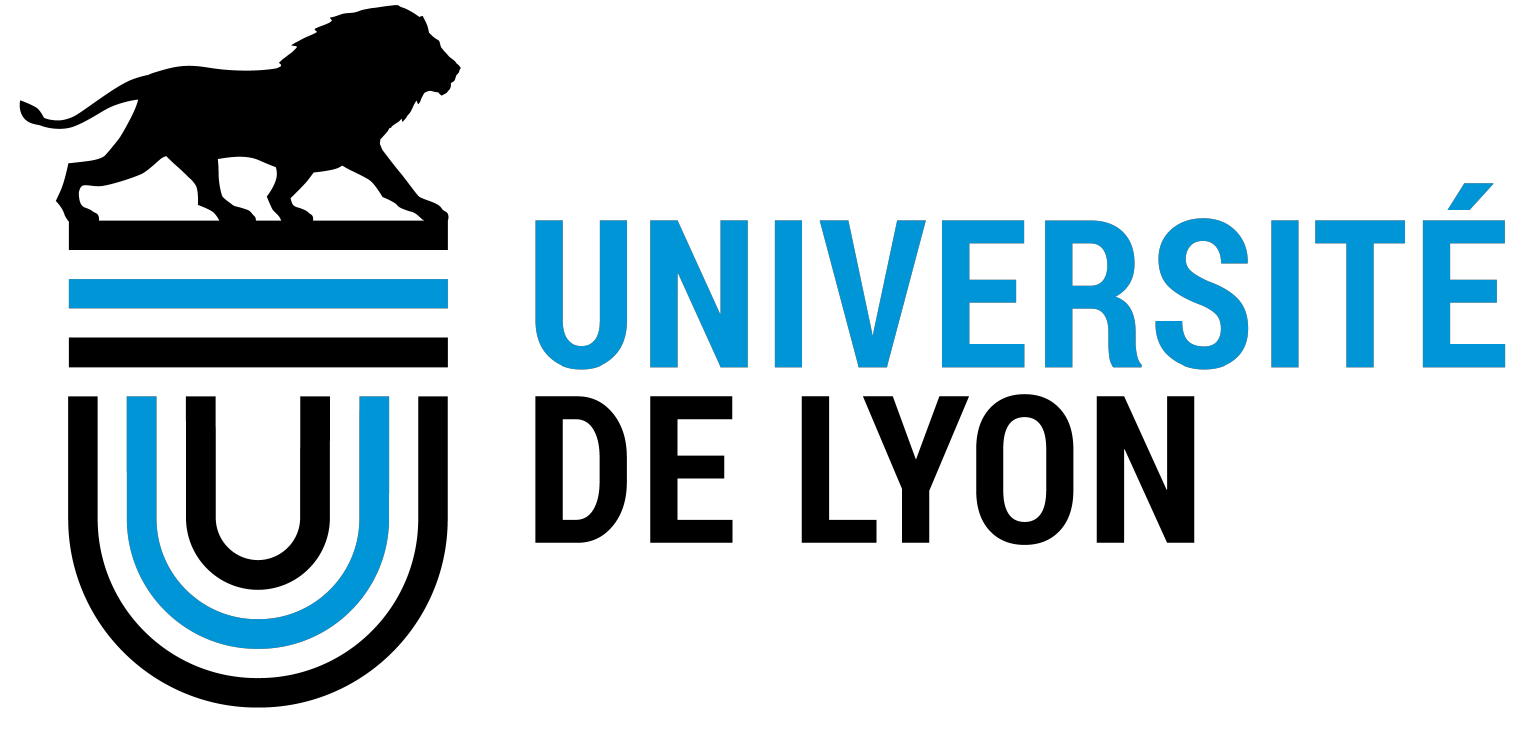}
        
        \vspace{1cm}
        
        \rule[1ex]{\linewidth}{1pt}
        
        \LARGE
        \textbf{Additive martingales of the branching Brownian motion}
        
        \rule[1ex]{\linewidth}{1pt}
        
        \vspace{0.6cm}
        
        \large
        Louis \textsc{Chataignier}
        
        \vspace{0.1cm}
        
        \normalsize
        supervised by Michel \textsc{Pain} and Pascal \textsc{Maillard}

        \vspace{0.5cm}

        \today
        
        \vspace{0.5cm}
            
        \begin{abstract}
        In this thesis, we study asymptotic properties of the standard branching Brownian motion, with a specific emphasis on the additive martingales at high temperature.
        We start by presenting classic and fundamental tools for our investigation.
        Subsequently, we establish various convergence results that enhance our understanding of the model.
        In particular, these results include the determination of particles contributing to the additive martingales, the description of the fluctuations of these martingales around their limits, and an approximation of the so-called overlap distribution.
        Regarding the latter, we believe this is the first time that such an approximation is given.
        Remarkably, we identify a specific regime in which stable distributions emerge.
        \end{abstract}
        
        \vspace{0.1cm}
        
        \selectlanguage{french}
        
        \begin{abstract}
        Dans ce mémoire, nous étudions les propriétés asymptotiques du mouvement brownien branchant standard, en mettant particulièrement l'accent sur les martingales additives à haute température.
        Nous commençons par présenter des outils classiques et fondamentaux pour notre analyse
        Ensuite, nous établissons divers résultats de convergence qui apportent une meilleure compréhension du modèle.
        Cela comprend notamment la détermination des particules contribuant aux martingales additives, la description des fluctuations de ces martingales autour de leurs limites et une approximation de l'\emph{overlap distribution}.
        Concernant ce dernier résultat, nous pensons qu'une telle approximation est inédite.
        De façon notable, nous identifions un régime spécifique dans lequel des lois stables apparaissent.
        \end{abstract}
        
         \selectlanguage{english}
        
        \vfill
        
        \includegraphics[scale=1.1]{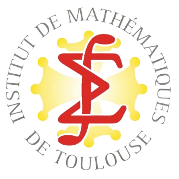}
   \end{center}
\end{titlepage}
\setcounter{page}{2}
I am deeply grateful to my supervisors, Michel Pain and Pascal Maillard, for the subject they proposed and for supporting me throughout my internship.
Their expertise, dedication, and insightful feedback have been invaluable in shaping this thesis.
I also want to thank my friends Kelly Doeuvre, Julia García Cristóbal, Mathilde Girolet, Colin Scarato, my parents Anouk Chataignier and Laurent Chataignier, and my partner Léa Champeau, for their support and proofreading.

\tableofcontents

\section{Introduction}

Branching processes are probabilistic models describing the evolution of populations or systems over discrete generations.
They find applications in various fields such as biology, economics, epidemiology and statistical physics.
One of the earliest instances arose from the investigation of the extinction of family names.
This was carried out independently by Bienaymé \cite{Bienaymé1845} in 1845 and by Galton-Watson \cite{GaltonWatson1875} in 1875.
In their model, generation~$0$ is made of a single individual, the \emph{ancestor}.
This one gives birth to a random number of children and, for the sake of simplicity, dies at the same time.
Subsequently, the individuals of each generation die while giving birth to their own children, following the same distribution as the ancestor's offspring, independently of each other and of the past.
The resulting sequence of individual counts per generation forms a Markov chain, known as the \emph{Galton-Watson process}.
Unsurprisingly, one can show that if the mean offspring count is less than $1$, the process becomes extinct beyond a certain time.
We can also consider the tree whose vertices are the individuals and the edges are the filial relationships, called \emph{Galton-Watson tree}.
It satisfies the \emph{branching property}: trees originating from individuals of a given generation are independent and have the same distribution as the initial tree.

In this thesis, we are interested in a slightly more developed branching process, called branching Brownian motion (BBM).
From a biological point of view, it is a generic model for describing growth and dispersal of a population.
It was introduced by Adke-Moyal \cite{AdkeMoyal1963} in 1963 and has been much studied since several mathematicians \cite{ItoMcKean1965, IkedaNagasawaWatanabe1968, McKean1975} made a deep connection with a classic partial differential equation, namely the \emph{F-KPP equation} (see Section~\ref{sct:speed_of_the_extremal_displacement_1} for further insights).

The one-dimensional branching Brownian motion can be depicted as follows.
At time $0$, a single particle starts a Brownian motion in $\R$ from the origin.
After a random time with exponential distribution, it splits into a random number of particles or, equivalently, it dies while giving birth to a random number of children.
Each of these new particles then repeats the same process, independently of each other.
More precisely, the children perform independent Brownian motions from the position of their parent at its death and, after independent exponential times, they in turn die, leaving a new generation to take over.
The numbers of children of the particles are assumed to be independent and identically distributed.
In particular, their genealogy is a Galton-Watson tree.
The choice of the exponential distributions for the lifetimes is justified by our wish to obtain a continuous-time Markovian process, since they are the only memoryless continuous distributions.
We also want the process to be time-homogeneous, which requires the trajectories to have independent and stationary increments.
By opting for Gaussian increments, we obtain Brownian motions.
This is a very natural choice, owing to their universal character.
Furthermore, this establishes connections with other mathematical fields such as the previously mentioned F-KPP equation (see Section~\ref{sct:speed_of_the_extremal_displacement_1}) or models from statistical physics (see Section~\ref{sct:rescaled_overlap_distribution}).
In Figure~\ref{fig:bbm_realizations}, we drew three realizations of branching Brownian motion.

\begin{figure}[ht]
    \centering
    \includegraphics[scale = 0.35]{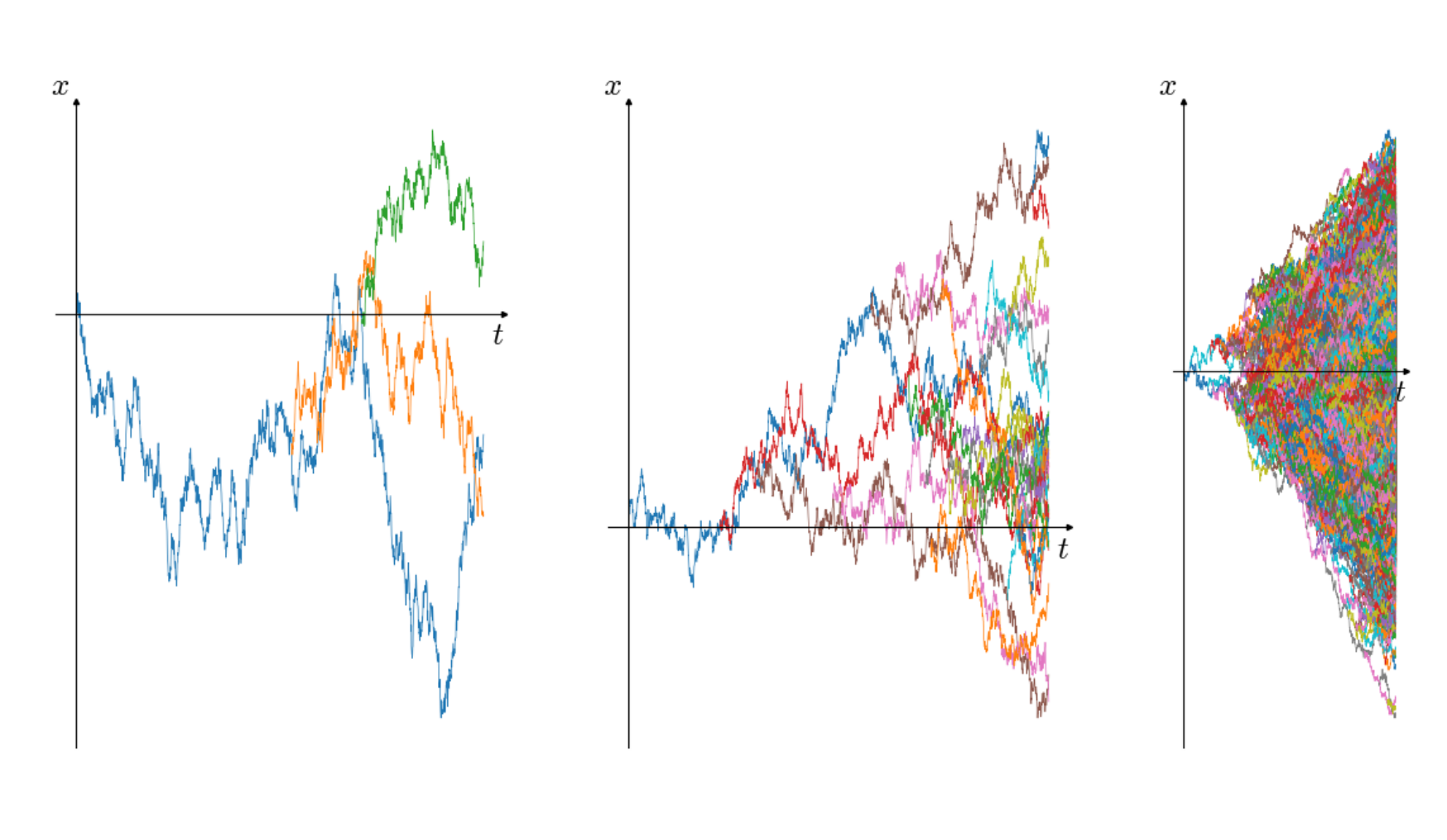}
    \caption{Realizations of branching Brownian motion over three different times.}\label{fig:bbm_realizations}
\end{figure}

Branching Brownian motion has a discrete-time counterpart, branching random walk (BRW).
In this model, the genealogy of the population is also a Galton-Watson tree but the particles do not perform any movement.
Instead, at time $n+1$, the particles of the $n^\textnormal{th}$ generation die while giving birth to children, scattered around their positions according to independent copies of a random point process.
Branching random walk naturally shares many properties with branching Brownian motion.
Nevertheless, some results established in discrete time are difficult to extend to continuous time, especially concerning the almost sure convergence.
This difficulty is illustrated below by the presence of technical lemmas such as Lemma~\ref{lem:almost_sure_convergence_of_functional_1_brutal_bounds} and Lemma~\ref{lem:almost_sure_convergence_of_functional_2_brutal_bounds}.

Of course, branching Brownian motion and branching random walk are far too simple to accurately describe the evolution of a real population (of cells, for example).
Firstly, it seems quite unreasonable to let the individuals diffuse in an unrestricted linear habitat.
Beyond that, in the event that the population is not subject to extinction, our construction lets it grow indefinitely at exponential speed.
However, one can add some complexity to the models in order to make it more realistic.

Here, our purpose is different.
Instead of the biological considerations, we are motivated by ideas from statistical physics.
In this context, we identify the positions of the particles alive at time $t \geq 0$ as the energy levels of a physical system.
Each particle can then be thought of as a \emph{configuration} of the system.
Let us denote by $\cN(t)$ the set of particles alive at time $t$ and by $X_u(t)$ the position of a given particle $u \in \cN(t)$.
The \emph{Gibbs measure}\footnote{The notion of Gibbs measure is central in statistical physics. For further motivations about this, we refer the interested reader to \cite[Chapter~1]{FriedliVelenik2017}.} at inverse temperature $\beta \geq 0$ is then the (random) probability measure on $\cN(t)$ that assigns to each configuration $u \in \cN(t)$ a weight proportional to $\e^{\beta X_u(t)}$.
By rescaling the associated \emph{partition function}, we obtain a martingale
\begin{equation*}
	W_t(\beta) = \e^{-(1+\beta^2/2)t} \sum_{u \in \cN(t)} \e^{\beta X_u(t)}.
\end{equation*}
This one is called \emph{additive martingale} of the branching Brownian motion at inverse temperature $\beta$.
As pointed out in the title, this is our main subject of study.
Note that, since it is a non-negative martingale, it converges almost surely to a random variable $W_\infty(\beta)$ as $t$ goes to infinity.

The thesis is organized as follows.
We start by giving a formal definition of branching Brownian motion in Section~\ref{sct:definition}.
We then present fundamental results about this model.
Among them, the \emph{many-to-one} and the \emph{many-to-two} formulas are very useful for first and second moment calculations.
They enable us to show in Section~\ref{sct:phase_transition} that a phase transition occurs: there exists a parameter $\beta_c > 0$ such that $W_\infty(\beta) = 0$ almost surely if $\beta \geq \beta_c$ and $W_\infty(\beta) > 0$ almost surely if $0 \leq \beta < \beta_c$.
Section~\ref{sct:front_and_extremes} is dedicated to results corresponding to the highest particles.
These results do not constitute the core of our subject but they are needed for the next sections.
Therefore, we present most of them without proof but with the appropriate references.
In Section~\ref{sct:position_under_gibbs_measure}, we study the particles that asymptotically contribute to $W_t(\beta)$.
It comes down to understanding the typical behavior of a particle chosen according to the Gibbs measure at inverse temperature $\beta$.
In Section~\ref{sct:growth_rates}, we show that $W_\infty(\beta)$ describes in a sense the asymptotic size of the population along the line with equation $x = \beta t$.
For this reason, this quantity plays a key role in the study of the long-term behavior of branching Brownian motion.
The main results of Section~\ref{sct:some_convergence_results}, Theorem~\ref{th:almost_sure_convergence_of_functional_1} and Theorem~\ref{th:growth_rates}, give the convergence of two processes, both almost surely and in $L^p$, for explicit values of $p$.
This was already known for branching random walk.
However, the extension of the almost sure convergence to continuous-time is not trivial and, to our knowledge, had never been done.
Afterward, we investigate in Section~\ref{sct:fluctuations} the fluctuations of the additive martingales around their almost sure limits.
Essentially, there are two different regimes, depending on whether $\beta$ is smaller or larger than $\beta_c/2$.
The fluctuations are first Gaussian and then $\alpha$-stable with $1 < \alpha < 2$ (see \eg \cite{Sato1999} for an introduction to stable distributions).
This was already studied in large part by Iksanov-Kolesko-Meiners in \cite{IksanovKoleskoMeiners2020} for the branching random walk.
We deepen their analysis by identifying stable distributions.
Finally, in Section~\ref{sct:rescaled_overlap_distribution}, we are interested in a question coming from spin glass theory: given $a \in (0, 1)$ and $\beta \in [0, \beta_c)$, if we pick two particles at time $t$ according to the Gibbs measure at inverse temperature $\beta$, what is the probability that their last common ancestor died after time $at$?
It is known that this probability converges to $0$ as $t$ goes to infinity.
We give a more accurate approximation in Theorem~\ref{th:renormalized_subcritical_overlap}, depending on the value of $\beta$.
In the regime $\beta_c/2 < \beta < \beta_c$, we observe $\alpha$-stable fluctuations with $0 < \alpha < 1$.
To our knowledge, this is the first time that such an asymptotic description is given.

Throughout the thesis, we use
the Bachmann-Landau notations $o$ and $O$ with their usual meaning.
We use the symbol $\mathds{1}$ for the indicator functions.
Given two random variables $X$ and $Y$, we write
\begin{equation*}
    X \overset{d}{=} Y
\end{equation*}
to indicate that they have the same distribution.
The letter $t$ denotes a certain point in time, which may vary in $[0, \infty)$ or remain fixed, depending on the context.
We avoid introducing it in each statement in order to lighten the writing.
The other notations, unless very standard, are introduced below when first used.

\section{Definitions and first tools}

The purpose of this section, once the model is properly defined, is to gather fundamental and now classic results about branching Brownian motion.
They are so commonly accepted that it is sometimes hard to find a formal proof in the literature.
We were inspired by several works to present these results and their demonstrations, including lecture notes by Berestycki \cite{Berestycki2014}, books by Athreya-Ney \cite{AthreyaNey1972} and by Bovier \cite{Bovier2016}.

\subsection{Definition of the branching Brownian motion}\label{sct:definition}

In order to give a formal definition of branching Brownian motion, we need to introduce the notion of (marked) trees and to choose the $\sigma$-algebra we want to work with.
Here, we follow the construction of Chauvin-Rouault \cite{ChauvinRouault1988} which is itself based on a work of Neveu \cite{Neveu1986}.

We denote by $\N$ the set of non-negative integers $0, 1, 2 \ldots$ and we define $\N^* = \N \setminus \{0\}$.
We equate the set of labels, sometimes called set of \emph{Ulam-Harris labels}, with the set of finite sequences of positive integers $\cU = \bigcup_{k \geq 0} (\N^*)^k$, with the convention $(\N^*)^0 = \{\varnothing\}$.
The elements $u \in \cU$ will represent potential \emph{individuals}, which can also be referred to as \emph{nodes} or \emph{particles}, depending on the context.
For two labels $u, v \in \cU$, we denote by $uv$ their concatenation, with the convention $u \varnothing = \varnothing u = u$.
We say that $u$ is an \emph{ancestor} of $v$ and that $v$ is a \emph{descendant} of $u$ if there exists $w \in \cU$ such that $v = uw$.
In this case, we write $u \leq v$.
The particle $\varnothing$ has therefore a specific role since it is the ancestor of all.
Given a particle $u = u_1 \ldots u_k \in \cU \setminus \{\varnothing\}$, we denote by $p(u) = u_1 \ldots u_{k-1}$ its \emph{parent}.
Finally, we denote by $u \wedge v$ the most recent common ancestor of two particles $u$ and $v$, \ie $u \wedge v = u_1 \ldots u_{k_0}$ with $k_0 = \sup\{k \geq 1 : u_1 \ldots u_k = v_1 \ldots v_k\}$ if $u_1 = v_1$, and $u \wedge v = \varnothing$ otherwise.

A \emph{rooted ordered tree}, also called a \emph{plane tree} or simply a \emph{tree}, is a subset $\cT$ of $\cU$ such that
\begin{enumerate}
    \item $\varnothing \in \cT$,
    \item for all $u, v \in \cU$, if $uv \in \cT$, then $u \in \cT$,
    \item for all $u \in \cT$, there exists an integer $A_u \geq 0$ such that for all $j \geq 1$, the label $uj$ is in $\cT$ if and only if $1 \leq j \leq A_u$.
\end{enumerate}

Given a topological space $X$, we denote by $\cC(X)$ the set of continuous functions from $X$ to $\R$.
A \emph{marked tree} is a tree $\cT$ endowed with \emph{marks} $(\sigma_u, Y_u)_{u \in \cT}$, where $\sigma_u \in \R_+$ and $Y_u \in \cC(\R_+)$ represent the lifetime and the displacement of the particle $u$, respectively.
We denote by $\Omega$ the space of marked trees.
It will be convenient to introduce $b_u = \sum_{v \leq p(u)} \sigma_v$ the birthtime of a particle $u$, and $d_u = \sum_{v \leq u} \sigma_v$ its deathtime, with the convention $b_\varnothing = 0$.
The set of particles alive at time $t \geq 0$ is omnipresent in the study of branching processes, we denote it by
\begin{equation*}
    \cN(t) = \{u \in \cT : b_u \leq t < d_u\}.
\end{equation*}
Besides, instead of the displacement, we shall prefer to work with the position of a particle, \ie its displacement shifted by the position of its parent when the latter died.
More precisely, we define inductively the position at time $t \geq 0$ of a particle $u \in \cN(t)$
\begin{equation*}
    X_u(t) = \begin{cases}
        Y_\varnothing(t) &\text{if } u = \varnothing, \\
        X_{p(u)}(b_u) + Y_u(t - b_u) &\text{otherwise}.
    \end{cases} 
\end{equation*}
We extend the notion of position for a particle $u \in \cN(t)$ to the whole interval $[0, t]$: for all $s \in [0, t)$, we set $X_u(s) = X_v(s)$, where $v$ is the ancestor of $u$ alive at time $s$.

Let us consider, for $u \in \cU$, $\Omega_u = \{(\cT, (\sigma, Y)) : u \in \cT\}$ the set of marked trees which contain $u$.
Note that, given $u \in \cU$, $A_u$, $\sigma_u$ and $(X_u(s))_{b_u \leq s < d_u}$ naturally define maps from $\Omega_u$ to $\N$, $\R_+$ and $\R^{[0, \infty)}$, respectively\footnote{Here, we identify $(X_u(s))_{b_u \leq s < d_u}$ with $(X_u(s) \mathds{1}_{b_u \leq s < d_u})_{s \geq 0}$.}.
We equip $\Omega$ with the $\sigma$-algebra $\cF$ generated by these maps, \ie
\begin{equation*}
    \cF = \sigma\left(A_u, \sigma_u, (X_u(s))_{b_u \leq s < d_u} : u \in \cU\right),
\end{equation*}
where we consider the Borel $\sigma$-algebra on $\R$ and the product $\sigma$-algebra on $\R^{[0, \infty)}$.
In order to obtain a branching property (see \eqref{eq:branching_property}), let us denote by $\Theta_u$ the operator which associates to $(\cT, (\sigma, Y)) \in \Omega_u$ the marked tree $(\cT', (\sigma_{uv}, Y_{uv})_{v \in \cT'})$, where $\cT' = \{v \in \cU : uv \in \cT\}$.
This shift operator is measurable with respect to the $\sigma$-algebra induced by $\cF$ on $\Omega_u$.

For $t \geq 0$, we define $\cF_t$ the $\sigma$-algebra that contains all the information available at time $t$
\begin{equation*}
    \cF_t = \sigma\left(A_u, \sigma_u, (X_u(s))_{b_u \leq s < d_u} : u \in \cU \text{ such that } d_u \leq t\right) \vee \sigma\left((X_u(s))_{b_u \leq s \leq t} : u \in \cU \text{ such that } b_u \leq t < d_u\right).
\end{equation*}
The collection $(\cF_t)_{t \geq 0}$ is then a filtration on $\Omega$.
As usual, we denote by $\cF_\infty$ the $\sigma$-algebra generated by this filtration
\begin{equation*}
    \cF_\infty = \bigvee_{t \geq 0} \cF_t.
\end{equation*}

The following proposition is a well-known result which enables us to define the branching Brownian motion.
It gives the existence and the uniqueness of its distribution.
One can deduce it from \cite[Section~3]{Neveu1986}.

\begin{proposition}\label{prop:characterization_of_bbm}
    Let $\mu = (\mu(k))_{k \geq 0}$ be a distribution on $\N$ and $\lambda > 0$. There exists a unique probability measure $\P$ on $(\Omega, \cF)$ such that
    \begin{enumerate}
        \item $A_\varnothing$ has distribution $\mu$, $\sigma_\varnothing$ has exponential distribution with rate parameter $\lambda$, $Y_\varnothing$ is a standard Brownian motion, $A_\varnothing$, $\sigma_\varnothing$, $Y_\varnothing$ are independent,
        \item by conditioning on $\cF_{\sigma_\varnothing}$, the trees $\Theta_1, \ldots, \Theta_{A_\varnothing}$ are independent and identically distributed with distribution $\P$, \ie for all sequences $(f_k)_{k \geq 1}$ of non-negative measurable functions from $\Omega$ to $\R$,
        \begin{equation}\label{eq:branching_property}
            \condExpec{\prod_{k = 1}^{A_\varnothing}{f_k \circ \Theta_k}}{\cF_{\sigma_\varnothing}} = \prod_{k = 1}^{A_\varnothing} \Expec{f_k}.
        \end{equation}
    \end{enumerate}
\end{proposition}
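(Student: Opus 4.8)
\emph{The plan is to} establish existence by building the measure explicitly on the countable Ulam--Harris index set, and uniqueness by a monotone class argument together with an induction on the generation of the labels involved. The result is classical and the details can be extracted from \cite[Section~3]{Neveu1986}.

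\emph{Uniqueness.} The events
\[
  E = \bigcap_{u \in F}\left\{\left(A_u, \sigma_u, (X_u(s))_{b_u \le s < d_u}\right) \in B_u\right\},
\]
with $F \subset \cU$ finite and each $B_u$ a measurable rectangle, form a $\pi$-system generating $\cF$, so by the monotone class theorem it suffices to show that two measures $\P, \P'$ satisfying (i)--(ii) agree on every such $E$. I would induct on $n = \max\{|u| : u \in F\}$. For $n = 0$ the event only constrains $(A_\varnothing, \sigma_\varnothing, Y_\varnothing)$ and its probability is fixed by~(i). For $n \ge 1$, one first checks that $\sigma_\varnothing$ is an $(\cF_t)_{t \ge 0}$-stopping time and that, up to null sets, $\cF_{\sigma_\varnothing} = \sigma\left(A_\varnothing, \sigma_\varnothing, (Y_\varnothing(s))_{0 \le s \le \sigma_\varnothing}\right)$, since almost surely the only particles alive at time $\sigma_\varnothing$ are the children $1, \dots, A_\varnothing$, all located at $Y_\varnothing(\sigma_\varnothing)$. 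Writing a label of positive length as $kv$ with $k \ge 1$, one has $d_{kv} = \sigma_\varnothing + (\text{deathtime of } v \text{ in } \Theta_k)$ and $X_{kv}(s) = Y_\varnothing(\sigma_\varnothing) + (\text{position of } v \text{ in } \Theta_k \text{ at time } s - \sigma_\varnothing)$; hence, conditionally on $\cF_{\sigma_\varnothing}$, the event $E$ splits into an $\cF_{\sigma_\varnothing}$-measurable factor (the constraint on $\varnothing$, if $\varnothing \in F$) times a product over $k \in \{1, \dots, A_\varnothing\}$ of constraints bearing only on labels of length at most $n-1$ in the subtree $\Theta_k$, shifted by the $\cF_{\sigma_\varnothing}$-measurable quantities $\sigma_\varnothing$ and $Y_\varnothing(\sigma_\varnothing)$. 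Applying the branching property~\eqref{eq:branching_property} with those shift parameters frozen turns $\Prob{E \mid \cF_{\sigma_\varnothing}}$ into a function of $(A_\varnothing, \sigma_\varnothing, Y_\varnothing(\sigma_\varnothing))$ built from $\P$-probabilities of events about labels of length at most $n-1$; by the induction hypothesis these coincide for $\P$ and $\P'$, and the law of $(A_\varnothing, \sigma_\varnothing, (Y_\varnothing(s))_{s \le \sigma_\varnothing})$ is the same for both by~(i). Integrating yields $\P(E) = \P'(E)$.

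\emph{Existence.} Since $\cU$ is countable, there is a probability space carrying independent random variables $(A_u, \sigma_u, Y_u)_{u \in \cU}$ with $A_u$ of law $\mu$, $\sigma_u$ exponential of parameter $\lambda$, and $Y_u$ a standard Brownian motion. Define a random subtree of $\cU$ recursively by $\cT = \bigcup_{n \ge 0} \cT_n$, where $\cT_0 = \{\varnothing\}$ and $\cT_{n+1} = \cT_n \cup \{uj : u \in \cT_n,\ |u| = n,\ 1 \le j \le A_u\}$; one checks that $\cT$ satisfies the three tree axioms with the $A_u$ as offspring counts. Because $\{u \in \cT\}$ depends on only finitely many of the $A_v$, the map $\Phi = \left(\cT, (\sigma_u, Y_u)_{u \in \cT}\right)$ is measurable into $(\Omega, \cF)$, and I would set $\P = \Law{\Phi}$. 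Condition~(i) is immediate. For~(ii), the shifted tree $\Theta_k$ is a measurable function of the subfamily $(A_{kv}, \sigma_{kv}, Y_{kv})_{v \in \cU}$, which has the same joint law as $(A_u, \sigma_u, Y_u)_{u \in \cU}$ (so $\Theta_k$ has law $\P$) and which, for the various $k$, is independent of $(A_\varnothing, \sigma_\varnothing, Y_\varnothing)$ and hence of $\cF_{\sigma_\varnothing}$; the branching property~\eqref{eq:branching_property} follows.

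\emph{On the main obstacle.} The probabilistic content is light: an i.i.d.\@ family indexed by the Ulam--Harris tree produces at once the prescribed marginals and the branching property. The bulk of the work is measure-theoretic bookkeeping --- checking that the recursively defined tree and the induced maps $A_u$, $\sigma_u$, $(X_u(s))_s$ are $\cF$-measurable, and pinning down $\cF_{\sigma_\varnothing}$ precisely enough to invoke~\eqref{eq:branching_property} with $\cF_{\sigma_\varnothing}$-measurable parameters --- and that is the step I expect to be the most delicate.
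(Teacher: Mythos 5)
The paper does not prove this proposition at all: it states it as a known result and defers entirely to \cite[Section~3]{Neveu1986} (and the surrounding construction is taken from Chauvin--Rouault), so there is no in-paper argument to compare against. Your sketch is the standard one and is correct in outline: an i.i.d.\@ family $(A_u,\sigma_u,Y_u)_{u\in\cU}$ on a countable product space, pushed forward through the recursive tree construction, gives existence, and a $\pi$-system/induction-on-generation argument gives uniqueness. Two points carry essentially all of the real content and deserve to be made explicit rather than asserted. First, the identification of $\cF_{\sigma_\varnothing}$ with $\sigma(A_\varnothing,\sigma_\varnothing,(Y_\varnothing(s))_{s\le\sigma_\varnothing})$ up to null sets, and the resulting extension of \eqref{eq:branching_property} to functionals $f_k(\Theta_k,\xi)$ with $\xi$ an $\cF_{\sigma_\varnothing}$-measurable shift parameter (proved by a monotone class argument starting from product-form $f_k$); without this, neither direction of your argument closes. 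Second, in the uniqueness induction, the constraint on a label $kv$ at deterministic times $s_1,\dots,s_m$ becomes, after conditioning, a constraint on $\Theta_k$ at the random times $s_i-\sigma_\varnothing$, so it is not itself an event of your $\pi$-system; you must first upgrade the induction hypothesis, via Dynkin's lemma, to agreement of $\P$ and $\P'$ on the whole $\sigma$-algebra generated by the generation-$\le(n-1)$ data, and only then evaluate it at each frozen value of the shift. Both steps are routine, and you flag the first one yourself, so I see no genuine gap --- only details that a complete write-up must supply.
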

The branching property \eqref{eq:branching_property} can be generalized to any stopping time $\tau$ (not necessarily $\sigma_\varnothing$): at time $\tau$, the particles in $\cN(\tau)$ all start independent branching Brownian motions shifted by their positions.
See \cite[Proposition~2.1]{Chauvin1991} for a formalization.

The measure $\P$ in Proposition~\ref{prop:characterization_of_bbm} is the \emph{distribution of the branching Brownian motion} with offspring distribution $\mu$ and rate parameter $\lambda$.
We can, without loss of generality, and shall in what follows, set $\lambda = 1$.
Furthermore, we shall assume that $\mu$ has mean $2$ and has a finite second moment, \ie
\begin{equation}\label{eq:assumption_offspring_distribution}
    \sum_{k \geq 0} \mu(k) k = 2 \quad \text{and} \quad \sum_{k \geq 0} \mu(k) k^2 < \infty.
\end{equation}
Note that nothing happens if a particle splits into one particle.
However, we shall keep this eventuality in order to maintain the mean number of offspring equal to $2$.
In many papers dealing with the branching Brownian motion, the authors prevent the extinction event by assuming $\mu(0) = 0$, or even impose a binary branching $\mu = \delta_2$, where $\delta_x$ denotes the Dirac delta distribution concentrated at $x$.
Most of the results obtained with the latter settings still hold under \eqref{eq:assumption_offspring_distribution}.
We believe that this is the case for all the content of this thesis.
Therefore, we take it as our basic framework and we clearly point out the changes of assumptions.

\subsection{Some results about the genealogy}

Given a set $E$, we denote by $\#E$ its cardinality.
Let us define $n(t) = \#\cN(t)$ the number of particles alive at time $t$.
Note that $n(t)$ is $\cF_t$-measurable.

\begin{proposition}\label{prop:expectation_nt}
    We have $\Expec{n(t)} = \e^t$.
\end{proposition}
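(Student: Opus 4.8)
The cleanest approach is to exploit the branching structure via a one-step (renewal-type) decomposition in time, or equivalently to notice that $n(t)$ is a continuous-time Markov branching process (a Yule-type process) and compute the expectation by a differential equation. The plan is to condition on the first branching event and use the branching property \eqref{eq:branching_property}. Let $m(t) = \Expec{n(t)}$. Since $\sigma_\varnothing$ has exponential distribution with rate $1$, we condition on its value: if $\sigma_\varnothing = s > t$, then only the root is alive at time $t$, so $n(t) = 1$; if $\sigma_\varnothing = s \leq t$, then at time $s$ the root dies and is replaced by $A_\varnothing$ particles, each starting an independent BBM, so by the branching property $\condExpec{n(t)}{\sigma_\varnothing = s, A_\varnothing} = A_\varnothing \, m(t-s)$, whence, using $\Expec{A_\varnothing} = 2$ from \eqref{eq:assumption_offspring_distribution} and independence of $A_\varnothing$ and $\sigma_\varnothing$,
\begin{equation*}
    m(t) = \e^{-t} + \int_0^t \e^{-s} \cdot 2 \, m(t-s) \d{s}.
\end{equation*}

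Next I would solve this renewal equation. Substituting $r = t-s$ gives $m(t) = \e^{-t} + 2\e^{-t}\int_0^t \e^{r} m(r) \d{r}$, so $\e^{t} m(t) = 1 + 2\int_0^t \e^{r} m(r)\d{r}$. Setting $g(t) = \e^{t} m(t)$, we get $g(t) = 1 + 2\int_0^t g(r)\d{r}$, i.e. $g$ is differentiable with $g'(t) = 2 g(t)$ and $g(0) = 1$, hence $g(t) = \e^{2t}$ and $m(t) = \e^{t}$, as claimed. A minor technical point to address first is that $m(t)$ is finite for all $t$: this follows because the finite-mean offspring assumption makes $n(t)$ stochastically dominated by (or directly comparable to) a standard Yule process, or one can run a Grönwall/iteration argument on the integral inequality to get an a priori bound $m(t) \leq \e^{Ct}$ before identifying the constant.

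Alternatively, and perhaps more in the spirit of the ``first tools'' section, one can present this as the $\beta = 0$ instance of a many-to-one computation: $n(t) = \sum_{u \in \cN(t)} 1$, and the expected number of particles alive at time $t$ weighted by any bounded functional of a single spine trajectory equals $\e^{t}$ times the expectation of that functional along a Brownian motion — with the trivial functional $1$ this gives $\Expec{n(t)} = \e^{t}$. The factor $\e^{t}$ is exactly $\e^{(\bar\mu - 1)\lambda t}$ with $\bar\mu = 2$, $\lambda = 1$. The main (very modest) obstacle in either route is purely bookkeeping: justifying the interchange of expectation and the a.s.-infinite sum / the measurability and finiteness needed to write the renewal equation rigorously; once $m(t) < \infty$ is secured, the computation is immediate.
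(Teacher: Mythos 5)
Your core computation---conditioning on the first branching time $\sigma_\varnothing$, deriving the renewal equation $m(t) = \e^{-t} + 2\int_0^t \e^{-s} m(t-s) \d{s}$ via the branching property, and solving it to get $m(t)=\e^t$---is exactly the final step of the paper's proof. The genuine issue is the point you set aside as minor bookkeeping: proving $m(t)<\infty$, which is where the paper spends almost all of its effort (non-explosion via the generating function $G(t,x)$, then finiteness of $\Expec{n(t)}$ by iterating a convolution inequality for $\partial_x G(t,x)$ with $x<1$, which is automatically finite, and letting $x \to 1^-$). Neither of your proposed shortcuts works as stated. Stochastic domination by a standard (binary) Yule process fails under the paper's assumption \eqref{eq:assumption_offspring_distribution}: $\mu$ is only required to have mean $2$ and a finite second moment, so the offspring numbers may be unbounded and already the first branching event can take $n$ above any binary Yule count; there is no such domination. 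And Gr\"onwall's lemma cannot be applied directly to the integral inequality for $m$, because it presupposes that $m$ is finite (locally bounded)---precisely what is to be proved; if $m$ were infinite the inequality is vacuous, and any iteration leaves a remainder term involving $m$ itself that cannot be discarded.

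The gap is easy to close, and differently from the paper: work with the truncations $m_N(t) := \Expec{n(t) \wedge N}$. Using $(x_1+\cdots+x_a)\wedge N \leq \sum_{i}(x_i \wedge N)$ and the branching property, $m_N$ satisfies $m_N(t) \leq \e^{-t} + 2\int_0^t \e^{-s} m_N(t-s) \d{s} \leq 1 + 2\int_0^t m_N(r) \d{r}$, and since $m_N \leq N$ Gr\"onwall now applies and gives $m_N(t) \leq \e^{2t}$ uniformly in $N$; monotone convergence yields $m(t) \leq \e^{2t} < \infty$, which in particular gives almost sure non-explosion, so the paper's separate generating-function step becomes unnecessary on this route. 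With finiteness secured, your ODE argument is correct. Finally, your suggested alternative via the many-to-one formula is circular in this paper: the proof of the many-to-one lemma in Section~\ref{sct:many-to-one} itself invokes Proposition~\ref{prop:expectation_nt}.
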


\begin{proof}
    The first step is to check that the number of particles does not explode in finite time.
    Let us define, for $x \in [0, 1]$,
    \begin{equation*}
        G(t, x) = \sum_{k \geq 0} \Prob{n(t) = k} x^k = \Expec{x^{n(t)} \mathds{1}_{n(t) < \infty}}.
    \end{equation*}
    Let $\tau$ be the first branching time of the system.
    If $\tau > t$, then $n(t) = 1$.
    Otherwise, the initial particle is replaced at time $\tau \leq t$ by new particles $u_1, \ldots, u_A$, where $A$ follows the offspring distribution $\mu$.
    In this case, we have $n(t) = n_1(t - \tau) + \cdots + n_A(t - \tau)$, where $n_1(s), \ldots, n_A(s)$ are the numbers of descendants of $u_1, \ldots, u_A$, respectively, alive at time $\tau + s$.
    By construction of the branching Brownian motion, the random variables $\tau$, $A$, $(n_i)_{i \geq 1}$ are independent and $n_i$, $i \geq 1$, are independent copies of $n$.
    Thus, for any $x \in [0, 1)$,
    \begin{align}
        G(t, x) &= \Expec{x \mathds{1}_{\tau > t}} + \Expec{x^{n_1(t - \tau) + \cdots + n_A(t - \tau)} \mathds{1}_{n_1(t - \tau) + \cdots + n_A(t - \tau) < \infty} \mathds{1}_{\tau \leq t}} \notag \\
        &= x \Prob{\tau > t} + \Expec{G(t - \tau, x)^A \mathds{1}_{\tau \leq t}} \notag \\
        &= x \e^{-t} + \int_0^t f(G(t - s, x)) \e^{-s} \d{s}, \label{eq:rewriting_generating_function}
    \end{align}
    where $f : y \mapsto \Expec{y^A}$ is smooth on $(0, 1)$ and continuous on $[0, 1]$.
    Letting $x$ grow to $1$, \eqref{eq:rewriting_generating_function} becomes
    \begin{equation*}
        G(t, 1) = \e^{-t} + \int_0^t f(G(t - s, 1))  \e^{-s} \d{s},
    \end{equation*}
    by dominated convergence.
    Then, the function $t \mapsto G(t, 1)$ is the unique solution of the differential equation $y' = -y + f(y)$ with initial condition $y(0) = 1$. Hence, $\Prob{n(t) < \infty} = G(t, 1) = 1$.
    
    Now, we present the arguments of \cite[Section~4.5]{AthreyaNey1972} to show that $\Expec{n(t)}$ is finite.
    We will use that $\Expec{n(t)} = \lim_{x \to 1^-} \partial_x G(t, x)$, by monotone convergence.
    Since $0 \leq G(t, x) \leq 1$ for any $x \in (0, 1)$, we can differentiate \eqref{eq:rewriting_generating_function} in $x \in (0, 1)$
    \begin{align*}
        \partial_x G(t, x) &= \e^{-t} + \int_0^t \partial_x G(t - s, x) \Expec{f'(G(t - s, x))} \e^{-s} \d{s} \\
        &\leq h(t) + \Expec{A} (g_x * h) (t),
    \end{align*}
    where $g_x(t) := \partial_x G(t, x) \mathds{1}_{t \geq 0}$, $h(t) := \e^{-t} \mathds{1}_{t \geq 0}$ and $g_x * h$ denotes the convolution of $g_x$ and $h$.
    Recall that $\Expec{A} = 2$.
    By induction, for any $k \geq 1$,
    \begin{equation}\label{eq:rewriting_differential_with_convolution}
        \partial_x G(t, x) \leq \sum_{i = 1}^{k} 2^{i - 1} h^{*i}(t) + 2^k (g_x * h^{*k})(t),
    \end{equation}
    where $h^{*i}$ denotes the $i$-fold convolution of $h$.
    Note that we can rewrite $h^{*i}(t) = \Prob{E_1 + \cdots + E_i \leq t}$, where $E_1, \ldots, E_i$ are independent exponential variables with mean $1$.
    By Markov's inequality, for any $\lambda > 0$,
    \begin{equation*}
        h^{*i}(t) \leq \e^{\lambda t} \Expec{\e^{-\lambda(E_1 + \cdots + E_i)}} = \e^{\lambda t} \Expec{\e^{-\lambda E_1}}^i.
    \end{equation*}
    Taking $\lambda$ large enough, we see that the series $\sum_i 2^{i - 1} h^{*i}(t)$ converges.
    In particular, letting $k$ go to infinity, \eqref{eq:rewriting_differential_with_convolution} becomes
    \begin{equation*}
        \partial_x G(t, x) \leq \sum_{i \geq 1} 2^{i - 1} h^{*i}(t) < \infty.
    \end{equation*}
    Since this bound does not depend on $x$, we obtain $\Expec{n(t)} = \lim_{x \to 1^-} \partial_x G(t, x) < \infty$.
    
    It remains to compute $\Expec{n(t)}$.
    The same arguments as for \eqref{eq:rewriting_generating_function} lead to
    \begin{equation*}
        \Expec{n(t)} = \e^{-t} + 2 \int_0^t \Expec{n(t - s)} \e^{-s} \d{s}.
    \end{equation*}
    Then, the function $t \mapsto \Expec{n(t)}$ satisfies the differential equation $y' = y$ with initial condition $y(0) = 1$, which concludes.
\end{proof}

\begin{remark}
    In the case of binary branching $\mu = \delta_2$, the process $(n(t))_{t \geq 0}$ is a \emph{Yule process} with parameter $1$, \ie a continuous-time Markov chain on $\N^*$ with transition-rate matrix given by $q_{i, i+1} = i$ and $q_{i, i} = -i$ for all $i \in \N^*$.
    The distribution of $n(t)$ is then geometric.
    Indeed, \eqref{eq:rewriting_generating_function} yields, for any $x \in [0, 1)$,
    \begin{equation*}
        \Expec{x^{n(t)}} = x \e^{-t} + \int_0^t \Expec{x^{n(t-s)}}^2 \e^{-s} \d{s}.
    \end{equation*}
    The above function is solution of the differential equation $y' = \e^{-t} y^2$ with initial condition $y(0) = x$.
    This implies that
    \begin{equation}\label{eq:generating_function_nt}
        \Expec{x^{n(t)}} = \frac{x \e^{-t}}{1 - x(1 - \e^{-t})},
    \end{equation}
    which is the probability-generating function of the geometric distribution on $\N^*$ with parameter $\e^{-t}$.
\end{remark}

More generally, we can define $\cN([s, t])$ the set of particles that have been alive between times $s$ and $t$, and $n([s, t]) = \#\cN([s, t])$.

\begin{lemma}\label{lem:particles_alive_in_an_interval}
    We have $\Expec{n([s, t])} = \e^t + \mu(0)(\e^t - e^s)$.
\end{lemma}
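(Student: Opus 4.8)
The plan is to express $n([s,t])$ in terms of quantities already under control. Unwinding the definition of $\cN([s,t])$, a particle $u$ is counted in exactly one of two disjoint ways: either it is still alive at time $t$, so $u\in\cN(t)$, or it dies without offspring inside the window, i.e.\ $A_u=0$ and $s<d_u\le t$ — a particle that dies with at least one child during $(s,t]$ is immediately superseded by its children and so does not contribute, while a particle that dies after $t$ already lies in $\cN(t)$. Keeping track of the endpoints $s,t$ and of the half-open convention $[b_u,d_u)$, this gives
\begin{equation*}
    n([s,t]) = n(t) + \#\{u\in\cT : A_u=0 \text{ and } s<d_u\le t\}.
\end{equation*}
Since $\Expec{n(t)}=\e^t$ by Proposition~\ref{prop:expectation_nt}, it remains to show that the expected number of childless deaths occurring in $(s,t]$ equals $\mu(0)(\e^t-\e^s)$.

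For this I would first drop the constraint $A_u=0$ and compute $\Expec{D(r)}$, where $D(r):=\#\{u\in\cT:d_u\le r\}$ is the number of particles that have died by time $r$. Decomposing at the first branching time $\tau$ exactly as in the proof of Proposition~\ref{prop:expectation_nt}, one obtains
\begin{equation*}
    \Expec{D(r)} = (1-\e^{-r}) + 2\int_0^r \Expec{D(r-v)}\,\e^{-v}\d{v},
\end{equation*}
so that $y(r)=\Expec{D(r)}$ solves $y'=y+1$ with $y(0)=0$, hence $\Expec{D(r)}=\e^r-1$. (Equivalently, every particle born by time $r$ is either alive at $r$ or dead by $r$, so $D(r)=\#\{u:b_u\le r\}-n(r)$, and the same first-branching argument gives $\Expec{\#\{u:b_u\le r\}}=2\e^r-1$.) In particular $\Expec{\#\{u\in\cT : s<d_u\le t\}}=\Expec{D(t)}-\Expec{D(s)}=\e^t-\e^s$.

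Finally I would reinsert the constraint $A_u=0$ by an independence argument. Writing
\begin{equation*}
    \#\{u\in\cT : A_u=0,\ s<d_u\le t\} = \sum_{u\in\cU}\mathds{1}_{u\in\cT}\,\mathds{1}_{s<d_u\le t}\,\mathds{1}_{A_u=0},
\end{equation*}
one notes that, in the construction of Proposition~\ref{prop:characterization_of_bbm}, the event $\{u\in\cT\}$ depends only on the offspring counts of the strict ancestors of $u$ while $d_u=\sum_{v\le u}\sigma_v$ depends only on the lifetimes of $u$ and its ancestors; hence $A_u$ is independent of the pair $(\mathds{1}_{u\in\cT},d_u)$. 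Taking expectations termwise (Tonelli) yields
\begin{equation*}
    \Expec{\#\{u\in\cT : A_u=0,\ s<d_u\le t\}} = \mu(0)\,\Expec{\#\{u\in\cT : s<d_u\le t\}} = \mu(0)(\e^t-\e^s),
\end{equation*}
and adding $\Expec{n(t)}=\e^t$ gives the stated formula. I expect the only delicate point to be the first step: correctly reading off from the definition which particles $\cN([s,t])$ retains and handling the boundary times; after that the argument is a routine variant of Proposition~\ref{prop:expectation_nt} together with the independence computation above.
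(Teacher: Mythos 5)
Your proof is correct and arrives at the lemma by essentially the same decomposition the paper uses: $\cN([s,t])$ is treated as the disjoint union of $\cN(t)$ and the set of childless deaths in $(s,t]$, the latter having expected size $\mu(0)(\e^t-\e^s)$. The execution differs only in minor ways. The paper first handles $s=0$ by the renewal/ODE argument of Proposition~\ref{prop:expectation_nt} (the equation $y'=y+\mu(0)$ with $y(0)=1$ already carries the factor $\mu(0)$) and then obtains the general window from the branching property at time $s$; you work on $[s,t]$ directly, compute the total death count $\Expec{D(r)}=\e^r-1$, and insert the factor $\mu(0)$ afterwards via the independence of $A_u$ from $(\mathds{1}_{u\in\cT},d_u)$. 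Both routes are sound; yours makes the origin of the factor $\mu(0)$ more transparent.

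You are right that the only delicate point is the first step, and it deserves to be made explicit: the identity $n([s,t])=n(t)+\#\{u\in\cT : A_u=0,\ s<d_u\le t\}$ does \emph{not} follow from the literal definition of $\cN([s,t])$ as the set of particles whose lifetime interval $[b_u,d_u)$ meets $[s,t]$. Under that reading, a particle that branches inside the window was alive there and must be counted, and your own intermediate computations give the literal expectation $\Expec{\#\{u:b_u\le t\}}-\Expec{\#\{u:d_u\le s\}}=(2\e^t-1)-(\e^s-1)=2\e^t-\e^s$, which differs from the stated $\e^t+\mu(0)(\e^t-\e^s)$ since $\mu(0)<1$. The convention you adopt --- a particle that branches is superseded by its children and not counted separately --- is exactly the one the paper's own renewal equation presupposes, so you have proved the lemma as stated; and for its only application, in Lemma~\ref{lem:almost_sure_convergence_of_functional_1_brutal_bounds}, either count is $O(\e^s)$, so nothing downstream is affected. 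Still, the mismatch between the displayed definition of $\cN([s,t])$ and the formula is real and worth recording.
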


\begin{proof}
    With the same arguments as for \eqref{eq:rewriting_generating_function}, we obtain that the expected number of particles that died before time $t$ satisfies the differential equation $y' = y + \mu(0)$.
    Hence,
    \begin{equation}\label{eq:particles_alive_between_0_t}
        \Expec{n([0, t])} = \e^t + \mu(0)(\e^t - 1).
    \end{equation}
    Now,
    \begin{equation*}
        \Expec{n([s, t])} = \Expec{\sum_{u \in \cN(s)} n_s^{(u)}([0, t-s])},
    \end{equation*}
    where, for $u \in \cN(s)$, we denote by $n_s^{(u)}([0, r])$ the number of descendants of $u$ that have been alive between times $s$ and $r + s$.
    By conditioning on the underlying tree $(\cT, \sigma) = (\cT, (\sigma_u)_{u \in \cU})$ defined in Section~\ref{sct:definition}, we obtain
    \begin{equation}\label{eq:particles_alive_between_s_t}
        \Expec{n([s, t])} = \Expec{\sum_{u \in \cN(s)} \condExpec{n_s^{(u)}([0, t-s])}{(\cT, \sigma)}} = \e^s \Expec{n([0, t-s])}.
    \end{equation}
    Lemma~\ref{lem:particles_alive_in_an_interval} directly follows from \eqref{eq:particles_alive_between_0_t} and \eqref{eq:particles_alive_between_s_t}.
\end{proof}

The next lemma will be useful for a later result, namely the \emph{many-to-two formula} (Lemma~\ref{lem:many-to-two}), which is itself a fundamental tool for the study of the branching Brownian motion.
In the proof, we use some ideas from Sawyer \cite[p.~686]{Sawyer1976}.

\begin{lemma}\label{lem:death_functional}
    Let $f : \R_+ \to \R$ be a measurable function.
    Assume $f$ to be non-negative or bounded.
    Then,
    \begin{equation}\label{eq:death_functional}
        \Expec{\sum_{\substack{u, v \in \cN(t) \\ u \neq v}} f(d_{u \wedge v})} = K \e^{2t} \int_0^t f(s) \e^{-s} \d{s},
    \end{equation}
    where $K := \sum_{k \geq 0} \mu(k)k(k-1)$ and with the convention $\sum_\varnothing = 0$.
\end{lemma}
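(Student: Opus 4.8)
The strategy is to reorganise the double sum according to the most recent common ancestor of the pair, and then to compute the resulting expectation by a many-to-one argument over the death events of the branching Brownian motion. This is the route suggested by the reference to Sawyer.

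\textbf{Step 1: decomposition over the common ancestor.} For a particle $w \in \cT$ with $d_w \le t$, write $A_w$ for its number of children and, for $1 \le i \le A_w$, $N_i^{(w)} = \#\{u \in \cN(t) : wi \le u\}$ for the number of its descendants alive at time $t$ that stem from the child $wi$. If $u \neq v$ belong to $\cN(t)$, their common ancestor $w = u \wedge v$ must satisfy $d_w \le t$ — otherwise $w$ would still be alive at time $t$, forcing $u$ or $v$ to be a not-yet-born strict descendant of $w$ — and $u$ and $v$ lie below two distinct children of $w$; conversely, fixing $w$, an ordered pair of distinct children of $w$, and a descendant in $\cN(t)$ below each, yields exactly one such pair. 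Hence
\begin{equation*}
  \sum_{\substack{u,v \in \cN(t) \\ u \neq v}} f(d_{u \wedge v}) = \sum_{\substack{w \in \cT \\ d_w \le t}} f(d_w) \sum_{\substack{1 \le i, j \le A_w \\ i \neq j}} N_i^{(w)} N_j^{(w)} .
\end{equation*}
I would first prove the lemma for $f \ge 0$, using Tonelli throughout so that no integrability is needed; the bounded case then follows by writing $f = f^+ - f^-$, since the case $f \equiv 1$ of the lemma (already proved) shows that $\Expec{\sum_{u \neq v} 1}$ is finite.

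\textbf{Step 2: conditioning via the branching property.} Fix a label $w \in \cU$ and let $\cG_w$ be the $\sigma$-algebra generated by all marks $(A_v, \sigma_v, Y_v)$ attached to labels $v$ that are not strict descendants of $w$. Then $\mathds{1}_{w \in \cT}$, $d_w$ and $A_w$ are $\cG_w$-measurable (the first depends only on $A_v$ for $v < w$, the second only on lifetimes), while by the branching property — in the generalised form recalled after Proposition~\ref{prop:characterization_of_bbm}, equivalently directly from the construction of the marked tree — the subtrees rooted at the children of $w$ are, conditionally on $\cG_w$, independent branching Brownian motions started at time $d_w$. Consequently, on $\{w \in \cT,\ d_w \le t\}$ the variables $N_i^{(w)}$ are conditionally i.i.d.\@ with conditional mean $\Expec{n(t - d_w)} = \e^{t - d_w}$ by Proposition~\ref{prop:expectation_nt}, so that
\begin{equation*}
  \condExpec{\sum_{\substack{1 \le i, j \le A_w \\ i \neq j}} N_i^{(w)} N_j^{(w)}}{\cG_w} = A_w(A_w - 1)\, \e^{2(t - d_w)} \qquad \text{on } \{w \in \cT,\ d_w \le t\} .
\end{equation*}
Taking expectations, summing over $w \in \cU$, and using that $A_w$ is independent of the pair $(\mathds{1}_{w \in \cT}, d_w)$ with $\Expec{A_w(A_w - 1)} = K < \infty$ by \eqref{eq:assumption_offspring_distribution}, I obtain
\begin{equation*}
  \Expec{\sum_{\substack{u,v \in \cN(t) \\ u \neq v}} f(d_{u \wedge v})} = K \e^{2t}\, \Expec{\sum_{\substack{w \in \cT \\ d_w \le t}} f(d_w)\, \e^{-2 d_w}} .
\end{equation*}

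\textbf{Step 3: the death-time intensity and conclusion.} It remains to show that, for non-negative measurable $\phi$, $\Expec{\sum_{w \in \cT,\ d_w \le t} \phi(d_w)} = \int_0^t \phi(s)\, \e^{s} \d{s}$, i.e.\@ that the intensity measure of the deathtimes has density $s \mapsto \e^{s}$. This is heuristically clear — at time $s$ there are on average $\e^{s}$ particles alive, each dying at unit rate — and it can be made rigorous exactly as in the proof of Proposition~\ref{prop:expectation_nt}: decomposing at the first branching time $\sigma_\varnothing$ gives a renewal-type integral equation for $M[\phi](t) := \Expec{\sum_{w \in \cT,\, d_w \le t} \phi(d_w)}$, and a substitution together with Fubini shows that $t \mapsto \int_0^t \phi(s)\e^{s}\d{s}$ solves it. Applying this with $\phi(s) = f(s)\e^{-2s}$ yields $\Expec{\sum_{w,\, d_w \le t} f(d_w)\e^{-2 d_w}} = \int_0^t f(s)\, \e^{-s}\d{s}$, and substituting into the previous display gives \eqref{eq:death_functional}. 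I expect the only genuinely delicate point to be Step~2: writing down cleanly the conditioning that turns the subtrees below $w$'s children into i.i.d.\@ branching Brownian motions, which must rest on the branching property and the construction of the process rather than on an informal argument.
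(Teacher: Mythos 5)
Your proposal is correct and follows essentially the same route as the paper: decompose the double sum according to the death event of the most recent common ancestor, use the branching property to reduce the conditional expectation to $A_w(A_w-1)\e^{2(t - d_w)}$, and identify the intensity measure of the death times as $\e^{s} \d{s}$ on $[0, t]$. The only minor divergence is in the last step, where you obtain this intensity from a renewal equation at the first branching time, whereas the paper derives it from the identity $\Expec{\sum_{n \geq 1} \mathds{1}_{\tau_n \leq t}} = \e^t - 1$ together with an integration by parts for continuously differentiable test functions; both work equally well.
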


\begin{proof}
    Let us define $(\tau_n)_{n \geq 1}$ the sequence of the consecutive branching times of the whole branching Brownian motion.
    This allows us to rewrite
    \begin{align}
        \sum_{\substack{u, v \in \cN(t) \\ u \neq v}} f(d_{u \wedge v}) &= \sum_{n \geq 1} \sum_{u, v \in \cN(t)} f(\tau_n) \mathds{1}_{d_{u \wedge v} = \tau_n \leq t} \notag \\
        &= \sum_{n \geq 1} f(\tau_n) \mathds{1}_{\tau_n \leq t} \#\{u, v \in \cN(t) : d_{u \wedge v} = \tau_n\} \notag \\
        &= \sum_{n \geq 1} f(\tau_n) \mathds{1}_{\tau_n \leq t} \sum_{\substack{u', v' \in \cN(\tau_n) \\ d_{u' \wedge v'} = \tau_n}} \#\{u \in \cN(t) : u \geq u'\} \#\{v \in \cN(t) : v \geq v'\}. \label{eq:many-to-two_rewriting}
    \end{align}
    Conditionally on $\cF_{\tau_n}$, if $u'$ and $v'$ are two particles of $\cN(\tau_n)$ such that $d_{u' \wedge v'} = \tau_n$, then, at time $\tau_n$, they start two branching Brownian motions, independent of each other and of $\cF_{\tau_n}$.
    Thus, using Proposition~\ref{prop:expectation_nt}, we obtain
    \begin{equation}\label{eq:many-to-two_first_conditioning}
        \condExpec{\#\{u \in \cN(t) : u \geq u'\} \#\{v \in \cN(t) : v \geq v'\}}{\cF_{\tau_n}} = \e^{2t - 2\tau_n}.
    \end{equation}
    We also have
    \begin{align*}
        \#\{u', v' \in \cN(\tau_n) : d_{u' \wedge v'} = \tau_n\} &= \#\{u', v' \in \cN(\tau_n) : b_{u'} = b_{v'} = \tau_n, u' \neq v'\} \\
        &= A_n(A_n - 1),
    \end{align*}
    where $A_n = \#\{u' \in \cN(\tau_n) : b_{u'} = \tau_n\}$ follows the offspring distribution $\mu$ and is independent of $\tau_n$.
    Thus,
    \begin{equation}\label{eq:many-to-two_second_conditioning}
        \condExpec{\#\{u', v' \in \cN(\tau_n) : d_{u' \wedge v'} = \tau_n\}}{\tau_n} = \Expec{A_n(A_n - 1)} = K.
    \end{equation}
    By conditioning, first on $\cF_{\tau_n}$ then on $\tau_n$, the equations \eqref{eq:many-to-two_rewriting}, \eqref{eq:many-to-two_first_conditioning}, \eqref{eq:many-to-two_second_conditioning} yield
    \begin{equation}\label{eq:many-to-two_quantity_of_interest}
        \Expec{\sum_{\substack{u, v \in \cN(t) \\ u \neq v}} f(d_{u \wedge v})} = \Expec{\sum_{n \geq 1} K \e^{2t - 2\tau_n} f(\tau_n) \mathds{1}_{\tau_n \leq t}}.
    \end{equation}
    Let us temporarily replace $s \mapsto \e^{2t - 2s} f(s)$ with some continuously differentiable function $\phi : \R \to \R$.
    We can write
    \begin{equation}\label{eq:many-to-two_mute_function_method}
        \Expec{\sum_{n \geq 1} \phi(\tau_n) \mathds{1}_{\tau_n \leq t}} = \phi(0) \Expec{\sum_{n \geq 1} \mathds{1}_{\tau_n \leq t}} + \int_0^t \phi'(s) \Expec{\sum_{n \geq 1} \mathds{1}_{s < \tau_n \leq t}} \d{s}.
    \end{equation}
    Note that, by definition of $\tau_n$ and $A_n$,
    \begin{equation*}
        \e^t = \Expec{n(t)} = \Expec{1 + \sum_{n \geq 1} \mathds{1}_{\tau_n \leq t} (A_n - 1)} = 1 + \Expec{\sum_{n \geq 1} \mathds{1}_{\tau_n \leq t}}.
    \end{equation*}
    Then, \eqref{eq:many-to-two_mute_function_method} becomes
    \begin{equation*}
        \Expec{\sum_{n \geq 1} \phi(\tau_n) \mathds{1}_{\tau_n \leq t}} = \phi(0) (\e^t - 1) + \int_0^t \phi'(s) (\e^t - \e^s) \d{s} = \int_0^t \phi(s) \e^s \d{s}.
    \end{equation*}
    Since the total mass $\Expec{\sum_n \mathds{1}_{\tau_n \leq t}}$ is finite, the above formula still holds for any measurable function $\phi : \R \to \R$ which is non-negative or bounded.
    Taking $\phi : s \mapsto \e^{2t - 2s} f(s)$, \eqref{eq:many-to-two_quantity_of_interest} becomes \eqref{eq:death_functional}.
\end{proof}

\subsection{The many-to-one lemma}\label{sct:many-to-one}

One of the most basic tools in the study of branching Brownian motion, and branching processes in general, is the \emph{many-to-one formula}.
It enables the calculation of first moments for certain functionals.

\begin{lemma}[Many-to-one lemma]
    Let $F : \cC([0, t]) \to \R$ be a measurable functional.
    Assume that $F$ is non-negative or that $F(B_s, 0 \leq s \leq t)$ admits a first moment.
    Then,
    \begin{equation}\label{eq:many-to-one}
        \Expec{\sum_{u \in \cN(t)} F(X_u(s), 0 \leq s \leq t)} = \e^t \Expec{F(B_s, 0 \leq s \leq t)},
    \end{equation}
    with the convention $\sum_\varnothing = 0$, where $(B_t)_{t \geq 0}$ is a standard Brownian motion under $\P$.
\end{lemma}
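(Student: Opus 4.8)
The plan is to prove the many-to-one formula by induction on the structure of the branching Brownian motion, decomposing at the first branching time $\tau$ exactly as in the proofs of Proposition~\ref{prop:expectation_nt} and Lemma~\ref{lem:particles_alive_in_an_interval}. Write $\Phi(t) := \Expec{\sum_{u \in \cN(t)} F(X_u(s), 0 \leq s \leq t)}$. By conditioning on $\cF_{\sigma_\varnothing}$ and using the branching property \eqref{eq:branching_property}: if $\tau > t$, the only particle alive is the root, contributing $F(Y_\varnothing(s), 0 \leq s \leq t)$; if $\tau \leq t$, the root's displacement on $[0, \tau]$ is fixed and each of the $A$ children independently starts a fresh BBM shifted by $Y_\varnothing(\tau)$, so the contribution of the subtree rooted at child $i$ has conditional expectation $\Expec{\tilde F}$ where $\tilde F$ is $F$ precomposed with the concatenation of the root path and an independent Brownian path. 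Since $\Expec{A} = 2$, this yields a renewal-type equation
\begin{equation*}
    \Phi(t) = \Expec{G(Y_\varnothing(s), 0 \le s \le t)\,\mathds{1}_{\tau > t}} + 2\,\Expec{\Psi(\tau, t)\,\mathds{1}_{\tau \le t}},
\end{equation*}
where the second term involves $\Phi(t-\tau)$ evaluated on the shifted functional.

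To make this precise and to actually identify the Brownian motion on the right-hand side, I would instead work directly with an integral-equation characterization of the law of $(X_{U_t}(s), 0 \le s \le t)$, where $U_t$ is the ``spine'' particle, but a cleaner route is the following. Define, for a bounded continuous $F$, the function $\Phi(t)$ as above; the decomposition at $\tau$ (an exponential time of rate $1$, independent of $Y_\varnothing$ and of everything after) gives, after using the Markov property of the root Brownian motion at time $\tau$ and the branching property,
\begin{equation*}
    \Phi(t) = \e^{-t}\Expec{F(B_s, 0 \le s \le t)} + 2\int_0^t \e^{-r}\,\Expec{\,\Expec{F(B_s, 0 \le s \le t)}{\mathcal{G}_r}_{\text{shift}}\,}\d{r},
\end{equation*}
where the inner object is $\Phi(t-r)$ applied to the functional obtained by prepending a Brownian path of length $r$. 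The key observation is that the map $t \mapsto \e^t\Expec{F(B_s, 0\le s\le t)}$ satisfies exactly this same equation: this follows from splitting the Brownian path of $(B_s)_{0 \le s \le t}$ at the independent exponential time $r$, using the Markov property $B_{r+\cdot} - B_r$ is a fresh Brownian motion, and the factor $\e^r$ arising from $\e^t = \e^r \cdot \e^{t-r}$ together with the mass identity $\e^{-r} + 2\int_0^{t}\cdots$ — wait, more carefully, one uses that $\e^{t}\Expec{F} = \e^{-t}\Expec{F} + \int_0^t (\text{derivative terms})$, i.e.\ one differentiates and checks both sides solve $\phi' = \phi + (\text{source})$ with the same initial condition $\phi(0) = F(\text{constant path})$.

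Concretely I would: (i) reduce to bounded continuous $F$ by monotone class / monotone convergence (the non-negative and integrable cases then follow by the usual truncation and monotone/dominated convergence, using that both sides are finite since $\Expec{n(t)} = \e^t < \infty$ by Proposition~\ref{prop:expectation_nt}); (ii) derive the renewal equation for $\Phi(t)$ by conditioning on $\cF_{\sigma_\varnothing}$ and applying \eqref{eq:branching_property}, being careful to track that the displacement functional on $[0,\tau]$ of the children is the root's own path and that the children's increments are independent Brownians; (iii) show $t \mapsto \e^t\Expec{F(B_s, 0\le s\le t)}$ satisfies the same renewal equation by the analogous decomposition of a single Brownian path at an independent rate-$1$ exponential clock; (iv) conclude by uniqueness of the solution — here one argues as in Proposition~\ref{prop:expectation_nt}, e.g.\ by iterating the convolution and using that $\sum_i h^{*i}$ converges, or by a Gronwall argument, noting $|\Phi(t)| \le \|F\|_\infty \e^t$ keeps everything finite.

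The main obstacle is step (ii): formalizing the ``shift'' so that $\Phi(t-\tau)$ applied to the correct functional genuinely appears, i.e.\ writing the functional evaluated along a child's lineage as $F$ composed with (root path on $[0,\tau]$) $\ast$ (that child's subtree lineage) and checking this is exactly the functional to which $\Phi(t-\tau)$ is applied — this requires the generalized branching property (the ``particles in $\cN(\tau)$ start independent BBMs shifted by their positions'', cf.\ \cite[Proposition~2.1]{Chauvin1991}) and a small amount of bookkeeping with the $\sigma$-algebras $\cF_t$. Once the renewal equation is set up correctly, matching it against the single-Brownian-path computation and invoking uniqueness is routine.
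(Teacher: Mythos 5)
Your route is genuinely different from the paper's. The paper's proof is essentially two lines: condition on the underlying tree $(\cT, (\sigma_u)_u)$; given the tree, the ancestral trajectory $(X_u(s),\, 0 \leq s \leq t)$ of each $u \in \cN(t)$ is a standard Brownian motion (concatenation of the independent displacements $Y_v$ along the lineage at tree-measurable times), independent of the tree, so each summand has conditional expectation $\Expec{F(B_s, 0 \leq s \leq t)}$ and the sum equals $\Expec{n(t)}\Expec{F(B_s, 0 \leq s \leq t)} = \e^t \Expec{F(B_s, 0 \leq s \leq t)}$ by Proposition~\ref{prop:expectation_nt}. Your renewal-equation approach at the first branching time can be made to work — the candidate $\Psi(t, F) = \e^t\Expec{F(B_s, 0 \leq s \leq t)}$ does satisfy your equation, since $\e^{-t} + 2\e^t\int_0^t \e^{-2r}\d{r} = \e^t$, and uniqueness follows by iterating the convolution as in Proposition~\ref{prop:expectation_nt} — but it is substantially heavier, and you should be aware of two points. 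First, because the functional gets precomposed with the root's path on $[0,\tau]$, the recursion is not a scalar renewal equation for a single function of $t$: it is an equation for the whole family $(t, F) \mapsto \Phi(t, F)$, and the uniqueness/Gronwall step must be run uniformly over that family (e.g.\ on $\sup_F |\Phi_1(t,F) - \Phi_2(t,F)|/\|F\|_\infty$); your step (iv) as stated does not make this explicit. Second, your step (ii) — the bookkeeping that identifies the child subtrees' contributions as $\Phi(t-\tau, F^{(\tau)})$ — is exactly where the work lies, and it is left as an acknowledged obstacle; note that the ingredient you need there (trajectories are Brownian given the tree) is the same fact that, used directly, gives the paper's one-step proof and lets you bypass the renewal machinery entirely. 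Your reduction in step (i) to bounded $F$ and back via monotone convergence is fine.
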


\begin{proof}
    The calculation is straightforward, by conditioning on the underlying tree $(\cT, \sigma) = (\cT, (\sigma_u)_{u \in \cU})$ defined in Section~\ref{sct:definition}.
    Indeed, we have
    \begin{align*}
        \Expec{\sum_{u \in \cN(t)} F(X_u(s), 0 \leq s \leq t)} &= \Expec{\sum_{u \in \cN(t)} \condExpec{F(X_u(s), 0 \leq s \leq t)}{(\cT, \sigma)}} \\
        &= \Expec{n(t)} \Expec{F(B_s, 0 \leq s \leq t)},
    \end{align*}
    since each trajectory $(X_u(s), 0 \leq s \leq t)$ follows a Brownian motion independent of $(\cT, \sigma)$.
    Using Proposition~\ref{prop:expectation_nt}, we obtain \eqref{eq:many-to-one}.
\end{proof}

\subsection{Additive and derivative martingales}\label{sct:additive_and_derivative_martingales}

From now on, $B = (B_t)_{t \geq 0}$ will denote a standard Brownian motion under $\P$.
Furthermore, we will assume $B$ to be independent of the whole branching Brownian motion.

\begin{proposition}
    For each $t \geq 1$, let $F_t : \cC([0, t]) \to \R$ be a measurable functional and set
    \begin{equation*}
        M_t = F_t(B_s, 0 \leq s \leq t).
    \end{equation*}
    If $(M_t)_{t \geq 0}$ is a martingale with respect to the natural filtration of $(B_t)_{t \geq 0}$, then
    \begin{equation*}
        W_t := \e^{-t} \sum_{u \in \cN(t)} F_t(X_u(s), 0 \leq s \leq t)
    \end{equation*}
    defines a martingale with respect to $(\cF_t)_{t \geq 0}$.
\end{proposition}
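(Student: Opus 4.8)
The plan is to check the martingale property directly. First, $(W_t)$ is visibly $\cF_t$-measurable, and its integrability follows from the many-to-one lemma applied to $|F_t|$: indeed
\[
\Expec{|W_t|} \le \e^{-t}\,\Expec{\sum_{u\in\cN(t)} \big|F_t(X_u(r),0\le r\le t)\big|} = \Expec{\big|F_t(B_r,0\le r\le t)\big|} = \Expec{|M_t|} < \infty,
\]
the last quantity being finite because a martingale is by definition integrable. It then remains to prove that $\condExpec{W_t}{\cF_s} = W_s$ for all $0 \le s \le t$.

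The key step is to group the particles of $\cN(t)$ according to their ancestor alive at time $s$. Each $u \in \cN(t)$ has a unique ancestor $v = v(u) \in \cN(s)$; one has $X_u(r) = X_v(r)$ for $r \in [0,s]$, while $(X_u(r))_{s\le r\le t}$ equals $X_v(s)$ plus the trajectory of a descendant in a branching Brownian motion of duration $t-s$, and by the branching property at the deterministic time $s$ (the generalized version recorded after Proposition~\ref{prop:characterization_of_bbm}), as $v$ ranges over $\cN(s)$ these sub-processes are independent and identically distributed copies of the BBM, independent of $\cF_s$. Hence
\[
W_t = \e^{-t}\sum_{v\in\cN(s)}\ \sum_{\substack{u\in\cN(t)\\ u\ge v}} F_t\big(X_u(r),\,0\le r\le t\big).
\]
For $\gamma \in \cC([0,s])$ write $\gamma \ast (\gamma(s)+B_\bullet)$ for the path in $\cC([0,t])$ equal to $\gamma$ on $[0,s]$ and to $r \mapsto \gamma(s)+B_{r-s}$ on $[s,t]$, and set $\psi(\gamma) = \Expec{F_t\big(\gamma \ast (\gamma(s)+B_\bullet)\big)}$. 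Conditioning on $\cF_s$ freezes $\cN(s)$ and the paths $(X_v(r))_{0\le r\le s}$, and each inner sum becomes a functional of an independent BBM of duration $t-s$; applying the many-to-one lemma to that BBM (with the frozen data acting as parameters) gives
\[
\condExpec{\sum_{\substack{u\in\cN(t)\\ u\ge v}} F_t\big(X_u(r),\,0\le r\le t\big)}{\cF_s} = \e^{t-s}\,\psi\big((X_v(r))_{0\le r\le s}\big).
\]

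To identify $\psi$, note that by the Markov property of Brownian motion the path $\gamma \ast (\gamma(s)+B_\bullet)$ has exactly the conditional law of $(B'_r)_{0\le r\le t}$ given $(B'_r)_{0\le r\le s} = \gamma$, for a standard Brownian motion $B'$. Therefore $\psi(\gamma) = \condExpec{M_t}{\cF^{B'}_s}$ evaluated along $\gamma$, which equals $M_s = F_s(\gamma(r),0\le r\le s)$ since $(M_t)$ is a martingale; substituting $\gamma = (X_v(r))_{0\le r\le s}$ gives $\psi\big((X_v(r))_{0\le r\le s}\big) = F_s(X_v(r),0\le r\le s)$. Summing over the finitely many, $\cF_s$-measurable $v \in \cN(s)$ and multiplying by $\e^{-t}$ yields $\condExpec{W_t}{\cF_s} = \e^{-s}\sum_{v\in\cN(s)} F_s(X_v(r),0\le r\le s) = W_s$, which concludes.

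The main obstacle is bookkeeping rather than substance: one must justify the "conditional many-to-one" step carefully — checking that $\psi$ is measurable (and, in the merely integrable case, a.e.\ finite via Fubini) and that the branching property at the deterministic time $s$ genuinely lets one pull the $\cF_s$-measurable ingredients out of the conditional expectation and treat the sub-trees as fresh independent BBMs. For non-negative $F_t$ all these interchanges are legitimate by monotone convergence; the general integrable case then follows by writing $F_t = F_t^+ - F_t^-$ and invoking the integrability bound above.
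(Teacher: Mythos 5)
Your proof is correct and follows essentially the same route as the paper: decompose $W_t$ over the ancestors in $\cN(s)$, apply the branching property at time $s$ together with the many-to-one formula to each subtree, and identify the resulting path functional (your $\psi$, the paper's $F_t^{(u)}$) with $F_s$ via the martingale property of $(M_t)$. The only additions are the explicit integrability check and the slightly more careful discussion of why $\psi(\gamma)$ equals the conditional expectation of $M_t$ along $\gamma$, both of which the paper leaves implicit.
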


\begin{proof}
    Let $0 \leq s \leq t$. We have
    \begin{equation}\label{eq:cond_expec_Zt}
        \condExpec{W_t}{\cF_s} = \e^{-t} \sum_{u \in \cN(s)} \condExpec{\sum_{\substack{v \in \cN(t) \\ v \geq u}} F_t(X_v(r), 0 \leq r \leq t)}{\cF_s}.
    \end{equation}
    In order to use the Markov property of the branching Brownian motion, let us fix $u \in \cN(s)$ and define, for any $\phi \in \cC([0, t-s])$ such that $\phi(0) = 0$,
    \begin{equation*}
        F_t^{(u)}(\phi(r), 0 \leq r \leq t - s) = F_t(\tilde{\phi}(r), 0 \leq r \leq t),
    \end{equation*}
    where
    \begin{equation*}
        \tilde{\phi}(r) = \begin{cases}
            X_u(r) &\text{if } 0 \leq r \leq s, \\
            X_u(s) + \phi(r - s) &\text{if } s < r \leq t.
        \end{cases}
    \end{equation*}
    Recall that $B$ is independent of the whole branching Brownian motion.
    We can then rewrite the summand in \eqref{eq:cond_expec_Zt} as
    \begin{align*}
        \condExpec{\sum_{\substack{v \in \cN(t) \\ v \geq u}} F_t^{(u)}(X_v(s + r) - X_u(s), 0 \leq r \leq t-s)}{\cF_s} = \e^{t - s} \condExpec{F_t^{(u)}(B_r, 0 \leq r \leq t-s)}{\cF_s},
    \end{align*}
    where we have used that, conditionally on $\cF_s$, the particle $u$ starts an independent branching Brownian motion with initial position $X_u(s)$, to which we can apply the many-to-one formula \eqref{eq:many-to-one}.
    Coming back to \eqref{eq:cond_expec_Zt}, it yields
    \begin{align*}
        \condExpec{W_t}{\cF_s} &= \e^{-s} \sum_{u \in \cN(s)} \condExpec{F_t^{(u)}(B_r, 0 \leq r \leq t-s)}{\cF_s} \\
        &= \e^{-s} \sum_{u \in \cN(s)} F_s(X_u(r), 0 \leq r \leq s),
    \end{align*}
    since $\left(F_{s + t}^{(u)}(B_r, \ 0 \leq r \leq t)\right)_{t \geq 0}$\footnote{Formally, we should replace $F_t^{(u)}(B_r, 0 \leq r \leq t)$ with
    \begin{equation*}
        F_t^{(u)}(B_r, 0 \leq r \leq t) \mathds{1}_{u \in \cN(s)} + F_t(B_r, 0 \leq r \leq s+t) \mathds{1}_{u \notin \cN(s)},
    \end{equation*}
    since the former expression is not well defined.} has the same distribution as $(M_{s + t})_{t \geq 0}$, which is a martingale.
\end{proof}

\begin{corollary}
    For any $\beta \in \R$,
    \begin{equation}\label{eq:definition_of_W_t}
        W_t(\beta) := \e^{-c(\beta)t} \sum_{u \in \cN(t)} \e^{\beta X_u(t)}
    \end{equation}
    and
    \begin{equation}\label{eq:definition_of_Z_t}
        Z_t(\beta) := \e^{-c(\beta)t} \sum_{u \in \cN(t)} (X_u(t) - \beta t) \e^{\beta X_u(t)}
    \end{equation}
    define martingales, where $c(\beta) = 1+\beta^2/2$.
\end{corollary}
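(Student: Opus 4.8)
The plan is to invoke the proposition just proved: in each case it is enough to exhibit a measurable functional $F_t$ such that the process under consideration equals $\e^{-t}\sum_{u\in\cN(t)}F_t(X_u(s),0\le s\le t)$ and such that $M_t:=F_t(B_s,0\le s\le t)$ is a martingale for the natural filtration of $B$. For $W_t(\beta)$ I would take $F_t(\phi(s),0\le s\le t)=\exp\left(\beta\phi(t)-\beta^2 t/2\right)$; since $c(\beta)=1+\beta^2/2$, one has $\e^{-t}F_t(X_u(s),0\le s\le t)=\e^{-c(\beta)t}\e^{\beta X_u(t)}$, so summing over $u\in\cN(t)$ gives exactly the right-hand side of \eqref{eq:definition_of_W_t}. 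The corresponding $M_t=\exp(\beta B_t-\beta^2 t/2)$ is the classical exponential martingale of Brownian motion (by Itô's formula, or directly from the Gaussian moment generating function), so the proposition immediately yields that $W_t(\beta)$ is an $(\cF_t)$-martingale.

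For $Z_t(\beta)$ I would take $F_t(\phi(s),0\le s\le t)=(\phi(t)-\beta t)\exp\left(\beta\phi(t)-\beta^2 t/2\right)$; the same bookkeeping with the constant $c(\beta)$ shows that $\e^{-t}\sum_{u\in\cN(t)}F_t(X_u(s),0\le s\le t)$ is the right-hand side of \eqref{eq:definition_of_Z_t}. The point to check is then that $M_t=(B_t-\beta t)\exp(\beta B_t-\beta^2 t/2)$ is a martingale; note first that $F_t(B_s,0\le s\le t)$ is integrable, being the product of a Gaussian and a lognormal variable. I would verify the martingale property directly: for $0\le s\le t$, write $B_t=B_s+G$ with $G$ a centered Gaussian of variance $t-s$ independent of $(B_r)_{r\le s}$, and use $\Expec{\e^{\beta G}}=\e^{\beta^2(t-s)/2}$ together with $\Expec{G\e^{\beta G}}=\beta(t-s)\e^{\beta^2(t-s)/2}$; after simplification this gives $\condExpec{M_t}{B_r,\ r\le s}=M_s$. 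Equivalently, $M_t=\partial_\beta\exp(\beta B_t-\beta^2 t/2)$, so $M$ is a martingale obtained by differentiating the exponential martingale in the parameter $\beta$, the interchange with the conditional expectation being justified by dominating the difference quotients uniformly for $\beta$ in a neighbourhood. Then the proposition gives that $Z_t(\beta)$ is an $(\cF_t)$-martingale.

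I do not expect a genuine obstacle here: the whole argument reduces to identifying the right functional $F_t$ (pure bookkeeping with the constant $c(\beta)$) together with the standard fact that the exponential martingale and its $\beta$-derivative are Brownian martingales. The only mildly delicate step is checking that the process $M_t$ associated with $Z_t(\beta)$ is a martingale, which is handled by the short Gaussian computation above — or, if one prefers, by justifying differentiation under the conditional expectation.
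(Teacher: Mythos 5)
Your proof is correct and follows exactly the route the paper intends: the corollary is stated as an immediate consequence of the preceding proposition, applied to the exponential Brownian martingale $\e^{\beta B_t - \beta^2 t/2}$ and its $\beta$-derivative $(B_t - \beta t)\e^{\beta B_t - \beta^2 t/2}$, with the bookkeeping $c(\beta) = 1 + \beta^2/2$ absorbing the extra factor $\e^{-t}$. The Gaussian computation you give for the derivative martingale is the standard verification and is accurate.
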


The processes $(W_t(\beta))_{t \geq 0}$ and $(Z_t(\beta))_{t \geq 0}$ are called, respectively, the \emph{additive martingale} and the \emph{derivative martingale} of the branching Brownian motion associated with the inverse temperature $\beta$.
The first one was introduced by McKean in \cite{McKean1975}.
Since it is positive, it converges almost surely to a random variable $W_\infty(\beta)$.
We will see in Section~\ref{sct:phase_transition} that this limit is non-degenerate if and only if $|\beta|$ is smaller than an explicit critical value.
As for the derivative martingale, it was introduce by Lalley-Sellke in \cite{LalleySellke1987}.
We will discuss its asymptotic behavior in Section~\ref{sct:phase_transition}.

\subsection{The many-to-two lemma}

The many-to-one formula introduced in Section~\ref{sct:many-to-one} does not provide any information about the correlations between the trajectories of the particles.
Indeed, at time $t$, it gives the same result as for $\e^t$ independent Brownian motions.
A way to capture the correlations is to compute the $k^\mathrm{th}$ moment of
sums over the particles, with $k \geq 2$.
The case $k = 2$ is made possible by the following \emph{many-to-two formula}.
In \cite{HarrisRoberts2017}, Harris-Roberts developed generalizations of this formula, called \emph{many-to-few}.

\begin{lemma}[Many-to-two lemma]\label{lem:many-to-two}
    Let $F : \cC([0, t])^2 \to \R$ be a measurable functional.
    Assume that $F$ is non-negative or bounded.
    Then,
    \begin{multline}\label{eq:many-to-two}
        \Expec{\sum_{\substack{u, v \in \cN(t) \\ u \neq v}} F\left((X_u(s))_{0 \leq s \leq t}, (X_v(s))_{0 \leq s \leq t}\right)} \\
        = K \e^{2t} \int_0^t \Expec{F\left((B^{1, s}_r))_{0 \leq r \leq t}, (B^{2, s}_r)_{0 \leq r \leq t}\right)} e^{-s} \d{s},
    \end{multline}
    where $K = \sum_{k \geq 0} \mu(k)k(k-1)$ and the processes $B^{1, s}$ and $B^{2, s}$ are two Brownian motions that coincide on $[0, s]$ and are independent afterward.
    More precisely, the process $(B^{1, s}_r)_{0 \leq r \leq t}$ is a Brownian motion and $(B^{2, s}_r)_{0 \leq r \leq t}$ is defined by
    \begin{equation*}
        B^{2, s}_r = \begin{cases}
            B^{1, s}_r &\text{if } 0 \leq r \leq s, \\
            B^{1, s}_s + \tilde{B}_{r - s} &\text{if } s < r \leq t,
        \end{cases}
    \end{equation*}
    where $\tilde{B}$ is a Brownian motion independent of $B^{1, s}$.
\end{lemma}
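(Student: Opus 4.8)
The plan is to follow the proof of Lemma~\ref{lem:death_functional} step by step, decomposing the double sum over the branching times $(\tau_n)_{n\geq1}$ and reducing everything to the mute-function identity obtained there; the only new ingredient is that the trajectories must be tracked, which is done via two applications of the many-to-one formula and a conditioning on the underlying tree. First, for distinct $u,v\in\cN(t)$ the time $d_{u\wedge v}$ is the branching time $\tau_n$ at which the common ancestor $w_n:=u\wedge v$ dies, and $u$, $v$ descend from two distinct children of $w_n$ born at $\tau_n$. Since a.s.\ exactly one particle branches at each $\tau_n$, the same rewriting as in \eqref{eq:many-to-two_rewriting} gives
\[
    \sum_{\substack{u,v\in\cN(t)\\u\neq v}} F\big((X_u(s))_{s\leq t},(X_v(s))_{s\leq t}\big)
    = \sum_{n\geq1}\mathds{1}_{\tau_n\leq t}\sum_{\substack{u',v'\in\cN(\tau_n)\\ d_{u'\wedge v'}=\tau_n}}\ \sum_{\substack{u\in\cN(t),\,u\geq u'\\ v\in\cN(t),\,v\geq v'}} F\big((X_u(s))_{s\leq t},(X_v(s))_{s\leq t}\big),
\]
and on $[0,\tau_n]$ all these trajectories coincide with the $\cF_{\tau_n}$-measurable path $\psi_n:=(X_{w_n}(s))_{0\leq s\leq\tau_n}$.

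Next I would condition on $\cF_{\tau_n}$. By the branching property at the stopping time $\tau_n$, each of the children born at $\tau_n$ starts an independent branching Brownian motion from $\psi_n(\tau_n)$, independent of $\cF_{\tau_n}$. Fixing an ordered pair $(u',v')$ of distinct such children, the trajectories $(X_u(s))_{s\leq t}$ for $u\geq u'$ and $(X_v(s))_{s\leq t}$ for $v\geq v'$ involve, after $\tau_n$, only the (independent) sub-populations of $u'$ and $v'$; applying \eqref{eq:many-to-one} first to the sub-population of $u'$ (the motions in that of $v'$ being frozen) and then to that of $v'$, and using $\Expec{n(t-\tau_n)}=\e^{t-\tau_n}$ from Proposition~\ref{prop:expectation_nt}, gives
\[
    \condExpec{\ \sum_{\substack{u\geq u'\\ v\geq v'}} F\big((X_u(s))_{s\leq t},(X_v(s))_{s\leq t}\big)\ }{\cF_{\tau_n}} = \e^{2(t-\tau_n)}\,g\big(\tau_n,\psi_n\big),
\]
where, for $s\in[0,t]$ and $\psi\in\cC([0,s])$, $g(s,\psi):=\Expec{F(\tilde B^1,\tilde B^2)}$ with $\tilde B^1,\tilde B^2$ two Brownian motions equal to $\psi$ on $[0,s]$ and running independently from $\psi(s)$ on $[s,t]$. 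Summing over the $\#\{u',v'\in\cN(\tau_n):d_{u'\wedge v'}=\tau_n\}=A_n(A_n-1)$ ordered pairs — with $A_n$ distributed as $\mu$ and independent of $(\tau_n,\psi_n)$, exactly as in the proof of Lemma~\ref{lem:death_functional} — and taking expectations yields
\[
    \Expec{\sum_{\substack{u,v\in\cN(t)\\u\neq v}} F} = K\,\Expec{\sum_{n\geq1} \e^{2(t-\tau_n)}\, g(\tau_n,\psi_n)\,\mathds{1}_{\tau_n\leq t}},\qquad K=\sum_{k\geq0}\mu(k)k(k-1).
\]

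Then I would condition on the underlying tree $(\cT,\sigma)$: given $(\cT,\sigma)$, the path $\psi_n$ is a standard Brownian motion on $[0,\tau_n]$, independent of the increments used after $\tau_n$, so that the pair entering the definition of $g$ has exactly the law of the split Brownian motions $(B^{1,\tau_n},B^{2,\tau_n})$ of the statement. Hence $\condExpec{g(\tau_n,\psi_n)}{(\cT,\sigma)}=H(\tau_n)$, where $H(s):=\Expec{F\big((B^{1,s}_r)_{0\leq r\leq t},(B^{2,s}_r)_{0\leq r\leq t}\big)}$ is measurable in $s$, and therefore
\[
    \Expec{\sum_{\substack{u,v\in\cN(t)\\u\neq v}} F} = K\,\Expec{\sum_{n\geq1}\e^{2(t-\tau_n)}H(\tau_n)\mathds{1}_{\tau_n\leq t}} = K\int_0^t\e^{2(t-s)}H(s)\,\e^s\d{s} = K\e^{2t}\int_0^t H(s)\,\e^{-s}\d{s},
\]
where the second equality is the identity $\Expec{\sum_n\phi(\tau_n)\mathds{1}_{\tau_n\leq t}}=\int_0^t\phi(s)\e^s\d{s}$ established inside the proof of Lemma~\ref{lem:death_functional}, applied to $\phi(s)=\e^{2(t-s)}H(s)$ (non-negative, resp.\ bounded, when $F$ is); this is precisely \eqref{eq:many-to-two}.

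The main obstacle is the careful bookkeeping of the successive conditionings: making rigorous that the two diverging lineages start independent branching Brownian motions sharing exactly the path $\psi_n$ on $[0,\tau_n]$, that the many-to-one formula applies at the random stopping time $\tau_n$ to each of the two independent sub-populations separately, and that conditioning on $(\cT,\sigma)$ turns $\psi_n$ into a Brownian motion independent of the post-$\tau_n$ increments, so that $(\tilde B^1,\tilde B^2)$ genuinely has the claimed joint law; the measurability of $s\mapsto H(s)$ and the non-negative/bounded dichotomy are routine.
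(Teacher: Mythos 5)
Your proof is correct, but it is organized differently from the paper's. The paper's proof is a two-step reduction: condition on the underlying tree $(\cT, \sigma)$ at the outset, observe that, given the tree, the pair of trajectories $((X_u(s))_{s \leq t}, (X_v(s))_{s \leq t})$ for fixed distinct $u, v$ has exactly the law of $(B^{1,s}, B^{2,s})$ with $s = d_{u \wedge v}$, so that the conditional expectation of the double sum collapses to $\sum_{u \neq v} f(d_{u \wedge v})$ with $f(s) = \Expec{F(B^{1,s}, B^{2,s})}$ — and then invoke Lemma~\ref{lem:death_functional} as a black box. You instead inline the proof of Lemma~\ref{lem:death_functional}: you redo the decomposition over branching times $\tau_n$, apply the branching property at $\tau_n$ and the many-to-one formula twice, condition on the tree only afterwards to identify the spine path as Brownian, and finish with the mute-function identity $\Expec{\sum_n \phi(\tau_n)\mathds{1}_{\tau_n \leq t}} = \int_0^t \phi(s)\e^s \d{s}$ extracted from inside that proof. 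The mathematical content is the same and every step you take is sound (the branching property at the stopping time $\tau_n$, the iterated many-to-one, the independence of $A_n$ from $(\tau_n, \psi_n)$, and the Tonelli/domination argument for the non-negative versus bounded cases all go through as you indicate). What the paper's organization buys is brevity and modularity — the trajectory information is stripped out in one conditioning, leaving a purely genealogical statement already proved; what your version buys is that it is self-contained apart from the single integral identity, at the cost of re-deriving the bookkeeping of branching times with the functional carried along.
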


The above many-to-two lemma is a generalization of \cite[Lemma~10]{Bramson1978}.
In this article, Bramson gives heuristic arguments, which we make here rigorous thanks to Lemma~\ref{lem:death_functional}.

\begin{proof}[Proof of Lemma~\ref{lem:many-to-two}]
    As for the many-to-one lemma, we start by conditioning on the underlying tree.
    Since the trajectories of the particles are independent of this tree, we have
    \begin{equation*}
        \sum_{\substack{u, v \in \cN(t) \\ u \neq v}} \condExpec{F\left((X_u(s))_{0 \leq s \leq t}, (X_v(s))_{0 \leq s \leq t}\right)}{(\cT, \sigma)} = \sum_{\substack{u, v \in \cN(t) \\ u \neq v}} f(d_{u \wedge v}),
    \end{equation*}
    where, with the notation introduced in the statement,
    \begin{equation*}
        f(s) = \Expec{F\left((B^{1, s}_r)_{0 \leq r \leq t}, (B^{2, s}_r)_{0 \leq r \leq t}\right)}.
    \end{equation*}
    We conclude using \eqref{eq:death_functional}.
\end{proof}

\subsection{Phase transition}\label{sct:phase_transition}

Recall that the additive martingales and the derivative martingales are defined by \eqref{eq:definition_of_W_t} and \eqref{eq:definition_of_Z_t}.

\begin{theorem}[Phase transition]\label{th:phase_transition_additive_martingales}
    The almost sure limit of the additive martingale $(W_t(\beta))_{t \geq 0}$ is non-degenerate if and only if $|\beta| < \beta_c$, with $\beta_c = \sqrt{2}$.
    More precisely,
    \begin{itemize}
        \item if $|\beta| \geq \beta_c$, then $W_\infty(\beta) = 0$ almost surely,
        \item if $|\beta| < \beta_c$, then $W_\infty(\beta)$ is an $L^1$ limit and is almost surely positive on the event of survival.
    \end{itemize}
\end{theorem}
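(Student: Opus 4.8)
The plan is to treat the two regimes separately, relying on the many-to-one formula \eqref{eq:many-to-one} for first moments and the many-to-two formula \eqref{eq:many-to-two} for second moments. Throughout, write $c(\beta) = 1 + \beta^2/2$ and recall $W_t(\beta) = \e^{-c(\beta)t}\sum_{u\in\cN(t)}\e^{\beta X_u(t)}$. By symmetry of Brownian motion it suffices to treat $\beta \geq 0$.

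\emph{The supercritical regime $\beta \geq \beta_c$.} Here I would show $W_\infty(\beta) = 0$ a.s.\ by a truncated first-moment (ballot-type) argument. The many-to-one formula gives $\Expec{W_t(\beta)} = \e^{-\beta^2 t/2}\Expec{\e^{\beta B_t}} = 1$, so the first moment alone is useless; the point is that the expectation is carried by atypically high particles. Fix a small $\delta > 0$ and consider the truncated martingale-like quantity $\widetilde W_t = \e^{-c(\beta)t}\sum_{u\in\cN(t)} \e^{\beta X_u(t)}\mathds{1}\{X_u(s) \leq (\sqrt2 + \delta)s \text{ for all } s \leq t\}$. Since the maximal displacement travels at speed $\sqrt2$ (to be quoted from Section~\ref{sct:front_and_extremes}), eventually every particle satisfies the barrier constraint, so $W_t(\beta) = \widetilde W_t$ for $t$ large, hence $W_\infty(\beta) = \lim \widetilde W_t$. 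But by many-to-one, $\Expec{\widetilde W_t} = \e^{-\beta^2 t/2}\Expec{\e^{\beta B_t}\mathds{1}\{B_s \leq (\sqrt2+\delta)s,\ s\leq t\}}$, and a Girsanov change of measure (tilting $B$ by $\beta$) turns this into $\Prob{B_s + \beta s \leq (\sqrt2 + \delta)s,\ s \leq t}$ for a standard BM $B$; since $\beta \geq \sqrt2$, the drifted walk $B_s + \beta s$ exits the region $\{x \leq (\sqrt2+\delta)s\}$ for $\delta$ small enough in a way that forces this probability $\to 0$ (more carefully one uses a barrier of the form $(\sqrt2+\delta)s$ with a correction, or simply $\beta > \sqrt2$ strictly and then handles $\beta = \sqrt2$ by a separate $\liminf$ argument using the $t^{1/2}$ drift mismatch). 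Thus $\Expec{\widetilde W_t} \to 0$, and since $\widetilde W_t \geq 0$ this forces $W_\infty(\beta) = 0$ a.s.\ (using Fatou: $\Expec{W_\infty(\beta)} \leq \liminf \Expec{\widetilde W_t} = 0$). The borderline case $\beta = \beta_c$ needs slightly more care and I would handle it via the derivative martingale $Z_t(\beta_c)$, which converges a.s.\ to a finite limit, forcing $W_t(\beta_c)\to 0$.

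\emph{The subcritical regime $0 \leq \beta < \beta_c$.} Here the standard route is a second-moment / $L^2$ argument when $\beta$ is small, bootstrapped to the full range by a self-similarity (spine decomposition) argument, but a cleaner packaging is the following. First, for $\beta^2 < 1$, i.e.\ $\beta < 1 < \sqrt2$, I would show $\sup_t \Expec{W_t(\beta)^2} < \infty$ directly: by the many-to-two formula,
\begin{equation*}
\Expec{W_t(\beta)^2} = \e^{-2c(\beta)t}\Expec{\!\!\sum_{u,v\in\cN(t)}\!\! \e^{\beta X_u(t) + \beta X_v(t)}} = \Expec{W_t(\beta)} + K\e^{-2c(\beta)t}\e^{2t}\!\int_0^t \Expec{\e^{\beta B^{1,s}_t + \beta B^{2,s}_t}}\e^{-s}\d s,
\end{equation*}
and $\Expec{\e^{\beta B^{1,s}_t + \beta B^{2,s}_t}} = \e^{\beta^2 s}\e^{\beta^2 t}$ (the two paths share variance $s$ and then run independently for time $t-s$ each), so the integral is $\int_0^t \e^{(\beta^2 - 1)s}\d s \e^{(\beta^2 - 2)t}\e^{2t}$ up to the prefactors; collecting exponents, $\e^{-2c(\beta)t+2t} = \e^{-\beta^2 t}$ and one gets $\Expec{W_t(\beta)^2} = 1 + K\int_0^t \e^{(\beta^2-1)s}\d s$, which is bounded in $t$ precisely when $\beta^2 < 1$. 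Hence $(W_t(\beta))_t$ is bounded in $L^2$, so it converges in $L^2$ (and a.s.) to a limit with $\Expec{W_\infty(\beta)} = 1 > 0$, which is therefore non-degenerate. To reach the full range $1 \leq \beta < \sqrt2$, I would use the elementary inequality relating $W_t$ at inverse temperature $\beta$ to the additive martingale for a sub-population: for $1 \leq \beta < \sqrt2$ pick an integer $k$ with $\beta/k < 1$; by conditioning on the configuration at time $t/k$ and iterating, or more directly by a convexity/Jensen estimate $\big(\sum \e^{\beta X_u}\big)^{1/k}$-type bound, one transfers non-triviality from the $L^2$ case — alternatively, and most robustly, one invokes the spine change of measure: define $\Prob^\beta$ under which a distinguished spine particle has drift $\beta$, show $W_t(\beta)$ is the density $\d\Prob^\beta/\d\P|_{\cF_t}$, and prove $W_\infty(\beta) > 0$ $\Prob$-a.s.\ is equivalent to $W_t(\beta)$ not $\to\infty$ $\Prob^\beta$-a.s., which by the law of large numbers for the spine holds iff the spine drift $\beta$ stays below the speed $\sqrt2$ of the front. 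Positivity on the survival event then follows from the $0$–$1$ law: $\{W_\infty(\beta) = 0\}$ is inherited by all subtrees (if $W_\infty = 0$ for the whole tree, then $\e^{-\beta X_u(b_u)}$ times the limit for the subtree rooted at each first-generation $u$ sums to $0$, forcing each to vanish), so by the branching property $\Prob{W_\infty(\beta) = 0}$ is a fixed point $q$ of the generating function $f$ of $\mu$; since $q < 1$ (as $\Expec{W_\infty} > 0$) and the survival probability is the unique such fixed point $< 1$, we get $\{W_\infty(\beta) = 0\} = \{\text{extinction}\}$ up to null sets.

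\emph{Main obstacle.} The genuinely delicate point is the subcritical regime for $1 \leq \beta < \sqrt2$: the naive second moment diverges there, and one must argue non-triviality by a less direct route — either a truncated second moment on the event that no particle has been too high (a barrier estimate, combined with many-to-two on the truncated sum, where the exponential $\e^{(\beta^2-1)s}$ growth is tamed by the barrier suppressing the contribution of pairs that separated early and then one of the paths shot up), or the spine decomposition with a law-of-large-numbers argument for the spine. I would favour the spine approach: it simultaneously gives the $L^1$ convergence, the non-degeneracy, and (via the $0$–$1$ argument above) positivity on survival, at the cost of setting up the change of measure $\Prob^\beta$, which is the main technical overhead.
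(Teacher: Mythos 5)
Your supercritical argument has a concrete flaw as written: the truncation indicator $\mathds{1}\{X_u(s) \leq (\sqrt2+\delta)s \text{ for all } s\leq t\}$ uses a barrier through the origin with no additive constant, and by the law of the iterated logarithm at $0$ a Brownian path exceeds any line $cs$ at arbitrarily small times almost surely, so your $\widetilde W_t$ is identically $0$ and the claim ``$W_t(\beta)=\widetilde W_t$ for $t$ large'' fails. Even after restoring a constant $L$, ``eventually every particle is below the barrier'' does not give eventual equality, since the constraint runs over the whole path from time $0$; one only gets equality on an event of probability close to $1$, which is exactly how the paper argues. The paper's route is to prove $\Prob{A_L}\geq 1-\e^{-\beta_c L}$ for the barrier $\beta_c s+L$ by optional stopping of $W_{t\wedge\tau}(\beta_c)$ (see \eqref{eq:prob_under_barrier}), then show $\Expec{W_t(\beta)\mathds{1}_{A_L}}\to 0$ by many-to-one and Girsanov — with slope $\beta_c$ the tilted drift is $\beta-\beta_c\geq 0$, so the case $\beta=\beta_c$ is covered uniformly — and conclude via convergence in probability. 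Your separate treatment of $\beta=\beta_c$ through the derivative martingale is a non sequitur as stated: convergence of $Z_t(\beta_c)$ to a finite limit does not by itself force $W_t(\beta_c)\to 0$ without a further argument (and in this paper the behaviour of $Z_t$ is only quoted, not proved). There is also a circularity hazard: the speed bound you want to quote from Section~\ref{sct:speed_of_the_extremal_displacement_1} is itself deduced in the paper from the present theorem, so you would need an independent first-moment/Borel--Cantelli proof of it.

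In the subcritical regime, your $L^2$ computation for $\beta<1$ matches the paper (Proposition~\ref{prop:phase_transition_easy_case}), but the genuinely hard range $1\leq\beta<\beta_c$ — which you correctly identify as the main obstacle — is left as a sketch: the ``condition at time $t/k$ and iterate / Jensen'' idea is too vague to assess, and the spine change of measure is invoked but not carried out (setting up $\hat\P$, the spine decomposition of $W_t$ under $\hat\P$, and the $\limsup$ criterion is precisely the technical content). The paper closes this gap by two self-contained arguments: a truncated second moment where the barrier slope $\beta'$ is chosen in $(\beta,2\beta)$ so that $2\beta^2+2-4\beta\beta'+\beta'^2>0$, which tames the $\e^{(\beta^2-1)s}$ divergence and yields uniform integrability; and an $L^p$ argument, decomposing $W_{t}(\beta)-W_s(\beta)$ over $\cN(s)$ and applying Biggins' inequality (Lemma~\ref{lem:biggins_lemma}) with $p\in(1,2]$ chosen so that $(p-1)(p\beta^2/\beta_c^2-1)<0$. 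Your zero--one argument for positivity on survival is essentially the paper's Lemma~\ref{lem:zero-one_law_additive_martingales} (fixed point of the offspring generating function versus the paper's $p=\Expec{p^{n(t)}}$ with $n(t)\to\infty$ on survival), and that part is fine, but as it stands the proposal does not contain a complete proof of either the supercritical vanishing or the subcritical non-degeneracy for $\beta\geq 1$.
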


By appealing to \emph{spine decompositions}, Kyprianou established in \cite[Theorem~1]{Kyprianou2004} that Theorem~\ref{th:phase_transition_additive_martingales} still holds if we replace the assumption \eqref{eq:assumption_offspring_distribution} with the following one
\begin{equation*}
    \mu(0) = 0, \quad \sum_{k \geq 0} \mu(k) k < \infty, \quad \sum_{k \geq 0} \mu(k) k \log_+ k < \infty,
\end{equation*}
where $\log_+ x = \log x$ if $x \geq 1$ and $\log_+ x = 0$ if $x < 1$.
Furthermore, this condition is optimal in the sense that if $\sum_k \mu(k) k \log_+ k = \infty$, then $W_\infty(\beta) = 0$ almost surely, whatever $\beta$.

We will see in Theorem~\ref{th:growth_rates} that $W_\infty(\beta)$ describes the asymptotic size of the population along the slope $\beta$.
Therefore, this quantity naturally appears in the asymptotic of processes which mainly depend on what happens around $\beta t$ (see \eg Theorem~\ref{th:almost_sure_convergence_of_functional_1} and Theorem~\ref{th:renormalized_subcritical_overlap}).
If $|\beta| \geq \beta_c$, though, $W_\infty(\beta)$ is almost surely equal to $0$.
In this case, the only information that it contains is that there is no population around $\beta t$, with high probability.
Another quantity is then appropriate to study what happens around $\beta_c t$ and beyond, that is the almost sure limit of the critical derivative martingale (see \eg Proposition~\ref{prop:convergence_of_the_critical_additive_martingale}, Proposition~\ref{prop:convergence_of_the_supercritical_additive_martingales}, Theorem~\ref{th:convergence_of_the_extremal_process}).

A phase transition is also observed in the asymptotic behavior of the derivative martingale.
In the case of the branching random walk, Biggins showed in \cite[Theorem~3]{Biggins1991} that, if $|\beta| < \beta_c$ and under appropriate moment conditions, $(Z_t(\beta))_{t \geq 0}$ converges almost surely and in mean to a non-degenerate random variable $Z_\infty(\beta)$.
The critical case $\beta = \beta_c$ was studied by Lalley-Sellke in \cite[Theorem~1]{LalleySellke1987} and by Neveu in \cite[Proposition~2]{Neveu1988}, for the branching Brownian motion with binary branching.
They proved that, in this case, $Z_t := Z_t(\beta_c)$ converges almost surely to a positive random variable $Z_\infty := Z_\infty(\beta_c)$.
Yang-Ren showed in \cite[Theorem~1]{YangRen2011} that the latter result still holds on the event of survival if we replace the binary assumption with the optimal condition $\Expec{A \log_+^2 A} < \infty$, where $A$ has distribution~$\mu$.
Finally, for the supercritical regime $|\beta| > \beta_c$, Kyprianou established in \cite[Theorem~3]{Kyprianou2004} that the derivative martingale converges almost surely to $0$.

Let us now tackle the proof of Theorem~\ref{th:phase_transition_additive_martingales}.
First of all, we need the following \emph{zero-one law}.

\begin{lemma}\label{lem:zero-one_law_additive_martingales}
    Let us denote by $q$ the probability of extinction, \ie $q := \Prob{\exists t \geq 0, \cN(t) = \varnothing}$.
    For any $\beta \in \R$, the probability $\Prob{W_\infty(\beta) = 0}$ is either $q$ or $1$.
\end{lemma}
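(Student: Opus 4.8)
The plan is to use the branching property at the first branching time together with the almost sure nature of the convergence of $W_t(\beta)$. Write $A = A_\varnothing$ for the number of offspring of the initial particle, $\sigma = \sigma_\varnothing$ for its lifetime, and $Y_\varnothing$ for its Brownian displacement. On the event $A \geq 1$, at time $\sigma$ the process restarts as $A$ independent copies of the branching Brownian motion, each shifted by $X_{k}(\sigma) = Y_\varnothing(\sigma)$ for $k = 1, \ldots, A$. Denoting by $W_\infty^{(k)}(\beta)$ the almost sure limit of the additive martingale of the $k$-th subtree (these are i.i.d.\ copies of $W_\infty(\beta)$, independent of $\cF_\sigma$), a direct computation from \eqref{eq:definition_of_W_t} and the branching property gives the identity
\begin{equation*}
    W_\infty(\beta) = \e^{-c(\beta)\sigma} \e^{\beta Y_\varnothing(\sigma)} \sum_{k=1}^{A} W_\infty^{(k)}(\beta)
\end{equation*}
almost surely on the event of survival (and $W_\infty(\beta) = 0$ on the finite-time extinction event, which is contained in $\{A = 0\} \cup \cdots$). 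Since the prefactor $\e^{-c(\beta)\sigma + \beta Y_\varnothing(\sigma)}$ is almost surely strictly positive, on $\{A \geq 1\}$ we have $\{W_\infty(\beta) = 0\} = \bigcap_{k=1}^{A} \{W_\infty^{(k)}(\beta) = 0\}$ up to a null set.

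Now set $p = \Prob{W_\infty(\beta) = 0}$. Conditioning on $A$ and using the independence of the $W_\infty^{(k)}(\beta)$ from each other and from $A$, the above gives
\begin{equation*}
    p = \mu(0) + \sum_{k \geq 1} \mu(k) p^k = \sum_{k \geq 0} \mu(k) p^k = f(p),
\end{equation*}
where $f(s) = \Expec{s^A}$ is the offspring generating function. So $p$ is a fixed point of $f$ in $[0,1]$. Since $\mu$ has mean $2 > 1$ and is not concentrated at $1$, the function $f$ is strictly convex on $[0,1]$ (unless $\mu = \delta_1$, which is excluded by the mean-$2$ assumption together with finite second moment forcing genuine branching), hence $f$ has exactly two fixed points in $[0,1]$: the extinction probability $q \in [0,1)$ and the point $1$. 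Therefore $p \in \{q, 1\}$, which is the claim. One subtlety: one must check that finite-time extinction implies $W_\infty(\beta) = 0$ (clear, since eventually $\cN(t) = \varnothing$ so $W_t(\beta) = 0$), which gives $q \leq p$ directly and is consistent with the two possible values.

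The main obstacle is making the recursive identity for $W_\infty(\beta)$ rigorous: one has to justify interchanging the almost sure limit $t \to \infty$ with the finite sum over the $A$ subtrees, and identify the limits of the subtree martingales with independent copies of $W_\infty(\beta)$ that are also independent of $\cF_\sigma$. This is exactly the content of the generalized branching property at the stopping time $\sigma$ (cited after Proposition~\ref{prop:characterization_of_bbm}): conditionally on $\cF_\sigma$, the shifted subtrees $\Theta_1, \ldots, \Theta_A$ are i.i.d.\ with law $\P$, so for each fixed $k$ the process $t \mapsto \e^{-c(\beta)(t-\sigma)}\sum_{v \in \cN(t), v \geq k} \e^{\beta(X_v(t) - X_k(\sigma))}$ converges almost surely to $W_\infty^{(k)}(\beta)$, and the finite sum of almost sure limits is the almost sure limit of the finite sum. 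A second minor point is the convexity/fixed-point analysis of $f$, which is entirely standard Galton--Watson theory; I would simply invoke that $f$ is convex, increasing, with $f(1) = 1$ and $f'(1) = \Expec{A} = 2 > 1$, so that apart from $1$ the only other fixed point in $[0,1]$ is $q = \Prob{\text{extinction}} < 1$.
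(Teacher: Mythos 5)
Your proof is correct, but it takes a genuinely different route from the paper's. You decompose at the first branching time $\sigma$, obtaining $W_\infty(\beta) = \e^{\beta Y_\varnothing(\sigma) - c(\beta)\sigma}\sum_{k=1}^A W_\infty^{(k)}(\beta)$ and hence the fixed-point equation $p = f(p)$ for the offspring generating function $f$; the conclusion then follows from the classical Galton--Watson fact that a supercritical, strictly convex $f$ has exactly two fixed points in $[0,1]$, namely $q$ and $1$. The paper instead decomposes at an arbitrary deterministic time $t$, which yields $p = \Expec{p^{n(t)}}$ under the assumption $p<1$, and then lets $t \to \infty$: it shows $n(t) \to \infty$ almost surely on survival (via transience of the positive states of the continuous-time Markov chain $(n(t))_{t\ge 0}$) so that $\Expec{p^{n(t)}} \to q$ by dominated convergence. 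Your argument is the more elementary and self-contained one on the combinatorial side --- it replaces the transience discussion by standard generating-function convexity (and your observation that mean $2$ already rules out support in $\{0,1\}$, hence gives strict convexity, is the right justification) --- while the paper's choice of decomposing over $\cN(t)$ has the side benefit of establishing the identity \eqref{eq:definition_of_W_infty^(u)}, which is reused later (e.g.\ in Section~\ref{sct:fluctuations}). Both treatments handle the only delicate point, namely passing the almost sure limit through the decomposition via the branching property at a stopping time, in essentially the same way.
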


\begin{proof}
    Set $p = \Prob{W_\infty(\beta) = 0}$.
    Let us rewrite, for $s, t \geq 0$,
    \begin{equation}\label{eq:definition_of_W_s^(u)}
        W_{t + s}(\beta) = \sum_{u \in \cN(t)} \e^{\beta X_u(t) - c(\beta)t} \underbrace{\sum_{\substack{v \in \cN(t + s) \\ v \geq u}} \e^{\beta (X_v(t + s) - X_u(t)) - c(\beta)s}}_{=: W_s^{(u)}(\beta)}.
    \end{equation}
    By the Markov property, conditionally on $\cF_t$, the processes $(W_s^{(u)}(\beta))_{s \geq 0}$, $u \in \cN(t)$, are the additive martingales of independent copies of the initial branching Brownian motion.
    Letting $s$ go to infinity, we obtain
    \begin{equation}\label{eq:definition_of_W_infty^(u)}
        W_{\infty}(\beta) = \sum_{u \in \cN(t)} \e^{\beta X_u(t) - c(\beta)t} W_\infty^{(u)}(\beta) \quad \text{almost surely}, 
    \end{equation}
    where given $\cF_t$, $W_\infty^{(u)}(\beta)$, $u \in \cN(t)$, are independent copies of $W_\infty(\beta)$.
    
    Now, assume that $p < 1$.
    We have
    \begin{equation}\label{eq:zero_one_law_expectation}
        p = \Prob{\sum_{u \in \cN(t)} \e^{\beta X_u(t) - c(\beta)t} W_\infty^{(u)}(\beta) = 0} = \Prob{\forall u \in \cN(t), W_\infty^{(u)}(\beta) = 0} = \Expec{p^{n(t)}}.
    \end{equation}
    In the case of a binary branching, \eqref{eq:zero_one_law_expectation} is equal to ${p \e^{-t}}/(1 - p(1 - \e^{-t}))$, by \eqref{eq:generating_function_nt}, which implies that $p = 0 = q$.
    More generally, let us show that, on the event of survival, $n(t)$ tends almost surely to infinity as $t$ goes to infinity.
    If the offspring distribution satisfies $\mu(0) = 0$, then $(n(t))_{t \geq 0}$ is almost surely non-decreasing and the probability that it is constant from a certain time is clearly zero.
    Now assume that $\mu(0) > 0$.
    Note that $(n(t))_{t \geq 0}$ is a continuous-time Markov chain on $\N$ with no infinitely many jumps in finite time.
    Besides, $0$ is an absorbing state and it is accessible from any other state.
    The positive integers are then all transient states, \ie for any $i \in \N^*$, the set $\{t \geq 0 : n(t) = i\}$ is almost surely bounded\footnote{See \eg \cite[Section~3.4]{Norris1997} for a discussion about recurrence and transience for continuous-time Markov chains.}.
    We deduce that $n(t)$ tends almost surely to infinity on the event where $0$ is never reached, that is the event of survival.
    Thus, the process $(p^{n(t)})_{t \geq 0}$ converges almost surely to $0$ on the event of survival and to $1$ on the event of extinction.
    By dominated convergence, \eqref{eq:zero_one_law_expectation} converges to $q$ as $t$ goes to infinity.
\end{proof}

Before proving Theorem~\ref{th:phase_transition_additive_martingales}, let us mention an easier case.

\begin{proposition}\label{prop:phase_transition_easy_case}
    If $0 \leq \beta < 1$, then the martingale $(W_t(\beta))_{t \geq 0}$ is bounded in $L^2$.
\end{proposition}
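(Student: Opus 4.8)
The plan is to compute $\Expec{W_t(\beta)^2}$ explicitly using the many-to-two formula and show it stays bounded uniformly in $t$. First I would write
\begin{equation*}
    W_t(\beta)^2 = \e^{-2c(\beta)t} \left( \sum_{u \in \cN(t)} \e^{2\beta X_u(t)} + \sum_{\substack{u, v \in \cN(t) \\ u \neq v}} \e^{\beta X_u(t) + \beta X_v(t)} \right).
\end{equation*}
The diagonal term is handled by the many-to-one lemma \eqref{eq:many-to-one}: its expectation is $\e^{-2c(\beta)t} \e^t \Expec{\e^{2\beta B_t}} = \e^{-2c(\beta)t} \e^t \e^{2\beta^2 t} = \e^{(1+\beta^2)t - (2+\beta^2)t} = \e^{-t}$, which tends to $0$, hence is certainly bounded.

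The off-diagonal term is the heart of the argument. Applying the many-to-two lemma (Lemma~\ref{lem:many-to-two}) with $F((x_r),(y_r)) = \e^{\beta x_t + \beta y_t}$ gives
\begin{equation*}
    \Expec{\sum_{\substack{u, v \in \cN(t) \\ u \neq v}} \e^{\beta X_u(t) + \beta X_v(t)}} = K \e^{2t} \int_0^t \Expec{\e^{\beta B^{1,s}_t + \beta B^{2,s}_t}} \e^{-s} \d{s}.
\end{equation*}
Since $B^{1,s}$ and $B^{2,s}$ agree on $[0,s]$ and branch independently afterward, one can write $B^{1,s}_t = B_s + \hat{B}_{t-s}$ and $B^{2,s}_t = B_s + \tilde{B}_{t-s}$ with $B_s$, $\hat{B}_{t-s}$, $\tilde{B}_{t-s}$ independent centered Gaussians of variances $s$, $t-s$, $t-s$; so the sum $\beta B^{1,s}_t + \beta B^{2,s}_t = 2\beta B_s + \beta \hat{B}_{t-s} + \beta \tilde{B}_{t-s}$ is Gaussian with variance $4\beta^2 s + 2\beta^2(t-s)$, giving $\Expec{\e^{\cdots}} = \e^{2\beta^2 s + \beta^2(t-s)} = \e^{\beta^2 t + \beta^2 s}$. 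Plugging back, the off-diagonal expectation equals
\begin{equation*}
    K \e^{2t} \e^{\beta^2 t} \int_0^t \e^{(\beta^2 - 1)s} \d{s}.
\end{equation*}
Multiplying by the prefactor $\e^{-2c(\beta)t} = \e^{-(2+\beta^2)t}$ leaves $K \e^{-t} \int_0^t \e^{(\beta^2-1)s}\d{s}$. When $\beta^2 < 1$ the integral is bounded by $\frac{1}{1-\beta^2}$, so the whole off-diagonal contribution to $\Expec{W_t(\beta)^2}$ is at most $\frac{K}{1-\beta^2}\e^{-t}$, which tends to $0$; thus $\Expec{W_t(\beta)^2}$ stays bounded (in fact converges to $1$ as $t\to\infty$, recovering $\Expec{W_t(\beta)}=1$ in the limit plus a vanishing variance).

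The main obstacle is purely bookkeeping: correctly identifying the joint law of $(B^{1,s}_t, B^{2,s}_t)$ from the definition in Lemma~\ref{lem:many-to-two} and tracking all the exponential factors so that the cancellation $2c(\beta) = 2 + \beta^2$ against $2 + \beta^2$ from the $\e^{2t}$ and $\e^{\beta^2 t}$ terms is visible; one must be careful that $c(\beta) = 1 + \beta^2/2$ so $2c(\beta)t - 2t - \beta^2 t = 0$ exactly, which is what produces the $\e^{-t}$ decay rather than growth. The condition $\beta < 1$ enters precisely at the point of integrating $\e^{(\beta^2-1)s}$: for $\beta \ge 1$ this integral grows (linearly or exponentially) and the argument breaks, consistent with the fact that the true critical parameter is $\beta_c = \sqrt 2$ but the clean $L^2$ bound only survives up to $\beta = 1$.
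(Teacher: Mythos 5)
Your approach is exactly the paper's: split $W_t(\beta)^2$ into a diagonal and an off-diagonal sum, apply the many-to-one and many-to-two formulas, and evaluate the Gaussian exponential moments; the identification of the joint law of $(B^{1,s}_t, B^{2,s}_t)$ is also correct. However, your exponent bookkeeping contains two slips. For the diagonal term, $\e^{t}\,\e^{2\beta^2 t} = \e^{(1+2\beta^2)t}$, not $\e^{(1+\beta^2)t}$, so after multiplying by $\e^{-(2+\beta^2)t}$ the diagonal contribution is $\e^{-(1-\beta^2)t}$, not $\e^{-t}$ (harmless, both are bounded). More seriously, for the off-diagonal term the prefactors cancel \emph{exactly}, as you yourself observe when you note $2c(\beta)t - 2t - \beta^2 t = 0$; there is therefore no leftover factor $\e^{-t}$, and what remains is $K\int_0^t \e^{-(1-\beta^2)s}\d{s}$, which converges to $K/(1-\beta^2) > 0$ rather than to $0$. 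Hence $\Expec{W_t(\beta)^2} \to K/(1-\beta^2)$, and the limiting variance of $W_\infty(\beta)$ is $K/(1-\beta^2) - 1$ — exactly the quantity the paper uses later in \eqref{eq:fluctuations_of_additive_martingales_characteristic_function_expansion}. Your claim that $\Expec{W_t(\beta)^2}\to 1$ would force $W_\infty(\beta)$ to be degenerate, contradicting Theorem~\ref{th:phase_transition_additive_martingales}. The uniform bound $\Expec{W_t(\beta)^2}\leq 1 + K/(1-\beta^2)$ survives the correction, so the proposition itself is still established by your argument.
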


\begin{proof}
    We compute
    \begin{align*}
        \Expec{W_t(\beta)^2} &= \e^{-(2 + \beta^2)t} \Expec{\sum_{u, v \in \cN(t)} \e^{\beta(X_u(t) + X_v(t))}} \\
        &= \e^{-(2 + \beta^2)t} \left(\Expec{\sum_{u \in \cN(t)} \e^{2 \beta X_u(t)}} + \Expec{\sum_{\substack{u, v \in \cN(t) \\ u \neq v}} \e^{\beta(X_u(t) + X_v(t))}}\right) \\
        &= \e^{-(2 + \beta^2)t} \left(\e^t \Expec{\e^{2 \beta B_t}} + K \e^{2t} \int_0^t \Expec{\e^{\beta(B^{1, s}_t + B^{2, s}_t)}} \e^{-s} \d{s}\right),
    \end{align*}
    where $K := \sum_{k \geq 0} \mu(k) k(k-1)$, according to the many-to-one formula \eqref{eq:many-to-one} and the many-to-two formula \eqref{eq:many-to-two}.
    Using that, for $0 \leq s \leq t$, the increments $B^{1, s}_t - B^{1, s}_s$, $B^{2, s}_t - B^{1, s}_s$ and $B^{1, s}_s$ are independent, we obtain
    \begin{equation}\label{eq:second_moment_of_additive_martingales}
        \Expec{W_t(\beta)^2} = \e^{-(1 - \beta^2)t} + K \int_0^t \e^{-(1 - \beta^2)s} \d{s} = O(1),
    \end{equation}
    that is Proposition~\ref{prop:phase_transition_easy_case}.
\end{proof}

The two following lemmas will be useful for the proof of Theorem~\ref{th:phase_transition_additive_martingales} and the rest of the thesis.

\begin{lemma}
    Recall that $\beta_c = \sqrt{2}$. For any $L > 0$, we have
    \begin{equation}\label{eq:prob_under_barrier}
        \P\bigl(\underbrace{\forall t \geq 0, \forall u \in \cN(t), X_u(t) < \beta_c t + L}_{=: A_L}\bigr) \geq 1 - \e^{-\beta_c L}.
    \end{equation}
\end{lemma}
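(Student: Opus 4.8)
The plan is to exploit the \emph{critical} additive martingale $(W_t(\beta_c))_{t\ge 0}$, which by the previous corollary is a non-negative $(\cF_t)$-martingale with $W_0(\beta_c)=1$ (at time $0$ there is a single particle, located at the origin), and for which $c(\beta_c)=1+\beta_c^2/2=2$, so that
\[
W_t(\beta_c)=\e^{-2t}\sum_{u\in\cN(t)}\e^{\beta_c X_u(t)} .
\]
The key observation is that a particle reaching the line $t\mapsto\beta_c t+L$ forces $W_t(\beta_c)$ to be at least $\e^{\beta_c L}$, and this is an event of small probability for a non-negative martingale started at $1$.

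Concretely, I would introduce the first-passage time
\[
\tau := \inf\{t\ge 0 : \exists\, u\in\cN(t),\ X_u(t)\ge \beta_c t+L\},
\]
with the convention $\inf\varnothing=+\infty$; this is an $(\cF_t)$-stopping time, and $A_L^{\,c}=\{\tau<\infty\}$. Since at time $0$ the unique particle sits at $0<L$, since each trajectory $X_u$ is continuous on the lifetime of $u$, and since newborn particles start at their parent's position (so the maximal displacement $s\mapsto\max_{u\in\cN(s)}X_u(s)$ has no upward jumps), on the event $\{\tau<\infty\}$ there is a particle $u\in\cN(\tau)$ with $X_u(\tau)=\beta_c\tau+L$ exactly. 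Keeping only this summand in the definition of $W_\tau(\beta_c)$,
\[
W_\tau(\beta_c)\ \ge\ \e^{-2\tau}\e^{\beta_c(\beta_c\tau+L)}\ =\ \e^{-2\tau}\e^{2\tau+\beta_c L}\ =\ \e^{\beta_c L}\qquad\text{on }\{\tau<\infty\}.
\]

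Next I would apply the optional stopping theorem at the bounded stopping time $\tau\wedge t$, which gives $\Expec{W_{\tau\wedge t}(\beta_c)}=W_0(\beta_c)=1$. Restricting to the event $\{\tau\le t\}$, on which $W_{\tau\wedge t}(\beta_c)=W_\tau(\beta_c)\ge \e^{\beta_c L}$, and using that $W$ is non-negative,
\[
1\ =\ \Expec{W_{\tau\wedge t}(\beta_c)}\ \ge\ \Expec{W_{\tau\wedge t}(\beta_c)\mathds{1}_{\tau\le t}}\ \ge\ \e^{\beta_c L}\,\Prob{\tau\le t},
\]
so $\Prob{\tau\le t}\le \e^{-\beta_c L}$ for every $t\ge0$. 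Letting $t\to\infty$ by monotone convergence yields $\Prob{A_L^{\,c}}=\Prob{\tau<\infty}\le \e^{-\beta_c L}$, which is exactly \eqref{eq:prob_under_barrier}.

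The main obstacle is the (minor) sample-path point ensuring that at time $\tau$ some particle lies exactly on the line $t\mapsto\beta_c t+L$, i.e.\ that the maximal-displacement process meets this increasing barrier continuously from below. This follows from continuity of the finitely many Brownian trajectories alive at any fixed time together with the fact that branching events create particles at the parent's location, so there are no upward jumps; it is nonetheless worth stating explicitly. Everything else is a routine optional-stopping estimate.
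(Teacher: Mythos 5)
Your proof is correct and follows essentially the same route as the paper: optional stopping of the non-negative critical additive martingale $(W_t(\beta_c))_{t\ge0}$ at the first passage time of the barrier $t\mapsto\beta_c t+L$, keeping only the summand of the particle on the barrier to get the lower bound $\e^{\beta_c L}$ on $\{\tau\le t\}$, then letting $t\to\infty$. The only difference is that you spell out the sample-path continuity argument ensuring the barrier is hit exactly, which the paper builds silently into its definition of $\tau$ with an equality.
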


\begin{proof}
    Let us consider the hitting time of the barrier $t \mapsto \beta_c t + L$
    \begin{equation*}
        \tau := \inf\left\{t \geq 0 : \exists u \in \cN(t), X_u(t) = \beta_c t + L\right\}
    \end{equation*}
    and denote by $s \wedge t$ the minimum of two numbers $s$ and $t$.
    The optional stopping theorem yields
    \begin{align*}
        1 = \Expec{W_{t \wedge \tau}(\beta_c)} &\geq \Expec{W_\tau(\beta_c) \mathds{1}_{\tau \leq t}} \\
        &\geq \Expec{\e^{-c(\beta_c) \tau} \e^{\beta_c(\beta_c \tau + L)} \mathds{1}_{\tau \leq t}} = \e^{\beta_c L} \Prob{\tau \leq t}.
    \end{align*}
    Letting $t$ go to infinity, it yields $\Prob{A_L^c} \leq \e^{-\beta_c L}$ and hence \eqref{eq:prob_under_barrier}.
\end{proof}

\begin{lemma}[\cite{Biggins1992}]\label{lem:biggins_lemma}
    If $X_1, \ldots, X_n$ are independent centered random variables on a probability space $(\Omega_0, \cF_0, \P_0)$, then for all $p \in [1, 2]$,
    \begin{equation}\label{eq:biggins_lemma}
        \E\left|\sum_{i=1}^n X_i\right|^p \leq 2^p \sum_{i = 1}^n \E\left|X_i\right|^p.
    \end{equation}
    More generally, consider $\cF_1 \subset \cF_0$ a $\sigma$-algebra and $N$ a $\cF_1$-measurable random variable taking its values in $\N$.
    If, conditionally on $\cF_1$, the random variables $X_1, \ldots, X_N$ are independent and centered, then for all $p \in [1, 2]$,
    \begin{equation}\label{eq:conditional_biggins_lemma}
        \condExpec{\left|\sum_{i =1}^N X_i\right|^p}{\cG} \leq 2^p \sum_{i = 1}^N \condExpec{|X_i|^p}{\cG}.
    \end{equation}
\end{lemma}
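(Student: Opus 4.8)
We prove \eqref{eq:biggins_lemma} first and then obtain \eqref{eq:conditional_biggins_lemma} (in whose statement $\cG$ should read $\cF_1$) by running the same argument conditionally on $\cF_1$. The whole plan rests on the classical symmetrization trick, which works precisely because $x \mapsto |x|^p$ is ``sub-quadratic'' for $p \in [1,2]$.

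First I would symmetrize. Enlarging $(\Omega_0, \cF_0, \P_0)$ if necessary, introduce independent copies $X_1', \dots, X_n'$ of $X_1, \dots, X_n$, with all $2n$ variables independent. Since $\condExpec{X_i'}{X_1, \dots, X_n} = 0$, we have $\sum_i X_i = \condExpec{\sum_i (X_i - X_i')}{X_1, \dots, X_n}$, so conditional Jensen applied to the convex map $x \mapsto |x|^p$ gives $\E|\sum_i X_i|^p \le \E|\sum_i (X_i - X_i')|^p$. Next I would randomize with signs: each $X_i - X_i'$ is symmetric and the $n$ of them are independent, so $\sum_i (X_i - X_i')$ has the same law as $\sum_i \varepsilon_i (X_i - X_i')$, where $\varepsilon_1, \dots, \varepsilon_n$ are independent Rademacher signs independent of the $X_i, X_i'$. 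Conditioning on $(X_i, X_i')_i$ and applying Jensen with the exponent $2/p \ge 1$, together with $\Expec{\varepsilon_i \varepsilon_j} = \mathds{1}_{i = j}$, bounds the conditional $p$-th moment of $\sum_i \varepsilon_i(X_i - X_i')$ by $\bigl(\sum_i (X_i - X_i')^2\bigr)^{p/2}$; and since $p \le 2$ the $\ell^2$ norm dominates the $\ell^p$ norm, so this is at most $\sum_i |X_i - X_i'|^p$. Finally, taking expectations and using $|x - y|^p \le 2^{p-1}(|x|^p + |y|^p)$ together with $X_i' \overset{d}{=} X_i$ yields $\E|\sum_i X_i|^p \le 2^p \sum_i \E|X_i|^p$, which is \eqref{eq:biggins_lemma}.

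For \eqref{eq:conditional_biggins_lemma}, the same three steps go through with $\E$ replaced by $\condExpec{\cdot}{\cF_1}$. I would construct the copies $(X_i')_i$ so that, conditionally on $\cF_1$, their law matches that of $(X_i)_i$ and they are conditionally independent of $(X_i)_i$, and realize the Rademacher signs $\varepsilon_i$ on a product extension independent of $\cF_0$. Since $N$ is $\cF_1$-measurable, it is frozen after conditioning on $\cF_1$, so the symmetrization (conditional Jensen), the sign-randomization (equality of conditional laws of $\sum_{i \le N}(X_i - X_i')$ and $\sum_{i \le N} \varepsilon_i (X_i - X_i')$ given $\cF_1$) and the $\ell^2 \hookrightarrow \ell^p$ estimate all apply verbatim, giving $\condExpec{|\sum_{i \le N} X_i|^p}{\cF_1} \le 2^p \sum_{i \le N} \condExpec{|X_i|^p}{\cF_1}$.

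The elementary computations involved — the two Jensen inequalities, the norm comparison, and $|x - y|^p \le 2^{p-1}(|x|^p + |y|^p)$ — present no difficulty. The only point deserving care is the measure-theoretic bookkeeping in the conditional statement: one must build the auxiliary randomness (the copies and the signs) on an extension of $(\Omega_0, \cF_0, \P_0)$ that is independent of $\cF_0$ while keeping the conditional law of $(X_i')_i$ given $\cF_1$ correct, after which the conditional inequality reduces, via regular conditional probabilities, to the unconditional one.
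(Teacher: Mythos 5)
Your argument is correct, but it takes a genuinely different route from the paper. The paper does not reprove the unconditional inequality at all: it cites Biggins' Lemma~1 from \cite{Biggins1992}, whose proof rests on Jensen's, Clarkson's and Minkowski's inequalities, and then obtains \eqref{eq:conditional_biggins_lemma} simply by remarking that each of these three inequalities admits a conditional-expectation analogue (incidentally, you are right that the $\cG$ in the statement should read $\cF_1$). You instead give a self-contained symmetrization proof: replace $\sum_i X_i$ by $\sum_i (X_i - X_i')$ via conditional Jensen, randomize with Rademacher signs, bound the conditional $p$-th moment of the sign sum by $\bigl(\sum_i (X_i - X_i')^2\bigr)^{p/2}$ using Jensen with exponent $2/p$, pass from the $\ell^2$ to the $\ell^p$ norm since $p \leq 2$, and finish with $|x - y|^p \leq 2^{p-1}(|x|^p + |y|^p)$; all steps are valid and the constants assemble exactly to $2^p$. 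Running the same chain conditionally on $\cF_1$, with $N$ frozen and the auxiliary copies and signs built on an independent extension with the correct conditional law, is also legitimate, and you flag the only delicate point (the construction of the conditionally independent copies) honestly. What each approach buys: yours is elementary and self-contained, avoids Clarkson's inequality entirely, and makes the conditional extension transparent once the extension of the probability space is set up; the paper's is two lines long but leans on the literature and on the reader accepting that the cited inequalities conditionalize, which is exactly the bookkeeping your proof makes explicit.
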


\begin{proof}
    We refer to Biggins \cite[Lemma~1]{Biggins1992} for a proof of \eqref{eq:biggins_lemma}.
    This is a consequence of Jensen's, Clarkson's and Minkowski's inequalities.
    Since each of these inequalities can all be generalized to conditional expectations, \eqref{eq:conditional_biggins_lemma} also holds.
\end{proof}

\begin{proof}[Proof of Theorem~\ref{th:phase_transition_additive_martingales}]
    \textbf{Supercritical case.} Assume $\beta \geq \beta_c$.
    We start by fixing $L > 0$ and showing that
    \begin{equation}\label{eq:limit_add_mart_barrier}
        \lim_{t \to \infty} \Expec{W_t(\beta) \mathds{1}_{A_L}} = 0,
    \end{equation}
    where $A_L$ is defined in \eqref{eq:prob_under_barrier}.
    Using the many-to-one formula \eqref{eq:many-to-one}, we obtain
    \begin{align*}
        \Expec{W_t(\beta) \mathds{1}_{A_L}} &\leq \Expec{\sum_{u \in \cN(t)} \e^{\beta X_u(t) - c(\beta) t} \mathds{1}_{\forall s \in [0, t], X_u(s) \leq \beta_c s + L}} \\
        &= \Expec{\e^{\beta B_t - \beta^2 t/2} \mathds{1}_{\forall s \in [0, t], B_s \leq \beta_c s + L}}.
    \end{align*}
    Classically, we recognize a Girsanov transform: under the probability measure $\hat{\P}$ defined by
    \begin{equation*}
        \frac{\d{\hat{\P}|_{\cF_t}}}{\d{\P|_{\cF_t}}} = \e^{\beta B_t - \beta^2 t/2},
    \end{equation*}
    the process $(B_t)_{t \geq 0}$ is a Brownian motion with drift $\beta$ and then $(B_t - \beta_c t)_{t \geq 0}$ is a Brownian motion with drift $\beta - \beta_c \geq 0$.
    Hence,
    \begin{equation*}
        \Expec{W_t(\beta) \mathds{1}_{A_L}} \leq \hat{\P}\left(\forall s \in [0, t], B_s - \beta_c s \leq L\right) \xrightarrow[t \to \infty]{} 0,
    \end{equation*}
    that is \eqref{eq:limit_add_mart_barrier}.

    Let us deduce that the additive martingale converges in probability to $0$.
    Fix $\varepsilon > 0$ and $\delta > 0$.
    We have
    \begin{equation*}
        \Prob{W_t(\beta) \geq \varepsilon} = \Prob{W_t(\beta) \mathds{1}_{A_L} \geq \varepsilon} + \Prob{W_t(\beta) \mathds{1}_{A_L^c} \geq \varepsilon}
    \end{equation*}
    By \eqref{eq:prob_under_barrier}, we can fix $L$ large enough so that the second term is bounded by $\delta$.
    By \eqref{eq:limit_add_mart_barrier}, we can fix $t_0 = t_0(L) > 0$ large enough so that, for any $t \geq t_0$, the first term is bounded by $\delta$.
    Hence the convergence in probability to $0$.
    Thus, the almost sure limit $W_\infty(\beta)$ is zero.
    
    For the subcritical case, we present two methods.
    The first one is to show that the additive martingale is uniformly integrable, thanks to a \emph{truncation} of the process, which is bounded in $L^2$.
    The second one is to show that the additive martingale is bounded in $L^p$ for some $p \in (1, 2]$, following arguments from Biggins \cite[Theorem~1]{Biggins1992} for the branching random walk.

    \textbf{Subcritical case, first method.} Assume that $0 \leq \beta < \beta_c$.
    Consider $L > 0$ and $\beta' > \beta$ whose value will be fixed later.
    We start by showing that the following quantity is close to $W_t(\beta)$ in $L^1$
    \begin{equation*}
        \tilde{W}_t(\beta) := \sum_{u \in \cN(t)} \e^{\beta X_u(t) - c(\beta)t} \mathds{1}_{\forall s \in [0, t], X_u(s) < \beta' s + L}.
    \end{equation*}
    Using the many-to-one formula \eqref{eq:many-to-one} and the same tilting as for the supercritical case, we obtain
    \begin{align}
        \E\left|W_t(\beta) - \tilde{W}_t(\beta)\right| &= \Prob{\exists s \in [0, t], B_s + \beta s \geq \beta' s + L} \notag \\
        &\leq \Prob{\sup_{s \geq 0} \left(B_s - (\beta' - \beta)s\right) \geq L} = \e^{-2(\beta' - \beta)L}. \label{eq:phase_transition_close_in_L1}
    \end{align}
    See \eg \cite[Equation~2.1.1.4]{BorodinSalminen2002} for the last equality.

    We now show that the second moment of $\tilde{W}_t(\beta)$ is bounded, uniformly in $t \geq 0$.
    Using the many-to-one formula \eqref{eq:many-to-one} and the many-to-two formula \eqref{eq:many-to-two}, we obtain
    \begin{multline}\label{eq:phase_transition_surcritical_bound}
        \Expec{\tilde{W}_t(\beta)^2} = \e^{-(1 + \beta^2)t} \Expec{\e^{2 \beta B_t} \mathds{1}_{\forall s \in [0, t], B_s < \beta' s + L}} \\
        + K \e^{-\beta^2t} \int_0^t \Expec{\e^{\beta B^{1,s}_t + \beta B^{2,s}_t} \mathds{1}_{\forall r \in [0, t], B^{1,s}_r < \beta' r + L} \mathds{1}_{\forall r \in [0, t], B^{2,s}_r < \beta' r + L}} \e^{-s} \d{s}.
    \end{multline}
    Let us control the second term in \eqref{eq:phase_transition_surcritical_bound}, the first one is easier.
    The expectation inside the integral is bounded by
    \begin{align*}
        \Expec{\e^{\beta B^{1,s}_t + \beta B^{2,s}_t} \mathds{1}_{B^{1,s}_s < \beta' s + L}} &= \Expec{\e^{\beta B_{t - s}}}^2 \Expec{\e^{2 \beta B_s} \mathds{1}_{B_s < \beta' s + L}} \\
        &= \e^{\beta^2 (t+s)} \Prob{B_s + 2 \beta s < \beta' s + L} \\
        &\leq \e^{\beta^2 (t+s)} \times \begin{cases}
            \e^{-((2\beta - \beta')s - L)^2/2s}/2 &\text{if } (2\beta - \beta')s > L, \\
            1 &\text{otherwise},
        \end{cases}
    \end{align*}
    where we have used the classic inequality $\Prob{N > x} \leq \e^{-x^2/2}/2$ for a standard Gaussian variable $N$ and any $x \geq 0$.
    Thus, if $\beta' < 2\beta$, there exist positive constants $t_0 = t_0(\beta, \beta', L)$, $C_0 = C_0(\beta, \beta', L, K)$, $C_1 = C_1(\beta, \beta', L, K)$ such that for any $t \geq t_0$, the second term in \eqref{eq:phase_transition_surcritical_bound} is bounded by
    \begin{equation*}
        C_0 + C_1 \e^{-\beta^2 t} \int_{t_0}^t \e^{\beta^2(t + s)} \e^{-(2\beta - \beta')^2 s/2} \e^{-s} \d{s} = C_0 + C_1 \int_{t_0}^t \e^{-(2\beta^2 + 2 - 4\beta\beta' + \beta'^2)s/2} \d{s}.
    \end{equation*}
    We now choose $\beta' \in (\beta, 2 \beta)$ such that the coefficient in the above exponent is negative, \ie $2\beta^2 + 2 - 4\beta\beta' + \beta'^2 > 0$.
    One can easily check that this is always possible, since $\beta < \sqrt{2}$.
    This shows that the second term in \eqref{eq:phase_transition_surcritical_bound} is bounded, uniformly in $t$.
    The first term can be controlled in the same way.
    Hence, there exists a constant $C = C(\beta, \beta', L, K) > 0$ such that $\Expec{\tilde{W}_t(\beta)^2} \leq C$, uniformly in $t$.

    We are now able to show that the additive martingale is uniformly integrable.
    Let $\varepsilon > 0$.
    Thanks to \eqref{eq:phase_transition_close_in_L1}, we can choose $L > 0$ such that $\Expec{W_t(\beta) - \tilde{W}_t(\beta)} \leq \varepsilon$, uniformly in $t$.
    We have, for any $M > 0$,
    \begin{align*}
        \Expec{W_t(\beta) \mathds{1}_{W_t(\beta) \geq M}} &\leq \Expec{\tilde{W}_t(\beta) \mathds{1}_{W_t(\beta) \geq M}} + \Expec{W_t(\beta) - \tilde{W}_t(\beta)} \\
        &\leq \Expec{\tilde{W}_t(\beta)^2}^{1/2} \Prob{W_t(\beta) \geq M} + \varepsilon \\
        &\leq \frac{C^{1/2}}{M} + \varepsilon.
    \end{align*}
    This implies the uniform integrability.
    Then, the limit $W_\infty(\beta)$ is also an $L^1$ limit and we have $W_t(\beta) = \condExpec{W_\infty(\beta)}{\cF_t}$, almost surely.
    By Lemma~\ref{lem:zero-one_law_additive_martingales}, this implies that $W_\infty(\beta)$ is almost surely positive on the event of survival.
    
    \textbf{Subcritical case, second method.} Using the many-to-two formula \eqref{eq:many-to-two}, we see that $W_s(\beta)$ is bounded in $L^2$, uniformly in $s \in [0, 1]$.
    Consequently, for all $p \in (1, 2]$,
    \begin{equation}\label{eq:phase_transition_Lp_bound}
        \sup_{s \in [0, 1]} \E\left|W_s(\beta) - 1\right|^p < \infty.
    \end{equation}
    In particular, we have $\E\left|W_1(\beta) - 1\right|^p < \infty$.
    Let us now control the quantity $\E\left|W_{k+1}(\beta) - W_k(\beta)\right|^p$, for an integer $k \geq 1$.
    To this end, note that for $0 \leq s \leq t$, we can rewrite
    \begin{align*}
        W_t(\beta) - W_s(\beta) &= \sum_{u \in \cN(s)} \e^{\beta X_u(s) - c(\beta)s} \left(\sum_{\substack{v \in \cN(t) \\ v \geq u}} \e^{\beta(X_v(t) - X_u(ks)) - c(\beta)(t - s)} - 1\right) \\
        &=: \sum_{u \in \cN(s)} \e^{\beta X_u(s) - c(\beta)s} Y^{(u)}_{t,s}.
    \end{align*}
    Conditionally on $\cF_s$, the random variables $Y^{(u)}_{t,s}$, $u \in \cN(s)$, are centered and independent, by construction of the branching Brownian motion.
    By Lemma~\ref{lem:biggins_lemma}, we obtain
    \begin{align}
        \condExpec{\left|W_t(\beta) - W_s(\beta)\right|^p}{\cF_s} &\leq 2^p \sum_{u \in \cN(s)} \left(\e^{\beta X_u(s) - c(\beta)s}\right)^p \condExpec{\left|Y^{(u)}_{t,s} \right|^p}{\cF_s} \notag \\
        &= 2^p W_s(p\beta) \e^{(p-1)(p\beta^2/\beta_c^2-1)s} \E\left|W_{t-s}(\beta) - 1\right|^p. \label{eq:phase_transition_cond_expec_F_s}
    \end{align}
    Since $\beta \in [0, \beta_c)$, there exists $p \in (1, 2]$ small enough so that the coefficient $(p-1)(p\beta^2/\beta_c^2-1)$ in the above exponent is negative.
    Thus, setting $s = k$, $t = k+1$ and taking the expectation of \eqref{eq:phase_transition_cond_expec_F_s}, the bound \eqref{eq:phase_transition_Lp_bound} implies that the series with general term $\E\left|W_{k+1}(\beta) - W_k(\beta)\right|^p$ is convergent.
    In particular, the sequence $(W_k(\beta))_{k \geq 0}$ is bounded in $L^p$.
    Thanks to \eqref{eq:phase_transition_Lp_bound} and \eqref{eq:phase_transition_cond_expec_F_s} applied with $s$ the floor function of $t$, we extend this result to the continuum: the whole process $(W_t(\beta))_{t \geq 0}$ is bounded in $L^p$.
    Then, this martingale is uniformly integrable and we conclude as in the first method.
\end{proof}

In the second method for the subcritical case, we have actually shown that for any $\beta \in [0, \beta_c)$, there exists $p > 1$ such that $(W_t(\beta))_{t \geq 0}$ is bounded in $L^p$.
More precisely, in view of the exponent in \eqref{eq:phase_transition_cond_expec_F_s}, we have the following result.

\begin{proposition}\label{prop:additive_martingales_bounded_in_Lp}
    Let $0 \leq \beta < \beta_c$.
    For any $p \in [1, 2]$ such that $p < \beta_c^2/\beta^2$, the martingale $(W_t(\beta))_{t \geq 0}$ is bounded in $L^p$.
    In particular, its almost sure limit $W_\infty(\beta)$ is in $L^p$ and is also an $L^p$ limit.
\end{proposition}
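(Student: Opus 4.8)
The plan is to recycle, while keeping careful track of the sharp range of exponents, the estimates already established in the second argument given for the subcritical case in the proof of Theorem~\ref{th:phase_transition_additive_martingales}. The case $p=1$ is immediate, since $(W_t(\beta))_{t\ge0}$ is a non-negative martingale started at $1$, so I would assume $p\in(1,2]$ with $p<\beta_c^2/\beta^2$ and set $\rho:=(p-1)\bigl(1-p\beta^2/\beta_c^2\bigr)$, which is strictly positive exactly under this hypothesis. The two ingredients I would quote are the uniform bound \eqref{eq:phase_transition_Lp_bound}, which gives $C:=\sup_{u\in[0,1]}\E|W_u(\beta)-1|^p<\infty$, and the conditional inequality \eqref{eq:phase_transition_cond_expec_F_s}, valid for all $0\le s\le t$, which in the present notation reads
\begin{equation*}
    \condExpec{|W_t(\beta)-W_s(\beta)|^p}{\cF_s}\le 2^p\,W_s(p\beta)\,\e^{-\rho s}\,\E|W_{t-s}(\beta)-1|^p .
\end{equation*}
The crucial observation is that $(W_s(p\beta))_{s\ge0}$ is a martingale started at $1$, so $\E[W_s(p\beta)]=1$ for every value of $p\beta$, whether or not $(W_s(p\beta))_{s\ge0}$ is uniformly integrable; this is what lets the argument reach the full range $p<\beta_c^2/\beta^2$ without requiring $(W_s(p\beta))_{s\ge0}$ to converge to a non-trivial limit.

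First I would get $L^p$-boundedness along integer times. With $s=k\in\N$ and $t=k+1$, using $\E[W_k(p\beta)]=1$ and $\E|W_1(\beta)-1|^p\le C$, the displayed inequality gives $\E|W_{k+1}(\beta)-W_k(\beta)|^p\le 2^pC\,\e^{-\rho k}$, so the series $\sum_k\|W_{k+1}(\beta)-W_k(\beta)\|_p$ converges and the triangle inequality in $L^p$ yields $\sup_k\|W_k(\beta)\|_p<\infty$. To reach an arbitrary $t\ge1$ I would apply the displayed inequality with $s=\lfloor t\rfloor$: since $t-\lfloor t\rfloor\in[0,1)$ the last factor is $\le C$, the middle factor is $\le1$, and $\E[W_{\lfloor t\rfloor}(p\beta)]=1$, so $\|W_t(\beta)-W_{\lfloor t\rfloor}(\beta)\|_p$ is bounded uniformly in $t$; together with \eqref{eq:phase_transition_Lp_bound} for $t\in[0,1]$ this gives $M:=\sup_{t\ge0}\|W_t(\beta)\|_p<\infty$. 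Note that these two steps use \eqref{eq:phase_transition_cond_expec_F_s} only in the cases $t-s\in[0,1]$, so there is no circularity (those cases of the estimate rest solely on \eqref{eq:phase_transition_Lp_bound} and Lemma~\ref{lem:biggins_lemma}).

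Finally, for the statement about the limit, membership $W_\infty(\beta)\in L^p$ follows from Fatou's lemma, $\E|W_\infty(\beta)|^p\le\liminf_t\E|W_t(\beta)|^p\le M^p$. For the $L^p$-convergence I would avoid uniform integrability, which only delivers an $L^1$-limit, and instead show directly that $(W_t(\beta))_{t\ge0}$ is Cauchy in $L^p$: now that $L^p$-boundedness is in hand, $C':=\sup_{r\ge0}\E|W_r(\beta)-1|^p\le 2^{p-1}(M^p+1)<\infty$, and the displayed inequality together with $\E[W_s(p\beta)]=1$ gives $\E|W_t(\beta)-W_s(\beta)|^p\le 2^pC'\,\e^{-\rho s}$ for all $t\ge s$, which tends to $0$ as $s\to\infty$ uniformly in $t$; by completeness of $L^p$ the resulting limit exists and must coincide with the almost sure limit $W_\infty(\beta)$ (one could alternatively just invoke Doob's $L^p$-convergence theorem for $p>1$). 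The only point calling for a little vigilance is the bookkeeping around the factor $W_s(p\beta)$: one should use only that it is a mean-one martingale and resist replacing it by its almost sure limit, which may well be $0$. Everything else is a routine repackaging of \eqref{eq:phase_transition_cond_expec_F_s}, \eqref{eq:phase_transition_Lp_bound} and Lemma~\ref{lem:biggins_lemma}.
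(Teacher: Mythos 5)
Your argument is correct and is essentially the paper's own proof of this proposition: the statement is presented as a direct consequence of the ``second method'' for the subcritical case in Theorem~\ref{th:phase_transition_additive_martingales}, re-run while noting that the exponent $(p-1)(p\beta^2/\beta_c^2-1)$ in \eqref{eq:phase_transition_cond_expec_F_s} is negative for the whole range $1<p\le 2$, $p<\beta_c^2/\beta^2$, with $\Expec{W_s(p\beta)}=1$ used exactly as you describe. Your direct Cauchy-in-$L^p$ argument for the convergence of the limit is a slightly more explicit finish than the paper's appeal to uniform integrability (or Doob's $L^p$ theorem), but the approach is the same.
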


\section{Front and extremes of the branching Brownian motion}\label{sct:front_and_extremes}

Since this thesis is about the additive martingales of the branching Brownian motion, Theorem~\ref{th:phase_transition_additive_martingales} suggests to focus on the subcritical case $0 \leq \beta < \beta_c$.
In this regime, we will see in Section~\ref{sct:position_under_gibbs_measure} that only particles at distance $O(\sqrt{t})$ from $\beta t$ contribute to $W_t(\beta)$.
However, some related processes still have an asymptotic behavior governed by the \emph{front} of the branching Brownian motion, \ie the particles at distance $O(\sqrt{t})$ from the highest one, or by the \emph{extremes}, \ie the particles at distance $O(1)$ from the highest one (see \eg Theorem~\ref{th:fluctuations} and Theorem~\ref{th:renormalized_subcritical_overlap}).
This section is dedicated to the presentation of the needed results about the front.
Since it does not constitute the core of our subject, we present most of them without proof.
We give all the appropriate references for the interested reader.

\subsection{Critical and supercritical additive martingales}

Here, for both regimes $\beta = \beta_c$ and $\beta > \beta_c$, we give without proof the suitable rate function $\gamma(t)$ such that $\gamma(t)W_t(\beta)$ converges to a non-degenerate distribution.
These results will be necessary in Section~\ref{sct:gaussian_boundary_case} and Section~\ref{sct:extremal_regime}, where we study the fluctuations of the additive martingales around their almost sure limits.
Recall that the derivative martingales are defined in \eqref{eq:definition_of_Z_t} and that, almost surely, the critical one converges to a random variable $Z_\infty$ that is positive on the event of survival.

\begin{proposition}\label{prop:convergence_of_the_critical_additive_martingale}
    The process $(\sqrt{t} W_t(\beta_c))_{t \geq 0}$ converges in $\P$-probability to $\sqrt{2/\pi} Z_\infty$ as $t$ goes to infinity.
\end{proposition}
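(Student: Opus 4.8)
The plan is to establish the convergence $\sqrt{t}\,W_t(\beta_c) \to \sqrt{2/\pi}\,Z_\infty$ in probability by relating the additive martingale at the critical parameter to the critical derivative martingale, whose convergence is known (it is cited in Section~\ref{sct:phase_transition}). The natural starting point is the deterministic identity
\begin{equation*}
    \sqrt{t}\,W_t(\beta_c) = \sqrt{t}\,\e^{-c(\beta_c)t}\sum_{u\in\cN(t)}\e^{\beta_c X_u(t)},
\end{equation*}
together with the observation that, after the Lalley--Sellke tilt, the particles contributing to $W_t(\beta_c)$ are those with $X_u(t) = \beta_c t - O(\sqrt t)$, i.e. roughly the front shifted down by $\Theta(\sqrt t)$. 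First I would recall the representation that makes the factor $\sqrt t$ appear: using the many-to-one formula, $\Expec{\sqrt t\,W_t(\beta_c)} = \sqrt t\,\e^{-c(\beta_c)t}\Expec{\sum_u \e^{\beta_c X_u(t)}} = \sqrt t\,\e^{-\beta_c^2 t/2}\Expec{\e^{\beta_c B_t}}=\sqrt t$, which already shows the normalization is the right one at the level of the first moment but says nothing about the limit.

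The core of the argument is a truncation and a change of measure. I would fix a level and write $W_t(\beta_c)$ as a sum over particles that have stayed below the critical line $\beta_c s + L$ up to time $t$ (call it $\tilde W_t$), plus a remainder controlled by \eqref{eq:prob_under_barrier}. For the truncated sum, apply the Girsanov tilt $\e^{\beta_c B_t - \beta_c^2 t/2}$ used in the proof of Theorem~\ref{th:phase_transition_additive_martingales}: under $\hat\P$ the spine $B_s - \beta_c s$ becomes a driftless Brownian motion constrained to stay below $L$. The key analytic input is the classical asymptotics for Brownian motion killed below a fixed barrier: the probability that such a path survives to time $t$ decays like $c\,t^{-1/2}$, and more precisely the occupation/endpoint statistics conditioned on survival converge to the appropriate Rayleigh-type profile. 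This $t^{-1/2}$ is exactly what the prefactor $\sqrt t$ absorbs. Matching constants should produce the $\sqrt{2/\pi}$; the $Z_\infty$ should emerge by recognizing that, after the tilt, the weight carried to the limit is precisely the critical derivative martingale via the spine change-of-measure identity (the spine decomposition for $Z_t(\beta_c)$), so that $\sqrt t\,\tilde W_t \to \sqrt{2/\pi}\,Z_\infty^{(L)}$ where $Z_\infty^{(L)}\uparrow Z_\infty$ as $L\to\infty$.

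Concretely, I would carry out the steps in this order: (i) reduce to the truncated martingale $\tilde W_t$ via \eqref{eq:prob_under_barrier}, showing $\Expec{|W_t(\beta_c)-\tilde W_t(\beta_c)|}$ is small uniformly in $t$ (as in \eqref{eq:phase_transition_close_in_L1}); (ii) apply the Girsanov tilt to express $\Expec{\sqrt t\,\tilde W_t\,\mathds{1}_G}$ for events $G\in\cF_s$ in terms of a barrier-constrained Brownian expectation; (iii) invoke the known critical convergence $\sqrt t\,W_t(\beta_c)$-type estimates or, more directly, use the Seneta--Heyde-style result for the derivative martingale together with the asymptotics of Brownian motion below a line to identify the limit constant; (iv) let $L\to\infty$ to remove the truncation. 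The main obstacle I anticipate is step (iii): controlling the joint law of $(X_u(t)-\beta_c t)/\sqrt t$ under the size-biased measure and showing it is asymptotically independent of the macroscopic randomness carried by $Z_\infty$ — i.e. proving the decoupling between the "spine survival profile" (deterministic, contributing $\sqrt{2/\pi}$) and the "total mass at the front" (random, contributing $Z_\infty$). This is where one genuinely needs the Lalley--Sellke picture and the convergence of the derivative martingale in a quantitative form, and it is the step where continuity of time (as opposed to the BRW case) requires the technical care alluded to in the introduction. Since only convergence in probability is claimed, I would not need an almost sure statement here, which simplifies (iv) to an $\varepsilon$-argument.
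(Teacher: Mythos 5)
The paper does not actually prove Proposition~\ref{prop:convergence_of_the_critical_additive_martingale}: it quotes it from the literature, citing A\"id\'ekon--Shi \cite{AidekonShi2014} (branching random walk) and Maillard--Pain \cite[Proposition~2.2]{MaillardPain2019} (branching Brownian motion). Your outline is essentially the strategy of those papers (Seneta--Heyde scaling via barrier truncation, spine change of measure, and the $t^{-1/2}$ survival asymptotics for Brownian motion below a line), so the plan is the right one; the issue is that, as a proof, it stops exactly where the work begins. Your step (iii) --- showing that $\sqrt{t}\,\tilde W_t$ concentrates around its conditional expectation given $\cF_s$ and that the limit factorizes into the deterministic constant $\sqrt{2/\pi}$ times the derivative-martingale mass --- is the entire content of the Seneta--Heyde theorem. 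Making it rigorous requires a genuinely nontrivial second-moment (or $L^p$) estimate for the barrier-truncated sum, with the correlations of pairs of particles near the barrier controlled by renewal-type estimates for the three-dimensional Bessel-like spine; none of that is sketched, and no elementary shortcut is available: a naive second moment of $\sqrt{t}W_t(\beta_c)$ diverges, which is why the truncation alone (step (i)) does not reduce the problem to anything already proved in this thesis. Note also that the result is sharp in the sense that the convergence holds only in probability ($\liminf_t \sqrt{t}\,W_t(\beta_c)=0$ almost surely), so any argument that tries to pass an almost sure limit of the truncated quantity through to $\sqrt{t}W_t(\beta_c)$ must be set up carefully.

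Two smaller points. First, the mechanism producing the constants should be stated precisely: conditionally on $\cF_s$, the first moment of the truncated sum is, for each $u \in \cN(s)$, proportional to the probability that a Brownian path stays below the barrier for time $t-s$, which behaves like $(L+\beta_c s - X_u(s))\sqrt{2/\pi}\,t^{-1/2}$; summing against the weights $\e^{\beta_c X_u(s)-2s}$ is what makes the (truncated) derivative martingale appear, and then \eqref{eq:prob_under_barrier} lets you remove the truncation since on $A_L$ the truncated and untruncated derivative martingales eventually agree. Your assertion that ``$Z_\infty^{(L)} \uparrow Z_\infty$'' should be replaced by this kind of statement, as the monotonicity in $L$ is not what one actually uses. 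Second, your step (i) as written is not available at $\beta=\beta_c$: the estimate \eqref{eq:phase_transition_close_in_L1} gives smallness of $\Expec{W_t(\beta)-\tilde W_t(\beta)}$ only for $\beta<\beta'$, whereas at criticality $\Expec{W_t(\beta_c)}=1$ while $\sqrt{t}W_t(\beta_c)$ is the object of interest, so an $L^1$ bound at the scale of $W_t$ says nothing after multiplication by $\sqrt t$; the reduction to the barrier event must instead go through \eqref{eq:prob_under_barrier} (an event of high probability on which the two objects coincide), not through a uniform $L^1$ approximation.
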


Proposition~\ref{prop:convergence_of_the_critical_additive_martingale} was first obtained by Aïdékon-Shi in \cite{AidekonShi2014} for the branching random walk.
In the case of the branching Brownian motion, it can be directly deduced from Maillard-Pain \cite[Proposition~2.2]{MaillardPain2019}.

\begin{proposition}\label{prop:convergence_of_the_supercritical_additive_martingales}
    Assume that $\mu = \delta_2$. For any $\beta > \beta_c$, the process $(t^{\frac{3 \beta}{2\sqrt{2}}} \e^{(1 - \beta/\beta_c)^2 t} W_t(\beta))_{t \geq 0}$ converges in distribution to $(C Z_\infty)^{\beta/\beta_c} S$ as $t$ goes to infinity, where $S$ is a non-degenerate $\beta_c/\beta$-stable random variable independent of $Z_\infty$ and $C$ is a positive constant.
\end{proposition}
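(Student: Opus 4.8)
The plan is to recognise the renormalised additive martingale as the extremal process of the branching Brownian motion tested against an exponential function, and then to read off the stable limit from the known limiting decorated Poisson point process. Write $m(t) = \sqrt{2}\,t - \tfrac{3}{2\sqrt{2}}\log t$ for the front location. Since $e^{-(1+\beta^2/2)t}e^{\beta m(t)} = e^{-\frac12(\beta-\sqrt2)^2 t}\,t^{-3\beta/(2\sqrt2)} = e^{-(1-\beta/\beta_c)^2 t}\,t^{-3\beta/(2\sqrt2)}$, one has the exact identity
\begin{equation*}
    t^{\frac{3\beta}{2\sqrt2}}\, e^{(1-\beta/\beta_c)^2 t}\, W_t(\beta) = \sum_{u \in \cN(t)} e^{\beta (X_u(t) - m(t))} = \langle \cE_t, e^{\beta\cdot}\rangle,
\end{equation*}
where $\cE_t = \sum_{u\in\cN(t)}\delta_{X_u(t) - m(t)}$. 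By the convergence of the extremal process (Theorem~\ref{th:convergence_of_the_extremal_process}), $\cE_t$ converges in distribution to a point process $\cE_\infty$ which, conditionally on $Z_\infty$, is a Poisson point process of intensity $\sqrt2\, C Z_\infty\, e^{-\sqrt2 x}\,\mathrm{d}x$ in which every atom carries an independent copy of a fixed decoration point process $\mathfrak{D}$. It therefore suffices to show that $\langle\cE_t, e^{\beta\cdot}\rangle$ converges in distribution to $\langle\cE_\infty, e^{\beta\cdot}\rangle$, and then to identify the latter.

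For the convergence I would use a double truncation of the test function $x\mapsto e^{\beta x}$, which is neither bounded nor compactly supported. The upper cutoff at a level $A$ is harmless by tightness of $\max_{u\in\cN(t)}X_u(t) - m(t)$ and of the number of particles above $m(t)+A$ (both consequences of Theorem~\ref{th:convergence_of_the_extremal_process}), giving $\limsup_t \Prob{\langle\cE_t, e^{\beta\cdot}\mathds{1}_{(A,\infty)}\rangle > \varepsilon}\to 0$ as $A\to\infty$, and similarly for $\cE_\infty$. The lower cutoff at a level $-K$ is the main obstacle: a direct first-moment bound through the many-to-one formula over-counts the extremal particles by a spurious factor of order $t$, so $\Expec{\langle\cE_t, e^{\beta\cdot}\mathds{1}_{(-\infty,-K)}\rangle}$ does not remain bounded in $t$. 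To circumvent this I would restrict, as in the proof of \eqref{eq:prob_under_barrier}, to particles whose ancestral path stays below a curved barrier of asymptotic slope $\sqrt2$ on $[0,t]$; the contribution off this event has probability $O(e^{-\sqrt2 K})$ and is negligible in the spirit of \eqref{eq:phase_transition_close_in_L1}, while under the barrier a refined first-moment computation (à la Bramson, using the Girsanov tilt of the supercritical case, which turns the walk into a Brownian motion of negative drift $\sqrt2-\beta$ conditioned to stay low) removes the factor $t$ and yields a bound that tends to $0$ as $K\to\infty$, uniformly in $t$. On the remaining window $(-K,A)$ the functional $\langle\,\cdot\,, e^{\beta\cdot}\mathds{1}_{(-K,A)}\rangle$ is bounded and continuous for the vague topology on point configurations, so the weak convergence $\cE_t\to\cE_\infty$ applies directly; letting $A\to\infty$ and then $K\to\infty$ gives $\langle\cE_t, e^{\beta\cdot}\rangle\Rightarrow\langle\cE_\infty, e^{\beta\cdot}\rangle$.

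It remains to identify the limit. Writing $\cE_\infty = \sum_i\sum_j \delta_{\xi_i + \eta^{(i)}_j}$ with $\{\xi_i\}$ the Poisson process of intensity $\sqrt2\, C Z_\infty e^{-\sqrt2 x}\,\mathrm{d}x$ (given $Z_\infty$) and $(\eta^{(i)}_j)_j$ i.i.d. copies of $\mathfrak{D}$, independent of everything, we obtain $\langle\cE_\infty, e^{\beta\cdot}\rangle = \sum_i e^{\beta\xi_i} D_i$ with $D_i := \langle\mathfrak{D}_i, e^{\beta\cdot}\rangle$ i.i.d.; here one uses $\Expec{D^{\beta_c/\beta}}<\infty$, which follows from the known decay of the decoration law for $\mu=\delta_2$. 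By the scaling property of Poisson processes, conditionally on $Z_\infty$ the family $\{\xi_i\}$ has the law of $\{\xi^0_i + \tfrac1{\sqrt2}\log(\sqrt2\, C Z_\infty)\}$ with $\{\xi^0_i\}$ of intensity $e^{-\sqrt2 x}\,\mathrm{d}x$, whence $\langle\cE_\infty, e^{\beta\cdot}\rangle = (\sqrt2\, C Z_\infty)^{\beta/\beta_c}\sum_i e^{\beta\xi^0_i}D_i$. Finally $\{e^{\beta\xi^0_i}\}$ is a Poisson process on $(0,\infty)$ with intensity proportional to $r^{-1-\alpha}\,\mathrm{d}r$ for $\alpha = \beta_c/\beta \in (0,1)$, so by the classical Poisson (LePage) representation of stable laws, $\sum_i e^{\beta\xi^0_i}D_i$ is a positive $\alpha$-stable random variable, non-degenerate since the decoration is non-trivial and independent of $Z_\infty$ by construction. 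Absorbing the deterministic factor $(\sqrt2)^{\beta/\beta_c}$ together with the scale into $S$ gives $\langle\cE_\infty, e^{\beta\cdot}\rangle \overset{d}{=} (C Z_\infty)^{\beta/\beta_c}\, S$ with $S$ a $(\beta_c/\beta)$-stable random variable independent of $Z_\infty$, which is the claim.
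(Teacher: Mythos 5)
The paper itself does not prove this proposition: it is stated as a cited result, deduced from Barral--Rhodes--Vargas for the branching random walk and from Bonnefont \cite[Proposition~2.5]{Bonnefont2022} for the binary branching Brownian motion. What you propose is essentially a reconstruction of that cited argument, and it is consistent with the machinery the paper develops elsewhere: your opening identity $t^{3\beta/(2\sqrt2)}\e^{(1-\beta/\beta_c)^2t}W_t(\beta)=\cE_t(\e^{\beta\cdot})$ is correct, the passage $\cE_t(f)\to\cE_\infty(f)$ for $f$ growing like $\e^{\beta x}$ at $-\infty$ with $\beta>\beta_c$ is exactly the extension recorded in Remark~\ref{rem:bonnefont_on_extremal_process}, and your identification of $\cE_\infty(\e^{\beta\cdot})$ as $(CZ_\infty)^{\beta/\beta_c}S$ with $S$ a $\beta_c/\beta$-stable variable independent of $Z_\infty$ is precisely Corollary~\ref{cor:asymptotic_mean_number_decoration} (the moment condition $\Expec{(\sum_j\e^{\beta\Delta_j})^{\beta_c/\beta}}<\infty$ you need for the marks follows from Proposition~\ref{prop:asymptotic_mean_number_decoration} and Jensen, as in that corollary; the paper identifies stability through the exponential formula and the Lévy--Khintchine triplet rather than the LePage representation, but this is the same computation).

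The one step you leave genuinely sketchy is the lower cutoff. You correctly diagnose the difficulty (the plain many-to-one bound on $\Expec{\cE_t(\e^{\beta\cdot}\mathds{1}_{(-\infty,-K)})}$ carries a spurious factor of order $t$), but your fix is stated loosely: the claim that the complement of the barrier event has probability $O(\e^{-\sqrt2 K})$ conflates the all-time barrier of \eqref{eq:prob_under_barrier} with the event actually needed here. The clean way to run your scheme, and the one the paper's own toolbox supports, is to truncate on $\{M(t)-m(t)\le L\}$, which is tight by Corollary~\ref{cor:tightness_of_M(t)-m(t)}, and to use the truncated first-moment estimate \eqref{eq:first_moment_estimate_extremal_process} of Proposition~\ref{prop:first_moment_estimate_extremal_process}, which removes the factor $t$ and yields a bound on the contribution of particles below $m(t)-K$ that vanishes as $K\to\infty$ uniformly in $t$ (because $\beta>\beta_c$, summing $\e^{-\beta(K+k)}\e^{\beta_c(K+k)}$ over unit slabs converges); this is exactly the argument carried out in the proof of Lemma~\ref{lem:renormalized_subcritical_overlap_contribution} (there with exponent $2\beta$, but verbatim for any exponent strictly larger than $\beta_c$). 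With that substitution your outline is a complete proof along the route the paper points to.
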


Proposition~\ref{prop:convergence_of_the_supercritical_additive_martingales} was proved by Barral-Rhodes-Vargas in \cite[Theorem~1]{BarralRhodesVargas2018} for the branching random walk with more general offspring distributions.
As for the binary branching Brownian motion, it can be deduced from Bonnefont \cite[Proposition~2.5]{Bonnefont2022}.
We deduce tightness.
Indeed, let $\varepsilon > 0$.
The convergence in distribution to a finite random variable implies that there exists $M > 0$ and $t_0 \geq 0$ such that
\begin{equation*}
    \sup_{t \geq t_0} \Prob{t^{\frac{3}{2\sqrt{2}}} \e^{(1 - \beta/\beta_c)^2 t} W_t(\beta) > M} \leq \varepsilon.
\end{equation*}
Using Markov's inequality and the many-to-one formula \eqref{eq:many-to-one}, we see that we can assume $M$ to be large enough so that
\begin{equation*}
    \sup_{0 \leq t \leq t_0} \Prob{t^{\frac{3}{2\sqrt{2}}} \e^{(1 - \beta/\beta_c)^2 t} W_t(\beta) > M} \leq \varepsilon.
\end{equation*}

\begin{corollary}\label{cor:tightness_of_the_supercritical_additive_martingales}
    Assume that $\mu = \delta_2$. The process $(t^{\frac{3}{2\sqrt{2}}} \e^{(1 - \beta/\beta_c)^2 t} W_t(\beta))_{t \geq 0}$ is tight.
\end{corollary}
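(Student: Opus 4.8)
The plan is to split the time parameter into a compact window $[0,t_0]$ and a tail $[t_0,\infty)$: on the tail, tightness comes essentially for free from the weak convergence already established, while on the compact window a crude first-moment bound suffices. Write $X_t := t^{\frac{3}{2\sqrt{2}}}\e^{(1-\beta/\beta_c)^2 t}W_t(\beta)$. Since $X_t \geq 0$, proving tightness of $(X_t)_{t\geq 0}$ amounts to showing that for every $\varepsilon > 0$ there is $M>0$ with $\sup_{t\geq 0}\Prob{X_t > M}\leq \varepsilon$.

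First I would record, via the many-to-one formula \eqref{eq:many-to-one}, that $\Expec{W_t(\beta)} = \e^{-c(\beta)t}\e^{t}\Expec{\e^{\beta B_t}} = 1$ for every $t\geq 0$, so that $\Expec{X_t} = g(t)$ with $g(t) := t^{\frac{3}{2\sqrt{2}}}\e^{(1-\beta/\beta_c)^2 t}$. Now fix $\varepsilon > 0$. By Proposition~\ref{prop:convergence_of_the_supercritical_additive_martingales}, $X_t$ converges in distribution, as $t\to\infty$, to the almost surely finite random variable $(C Z_\infty)^{\beta/\beta_c}S$. Choosing a continuity point $M_1>0$ of its distribution function with $\Prob{(C Z_\infty)^{\beta/\beta_c}S > M_1}\leq \varepsilon/2$, the convergence of the distribution functions at $M_1$ provides $t_0\geq 0$ such that $\Prob{X_t > M_1}\leq \varepsilon$ for all $t\geq t_0$.

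It remains to control $t\in[0,t_0]$, where I would simply invoke Markov's inequality: $\Prob{X_t > M}\leq M^{-1}\Expec{X_t} = M^{-1}g(t)$. Since $g$ is continuous, it is bounded on the compact interval $[0,t_0]$ by some constant $C_{t_0}$ (in fact $g(0)=0$), so $\sup_{0\leq t\leq t_0}\Prob{X_t > M}\leq C_{t_0}/M$, which is $\leq\varepsilon$ as soon as $M\geq C_{t_0}/\varepsilon =: M_2$. Taking $M := \max(M_1, M_2)$ then yields $\sup_{t\geq 0}\Prob{X_t > M}\leq\varepsilon$, and letting $\varepsilon\to 0$ gives tightness. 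No step here is genuinely hard; the only point to watch is that the deterministic prefactor $g$ is bounded on the compact window, so that the uniform first-moment identity $\Expec{W_t(\beta)}=1$ can be leveraged there, the tail being dispatched entirely by Proposition~\ref{prop:convergence_of_the_supercritical_additive_martingales}.
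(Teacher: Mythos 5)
Your argument is correct and is essentially the paper's own proof: the paper likewise handles $t\geq t_0$ via the convergence in distribution of Proposition~\ref{prop:convergence_of_the_supercritical_additive_martingales} and the compact window $[0,t_0]$ via Markov's inequality together with the many-to-one identity $\Expec{W_t(\beta)}=1$. (The only wrinkle, inherited from the paper itself, is the mismatch between the power $t^{3\beta/(2\sqrt{2})}$ in the proposition and $t^{3/(2\sqrt{2})}$ in the corollary; since $\beta>\beta_c$ the smaller power only helps, so your argument goes through either way.)
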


\subsection{Speed of the maximal displacement, first order}\label{sct:speed_of_the_extremal_displacement_1}

Let us denote by $M(t)$ the maximum position of the particles at time $t$
\begin{equation}\label{eq:definition_of_M(t)}
    M(t) := \max_{u \in \cN(t)} X_u(t).
\end{equation}
This process has been much studied (see \eg \cite{Bramson1978, Bramson1983, LalleySellke1987}) since McKean exhibited in \cite{McKean1975} its connection with the \emph{F-KPP equation}
\begin{equation}\label{eq:f-kpp_equation}
    \begin{cases}
        \partial_t u = \frac{1}{2} \partial_x^2 u + \sum_{k \geq 0} \mu(k) u^k - u & \text{on } \R_+ \times \R, \\
        u(0, \cdot) = u_0(\cdot) & \text{on } \R,
    \end{cases}
\end{equation}
also called \emph{KPP equation}, \emph{Fisher's equation} or \emph{Kolmogorov equation}.
This equation was first considered by Fisher in \cite{Fisher1937} and by Kolmogorov-Petrovsky-Piskunov in \cite{KolmogorovPetrovskyPiskunov1937}.
It belongs to the class of \emph{reaction–diffusion equations}, which naturally appear in many fields, as population genetics, chemical combustion theory and flame
propagation (see \eg \cite{AronsonWeinberger1975, FifeMcLeod1977}).
McKean noticed in \cite{McKean1975} that the distribution function $u(t, x) := \Prob{M(t) \leq x}$ is the unique solution of \eqref{eq:f-kpp_equation} with Heaviside initial condition $u_0 = \mathds{1}_{[0, \infty)}$.
Here, we give a first approximation of the maximal displacement $M(t)$ defined in \eqref{eq:definition_of_M(t)}.

\begin{proposition}
    Recall that $\beta_c := \sqrt{2}$. We have
    \begin{equation}\label{eq:domination_extremal_particle}
        \lim_{t \to \infty} M(t) - \beta_c t = -\infty \quad \P\text{-almost surely}.
    \end{equation}
    In addition, on the event of survival,
    \begin{equation}\label{eq:speed_extremal_particle}
        \lim_{t \to \infty} \frac{M(t)}{t} = \beta_c \quad \P\text{-almost surely}.
    \end{equation}
\end{proposition}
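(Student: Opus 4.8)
The plan is to establish the almost-sure bound \eqref{eq:domination_extremal_particle} first, deduce from it the upper bound $\limsup M(t)/t\le\beta_c$, and then prove the matching lower bound, which is the only real difficulty. For \eqref{eq:domination_extremal_particle} I would use the critical additive martingale: since $c(\beta_c)=1+\beta_c^2/2=2=\beta_c^2$, retaining only the top particle in \eqref{eq:definition_of_W_t} gives the deterministic inequality $W_t(\beta_c)\ge\e^{\beta_c(M(t)-\beta_c t)}$ for every $t\ge 0$ (with the convention $M(t)=-\infty$ when $\cN(t)=\varnothing$, so that both sides vanish). The non-negative martingale $(W_t(\beta_c))_{t\ge 0}$ converges $\P$-a.s., and by Theorem~\ref{th:phase_transition_additive_martingales} applied with $\beta=\beta_c$ its limit is $0$ a.s.; hence $M(t)-\beta_c t\to-\infty$, which is \eqref{eq:domination_extremal_particle}, and in particular $M(t)<\beta_c t$ for all large $t$, so $\limsup_{t\to\infty}M(t)/t\le\beta_c$ a.s.

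For the lower bound I would fix $v<\beta_c$, choose $\theta\in(v,\beta_c)$, and use that $W_\infty(\theta)>0$ a.s. on survival (Theorem~\ref{th:phase_transition_additive_martingales}, subcritical case). For an integer $n$, split $W_n(\theta)=L_n+H_n$, where $L_n$ collects the terms of \eqref{eq:definition_of_W_t} with $X_u(n)\le vn$. The many-to-one formula \eqref{eq:many-to-one} turns $\Expec{L_n}=\e^{-\theta^2 n/2}\Expec{\e^{\theta B_n}\mathds{1}_{B_n\le vn}}$ into a large-deviation estimate for one Brownian motion, yielding $\Expec{L_n}\le\tfrac12\e^{-(\theta-v)^2 n/2}$; by Markov's inequality and Borel--Cantelli, $L_n\to 0$ a.s., and since $W_n(\theta)\to W_\infty(\theta)$ a.s. this forces $H_n\to W_\infty(\theta)$. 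On survival $H_n>0$ for all large $n$, which requires $\cN(n)$ to contain a particle with $X_u(n)>vn$, i.e.\ $M(n)>vn$; letting $v$ increase to $\beta_c$ along a sequence gives $\liminf_{n\in\N}M(n)/n\ge\beta_c$ a.s.\ on survival.

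To pass from integers to all real times I would run the same splitting with an intermediate slope $v<v_1<\beta_c$ and note, using \eqref{eq:domination_extremal_particle} to bound each summand of $H_n$ by $\e^{-(\theta-\beta_c)^2 n/2}$, that on survival the number of particles above slope $v_1$ at time $n$ grows exponentially. Conditionally on $\cF_n$ these particles start independent branching Brownian motions, and each keeps a descendant at least $1$ below its current height throughout $[n,n+1]$ with probability at least a fixed $p_0>0$ (no branching together with a Brownian fluctuation $\ge-1$ over unit time); a conditional Borel--Cantelli argument then gives $\inf_{s\in[n,n+1]}M(s)\ge v_1 n-1\ge vs$ for all large $n$, hence $\liminf_{t\to\infty}M(t)/t\ge\beta_c$ a.s.\ on survival, which combined with the upper bound is \eqref{eq:speed_extremal_particle}.

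I expect the lower bound to be the main obstacle: one must convert a first-moment estimate into an almost-sure \emph{existence} statement for particles near slope $\beta_c$ (which is why the a.s.\ convergence of $W_n(\theta)$, itself the content of Theorem~\ref{th:phase_transition_additive_martingales}, enters), and the continuous-time upgrade requires the branching structure rather than the martingale alone, exactly the kind of technicality flagged in the introduction. A robust alternative for the lower bound is a second-moment ``colonization'' argument: use the many-to-two formula \eqref{eq:many-to-two} to show that $\#\{u\in\cN(T):X_u(T)\ge vT\}$ is, with probability bounded away from $0$, of the order of its mean $\e^{(1-v^2/2)T}\to\infty$, and then iterate this along the genealogical tree via the branching property and the zero-one law of Lemma~\ref{lem:zero-one_law_additive_martingales}.
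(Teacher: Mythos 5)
Your upper bound is identical to the paper's: both retain the top particle in $W_t(\beta_c)$ and use $W_\infty(\beta_c)=0$. For the lower bound the core mechanism is also the same — positivity of $W_\infty(\theta)$ for subcritical $\theta$ forces the existence of particles above slope $v<\theta$, via a first-moment estimate on the truncated part and Borel--Cantelli — but the truncation and the continuous-time upgrade are handled genuinely differently. The paper truncates with the indicator $\mathds{1}_{\forall s\in[t-1,t],\,X_u(s)\ge\beta' s}$ over the whole unit interval preceding $t$, so that almost sure convergence of the truncated martingale along integers immediately yields $M(s)\ge\beta' s$ for \emph{all} $s\in[n-1,n]$ eventually; the price is a Girsanov estimate on the running infimum of a Brownian motion rather than on its endpoint. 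You truncate only at the endpoint ($X_u(n)\le vn$), which makes the first-moment bound an elementary Gaussian tail, but then you must upgrade from integer to real times by a separate colonization step: exponentially many particles above slope $v_1$ at time $n$ (which your bound $\e^{-(\theta-\beta_c)^2n/2}$ on each summand of $H_n$ correctly delivers), each independently retaining a descendant within distance $1$ over $[n,n+1]$ with probability $p_0\ge\e^{-1}\Prob{\inf_{s\le1}B_s\ge-1}>0$, followed by conditional Borel--Cantelli. Both routes are correct; the paper's is shorter because the interval truncation absorbs the continuity issue, while yours isolates the discrete-time statement (which is all one needs for branching random walk) and makes explicit where the branching structure, rather than the martingale alone, is required for the continuous-time extension.
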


\begin{proof}
    By Theorem~\ref{th:phase_transition_additive_martingales}, the additive martingale $(W_t(\beta_c))_{t \geq 0}$ converges almost surely to $0$.
    But we have
    \begin{equation*}
        \e^{\sqrt{2} M(t) - 2t} \leq W_t(\beta_c).
    \end{equation*}
    Then, the quantity $M(t) - \beta_c t$ tends almost surely to $-\infty$ as $t$ goes to $\infty$.
    This implies that
    \begin{equation*}
        \limsup_{t \to \infty} \frac{M(t)}{t} \leq \beta_c \quad \P\text{-almost surely}.
    \end{equation*}
    It remains to show that, on the event of survival,
    \begin{equation}\label{eq:extremal_particle_1st_order_lower_bound}
        \liminf_{t \to \infty} \frac{M(t)}{t} \geq \beta_c \quad \P\text{-almost surely}.
    \end{equation}
    To this end, assume that there is no extinction and consider $t \geq 1$ as well as two parameters $\beta > \beta'$ in $[0, \beta_c)$.
    We start by approximating $W_t(\beta)$ with
    \begin{equation*}
        \tilde{W}_t(\beta) = \sum_{u \in \cN(t)} \e^{\beta X_u(t) - c(\beta)t} \mathds{1}_{\forall s \in [t-1, t], X_u(s) \geq \beta' s}.
    \end{equation*}
    Using the many-to-one formula \eqref{eq:many-to-one} and Girsanov's theorem in the same way as in the proof of Theorem~\ref{th:phase_transition_additive_martingales}, we obtain
    \begin{align*}
        \E\left|W_t(\beta) - \tilde{W}_t(\beta)\right| &= \Prob{\exists s \in [t-1, t], B_s + \beta s < \beta' s} \\
        &\leq \Prob{\inf_{s \in [0, t]} B_s < -(\beta - \beta')(t-1)} \leq \e^{-(\beta - \beta')^2 (t - 1)^2/2t}.
    \end{align*}
    By the Borel-Cantelli lemma, the sequence $(\tilde{W}_n(\beta))_{n \geq 1}$ converges almost surely to $W_\infty(\beta)$.
    This limit is almost surely positive since there is no extinction, by assumption.
    Thus, in view of the definition of $\tilde{W}_t(\beta)$, there exists a rank from which the sequence $(\mathds{1}_{\forall s \in [n-1, n], M(s) \geq \beta' s})_{n \geq 1}$ is positive.
    In particular,
    \begin{equation*}
        \liminf_{t \to \infty} \frac{M(t)}{t} \geq \beta' \quad \P\text{-almost surely}.
    \end{equation*}
    Since $\beta' \in [0, \beta_c)$ is arbitrary, this implies \eqref{eq:extremal_particle_1st_order_lower_bound}.
\end{proof}

\subsection{Speed of the maximal displacement, second order}\label{sct:speed_of_the_extremal_displacement_2}

In this section, we assume that $0$ is not an atom of the offspring distribution, \ie $\mu(0) = 0$.
We give without proof another approximation of $M(t)$, which is more accurate than \eqref{eq:speed_extremal_particle}.
It was obtained by Bramson in \cite{Bramson1978}.
Based on the classic paper of Kolmogorov-Petrovsky-Piscounov \cite{KolmogorovPetrovskyPiskunov1937}, the author showed that the maximal displacement is at distance $O(1)$ of the following trajectory
\begin{equation}\label{eq:definition_of_m(t)}
    m(t) := \sqrt{2}t - \frac{3}{2 \sqrt{2}} \log t.
\end{equation}

\begin{proposition}[\cite{Bramson1978}]\label{prop:speed_extremal_particle_2}
    For $\alpha \in (0, 1)$, let us denote by $q_\alpha(t)$ the $\alpha$-quantile of $M(t) - m(t)$, \ie the value such that
    \begin{equation*}
        \Prob{M(t) - m(t) \leq q_\alpha(t)} = \alpha.
    \end{equation*}
    Then, for any $\alpha \in (0, 1)$, we have $q_\alpha(t) = O(1)$ as $t$ goes to infinity.
\end{proposition}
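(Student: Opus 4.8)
The plan is to deduce the statement from the tightness of the family $(M(t)-m(t))_{t\geq1}$. Since $\mu(0)=0$ there is no extinction, so $M(t)\in\R$ for every $t>0$ and $x\mapsto\Prob{M(t)\leq x}$ is continuous and strictly increasing; hence $q_\alpha(t)$ is well defined, and by McKean's identification \cite{McKean1975} it is the unique number with $u(t,q_\alpha(t))=\alpha$, where $u$ solves \eqref{eq:f-kpp_equation} with Heaviside data. It therefore suffices to produce, for every $\varepsilon>0$, a constant $y=y(\varepsilon)>0$ and a time $t_0$ such that
\begin{equation*}
    \Prob{M(t)\geq m(t)+y}\leq\varepsilon \qquad\text{and}\qquad \Prob{M(t)\leq m(t)-y}\leq\varepsilon \qquad\text{for all } t\geq t_0 ;
\end{equation*}
taking $\varepsilon<\min(\alpha,1-\alpha)$ then gives $q_\alpha(t)\in[-y(\varepsilon),y(\varepsilon)]$ for $t\geq t_0$, \ie $q_\alpha(t)=O(1)$ (the remaining bounded range of $t$ being harmless). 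This is the probabilistic shadow of Bramson's statement that $u(t,m(t)+\cdot)$ converges to a travelling-wave profile; one may argue either through that PDE or, as sketched below, directly.

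For the \textbf{upper tail} I would use a truncated first-moment argument. The crude bound $\Prob{M(t)\geq m(t)+y}\leq\e^t\,\Prob{B_t\geq m(t)+y}$ from the many-to-one formula \eqref{eq:many-to-one} overshoots by a factor of order $t$, because it counts particles whose ancestral path climbed far above its typical height. The remedy is to intersect with the event that some particle exceeds $m(t)+y$ at time $t$ while its whole path has stayed below a suitable curved barrier $g_t\colon[0,t]\to\R$ --- Bramson's barrier, roughly the chord $s\mapsto\frac{s}{t}m(t)$ lowered to the $\sqrt t$ scale near the two endpoints; note this lies genuinely below the line $s\mapsto\beta_c s$ by an amount $\asymp\log t$ in the bulk, which is exactly what the a priori estimate \eqref{eq:prob_under_barrier} alone cannot deliver. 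On the complement of this event the contribution is negligible (using \eqref{eq:prob_under_barrier} together with a control of the trajectory on $[0,1]$ and $[t-1,t]$), while on the event itself many-to-one followed by a Girsanov tilt by $\e^{\beta_c B_t-t}$ turns the estimate into the expectation of $\e^{-\beta_c\widehat B_t}$ over the event that a standard Brownian motion $\widehat B$ stays below $s\mapsto g_t(s)-\beta_c s$ on $[0,t]$ and finishes at $\widehat B_t\geq y-\frac{3}{2\beta_c}\log t$. Since $\e^{-\beta_c\widehat B_t}\leq\e^{-\beta_c y}t^{3/2}$ on that event, it remains to bound the probability of staying below the now-decreasing barrier and finishing near its top by $C\,t^{-3/2}$: this is a Brownian-bridge ballot estimate, and it supplies exactly the cancellation that leaves $\Prob{M(t)\geq m(t)+y}\leq C\e^{-\beta_c y}$ uniformly in large $t$.

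For the \textbf{lower tail} I would run a second-moment argument along the same barrier, followed by a branching bootstrap. Fix a large constant $y_0$; for $s$ large let $N$ be the number of $u\in\cN(s)$ with $X_u(s)\in[m(s)-y_0,m(s)-y_0+1]$ whose path stays below $g_s$. The many-to-one formula gives $\Expec{N}\geq c\,\e^{\beta_c y_0}$, and the many-to-two formula \eqref{eq:many-to-two}, whose branching-time integral is dominated by the early splits because of the barrier, gives $\Expec{N^2}\leq C\,\Expec{N}^2$ with $C$ independent of $s$; Paley--Zygmund then yields a fixed $\delta_0>0$ with $\Prob{M(s)\geq m(s)-y_0}\geq\Expec{N}^2/\Expec{N^2}\geq\delta_0$ for all $s$ large. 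To turn this fixed-level bound into one tending to $0$ for large displacements, fix a large time $s_0$; since $\mu(0)=0$, almost surely $n(r)\to\infty$, so one can choose $s_0$ and a large integer $N_0$ so that with probability at least $1-\varepsilon/2$ there are at least $N_0$ particles in $\cN(s_0)$, all lying above a level $-A$. Conditionally on $\cF_{s_0}$ those particles start independent branching Brownian motions, and since $m(t)-m(t-s_0)\leq\beta_c s_0$, any one of the corresponding subtrees reaching height $m(t-s_0)-y_0$ above its root forces $M(t)\geq -A+m(t-s_0)-y_0\geq m(t)-y$ as soon as $y\geq y_0+A+\beta_c s_0$. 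By the fixed-level bound each subtree does so with probability at least $\delta_0$, so $\Prob{M(t)\leq m(t)-y}\leq(1-\delta_0)^{N_0}+\varepsilon/2\leq\varepsilon$ once $N_0$ and then $y$ are chosen large.

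The main obstacle is the upper tail, and within it the precise logarithmic correction $\frac{3}{2\beta_c}\log t=\frac{3}{2\sqrt2}\log t$ in $m(t)$: extracting it forces the right curved barrier $g_t$ and the matching ballot / entropic-repulsion estimate for Brownian motion below a barrier, which is the technical heart of \cite{Bramson1978} (equivalently, the delicate part of the F-KPP front analysis there). Once that estimate is available, the reduction above, the many-to-two computation bounding $\Expec{N^2}$, and the branching-property bootstrap in the lower tail are all routine, which is why we content ourselves with the reference.
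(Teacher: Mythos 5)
The paper does not prove this proposition at all: it is stated explicitly ``without proof'' as a result of Bramson, with \cite{Bramson1978} as the reference, so there is no in-paper argument to compare yours against. Your proposal is, in spirit, the same treatment --- an outline of the standard route (truncated first moment along a curved barrier plus a Girsanov tilt and a ballot estimate for the upper tail; a second-moment/Paley--Zygmund bound at a fixed level followed by a branching bootstrap for the lower tail), with the decisive barrier/ballot estimates deferred to \cite{Bramson1978}, exactly as the paper defers the whole statement. The reduction of the quantile bound to two-sided tightness, and the lower-tail bootstrap via the branching property at a fixed time $s_0$ (using $\mu(0)=0$, $n(s_0)\to\infty$ and $m(t)-m(t-s_0)\le\beta_c s_0$), are sound as sketched.

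One concrete point to fix if you ever flesh this out: in the upper tail you claim the contribution of particles whose path crosses the curved barrier $g_t$ is negligible ``using \eqref{eq:prob_under_barrier}''. That estimate only controls exceedance of the straight line $s\mapsto\beta_c s+L$, which lies $\asymp\log t$ \emph{above} the chord $s\mapsto\frac{s}{t}m(t)$ in the bulk and equals $m(t)+\Theta(\log t)$ at $s=t$; staying below $\beta_c s+L$ therefore gives no control on crossings of $g_t$, and the event that some ancestral path exceeds a chord-type barrier is not a priori negligible (at the right endpoint the chord is at height $m(t)$, which $M(t)$ exceeds with probability bounded away from $0$). In Bramson's argument this is handled by a separate first-moment estimate over the crossing time (equivalently, the statement that with high probability the paths of particles ending near $m(t)+y$ stay below the chord plus a suitable sublinear correction), which is part of the very ``technical heart'' you are citing --- so the gap is absorbed by the reference, but the justification as written would not stand on its own.
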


Recall that the distribution function of $M(t)$ is solution of the F-KPP equation \eqref{eq:f-kpp_equation} with Heaviside initial condition.
Therefore, one can deduce from \cite{KolmogorovPetrovskyPiskunov1937} that there exists a non-degenerate random variable $W$ and a function $\gamma(t)$ asymptotically equivalent to $\sqrt{2}t$ such that $M(t) - \gamma(t)$ converges in distribution to $W$.
Bramson showed in \cite{Bramson1983} that we can take $\gamma(t) = m(t)$ as defined in \eqref{eq:definition_of_m(t)}.
Based on this work, Lalley-Sellke obtained in \cite[Theorem~1]{LalleySellke1987} the following representation of the limit distribution function.

\begin{theorem}[\cite{LalleySellke1987}]\label{th:convergence_of_M(t)-m(t)}
    As $t$ goes to infinity, $M(t) - m(t)$ converges in distribution to a random variable $W$ whose distribution function has the following representation
    \begin{equation*}
        \Prob{W \leq x} = \Expec{\exp\left(-C Z_\infty \e^{-\sqrt{2}x}\right)}, \quad x \in \R,
    \end{equation*}
    where $C > 0$ is a constant and $Z_\infty > 0$ is the almost sure limit of the critical derivative martingale defined in \eqref{eq:definition_of_Z_t}.
    In other words, $W$ has the same distribution as $(G + \log(C Z_\infty))/\sqrt{2}$, where $G$ follows a standard Gumbel distribution and is independent of $Z_\infty$.
\end{theorem}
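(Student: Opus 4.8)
The statement to prove is Theorem~\ref{th:convergence_of_M(t)-m(t)}, the Lalley--Sellke representation of the limit law of $M(t) - m(t)$. The natural route, and the one I would take, passes through the F-KPP equation. Write $u(t,x) = \Prob{M(t) \leq x}$; by McKean's observation this is the solution of \eqref{eq:f-kpp_equation} with Heaviside initial datum $u_0 = \mathds{1}_{[0,\infty)}$. Bramson's convergence result (cited just above the theorem, \cite{Bramson1983}) already gives that $u(t, x + m(t)) \to w(x)$ for some distribution function $w$, where the limit is the unique (up to translation) travelling-wave solution of the ODE $\frac12 w'' + \sqrt{2} w' + \big(\sum_k \mu(k) w^k - w\big) = 0$ with $w(-\infty) = 0$, $w(+\infty) = 1$. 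So the content of the theorem is really the \emph{identification} of $w$, namely that $w(x) = \Expec{\exp(-CZ_\infty \e^{-\sqrt2 x})}$. The first step is therefore to record that this candidate function is indeed the right travelling wave; this is classical tail asymptotics for the F-KPP wave ($1 - w(x) \sim C x \e^{-\sqrt2 x}$ as $x \to \infty$), but I would instead get the representation probabilistically.

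\textbf{The probabilistic identification.} The key device is the Lalley--Sellke additive-martingale-at-the-front argument. Fix a large time $s$ and decompose the population at time $t > s$ along the particles alive at time $s$: by the branching property, conditionally on $\cF_s$,
\begin{equation*}
    \Prob{M(t) \leq x + m(t) \,\middle|\, \cF_s} = \prod_{u \in \cN(s)} u\big(t - s,\ x + m(t) - X_u(s)\big),
\end{equation*}
where $u(\cdot,\cdot)$ is again the F-KPP solution. Now let $t \to \infty$. Using $m(t) - m(t-s) \to \sqrt2 s$ and the uniform convergence $u(t-s, \cdot + m(t-s)) \to w$, each factor converges to $w\big(x + \sqrt2 s - X_u(s) + o(1)\big)$; since $1 - w(y) \sim C y \e^{-\sqrt 2 y}$ at $+\infty$ and the relevant arguments are large, $-\log w\big(x + \sqrt2 s - X_u(s)\big) \approx C\,(\sqrt2 s - X_u(s))\,\e^{-\sqrt2 x}\e^{-\sqrt2(\sqrt2 s - X_u(s))}$. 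Summing over $u \in \cN(s)$ and recognizing $\sum_{u \in \cN(s)} (\sqrt2 s - X_u(s)) \e^{-\sqrt2(\sqrt2 s - X_u(s))} = \e^{-2s}\sum_{u\in\cN(s)}(\sqrt2 s - X_u(s))\e^{\sqrt 2 X_u(s)} = -Z_s(\beta_c)$ — wait, more precisely $= \e^{(2 - \beta_c^2/1)\cdot}$, one matches this with the derivative martingale $Z_s := Z_s(\beta_c)$ from \eqref{eq:definition_of_Z_t} up to sign. Letting $s \to \infty$ and invoking $Z_s \to Z_\infty$ almost surely (the convergence quoted after Theorem~\ref{th:phase_transition_additive_martingales}), one gets $\Prob{M(t) - m(t) \leq x \mid \cF_\infty} \to \exp(-C Z_\infty \e^{-\sqrt2 x})$ in an appropriate sense; taking expectations yields the claimed formula. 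The final sentence of the theorem, rewriting $W$ as $(G + \log(CZ_\infty))/\sqrt2$ with $G$ standard Gumbel independent of $Z_\infty$, is then just the observation that $\Prob{G \leq y} = \exp(-\e^{-y})$, so that conditionally on $Z_\infty$ the law of $W$ is that of a Gumbel shifted by $\log(C Z_\infty)$ and rescaled by $1/\sqrt2$.

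\textbf{Where the difficulty lies.} The genuinely delicate point is controlling the errors when passing to the limit in the product over $\cN(s)$: the particles $u$ with $X_u(s)$ close to or above $\sqrt2 s$ contribute arguments to $w$ that are \emph{not} large, so the tail asymptotic $1 - w(y) \sim Cy\e^{-\sqrt2 y}$ cannot be applied uniformly, and one must argue that such particles are asymptotically negligible — this is exactly where \eqref{eq:domination_extremal_particle}, $M(s) - \sqrt2 s \to -\infty$, and the fine structure of the front enter. One also needs the uniform (in the spatial variable, on the relevant scale) convergence $u(t,\cdot + m(t)) \to w$, which is the technical heart of Bramson's work and which I would simply quote. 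Interchanging the $t\to\infty$ and $s\to\infty$ limits requires a uniform-integrability / monotonicity argument — monotonicity in $s$ of the conditional probabilities (a supermartingale-type property inherited from the tower structure) is the cleanest way, together with dominated convergence for the outer expectation since all quantities are bounded by $1$. These are the steps I would expect to occupy most of the proof; everything else is bookkeeping with $m(t)$, the branching property \eqref{eq:branching_property}, and the definition of $Z_t(\beta_c)$.
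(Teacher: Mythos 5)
The paper does not prove this statement at all: Theorem~\ref{th:convergence_of_M(t)-m(t)} is quoted from Lalley--Sellke \cite{LalleySellke1987} (building on Bramson's convergence $u(t, m(t)+\cdot) \to w$), so the only meaningful comparison is with the cited source's argument --- and your sketch is exactly that argument: condition on $\cF_s$, write the conditional law of $M(t)-m(t)$ as $\prod_{u \in \cN(s)} u(t-s, x+m(t)-X_u(s))$, pass to the wave $w$, use the tail $1-w(y) \sim C y \e^{-\sqrt{2}y}$ together with $M(s)-\sqrt{2}s \to -\infty$ to linearize the logarithm, recognize the critical derivative martingale, and let $s \to \infty$. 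The outline is sound, and you correctly identify where the real work lies (Bramson's uniform convergence, control of the near-frontal particles, the limit interchange). Three small points of care: first, with the paper's sign convention in \eqref{eq:definition_of_Z_t} the sum you obtain is $\sum_{u \in \cN(s)} (\sqrt{2}s - X_u(s))\e^{\sqrt{2}X_u(s)-2s} = -Z_s(\beta_c)$, i.e.\ the positive limit $Z_\infty$ of the theorem is the limit of $-Z_s(\beta_c)$ in this convention (you flagged ``up to sign'', but the garbled aside about the exponent should be cleaned up); second, the tail expansion of $w$ has a subleading term of order $\e^{-\sqrt{2}y}$ without the linear prefactor, and discarding it uses that the critical additive martingale $W_s(\beta_c)$ tends to $0$ almost surely --- this should be said explicitly rather than absorbed into ``$\approx$''; third, the interchange of the $t$ and $s$ limits does not need any monotonicity: for fixed $s$ the conditional probabilities are bounded by $1$ and converge almost surely, so bounded convergence gives $\lim_t \Prob{M(t)-m(t) \le x} = \Expec{\prod_{u \in \cN(s)} w(x+\sqrt{2}s - X_u(s))}$, a quantity independent of $s$ (the product is a bounded martingale in $s$), and a second application of bounded convergence as $s \to \infty$ yields the stated representation; the Gumbel reformulation is then immediate, as you say.
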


With the same arguments as for Corollary~\ref{cor:tightness_of_the_supercritical_additive_martingales}, we deduce the following corollary.

\begin{corollary}\label{cor:tightness_of_M(t)-m(t)}
    The process $M(t) - m(t)$ is tight.
\end{corollary}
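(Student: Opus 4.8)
The goal is to show that the family $(M(t) - m(t))_{t \geq 0}$ is tight, where $M(t)$ is the maximal displacement and $m(t) = \sqrt{2}t - \frac{3}{2\sqrt{2}}\log t$. The plan is to reproduce the argument indicated just before Corollary~\ref{cor:tightness_of_the_supercritical_additive_martingales}, but now starting from the convergence in distribution provided by Theorem~\ref{th:convergence_of_M(t)-m(t)}. Tightness of a real-valued family means that for every $\varepsilon > 0$ there is a compact interval $[-M, M]$ such that $\Prob{M(t) - m(t) \notin [-M, M]} \leq \varepsilon$ for all $t \geq 0$; since we already have convergence in distribution, the only issue is to get a uniform bound over the initial times $t$ in a fixed compact set, and for those we need both an upper tail bound (via a first-moment/many-to-one computation) and the observation that $m(t) \to -\infty$ as $t \to 0^+$, so the lower tail near $t = 0$ has to be handled too, but this is not serious since $m$ is bounded below on, say, $[1, \infty)$ and we can treat $t \in (0,1)$ separately.

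Concretely, I would proceed as follows. First, fix $\varepsilon > 0$. By Theorem~\ref{th:convergence_of_M(t)-m(t)}, $M(t) - m(t)$ converges in distribution to the random variable $W$, which is almost surely finite; hence there exist $M_0 > 0$ and $t_0 \geq 1$ such that for all $t \geq t_0$, $\Prob{|M(t) - m(t)| > M_0} \leq \varepsilon$. It remains to find $M_1$ so that $\sup_{0 \leq t \leq t_0}\Prob{|M(t) - m(t)| > M_1} \leq \varepsilon$. For the upper tail, use Markov's inequality together with the many-to-one formula \eqref{eq:many-to-one}: for any $x > 0$,
\begin{equation*}
    \Prob{M(t) \geq x} = \Prob{\sum_{u \in \cN(t)} \mathds{1}_{X_u(t) \geq x} \geq 1} \leq \Expec{\sum_{u \in \cN(t)} \mathds{1}_{X_u(t) \geq x}} = \e^{t}\, \Prob{B_t \geq x},
\end{equation*}
and a standard Gaussian tail bound $\Prob{B_t \geq x} \leq \e^{-x^2/(2t)}$ shows that, with $x = m(t) + M_1$ and $t$ restricted to the compact set $[0, t_0]$, this probability is at most $\e^{t_0}\e^{-(m(t) + M_1)^2/(2t)}$, which tends to $0$ uniformly in $t \in [0, t_0]$ as $M_1 \to \infty$ (care is only needed near $t = 0$, but there $m(t) \to -\infty$, so one should instead simply note $M(t) \leq \max_{u}\sup_{s \leq t} X_u(s)$ and bound the supremum of finitely many Brownian paths, or equivalently apply the submartingale inequality $\Prob{\sup_{s \leq t} B_s \geq x} \leq \e^{t}\Prob{B_t \geq x}$ after the change of measure used in Section~\ref{sct:phase_transition}). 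For the lower tail, on $(0, t_0]$ we have $m(t) \leq \sqrt{2}t_0 + C$ for an absolute constant $C$ (since $-\log t$ is bounded on $(0, t_0]$ only from below... actually $-\log t \to +\infty$), so here one should argue differently: since $M(t) \geq X_\varnothing(t) = B_t$ with $B_t$ a single Brownian motion, $\Prob{M(t) - m(t) < -M_1} \leq \Prob{B_t < m(t) - M_1}$, and $m(t) - M_1$ can be made arbitrarily negative uniformly over $t$ in any compact set by taking $M_1$ large if $m$ is bounded below there; on $(0,1]$ where $m(t) \to -\infty$, observe $m(t) - M_1 \leq -M_1$ once $t$ is small, giving $\Prob{B_t < m(t) - M_1} \to 0$.

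Combining, set $M = \max(M_0, M_1)$; then $\sup_{t \geq 0}\Prob{|M(t) - m(t)| > M} \leq \varepsilon$, which is exactly tightness. The main obstacle — really the only place requiring care — is the behavior near $t = 0$, where $m(t) = \sqrt{2}t - \frac{3}{2\sqrt{2}}\log t \to +\infty$, so that ``$M(t) - m(t)$ large'' near $t=0$ is automatic from the $\log t$ term blowing up to $+\infty$... wait, it blows up to $+\infty$ because $-\log t \to +\infty$, so $M(t) - m(t) \to -\infty$ near $0$. This is genuinely a problem for tightness unless the statement is implicitly taken over $t \geq t_1$ for some $t_1 > 0$, or unless one checks that $\Prob{M(t) - m(t) < -M_1}$ is controlled; fortunately it is, since $M(t) - m(t) \leq \sup_{u, s \leq t} X_u(s) - m(t)$ is also going to $+\infty$ deterministically... no, $M(t)$ itself stays near $0$ while $m(t) \to +\infty$, so $M(t) - m(t) \to -\infty$ a.s. Hence in fact one should read the corollary as asserting tightness of the family indexed by $t$ bounded away from $0$ (e.g. $t \geq 1$), which is the natural and intended reading; with that convention $m$ is continuous and bounded below on $[1, \infty)$, both tails are handled as above, and no real difficulty remains — the proof is a routine transcription of the argument preceding Corollary~\ref{cor:tightness_of_the_supercritical_additive_martingales}.
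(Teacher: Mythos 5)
Your proof is correct and follows essentially the same route as the paper, whose proof is just a pointer to the argument preceding Corollary~\ref{cor:tightness_of_the_supercritical_additive_martingales}: the convergence in distribution of Theorem~\ref{th:convergence_of_M(t)-m(t)} handles $t \geq t_0$, and a Markov/many-to-one bound handles $t$ in a compact set. Your observation that the lower tail degenerates as $t \to 0^+$ (because $m(t) \to +\infty$ there) is a genuine point the paper glosses over; reading the statement for $t$ bounded away from $0$, as you do, is the intended interpretation, and in the later applications only the upper tail of $M(t) - m(t)$ is ever used.
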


\subsection{Convergence of the extremal process}\label{sct:convergence_of_the_extremal_process}

As in Section~\ref{sct:speed_of_the_extremal_displacement_2}, we assume that the offspring distribution satisfies $\mu(0) = 0$.
Arguin-Bovier-Kistler \cite{ArguinBovierKistler2012} and Aïdékon-Berestycki-Brunet-Shi \cite{AidekonBerestyckiBrunetShi2013} independently established the convergence in distribution of the \emph{extremal process}, that is the point process associated with the branching Brownian motion seen from its tip
\begin{equation*}
    \cE_t := \sum_{u \in \cN(t)} \delta_{X_u(t) - m(t)}.
\end{equation*}

Let us state without proof a suitable consequence of their results.
Given a measure $\nu$ on some measurable space $X$ and a measurable function $f : X \to \R$ which is integrable or non-negative, we write
\begin{equation*}
    \nu(f) = \int_X f \d{\nu} = \int_X f(x) \Pd{\nu}{x}.
\end{equation*}
We denote by $\cM$ the space of Radon measures on $\R$.
We equip it with the \emph{vague topology}: a sequence $(\nu_n)_{n \geq 1}$ converges to $\nu_\infty$ in $\cM$ if and only if, for any continuous function $f : \R \to \R$ with compact support, the quantity $\nu_n(f)$ converges to $\nu_\infty(f)$ as $n$ goes to infinity.
With respect to this topology, the extremal process converges in distribution to a \emph{randomly shifted decorated Poisson point process}, \ie a Poisson point process $\cP$ where each atom is replaced by an independent copy of a certain \emph{decoration} $\cD$ shifted by the position of the atom, plus a shift depending on $Z_\infty$.

\begin{theorem}[\cite{ArguinBovierKistler2012, AidekonBerestyckiBrunetShi2013}]\label{th:convergence_of_the_extremal_process}
    The extremal process $(\cE_t)_{t \geq 0}$ converges in distribution in $\cM$ to the following point process
    \begin{equation}\label{eq:definition_of_the_limit_extremal_process}
        \cE_\infty := \sum_{i, j \geq 1} \delta_{p_i + \Delta_{ij} + \frac{1}{\sqrt{2}} \log\left(C Z_\infty\right)},
    \end{equation}
    where
    \begin{itemize}
        \item $\cP := \sum_{i} \delta_{p_i}$ is a Poisson point process on $\R$ with intensity measure $\sqrt{2} \e^{-\sqrt{2}x} \d{x}$, independent of $\cF_\infty$,
        \item the point processes $\cD_i := \sum_j \delta_{\Delta_{ij}}$, $i \geq 1$, are independent copies of an auxiliary one $\cD := \sum_j \delta_{\Delta_{j}}$ such that $\max \cD = 0$, and are independent of $\cP$ and $\cF_\infty$,
        \item $C$ is a positive constant.
    \end{itemize}
\end{theorem}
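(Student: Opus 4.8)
The statement is quoted from \cite{ArguinBovierKistler2012, AidekonBerestyckiBrunetShi2013}, so my plan is to reconstruct the two pillars of those works: (a) convergence of $\cE_t$ to \emph{some} limit, via the F-KPP equation and Bramson's analysis, and (b) identification of that limit as a randomly shifted decorated Poisson point process. First I would prove tightness of $(\cE_t)_{t\ge 0}$ in $\cM$ for the vague topology. Here the naive many-to-one bound is useless: by \eqref{eq:many-to-one}, $\Expec{\cE_t([-A,A])} = \e^t\,\Prob{B_t - m(t) \in [-A,A]}$, which with $m(t)$ as in \eqref{eq:definition_of_m(t)} is of order $t$, the logarithmic correction being precisely what the first moment fails to see. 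The remedy is to restrict the sum to particles that stay below a curved barrier $s\mapsto \tfrac st m(t) + O(\min(s,t-s)^{\delta})$: a ballot-type estimate gives a bound of order $O(1)$ for the truncated first moment, while (the proof of) Proposition~\ref{prop:speed_extremal_particle_2} controls the probability that some extremal particle crosses the barrier. This yields $\sup_{t\ge 1}\Prob{\cE_t([-A,A]) > k}\to 0$ as $k\to\infty$ for every $A$, hence tightness. To produce a candidate limit I would pass to Laplace functionals: for $\phi:\R\to[0,\infty)$ continuous with compact support, a first-branching decomposition expresses $\Expec{\e^{-\cE_t(\phi)}}$ through the solution $v$ of the F-KPP equation \eqref{eq:f-kpp_equation} with a bounded initial datum determined by $1-\e^{-\phi}$, evaluated near the front $x=m(t)$.

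Since $\phi$ has compact support, the relevant initial datum decays faster than $\e^{-\beta_c x}$, so Bramson's convergence theorem \cite{Bramson1983} applies and $v(t,\cdot+m(t))$ converges to a traveling-wave profile. Running the Lalley--Sellke argument \cite{LalleySellke1987} with the derivative martingale $Z_t$ and its almost sure limit $Z_\infty$ (from Section~\ref{sct:phase_transition}) upgrades this to
\begin{equation*}
    \lim_{t\to\infty}\Expec{\e^{-\cE_t(\phi)}} = \Expec{\exp\!\bigl(-\Lambda(\phi)\,Z_\infty\bigr)}
\end{equation*}
for a functional $\Lambda$ of $\phi$ with values in $(0,\infty)$; letting $\phi$ degenerate to the case corresponding to the law of the maximum recovers Theorem~\ref{th:convergence_of_M(t)-m(t)} and relates $\Lambda$ to the constant $C$ there. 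Together with tightness this gives $\cE_t\to\cE_\infty$ in distribution in $\cM$, with $\Expec{\e^{-\cE_\infty(\phi)}}=\Expec{\exp(-\Lambda(\phi)Z_\infty)}$.

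The hard step is to show that $\phi\mapsto\Lambda(\phi)$ has exactly the form forced by a randomly shifted decorated PPP, as in \eqref{eq:definition_of_the_limit_extremal_process}. I see two routes. The \emph{functional-equation route} (following \cite{ArguinBovierKistler2012}) uses the branching property at a late time $t-r$ to derive an exact invariance — a fixed-point/superposition-stability equation — for the law of $\cE_\infty$, and then invokes the structure theorem for such point processes, namely that they are decorated PPPs with exponential intensity $\sqrt2\,\e^{-\sqrt2 x}\d{x}$, the random shift being $\cF_\infty$-measurable and pinned down to $\tfrac1{\sqrt2}\log(CZ_\infty)$ by comparison with Theorem~\ref{th:convergence_of_M(t)-m(t)}. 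The \emph{genealogical route} (following \cite{AidekonBerestyckiBrunetShi2013}) instead uses a spinal decomposition to show that the ancestors at time $t-r$ of the particles contributing to $\cE_t$ are $O_r(1)$ in number, lie near $m(t)-\sqrt2\,r$, and converge after the correct rescaling to a Cox process directed by $Z_\infty\,\sqrt2\,\e^{-\sqrt2 x}\d{x}$; conditionally on these ancestors, each independently grows a cluster over the remaining time $r$, whose top-shifted law converges as $r\to\infty$ to the decoration $\cD$ normalized by $\max\cD=0$. Either way, the delicate inputs are the uniform-in-$t$ barrier estimates of the first part, the statement that no extremal ancestor branches at intermediate times, and the identification of the constant $C$; once these are in hand, the Poisson and independence structure in \eqref{eq:definition_of_the_limit_extremal_process} follows from the branching property. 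In continuous time I would expect the genealogical route to be the more transparent, at the price of the technical genealogical lemmas flagged in the introduction.
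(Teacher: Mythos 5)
The paper does not prove this theorem: it is imported verbatim from \cite{ArguinBovierKistler2012, AidekonBerestyckiBrunetShi2013} and explicitly stated without proof, so there is no in-paper argument to compare yours against. Judged on its own terms, your sketch is a faithful roadmap of the two known strategies, and its quantitative remarks are sound: the naive many-to-one bound $\e^t\,\Prob{B_t - m(t)\in[-A,A]}$ is indeed of order $t$ because of the $\frac{3}{2\sqrt 2}\log t$ correction, the curved-barrier truncation with ballot-type estimates is the standard cure, the McKean/F-KPP representation of the Laplace functional with initial datum built from $1-\e^{-\phi}$ is correct, and Bramson plus the Lalley--Sellke derivative-martingale argument does yield a limit of the form $\Expec{\exp(-\Lambda(\phi)Z_\infty)}$, consistent with \eqref{eq:laplace_transform_limit_extremal_process}. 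That said, what you have written is a survey-level outline rather than a proof: every point where the real difficulty sits --- the uniform-in-$t$ barrier and localization estimates, Bramson's traveling-wave convergence for the relevant class of initial data, the passage from convergence of Laplace functionals to the precise decorated-PPP form (via the superposability/structure theorem in the \cite{ArguinBovierKistler2012} route, or via the convergence of the ancestor process to a Cox process directed by $C Z_\infty\sqrt 2\,\e^{-\sqrt 2 x}\d{x}$ and of the clusters to $\cD$ in the \cite{AidekonBerestyckiBrunetShi2013} route) --- is invoked from the literature rather than carried out. Given that the thesis itself treats this as an external input, that is a reasonable stance, but you should present it as such and not as a self-contained proof; if you wanted to make any part rigorous within this framework, the tightness argument via the truncated first moment (essentially Proposition~\ref{prop:first_moment_estimate_extremal_process}) is the one piece that could realistically be written out in full.
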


\begin{remark}\label{rem:limit_of_the_extremal_process}
    Since $\cP([0, \infty))$ is almost surely finite and $\max \cD = 0$, the atoms of $\cE_\infty =: \sum_k \delta_{\xi_k}$ almost surely admit an upper bound in $\R$.
    Moreover, since $\cE_\infty$ is a Radon measure, the set of its atoms has no accumulation point.
    We can then assume without loss of generality that they are arranged in non-increasing order $\xi_1 \geq \xi_2 \geq \ldots$
    Note that this also implies that $\cE_\infty([0, \infty))$ is almost surely finite.
\end{remark}

\begin{remark}
    Let $f : \R \to \R_+$ be continuous with compact support.
    The Laplace transform of $\cE_\infty$ is given by
    \begin{equation}\label{eq:laplace_transform_limit_extremal_process}
        \Expec{\e^{-\cE_\infty(f)}} = \Expec{\exp\left(-C Z_\infty \int \Expec{1 - \e^{-\int f(y + z) \Pd{\cD}{z}}} \sqrt{2} \e^{-\sqrt{2} y} \d{y}\right)}.
    \end{equation}
    Indeed, by conditioning on $\cP$ and $Z_\infty$, and by dominated convergence, we obtain
    \begin{align*}
        \Expec{\e^{-\cE_\infty(f)}} &= \Expec{\exp \sum_{i \geq 1} \log \condExpec{\e^{-\int f\left(p_i + z + \frac{1}{\sqrt{2}} \log(C Z_\infty)\right) \Pd{\cD}{z}}}{\cP, Z_\infty}} \\
        &= \E\condExpec{\exp\left(-\int \phi\left(y + \frac{1}{\sqrt{2}}\log(C Z_\infty)\right) \Pd{\cP}{y}\right)}{Z_\infty},
    \end{align*}
    where
    \begin{equation*}
        \phi(y) := -\log \Expec{\e^{-\int f(y + z) \Pd{\cD}{z}}}.
    \end{equation*}
    To conclude, it is then sufficient to apply the exponential formula for Poisson point processes (see \eg \cite[Section~0.5]{Bertoin1996}).
\end{remark}

The convergence to a limiting point process
was already implicit in a physics paper by Brunet-Derrida \cite{BrunetDerrida2009}.
The main result of \cite{ArguinBovierKistler2012, AidekonBerestyckiBrunetShi2013} is then a complete characterization of the decoration $\cD$.
In this thesis, we decide not to present their two constructions of $\cD$.
However, it will be useful to have a minimum of information about its structure.
The following proposition gives an approximation of the asymptotic mean number of its atoms at height $-x$ or above.
It is due to Cortines-Hartung-Louidor \cite[Proposition~1.5]{CortinesHartungLouidor2019}.
The authors only treated the binary branching $\mu = \delta_2$ but, in their own words, the extension to branching Brownian motion with a general offspring distribution requires only minor changes in their proofs.
We therefore believe that the following proposition and its corollary hold as soon as the offspring distribution satisfies the generic condition \eqref{eq:assumption_offspring_distribution}.

\begin{proposition}[\cite{CortinesHartungLouidor2019}]\label{prop:asymptotic_mean_number_decoration}
    Assume that $\mu = \delta_2$. There exists a constant $C_\star > 0$ such that $\E\cD([-x, 0]) \sim C_\star \e^{\beta_c x}$ as $x$ goes to infinity.
\end{proposition}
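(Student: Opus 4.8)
Since the statement is quoted from \cite{CortinesHartungLouidor2019}, we only outline the strategy one would follow to prove it. The plan is to combine an explicit description of the decoration point process $\cD$ with the many-to-one formula \eqref{eq:many-to-one} and a renewal-type asymptotic; the binary branching assumption $\mu = \delta_2$ is used throughout, as it is what makes the description of $\cD$ fully explicit.

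\textbf{A tractable description of $\cD$.} First I would invoke the construction of the decoration due to Aïdékon-Berestycki-Brunet-Shi \cite{AidekonBerestyckiBrunetShi2013}: normalising so that $\max \cD = 0$, the point process $\cD$ is the scaling limit of the branching Brownian motion seen from its tip, restricted to the particles that are genealogically close to the highest one (the ``cluster'' of the tip). Equivalently, $\cD$ has a spinal representation: there is a spine trajectory issued from the tip and drifting downward at asymptotic speed $\beta_c$, along which independent copies of ordinary branching Brownian motions are grafted at unit rate, each conditioned (or tilted) never to overtake the spine-tip; then $\cD$ is the superposition of the particles of all these grafted subtrees, with the spine-tip placed at $0$.

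\textbf{Reduction of the first moment to a Brownian estimate.} Writing $\cD([-x,0]) = 1 + (\text{contributions of the grafted subtrees})$, where the $1$ counts the spine-tip itself, I would condition on the spine and apply the many-to-one formula to each grafted subtree. A subtree grafted at ``co-distance'' $s$ from the tip has, from the tip's viewpoint, been running for time $s$ from a point lying at height of order $-\beta_c s$, so its expected number of atoms in $[-x,0]$ is $\e^{s}$ times the probability that an independent Brownian motion, started near $-\beta_c s$, sits in $[-x,0]$ at time $s$ while remaining below $0$ on $[0,s]$ --- a ballot-type quantity of order $s^{-3/2}$ after the barrier is accounted for. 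Averaging over the (explicit) law of the spine and integrating over $s\in(0,\infty)$ would give
\begin{equation*}
    \Expec{\cD([-x,0])} = 1 + \int_0^\infty \rho_s(x)\d{s}
\end{equation*}
for an explicit $\rho_s(x)$ built from Gaussian densities and first-passage probabilities. The Gaussian tilt in $\rho_s(x)$ --- the cost of a Brownian motion travelling a distance $\approx x$ below its typical position --- produces exactly the factor $\e^{\beta_c x}$, uniformly in $s$, while the remaining $s$-dependence (the rate term $\e^s$, the Gaussian normalisation and the ballot factor) assembles, in the limit $x\to\infty$, into a convergent integral that no longer depends on $x$; thus $\Expec{\cD([-x,0])} \sim C_\star\,\e^{\beta_c x}$ with $C_\star = \int_0^\infty \bigl(\lim_{x\to\infty}\e^{-\beta_c x}\rho_s(x)\bigr)\d{s}$. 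One may equivalently phrase this last step as a renewal theorem for the spatial displacement accumulated along the cluster after exponential tilting by $\e^{\beta_c\,\cdot}$.

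\textbf{The main obstacle.} The exponential rate is, as the heuristic shows, forced by a single Gaussian large-deviation estimate; the real difficulty lies in two interchanges of limits. First, one must genuinely isolate $\cD$ from the full limiting extremal process $\cE_\infty$: the intensity measure of $\cE_\infty$ is infinite on every half-line $[-a,\infty)$ --- as one sees by linearising \eqref{eq:laplace_transform_limit_extremal_process}, whose coefficient $\int \Expec{\cD(\mathrm{d}z)}\,\e^{\beta_c z}$ is infinite precisely because of the asymptotics being proved --- so $\Expec{\cD([-x,0])}$ cannot be extracted from $\cE_\infty$ and one must work with the genealogically-restricted cluster directly, which is where the Aïdékon-Berestycki-Brunet-Shi construction together with Bramson-type barrier estimates enter. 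Second, the exchange of $\lim_{x\to\infty}$ with the $s$-integral and with the expectation over the spine must be justified by a dominated-convergence bound, and it is in producing such a bound --- and in verifying that the constant $C_\star$ it yields is finite and strictly positive --- that the substance of the proof resides. An alternative route, avoiding the spinal picture, would encode $\Expec{\cD([-x,0])}$ through the F-KPP equation \eqref{eq:f-kpp_equation} by writing an appropriate Laplace functional of $\cE_t$ as $1 - u(t,\cdot)$, differentiating in the Laplace parameter and invoking Bramson's sharp traveling-wave asymptotics; but the subtraction needed to regularise the infinite first moment makes that bookkeeping at least as delicate.
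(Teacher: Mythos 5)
The paper itself offers no proof of this proposition: it is stated as a quoted result with a pointer to \cite[Proposition~1.5]{CortinesHartungLouidor2019}, so deferring to that reference, as you do, is exactly what the thesis does. Your outline of how the proof goes there --- describing the decoration via the cluster/spinal picture inherited from \cite{AidekonBerestyckiBrunetShi2013}, reducing $\Expec{\cD([-x,0])}$ to first-moment computations with ballot-type barrier estimates along the spine, and extracting the factor $\e^{\beta_c x}$ from a Gaussian tilt followed by a dominated-convergence or renewal argument --- is consistent with the strategy of that paper, and your observation that one cannot read the asymptotics off $\cE_\infty$ directly (its intensity on half-lines being infinite) correctly identifies why one must work with the genealogically restricted cluster. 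As you acknowledge, however, this remains a sketch rather than a proof, so the honest comparison is simply that both you and the paper ultimately rest on the citation.
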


\begin{corollary}\label{cor:asymptotic_mean_number_decoration}
    Assume that $\mu = \delta_2$ and $\beta > \beta_c$.
    Then, $\Expec{\sum_j \e^{\beta \Delta_j}} < \infty$.
    Furthermore, the sum $\sum_k \e^{\beta \xi_k}$ is almost surely finite and satisfies
    \begin{equation}\label{eq:rewriting_sum_ppp}
        \sum_{k \geq 1} \e^{\beta \xi_k} \overset{d}{=} \left(C' Z_\infty\right)^{\beta/\beta_c} S,
    \end{equation}
    where $S := \sum_i \e^{\beta p_i}$ is a non-degenerate $\beta_c/\beta$-stable random variable independent of $Z_\infty$ and $C'$ is a positive constant.
\end{corollary}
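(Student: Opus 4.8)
The plan is to split the argument into three parts: (i) the summability of $\sum_j \e^{\beta \Delta_j}$ over a single decoration, (ii) the almost sure finiteness of $\sum_k \e^{\beta \xi_k}$, and (iii) the identification of its law as a stable random variable scaled by a power of $Z_\infty$.

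For part (i), I would use Proposition~\ref{prop:asymptotic_mean_number_decoration} together with the fact that $\max \cD = 0$, so all atoms $\Delta_j$ are non-positive. Writing $N(x) := \cD([-x, 0])$, we have $\E N(x) \sim C_\star \e^{\beta_c x}$, hence there is a constant $C'' > 0$ with $\E N(x) \le C'' \e^{\beta_c x}$ for all $x \ge 0$. Then, by a standard layer-cake / summation-by-parts argument,
\begin{equation*}
    \Expec{\sum_j \e^{\beta \Delta_j}} = \Expec{\int_0^\infty \beta \e^{-\beta x} N(x) \d{x}} = \int_0^\infty \beta \e^{-\beta x} \E N(x) \d{x} \le \beta C'' \int_0^\infty \e^{-(\beta - \beta_c) x} \d{x} < \infty,
\end{equation*}
the integral converging precisely because $\beta > \beta_c$. (One uses Tonelli's theorem to exchange expectation and integral, everything being non-negative.)

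For parts (ii) and (iii), recall from Theorem~\ref{th:convergence_of_the_extremal_process} that $\cE_\infty = \sum_{i,j} \delta_{p_i + \Delta_{ij} + \frac{1}{\sqrt2}\log(CZ_\infty)}$, so
\begin{equation*}
    \sum_k \e^{\beta \xi_k} = (CZ_\infty)^{\beta/\beta_c} \sum_{i \ge 1} \e^{\beta p_i} \Bigl( \sum_j \e^{\beta \Delta_{ij}} \Bigr).
\end{equation*}
Condition on $Z_\infty$ (equivalently on $\cF_\infty$); then $\cP = \sum_i \delta_{p_i}$ is a Poisson point process on $\R$ with intensity $\sqrt2 \e^{-\sqrt2 x}\d{x}$ and the $\cD_i$ are i.i.d.\ copies of $\cD$, independent of $\cP$. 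Each atom $p_i$ is marked by the independent weight $R_i := \sum_j \e^{\beta \Delta_{ij}}$, which by part (i) has finite mean $m := \Expec{\sum_j \e^{\beta \Delta_j}} < \infty$. By the marking theorem, $\sum_i \delta_{(p_i, R_i)}$ is a Poisson point process on $\R \times (0,\infty)$, and pushing forward under $(p, r) \mapsto r \e^{\beta p}$ shows that $\sum_i R_i \e^{\beta p_i}$ is the total mass of a Poisson point process on $(0,\infty)$ whose intensity I compute via the change of variables $y = r\e^{\beta p}$: for fixed $r$, the image of $\sqrt2 \e^{-\sqrt2 p}\d{p}$ under $p \mapsto r\e^{\beta p}$ is $\frac{\sqrt2}{\beta} r^{\beta_c/\beta} y^{-1-\beta_c/\beta}\d{y}$. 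Integrating over the law of $R$ gives a Lévy measure of the form $\kappa\, y^{-1-\alpha}\d{y}$ on $(0,\infty)$ with $\alpha = \beta_c/\beta \in (0,1)$ and $\kappa = \frac{\sqrt2}{\beta}\Expec{R^{\alpha}}$; one checks $\Expec{R^\alpha} < \infty$ since $\alpha < 1 \le$ the exponent making $\Expec{R}$ finite (indeed $\Expec{R^\alpha} \le \Expec{R}^\alpha$ by Jensen, or directly by the same layer-cake bound). This is exactly the Lévy measure of a positive $\alpha$-stable random variable; in particular the sum is almost surely finite, which (after multiplying by the a.s.\ finite factor $(CZ_\infty)^{\beta/\beta_c}$) gives the a.s.\ finiteness of $\sum_k \e^{\beta \xi_k}$. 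Finally, since $S := \sum_i \e^{\beta p_i}$ is itself the total mass of a Poisson point process with intensity $\frac{\sqrt2}{\beta} y^{-1-\alpha}\d{y}$ (the case $R \equiv 1$), it is $\alpha$-stable, and the computation above shows $\sum_i R_i \e^{\beta p_i} \overset{d}{=} \Expec{R^\alpha}^{1/\alpha} S$; absorbing the constant $\Expec{R^\alpha}^{1/\alpha} C^{\beta/\beta_c}$ into a new constant $(C')^{\beta/\beta_c}$ and using the conditional independence of $S$ from $Z_\infty$ yields \eqref{eq:rewriting_sum_ppp}.

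The main obstacle is the bookkeeping in part (iii): carefully justifying that the decorated-then-exponentiated point process is again Poisson with the claimed stable Lévy measure, i.e.\ correctly applying the marking and mapping theorems for Poisson point processes and verifying the finiteness of $\Expec{R^\alpha}$ so that the resulting intensity integrates $y \wedge 1$ near $0$ and is finite away from $0$. The stability of $S$ and the fact that summing i.i.d.\ stable-type contributions rescales by a power are then routine, but the identification of the scaling constant and the verification that no drift or truncation term appears (which is why $\alpha < 1$ is essential) should be stated explicitly.
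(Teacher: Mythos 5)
Your proof is correct, and the first part (the layer-cake/Fubini--Tonelli bound for $\Expec{\sum_j \e^{\beta \Delta_j}}$ using Proposition~\ref{prop:asymptotic_mean_number_decoration}) is identical to the paper's. For the main step you take a genuinely different route: the paper writes $\sum_k \e^{\beta \xi_k} = (C Z_\infty)^{\beta/\beta_c}\sum_i \e^{\beta(p_i + X_i)}$ with $X_i = \beta^{-1}\log\sum_j \e^{\beta\Delta_{ij}}$, invokes \cite[Proposition~8.7.a]{BolthausenSznitman2002} to replace the i.i.d.\ displacements $X_i$ by the deterministic shift $\beta_c^{-1}\log\Expec{\e^{\beta_c X_1}}$ (after checking $\Expec{\e^{\beta_c X_1}}<\infty$, which is your $\Expec{R^\alpha}<\infty$), proves almost sure finiteness of $S=\sum_i \e^{\beta p_i}$ by a direct first-moment argument, and only then establishes $\alpha$-stability of $S$ by computing its Laplace transform through the exponential formula and citing \cite[Theorems~24.11 and 14.3]{Sato1999}. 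You instead apply the marking and mapping theorems to realize $\sum_i R_i \e^{\beta p_i}$ directly as the total mass of a Poisson random measure on $(0,\infty)$ with intensity $\alpha\Expec{R^\alpha}\,y^{-1-\alpha}\d{y}$, $\alpha=\beta_c/\beta<1$, which yields the a.s.\ finiteness (integrability of $y\wedge 1$ against this measure), the stability, and the scaling identity $\sum_i R_i\e^{\beta p_i}\overset{d}{=}\Expec{R^\alpha}^{1/\alpha}S$ in one Poissonian computation; in effect you reprove the Bolthausen--Sznitman invariance in this special case rather than citing it. The paper's route is shorter given its references; yours is more self-contained and makes explicit why no drift or truncation term appears (precisely because $\alpha<1$), and your constant $\Expec{R^\alpha}^{1/\alpha}=\Expec{\e^{\beta_c X_1}}^{\beta/\beta_c}$ matches the paper's $C'=C\,\Expec{\e^{\beta_c X_1}}$. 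The only points worth stating explicitly in a final write-up are the appeal to Campbell's theorem (or the identification with a stable subordinator) for the a.s.\ finiteness, and the fact that, conditionally on $Z_\infty$, the pair $(\cP,(\cD_i)_{i\geq 1})$ has the same law as unconditionally, which is what makes $S$ independent of $Z_\infty$.
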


\begin{remark}
    As a matter of fact, one can show that \eqref{eq:rewriting_sum_ppp} is also the limit distribution appearing in Proposition~\ref{prop:convergence_of_the_supercritical_additive_martingales} (see Remark~\ref{rem:bonnefont_on_extremal_process}).
\end{remark}

\begin{proof}[Proof of Corollary~\ref{cor:asymptotic_mean_number_decoration}]
    As done in \cite[Lemma~3.1]{Bonnefont2022}, we rewrite
    \begin{equation*}
        \Expec{\sum_{j \geq 1} \e^{\beta \Delta_j}} = \E\int_0^\infty \cD([-x, 0]) \beta \e^{-\beta x} \d{x}.
    \end{equation*}
    Applying the Fubini-Tonelli theorem and Proposition~\ref{prop:asymptotic_mean_number_decoration}, we see that the above quantity is finite.
    Now, let us rewrite
    \begin{equation}\label{eq:rewriting_sum_exp}
        \sum_{k \geq 1} \e^{\beta \xi_k} = (C Z_\infty)^{\beta/\beta_c} \sum_{i \geq 1} \e^{\beta(p_i + X_i)},
    \end{equation}
    where $X_i := \beta^{-1} \log \sum_j \e^{\beta \Delta_{ij}}$.
    The random variables $X_i$, $i \geq 1$ are independent and identically distributed.
    Furthermore, they are independent of $\cP$ and satisfy
    \begin{equation*}
        \Expec{\e^{\beta_c X_1}} = \Expec{\left(\sum_{j \geq 1} \e^{\beta \Delta_j}\right)^{\beta_c/\beta}} \leq \Expec{\sum_{j \geq 1} \e^{\beta \Delta_j}} < \infty.
    \end{equation*}
    By \cite[Proposition~8.7.a]{BolthausenSznitman2002},
    \begin{equation}\label{eq:rewriting_poisson_distribution}
        \sum_{i \geq 1} \delta_{p_i + X_i} \overset{d}{=} \sum_{i \geq 1} \delta_{p_i + \frac{1}{\beta_c} \log \Expec{\e^{\beta_c X_1}}}.
    \end{equation}
    In view of \eqref{eq:rewriting_sum_exp} and \eqref{eq:rewriting_poisson_distribution}, we have $\sum_k \e^{\beta \xi_k} < \infty$ almost surely if and only if $\sum_i \e^{\beta p_i} < \infty$ almost surely.
    By computing the expectations, we see that both the number of atoms of $\cP$ inside $[0, \infty)$ and the sum $\sum_i \e^{\beta p_i} \mathds{1}_{p_i < 0}$ are almost surely finite.
    Hence, $\sum_i \e^{\beta p_i} < \infty$ almost surely.
    Besides, since $Z_\infty$ is independent of $\cP$ and $\cD_i$, $i \geq 1$, \eqref{eq:rewriting_sum_exp} and \eqref{eq:rewriting_poisson_distribution} yield
    \begin{equation*}
        \sum_{k \geq 1} \e^{\beta \xi_k} \overset{d}{=} \left(C Z_\infty \Expec{\e^{\beta_c X_1}}\right)^{\beta/\beta_c} \sum_{i \geq 1} \e^{\beta p_i}.
    \end{equation*}
    
    It remains to show that $S := \sum_i e^{\beta p_i}$ is $\alpha$-stable with $\alpha := \beta_c/\beta$.
    Let us fix $\lambda > 0$ and compute the associated Laplace transform.
    Applying successively the exponential formula for Poisson point processes (see \eg \cite[Section~0.5]{Bertoin1996}) and the change of variable $y = \e^{\beta x}$, we obtain
    \begin{align*}
        \Expec{\exp\left(-\lambda \sum_{i \geq 1} \e^{\beta p_i}\right)} &= \exp\left(-\int_\R (1 - \e^{-\lambda \e^{\beta x}}) \sqrt{2} \e^{-\sqrt{2} x} \d{x}\right) \\
        &= \exp\left(-\alpha \int_0^\infty (1 - \e^{-\lambda y}) y^{-1-\alpha} \d{y}\right).
    \end{align*}
    By \cite[Theorem~24.11]{Sato1999}, this is the Laplace transform of $X_1$, where $(X_t)_{t \geq 0}$ is a subordinator with and drift $\gamma_0 = 0$ and Lévy measure
    \begin{equation*}
        \Pd{\nu}{x} = \alpha x^{-1-\alpha} \mathds{1}_{x > 0} \d{x}.
    \end{equation*}
    By \cite[Theorem~14.3]{Sato1999}, $S$ is $\alpha$-stable.
\end{proof}

The following Proposition~\ref{prop:first_moment_estimate_extremal_process} corresponds to \cite[Lemma~4.2]{CortinesHartungLouidor2019} by Cortines-Hartung-Louidor.
It concerns the first moment estimate of $\cE_t([x, \infty))$, where $x \in \R$.
Using the many-to-one formula \eqref{eq:many-to-one}, we see that this first moment blows up as $t$ goes to infinity.
Therefore, we must consider a truncation event.
This is the role of $\{M(t) - m(t) \leq y\}$, $y \geq x$.
As mentioned above, the authors only treated the binary branching but their arguments should be extended to our generic assumption \eqref{eq:assumption_offspring_distribution}.

\begin{proposition}[\cite{CortinesHartungLouidor2019}]\label{prop:first_moment_estimate_extremal_process}
    Assume that $\mu = \delta_2$. There exists a constant $C'' > 0$ such that for all $t \geq 0$ and $x \leq y$,
    \begin{equation}\label{eq:first_moment_estimate_extremal_process}
        \Expec{\cE_t([x, \infty)) \mathds{1}_{M(t) - m(t) \leq y}} \leq C'' \e^{-\beta_c x} (y - x + 1)(y^+ + 1)(\e^{-x^2/4t} + \e^{x/2}),
    \end{equation}
    where $y^+ := \max(y, 0)$.
\end{proposition}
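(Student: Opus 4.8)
The plan is to run the classical first-moment-with-barrier argument for level sets of branching Brownian motion, in the spirit of Bramson~\cite{Bramson1983} and Aïdékon–Berestycki–Brunet–Shi~\cite{AidekonBerestyckiBrunetShi2013}; this is the route followed in~\cite{CortinesHartungLouidor2019}. The naive bound, ignoring the truncation, fails: by the many-to-one formula~\eqref{eq:many-to-one}, $\Expec{\cE_t([x,\infty))} = \e^t\Prob{B_t \geq m(t)+x} \asymp t\,\e^{-\beta_c x}$ for moderate $x$, which is too large by a factor $t$. The truncation event $\{M(t)-m(t)\leq y\}$ is precisely what removes this factor, through a ballot-type estimate.

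First I would fix a slightly concave barrier $\gamma:[0,t]\to\R$ interpolating between $\gamma_0$, of size $O(y^++1)$, and $\gamma_t = m(t)+y$, with a logarithmic dip in the middle (for instance $\gamma_s = \frac{t-s}{t}\gamma_0 + \frac{s}{t}(m(t)+y) - C\log\!\big(\tfrac{s(t-s)}{t}+1\big)$, or a similar choice), and let $E$ be the event that $X_w(s)\leq\gamma_s$ for every $s\in[0,t]$ and every $w\in\cN(s)$. Writing $\cE_t^{E}([x,\infty))$ for the number of $u\in\cN(t)$ with $X_u(t)-m(t)\in[x,y]$ whose ancestral trajectory stays below $\gamma$, the key reduction is
$$\Expec{\cE_t([x,\infty))\mathds{1}_{M(t)-m(t)\leq y}} \leq \Expec{\cE_t^{E}([x,\infty))} + (\text{error}),$$
where the error counts high particles whose lineage exits $\gamma$. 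To control it one decomposes over the last ancestor $w$ of such a particle to lie above $\gamma$, applies the branching property at that time, and uses that on $\{M(t)-m(t)\leq y\}$ all descendants of $w$ at time $t$ stay below $m(t)+y$: since $w$ is above $\gamma$, this forces the independent maximum of the subtree rooted at $w$ to undershoot its typical value by at least the gap built into $\gamma$, an event of super-exponentially small probability by the lower-deviation estimate $\Prob{M(r)-m(r)\leq -a}\leq C\exp(-c\e^{\beta_c a})$, uniformly in $r$. Choosing the barrier gap large enough (still $O(y^++1)$) makes the error at most the right-hand side of~\eqref{eq:first_moment_estimate_extremal_process}.

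Next I would apply the many-to-one formula~\eqref{eq:many-to-one} to the barrier-constrained count,
$$\Expec{\cE_t^{E}([x,\infty))} = \e^t\,\Prob{B_t - m(t)\in[x,y],\ B_s\leq\gamma_s\ \forall s\in[0,t]},$$
and estimate this Brownian probability. Conditioning on the endpoint and using that, given $B_0=0$ and $B_t$, the process is a Brownian bridge, a standard ballot/reflection estimate bounds the probability that the bridge stays below $\gamma$ by $C\,\frac{(\gamma_0+1)(\gamma_t-B_t+1)}{t} \asymp C\,\frac{(y^++1)(y-z+1)}{t}$, with $z:=B_t-m(t)\in[x,y]$. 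Integrating against the Gaussian density of $B_t$ and using $\e^t\cdot\frac{1}{\sqrt{2\pi t}}\e^{-(m(t)+z)^2/2t}\asymp t\,\e^{-\beta_c z}\e^{-z^2/2t}$ (valid while $z\ll t$), the factor $t$ cancels the $1/t$ from the ballot bound, leaving
$$C\,(y^++1)\int_x^y (y-z+1)\,\e^{-\beta_c z}\e^{-z^2/2t}\,\d{z} \leq C\,(y^++1)(y-x+1)\,\e^{-\beta_c x}\e^{-x^2/4t},$$
where replacing $\e^{-x^2/2t}$ by $\e^{-x^2/4t}$ leaves ample slack to absorb the $O(1)$ constants and the logarithmic corrections in $m(t)$ and $\gamma$. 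The remaining regime, $x$ very negative (of order $-t$ or below, where $\cE_t([x,\infty))$ is essentially all of $\cN(t)$), is handled by the trivial bound $\Expec{\cE_t([x,\infty))}\leq\e^t$ together with $\e^t\leq C\,\e^{-\beta_c x}\e^{x/2}$ in that range; this produces the $\e^{x/2}$ term, which dominates $\e^{-x^2/4t}$ exactly when $x\lesssim-2t$.

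The main obstacle is the bookkeeping in the reduction of the second paragraph: producing a single barrier $\gamma$ that simultaneously (i) is crossed with only super-exponentially small probability by the ancestral line of any particle surviving the truncation, with all estimates uniform in $t$, $x$ and $y$, and (ii) is low enough and smooth enough that the ballot estimate still delivers the sharp prefactors $(y-x+1)(y^++1)$. Everything else is a routine, if lengthy, Brownian computation, and the slack between $\e^{-x^2/2t}$ and $\e^{-x^2/4t}$ means no step needs to be pushed to optimality.
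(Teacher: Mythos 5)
First, a point of reference: the paper does not prove this proposition. It is stated as a quotation of \cite[Lemma~4.2]{CortinesHartungLouidor2019}, accompanied only by the remark that the truncation event is what tames the first moment; there is no in-paper argument to compare yours against. Measured against the standard proof in the cited literature, your architecture is the right one — many-to-one along a barrier, a ballot estimate for the Brownian bridge to cancel the spurious factor $t$, and the trivial bound $\Expec{\cE_t([x,\infty))}\le\e^t$ producing the $\e^{x/2}$ term for $x$ of order $-t$ — and your main-term computation and endgame regime-splitting are sound.

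The genuine gap is in the second paragraph, the control of barrier-crossers. The estimate you invoke, $\Prob{M(r)-m(r)\le -a}\le C\exp(-c\e^{\beta_c a})$, is false: with probability at least $c\,\e^{-a/\sqrt 2}$ the initial particle does not branch before time $a/\sqrt 2$ and has nonpositive displacement there, after which tightness of $M(\cdot)-m(\cdot)$ (Corollary~\ref{cor:tightness_of_M(t)-m(t)}) yields $M(t)\le m(t)-a+O(1)$ with probability bounded below; so the left tail is only exponentially small in $a$. (A doubly exponential tail would force $Z_\infty$ in Theorem~\ref{th:convergence_of_M(t)-m(t)} to be bounded away from $0$, which it is not.) This matters quantitatively: near $s=t$ the barrier must descend to $m(t)+y$, which sits $\frac{3}{2\sqrt2}\log t$ below the critical line $\sqrt2 s$, so the expected number of crossers there is of order $t$, and a single exponentially small lower-deviation factor applied at the (last) crossing time does not remove that factor. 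The arguments that do work use either Bramson's curve-crossing/F-KPP estimates, or the spinal decomposition in which \emph{every} branch point where the spine sits above the critical curve contributes an independent factor $\Prob{M(r)\le m(r)-h}\le 1-c$, the product over the excursion supplying the decay — together with the first-crossing ballot constraint on the pre-crossing segment. Relatedly, your barrier bends the wrong way: a downward logarithmic dip makes mid-interval crossings typical rather than rare; one needs an upward correction of at least $\frac{3}{2\sqrt2}\log$ of the distance to the endpoints so that a crossing forces the subtree maximum above $m(t)+y$. These defects are repairable, but as written the reduction to the barrier-constrained count is not justified.
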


\subsection{Joint convergence of the extremal process and the additive martingales}

In \cite{AidekonBerestyckiBrunetShi2013}, Aïdékon-Berestycki-Brunet-Shi showed that the pair $(\cE_t, Z_t)$ converges jointly in distribution to $(\cE_\infty, Z_\infty)$ as $t$ goes to infinity.
Thanks to Proposition~\ref{prop:convergence_extremal_process_cond_F_s} below, one can also obtain the convergence of the extremal process jointly with other functionals of the branching Brownian motion, \eg the additive martingales.
The latter will be useful in Section~\ref{sct:renormalized_subcritical_overlap_2} and is the subject of Corollary~\ref{cor:joint_convergence_extremal_process_additive_martingale}.
To our knowledge, Proposition~\ref{prop:convergence_extremal_process_cond_F_s} and Corollary~\ref{cor:joint_convergence_extremal_process_additive_martingale} represent original contributions.

\begin{lemma}\label{lem:critical_derivative_martingale_representation}
    For any $s \geq 0$,
    \begin{equation}\label{eq:critical_derivative_martingale_representation}
        Z_\infty = \sum_{u \in \cN(s)} \e^{\sqrt{2} X_u(s) - 2s} Z_\infty^{(u)} \quad \text{almost surely},
    \end{equation}
    where, conditionally on $\cF_s$, the random variables $Z_\infty^{(u)}$, $u \in \cN(s)$, are independent copies of $Z_\infty$.
\end{lemma}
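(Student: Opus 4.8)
The plan is to obtain \eqref{eq:critical_derivative_martingale_representation} by applying the branching (Markov) property at time $s$ to the critical derivative martingale $Z_t := Z_t(\beta_c)$ and then passing to the limit $t \to \infty$. First I would fix $s \geq 0$ and decompose the derivative martingale at a later time $t + s$ according to the ancestry at time $s$. Writing $c(\beta_c) = 2$ and $m$-parametrizing the displacement, for each $u \in \cN(s)$ and each descendant $v \in \cN(t+s)$ with $v \geq u$ we have $X_v(t+s) - \beta_c(t+s) = \bigl(X_u(s) - \beta_c s\bigr) + \bigl((X_v(t+s) - X_u(s)) - \beta_c t\bigr)$, so
\begin{align*}
    Z_{t+s} &= \sum_{u \in \cN(s)} \e^{\beta_c X_u(s) - 2s} \sum_{\substack{v \in \cN(t+s) \\ v \geq u}} \bigl(X_v(t+s) - \beta_c(t+s)\bigr) \e^{\beta_c (X_v(t+s) - X_u(s)) - 2t} \\
    &= \sum_{u \in \cN(s)} \e^{\beta_c X_u(s) - 2s} \Bigl( (X_u(s) - \beta_c s) W_t^{(u)} + Z_t^{(u)} \Bigr),
\end{align*}
where, conditionally on $\cF_s$, the pairs $(W_t^{(u)}, Z_t^{(u)})_{u \in \cN(s)}$ are independent, and each $(W_t^{(u)}, Z_t^{(u)})$ is distributed as the pair $(W_t(\beta_c), Z_t)$ for an independent copy of the branching Brownian motion started from $0$ (this is exactly the generalized branching property recalled after Proposition~\ref{prop:characterization_of_bbm}).

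Next I would let $t \to \infty$. Since $\cN(s)$ is almost surely finite, only finitely many terms appear in the outer sum, so it suffices to control each summand. By Proposition~\ref{prop:convergence_of_the_critical_additive_martingale}, $W_t(\beta_c) \to 0$ in probability (hence $W_t^{(u)} \to 0$ in probability for each $u$, conditionally on $\cF_s$), so along a suitable subsequence $W_t^{(u)} \to 0$ almost surely; and the critical derivative martingale converges almost surely, $Z_t^{(u)} \to Z_\infty^{(u)}$, where conditionally on $\cF_s$ the $Z_\infty^{(u)}$ are i.i.d.\ copies of $Z_\infty$. Meanwhile $Z_{t+s} \to Z_\infty$ almost surely. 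Taking the limit (along the subsequence that makes the finitely many $W_t^{(u)}$ converge to $0$) yields
\begin{equation*}
    Z_\infty = \sum_{u \in \cN(s)} \e^{\beta_c X_u(s) - 2s}\, Z_\infty^{(u)} \quad \text{almost surely},
\end{equation*}
which is \eqref{eq:critical_derivative_martingale_representation}.

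The main obstacle is the handling of the cross term $(X_u(s) - \beta_c s) W_t^{(u)}$: unlike a na\"ive guess, it does not vanish by monotone or dominated convergence in any obvious way, since $W_t(\beta_c)$ converges to $0$ only in probability, not almost surely, and $X_u(s) - \beta_c s$ may be negative. The clean fix is to exploit finiteness of $\cN(s)$: from convergence in probability one extracts a single subsequence $t_k \to \infty$ along which $W_{t_k}^{(u)} \to 0$ almost surely simultaneously for all $u \in \cN(s)$, while the almost sure convergence of $Z_{t_k+s}$ and of each $Z_{t_k}^{(u)}$ is automatic; since the identity to be proved does not involve $t$, proving it along one subsequence is enough. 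One should also record why the $Z_\infty^{(u)}$ are i.i.d.\ copies of $Z_\infty$ conditionally on $\cF_s$: this follows because $(Z_t^{(u)})_{u \in \cN(s)}$ are conditionally independent with the right marginals for every $t$, and almost sure convergence preserves conditional independence and identity of conditional laws. These are the points I would write out carefully; the rest is the routine decomposition above.
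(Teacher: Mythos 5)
Your proposal is correct and follows essentially the same route as the paper: decompose the critical derivative martingale at time $s$ via the branching property into $\sum_{u \in \cN(s)} \e^{\beta_c X_u(s) - c(\beta_c)s}\bigl(Z^{(u)} + (X_u(s) - \beta_c s) W^{(u)}(\beta_c)\bigr)$ terms and let the later time go to infinity. The only superfluous step is your subsequence extraction for the cross term: since $(W_t(\beta_c))_{t \geq 0}$ is a non-negative martingale it converges almost surely, and by Theorem~\ref{th:phase_transition_additive_martingales} its limit is $0$ almost surely (not merely in probability, which is what Proposition~\ref{prop:convergence_of_the_critical_additive_martingale} would give), so each $W_t^{(u)}(\beta_c) \to 0$ almost surely and the limit can be taken along the full family $t \to \infty$ directly, as the paper does.
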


\begin{proof}
    In the same vein as for \eqref{eq:definition_of_W_s^(u)}, we rewrite, for $0 \leq s \leq t$,
    \begin{align*}
        Z_t &= \sum_{u \in \cN(s)} \e^{\beta_c X_u(s) - c(\beta_c)s} \sum_{\substack{v \in \cN(t) \\ v \geq u}} (X_v(t) - \beta_c t) \e^{\beta_c(X_v(t) - X_u(s)) - c(\beta_c)(t - s)} \\
        &= \sum_{u \in \cN(s)} \e^{\beta_c X_u(s) - c(\beta_c)s} \left(Z_{t - s}^{(u)} + (X_u(s) - \beta_c s) W_{t - s}^{(u)}(\beta_c)\right),
    \end{align*}
    where, conditionally on $\cF_s$, the processes $(W_r^{(u)}(\beta_c))_{r \geq 0}$ and $(Z_r^{(u)})_{r \geq 0}$, $u \in \cN(s)$, are, respectively, the critical additive martingales and the critical derivative martingales of independent copies of the initial branching Brownian motion.
    Letting $t$ go to infinity, we obtain \eqref{eq:critical_derivative_martingale_representation}.
\end{proof}

\begin{proposition}\label{prop:convergence_extremal_process_cond_F_s}
    Assume that $\mu = \delta_2$ and let $f : \R \to \R_+$ be continuous with compact support.
    We have
    \begin{equation*}
        \lim_{s \to \infty} \lim_{t \to \infty} \condExpec{\e^{-\cE_t(f)}}{\cF_s} = \condExpec{\e^{-\cE_\infty(f)}}{\cF_\infty} \quad \text{almost surely}.
    \end{equation*}
\end{proposition}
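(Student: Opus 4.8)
The plan is to pass to the limit in $t$ first, using the branching property at time $s$ to split $\cE_t$ into independent copies of the extremal process, and only then to let $s\to\infty$ through Lévy's martingale convergence theorem.

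\textbf{Inner limit.} Decompose the motion at time $s$. Since $n(s)<\infty$ almost surely by Proposition~\ref{prop:expectation_nt}, each $u\in\cN(s)$ spawns, conditionally on $\cF_s$, an independent branching Brownian motion of duration $t-s$ issued from $X_u(s)$. Writing $\cE^{(u)}_{t-s}$ for its recentred extremal process (which has the law of $\cE_{t-s}$ and is independent of $\cF_s$) and $\delta_s(t):=m(t)-m(t-s)\to\sqrt2\,s$, one gets
\[
\cE_t(f)=\sum_{u\in\cN(s)}\cE^{(u)}_{t-s}\bigl(f(\cdot+X_u(s)-\delta_s(t))\bigr),
\]
hence, by conditional independence,
\[
\condExpec{\e^{-\cE_t(f)}}{\cF_s}=\prod_{u\in\cN(s)}g_{t-s}\bigl(X_u(s)-\delta_s(t)\bigr),\qquad g_r(x):=\Expec{\e^{-\cE_r(f(\cdot+x))}}.
\]
Because the product is finite and $\delta_s(t)\to\sqrt2\,s$, it is enough to prove that $g_r\to g_\infty$, with $g_\infty(x):=\Expec{\e^{-\cE_\infty(f(\cdot+x))}}$, locally uniformly as $r\to\infty$. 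Pointwise this follows from Theorem~\ref{th:convergence_of_the_extremal_process}, since $\nu\mapsto\e^{-\nu(g)}$ is bounded and continuous for the vague topology when $g$ is continuous with compact support.

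\textbf{Equicontinuity — the main obstacle.} Fix $b>0$. On $[-b,b]$ the supports of the $f(\cdot+x)$ all lie in a fixed $[-R,R]$, and $|f(\cdot+x)-f(\cdot+x')|\le\omega_f(|x-x'|)$ with $\omega_f$ the modulus of continuity of $f$, so $|g_r(x)-g_r(x')|\le\Expec{|\e^{-\cE_r(f(\cdot+x))}-\e^{-\cE_r(f(\cdot+x'))}|}$. One cannot bound this by $\omega_f(|x-x'|)\Expec{\cE_r([-R,R])}$ directly, because that first moment diverges with $r$. Instead I would truncate on $E_y:=\{M(r)-m(r)\le y\}$: by Corollary~\ref{cor:tightness_of_M(t)-m(t)} we may fix $y\ge R$ (depending on $\varepsilon$) with $\Prob{E_y^c}<\varepsilon$ uniformly in $r$; using $|\e^{-a}-\e^{-b}|\le1$ on $E_y^c$, and $|\e^{-a}-\e^{-b}|\le\cE_r(|f(\cdot+x)-f(\cdot+x')|)\le\omega_f(|x-x'|)\cE_r([-R,\infty))$ on $E_y$, Proposition~\ref{prop:first_moment_estimate_extremal_process} applied with $x=-R$ bounds $\Expec{\cE_r([-R,\infty))\mathds{1}_{E_y}}$ by a constant depending only on $R$ and $y$, uniformly in $r$. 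Hence $|g_r(x)-g_r(x')|\le C(\varepsilon)\,\omega_f(|x-x'|)+\varepsilon$, which gives the equicontinuity and therefore the locally uniform convergence. Consequently $\lim_{t\to\infty}\condExpec{\e^{-\cE_t(f)}}{\cF_s}=\prod_{u\in\cN(s)}g_\infty\bigl(X_u(s)-\sqrt2\,s\bigr)$ almost surely.

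\textbf{Identification and outer limit.} Conditioning the Laplace formula \eqref{eq:laplace_transform_limit_extremal_process} on $\cF_\infty$ (equivalently on $Z_\infty$, since $\cP$ and the $\cD_i$ are independent of $\cF_\infty$) gives $\condExpec{\e^{-\cE_\infty(f)}}{\cF_\infty}=\e^{-\Lambda(f)Z_\infty}$, where $\Lambda(f):=C\int_\R\Expec{1-\e^{-\int f(y+z)\Pd{\cD}{z}}}\sqrt2\,\e^{-\sqrt2 y}\d y$; this is finite because $\Supp f$ is compact and $\max\cD=0$, and a change of variables gives $\Lambda(f(\cdot+x))=\e^{\sqrt2 x}\Lambda(f)$, so that $g_\infty(x)=\Expec{\e^{-\Lambda(f)\e^{\sqrt2 x}Z_\infty}}$. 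Using the representation $Z_\infty=\sum_{u\in\cN(s)}\e^{\sqrt2X_u(s)-2s}Z_\infty^{(u)}$ of Lemma~\ref{lem:critical_derivative_martingale_representation} with conditionally i.i.d.\ copies $Z_\infty^{(u)}$ of $Z_\infty$, conditional independence yields
\begin{multline*}
\prod_{u\in\cN(s)}g_\infty\bigl(X_u(s)-\sqrt2\,s\bigr)
=\condExpec{\exp\Bigl(-\Lambda(f)\sum_{u\in\cN(s)}\e^{\sqrt2X_u(s)-2s}Z_\infty^{(u)}\Bigr)}{\cF_s}\\
=\condExpec{\e^{-\Lambda(f)Z_\infty}}{\cF_s}=\condExpec{\e^{-\cE_\infty(f)}}{\cF_s}.
\end{multline*}
Since $\e^{-\cE_\infty(f)}$ is bounded, Lévy's martingale convergence theorem gives $\condExpec{\e^{-\cE_\infty(f)}}{\cF_s}\to\condExpec{\e^{-\cE_\infty(f)}}{\cF_\infty}$ almost surely as $s\to\infty$, which is the claimed identity. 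The only genuinely delicate point is the uniform-in-$r$ equicontinuity of the second step; the remaining ingredients are the branching decomposition, Lemma~\ref{lem:critical_derivative_martingale_representation}, and a standard zero–one law.
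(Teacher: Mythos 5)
Your proof is correct and follows essentially the same route as the paper's: the branching decomposition at time $s$, the inner limit via Theorem~\ref{th:convergence_of_the_extremal_process} combined with a uniformity estimate based on Proposition~\ref{prop:first_moment_estimate_extremal_process} to absorb the drift $m(t)-m(t-s)-\sqrt{2}s$, identification of the limit through \eqref{eq:laplace_transform_limit_extremal_process} and Lemma~\ref{lem:critical_derivative_martingale_representation}, and Lévy's martingale convergence theorem for the outer limit. Your truncation on $\{M(r)-m(r)\le y\}$ in the equicontinuity step is in fact the right way to invoke Proposition~\ref{prop:first_moment_estimate_extremal_process}, whose bound carries that indicator (the untruncated first moment of $\cE_r$ over a fixed compact window grows with $r$), so this point is handled at least as carefully as in the paper.
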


\begin{proof}
    Let $0 \leq s \leq t$.
    We have
    \begin{align}
        \condExpec{\e^{-\cE_t(f)}}{\cF_s} &= \prod_{u \in \cN(s)} \condExpec{\exp\left(-\sum_{\substack{v \in \cN(t) \\ v \geq u}} f(X_v(t) - m(t))\right)}{\cF_s} \notag \\
        &= \prod_{u \in \cN(s)} \varphi_{t, s}(X_u(s)), \label{eq:joint_convergence_0}
    \end{align}
    where, for all $x \in \R$,
    \begin{align*}
        \varphi_{t,s}(x) &= \Expec{\exp\left(-\sum_{v \in \cN(t-s)} f(X_v(t-s) + x - m(t))\right)} \\
        &= \Expec{\e^{-\int f(y + x - m(t) + m(t-s)) \Pd{\cE_{t-s}}{y}}}.
    \end{align*}
    
    Let us show that for all $x \in \R$,
    \begin{equation}\label{eq:joint_convergence_1}
        \lim_{t \to \infty} \varphi_{t,s}(x) = \Expec{\e^{-\int f(y + x - \sqrt{2}s) \Pd{\cE_\infty}{y}}}.
    \end{equation}
    Since $y \mapsto f(y + x - \sqrt{2}s)$ is continuous with compact support, to prove \eqref{eq:joint_convergence_1}, it is sufficient to show that
    \begin{equation}\label{eq:joint_convergence_2}
        \lim_{t \to \infty} \left|\varphi_{t,s}(x) - \Expec{\e^{-\int f(y + x - \sqrt{2}s) \Pd{\cE_{t - s}}{y}}}\right| = 0.
    \end{equation}
    Let $\varepsilon > 0$. By uniform continuity of $f$, there exists $\delta \in (0, 1]$ such that $|f(y_1) - f(y_2)| \leq \varepsilon$ as soon as $|y_1 - y_2| \leq \delta$.
    Besides, since $m(t) - m(t-s)$ converges to $\sqrt{2}s$ as $t$ goes to infinity, we can fix $t_0 = t_0(s) \geq 0$ such that for all $t \geq t_0$, $|m(t) - m(t-s)| \leq \delta$.
    In particular, for all $t \geq t_0$ and all $y \in \R$,
    \begin{equation*}
        |f(y - m(t) + m(t-s)) - f(y - \sqrt{2}s)| \leq \varepsilon
    \end{equation*}
    and then,
    \begin{equation*}
        \int \left|f(y + x - m(t) + m(t-s)) - f(y + x - \sqrt{2}s)\right| \Pd{\cE_{t - s}}{y} \leq \varepsilon \int_{\Supp f + [-\delta, \delta] - x + \sqrt{2}s} \Pd{\cE_{t - s}}{y}.
    \end{equation*}
    Hence,
    \begin{multline}\label{eq:joint_convergence_3}
        \limsup_{t \to \infty} \E\int \left|f(y + x - m(t) + m(t-s)) - f(y + x - \sqrt{2}s)\right| \Pd{\cE_{t - s}}{y} \\
        \leq \varepsilon \sup_{t \geq 0} \E \int_{\Supp f + [-1, 1] - x + \sqrt{2}s} \Pd{\cE_{t - s}}{y}.
    \end{multline}
    Since the support of $f$ is compact, the bound \eqref{eq:first_moment_estimate_extremal_process} implies that the supremum in the right-hand side of \eqref{eq:joint_convergence_3} is finite.
    Since $\varepsilon$ is arbitrarily small, the left-hand side of \eqref{eq:joint_convergence_3} is equal to $0$.
    Hence \eqref{eq:joint_convergence_2} and \eqref{eq:joint_convergence_1}.
    
    Now, by \eqref{eq:joint_convergence_1} and \eqref{eq:laplace_transform_limit_extremal_process},
    \begin{equation}\label{eq:joint_convergence_4}
        \lim_{t \to \infty} \varphi_{t,s}(x) = \Expec{\exp\left(- C \e^{\sqrt{2}x - 2s} Z_\infty \int \Expec{1 - \e^{-\int f(y + z) \Pd{\cD}{z}}} \sqrt{2}\e^{-\sqrt{2}y} \d{y}\right)}.
    \end{equation}
    Coming back to \eqref{eq:joint_convergence_0}, the limit \eqref{eq:joint_convergence_4} yields
    \begin{align*}
        \lim_{t \to \infty} &\condExpec{\e^{-\cE_t(f)}}{\cF_s} \\
        &= \prod_{u \in \cN(s)} \condExpec{\exp\left(- C  \e^{\sqrt{2} X_u(s) - 2s} Z_\infty^{(u)} \int \Expec{1 - \e^{-\int f(y + z) \Pd{\cD}{z}}} \sqrt{2}\e^{-\sqrt{2}y} \d{y}\right)}{\cF_s} \\
        &= \condExpec{\exp\left(- C \sum_{u \in \cN(s)} \e^{\sqrt{2} X_u(s) - 2s} Z_\infty^{(u)} \int \Expec{1 - \e^{-\int f(y + z) \Pd{\cD}{z}}} \sqrt{2}\e^{-\sqrt{2}y} \d{y}\right)}{\cF_s},
    \end{align*}
    where the random variables $Z_\infty^{(u)}$, $u \in \cN(s)$, are defined in Lemma~\ref{lem:critical_derivative_martingale_representation}.
    By \eqref{eq:critical_derivative_martingale_representation}, we can rewrite
    \begin{equation*}
        \lim_{t \to \infty} \condExpec{\e^{-\cE_t(f)}}{\cF_s} = \condExpec{\exp\left(- C Z_\infty \int \Expec{1 - \e^{-\int f(y + z) \Pd{\cD}{z}}} \sqrt{2}\e^{-\sqrt{2}y} \d{y}\right)}{\cF_s}.
    \end{equation*}
    Letting $s$ go to infinity, we obtain
    \begin{equation*}
        \lim_{s \to \infty} \lim_{t \to \infty} \condExpec{\e^{-\cE_t(f)}}{\cF_s} = \condExpec{\exp\left(- C Z_\infty \int \Expec{1 - \e^{-\int f(y + z) \Pd{\cD}{z}}} \sqrt{2}\e^{-\sqrt{2}y} \d{y}\right)}{Z_\infty}.
    \end{equation*}
    As done for \eqref{eq:laplace_transform_limit_extremal_process}, it is straightforward to check that the above conditional expectation is equal to $\condExpec{\e^{-\cE_\infty(f)}}{\cF_\infty}$.
\end{proof}

\begin{corollary}\label{cor:joint_convergence_extremal_process_additive_martingale}
    Assume that $\mu = \delta_2$ and and let $\beta \geq 0$. The pair $(\cE_t, W_t(\beta))$ converges jointly in distribution to $(\cE_\infty, W_\infty(\beta))$ as $t$ goes to infinity.
\end{corollary}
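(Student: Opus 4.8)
The plan is to prove the joint convergence through convergence of joint Laplace functionals: I would show that for every continuous $f : \R \to \R_+$ with compact support and every $\lambda \geq 0$,
\begin{equation*}
    \Expec{\e^{-\cE_t(f) - \lambda W_t(\beta)}} \xrightarrow[t \to \infty]{} \Expec{\e^{-\cE_\infty(f) - \lambda W_\infty(\beta)}} .
\end{equation*}
Since the family of functions $(\nu, w) \mapsto \e^{-\nu(f) - \lambda w}$ is stable under multiplication (one has $\e^{-\nu(f_1)-\lambda_1 w}\e^{-\nu(f_2)-\lambda_2 w} = \e^{-\nu(f_1+f_2)-(\lambda_1+\lambda_2)w}$) and generates the Borel $\sigma$-algebra of $\cM \times \R_+$, and since the pair $(\cE_t, W_t(\beta))$ is tight (its first coordinate converges in distribution by Theorem~\ref{th:convergence_of_the_extremal_process}, its second converges almost surely, or in probability when $\beta \geq \beta_c$), such a convergence of Laplace functionals identifies the limit law, by a Kallenberg-type determining-class argument. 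I would first dispose of the case $\beta \geq \beta_c$: there $W_\infty(\beta) = 0$ almost surely and $W_t(\beta) \to 0$ in probability by the supercritical part of Theorem~\ref{th:phase_transition_additive_martingales}, so the joint convergence follows from $\cE_t \Rightarrow \cE_\infty$ together with Slutsky's lemma. From now on assume $\beta \in [0, \beta_c)$.

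The first genuine step is to replace $W_t(\beta)$ by its almost sure limit. Using the elementary inequality $|\e^{-a} - \e^{-b}| \leq |a-b|$ for $a, b \geq 0$ and the $L^1$-convergence $W_t(\beta) \to W_\infty(\beta)$ furnished by the subcritical part of Theorem~\ref{th:phase_transition_additive_martingales}, one gets
\begin{equation*}
    \left|\Expec{\e^{-\cE_t(f) - \lambda W_t(\beta)}} - \Expec{\e^{-\cE_t(f) - \lambda W_\infty(\beta)}}\right| \leq \lambda\, \Expec{|W_t(\beta) - W_\infty(\beta)|} \xrightarrow[t \to \infty]{} 0 .
\end{equation*}
Hence it remains to prove that $\Expec{g(W_\infty(\beta))\, \e^{-\cE_t(f)}} \to \Expec{g(W_\infty(\beta))\, \e^{-\cE_\infty(f)}}$, where $g(x) := \e^{-\lambda x}$ is bounded. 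The benefit of this reduction is that $W_\infty(\beta)$ is now $\cF_\infty$-measurable, so what remains is exactly a stable-type convergence of $\cE_t$ relative to the filtration, which is what Proposition~\ref{prop:convergence_extremal_process_cond_F_s} is tailored to produce.

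For this last claim I would run a standard double-limit argument. Set $M_r := \condExpec{g(W_\infty(\beta))}{\cF_r}$, a bounded $(\cF_r)_{r \geq 0}$-martingale with $M_r \to M_\infty := g(W_\infty(\beta))$ almost surely and in $L^1$. Since $\cE_t$, hence $\e^{-\cE_t(f)}$, is $\cF_t$-measurable, the tower property gives, for $0 \leq s \leq t$, the decomposition $\Expec{g(W_\infty(\beta))\,\e^{-\cE_t(f)}} = \Expec{M_s\, \condExpec{\e^{-\cE_t(f)}}{\cF_s}} + \Expec{(M_t - M_s)\,\e^{-\cE_t(f)}}$. The second term is at most $\Expec{|M_t - M_s|} \leq \Expec{|M_\infty - M_s|}$ in absolute value (using $M_t - M_s = \condExpec{M_\infty - M_s}{\cF_t}$ and Jensen), uniformly in $t \geq s$, and vanishes as $s \to \infty$. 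For the first term I would let $t \to \infty$ first: by Proposition~\ref{prop:convergence_extremal_process_cond_F_s}, the limit $\Phi_s := \lim_{t\to\infty}\condExpec{\e^{-\cE_t(f)}}{\cF_s}$ exists almost surely and $\Phi_s \to \condExpec{\e^{-\cE_\infty(f)}}{\cF_\infty}$ almost surely as $s \to \infty$; since all the integrands are uniformly bounded, dominated convergence yields $\Expec{M_s\, \condExpec{\e^{-\cE_t(f)}}{\cF_s}} \to \Expec{M_s \Phi_s}$ as $t \to \infty$ and then $\Expec{M_s \Phi_s} \to \Expec{M_\infty\, \condExpec{\e^{-\cE_\infty(f)}}{\cF_\infty}} = \Expec{g(W_\infty(\beta))\,\e^{-\cE_\infty(f)}}$ as $s \to \infty$. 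Taking $\limsup_{t\to\infty}$ in the decomposition and then letting $s \to \infty$ closes the argument.

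The main obstacle, and the reason for the slightly indirect route, is circularity. If one tried to condition the joint Laplace functional $\Expec{\e^{-\cE_t(f) - \lambda W_t(\beta)}}$ at a fixed time $s$ directly, the inner limit ($t \to \infty$) would be the joint convergence of $(\cE_{t-s}, W_{t-s}(\beta))$ — precisely the statement under proof — because every particle alive at time $s$ contributes to both coordinates. Peeling $W_t(\beta)$ off first, down to the $\cF_\infty$-measurable quantity $W_\infty(\beta)$, is what breaks this loop and lets Proposition~\ref{prop:convergence_extremal_process_cond_F_s} do the work; everything else is routine bookkeeping of the iterated limit $t \to \infty$ then $s \to \infty$ together with the verification that convergence of joint Laplace functionals (plus tightness) pins down the limiting law.
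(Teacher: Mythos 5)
Your proof is correct and rests on the same key ingredient as the paper's, namely Proposition~\ref{prop:convergence_extremal_process_cond_F_s} combined with the iterated limit $t \to \infty$ then $s \to \infty$ applied to the joint Laplace functional. The only real difference is in the decoupling step: the paper replaces $W_t(\beta)$ by the $\cF_s$-measurable $W_s(\beta)$, using only the almost sure convergence of the non-negative martingale and the boundedness of $x \mapsto \e^{-\lambda x}$ (which works uniformly for all $\beta \geq 0$ and avoids your separate Slutsky argument for $\beta \geq \beta_c$), whereas you pass to $W_\infty(\beta)$ via $L^1$ convergence and then project back onto $\cF_s$ through the bounded martingale $M_s = \condExpec{\e^{-\lambda W_\infty(\beta)}}{\cF_s}$ --- a slightly longer but equally valid route.
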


\begin{proof}
    Fix $\lambda > 0$ and let $f : \R \to \R_+$ be continuous with compact support. For any $s \geq 0$,
    \begin{multline}
        \left|\Expec{\e^{-\cE_t(f) - \lambda W_t(\beta)}} - \Expec{\e^{-\cE_\infty(f) - \lambda W_\infty(\beta)}}\right| \leq \E\left|\e^{-\lambda W_t(\beta)} - \e^{-\lambda W_s(\beta)}\right| \\
        + \left|\Expec{\e^{-\cE_t(f) - \lambda W_s(\beta)}} - \Expec{\e^{-\cE_\infty(f) - \lambda W_\infty(\beta)}}\right|.
    \end{multline}
    Since the additive martingale converges almost surely and by Proposition~\ref{prop:convergence_extremal_process_cond_F_s}, the above quantity vanishes as $t$ and $s$ go to infinity.
\end{proof}

\section{Some convergence results}\label{sct:some_convergence_results}

From now on, we come back to our basic framework \eqref{eq:assumption_offspring_distribution}, unless otherwise stated.

\subsection{Position of particles under the Gibbs measure}\label{sct:position_under_gibbs_measure}

Let $\beta \in [0, \beta_c)$.
We are interested in the asymptotic behavior of the following quantity
\begin{equation*}
    W_t(\beta, f) = \sum_{u \in \cN(t)} \e^{\beta X_u(t) - c(\beta)t} f\left(\frac{X_u(t) - \beta t}{\sqrt{t}}\right),
\end{equation*}
where $f$ is a measurable function from $\R$ to $\R$.
In \cite{AsmussenKaplan1976a, AsmussenKaplan1976b}, Asmussen-Kaplan studied the counterpart of $W_t(\beta, f)$ for the branching random walk at $\beta = 0$.

\begin{theorem}\label{th:almost_sure_convergence_of_functional_1}
    Assume $f$ to be continuous and bounded.
    Then we have the following convergence, $\P$-almost surely and in $L^p$ for any $p \in [1, 2]$ such that $p < \beta_c^2/\beta^2$,
    \begin{equation}\label{eq:almost_sure_convergence_of_functional_1}
        \lim_{t \to \infty} W_t(\beta, f) = W_\infty(\beta) \int_\R f(x) \frac{\e^{-x^2/2}}{\sqrt{2 \pi}} \d{x} =: W_\infty(\beta, f).
    \end{equation}
\end{theorem}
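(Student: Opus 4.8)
The plan is to first establish the $L^p$ convergence, which will also identify the limit, and then to bootstrap to almost sure convergence along integer times using a Borel--Cantelli argument, before filling the gaps between integers. For the $L^p$ part, I would proceed in two moves. First, by a density/approximation argument it suffices to treat a convenient class of test functions: since $f$ is bounded and continuous, one can approximate it uniformly on compacts and control the tails using that the ``mass'' of $W_t(\beta,f)$ outside a large window is small, which follows from a first-moment (many-to-one) computation showing $\Expec{W_t(\beta,\mathds{1}_{|x|>R})}$ is small uniformly in $t$ once $R$ is large --- here one uses that under the Girsanov tilt the displacement $X_u(t)-\beta t$ behaves like $\sqrt{t}\,N$ with $N$ standard Gaussian. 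Second, for smooth compactly supported $f$ (or $f$ in the span of functions $x\mapsto \e^{\theta x}$, for which everything is explicit), I would compute $\Expec{W_t(\beta,f)}\to W_\infty(\beta,f)$ in expectation via the many-to-one formula and then control $\E|W_t(\beta,f)-W_s(\beta,f)|^p$ for $s\le t$ by the same telescoping/spine-type decomposition used in the second method of the proof of Theorem \ref{th:phase_transition_additive_martingales}: write the increment as a sum over $u\in\cN(s)$ of $\e^{\beta X_u(s)-c(\beta)s}$ times a conditionally centered term, apply Lemma \ref{lem:biggins_lemma}, and obtain a bound of the form $2^p W_s(p\beta)\,\e^{(p-1)(p\beta^2/\beta_c^2-1)s}\cdot(\text{something bounded})$; the exponential decay (valid since $p<\beta_c^2/\beta^2$) makes the series $\sum_k \E|W_{k+1}(\beta,f)-W_k(\beta,f)|^p$ summable, hence $(W_k(\beta,f))_k$ is Cauchy in $L^p$.

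Having $L^p$ convergence along integers together with summability of the $p$-th moments of the increments, the Borel--Cantelli lemma gives almost sure convergence of $W_k(\beta,f)$ along integers $k$ to some limit; matching it with the known $L^1$ limit identifies it as $W_\infty(\beta)\int f\,\tfrac{\e^{-x^2/2}}{\sqrt{2\pi}}\d x$ (using the many-to-one computation of the mean and that $W_t(\beta)\to W_\infty(\beta)$). To pass from integer times to the continuum, I would control $\sup_{t\in[k,k+1]}|W_t(\beta,f)-W_k(\beta,f)|$. The natural route is again to bound $\E\,\sup_{t\in[k,k+1]}|W_t(\beta,f)-W_k(\beta,f)|^p$ using the decomposition over $\cN(k)$, Doob's maximal inequality applied conditionally to the martingale-like pieces, and the boundedness of $f$; summability in $k$ then gives, by Borel--Cantelli, that the fluctuation over each unit interval tends to $0$ a.s. This is precisely the step the introduction flags as the genuinely new difficulty in continuous time (``technical lemmas such as Lemma \ref{lem:almost_sure_convergence_of_functional_1_brutal_bounds}''), and I expect it to be the main obstacle: unlike in branching random walk, one must handle the continuum of times and show the sup over $[k,k+1]$ does not accumulate, which likely requires crude but careful bounds on how much particles can move and on how much mass can be created in a bounded time window (controlled via Lemma \ref{lem:particles_alive_in_an_interval} and moment estimates for $\sum_{u\in\cN([k,k+1])}\e^{\beta X_u}$).

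A few remarks on bookkeeping. The condition $p<\beta_c^2/\beta^2$ is exactly what is needed for the exponent $(p-1)(p\beta^2/\beta_c^2-1)$ to be negative, and it matches Proposition \ref{prop:additive_martingales_bounded_in_Lp}; for $\beta=0$ any $p\in[1,2]$ works and the statement reduces to the Asmussen--Kaplan type law of large numbers. When $f$ is merely bounded (not compactly supported), the tail truncation must be done at the level of the increment bounds as well, i.e.\ one first reduces to compactly supported $f$ uniformly in $t$, then runs the above machinery; the uniform control $\sup_t\Expec{W_t(p\beta)}<\infty$ needed for this is available since $p\beta<\beta_c$ when $p<\beta_c^2/\beta^2$. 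Finally, once a.s.\ convergence along integers and vanishing of unit-interval fluctuations are both established, a standard $\varepsilon/3$ argument yields $\lim_{t\to\infty}W_t(\beta,f)=W_\infty(\beta,f)$ $\P$-almost surely, completing the proof.
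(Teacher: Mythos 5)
Your overall architecture (reduce to nice test functions, prove convergence along a discrete mesh via Biggins' inequality and Borel--Cantelli, then fill in the gaps, then deduce the $L^p$ convergence by domination) matches the paper's. But the central increment estimate has a genuine gap: you claim that $W_{k+1}(\beta,f)-W_k(\beta,f)$ can be written as $\sum_{u\in\cN(k)}\e^{\beta X_u(k)-c(\beta)k}\,Y_u$ with $Y_u$ \emph{conditionally centered}, exactly as in the second method of the proof of Theorem~\ref{th:phase_transition_additive_martingales}. This is false, because $(W_t(\beta,f))_{t\ge 0}$ is not a martingale: the argument of $f$ is rescaled by $\sqrt{t}$, so
\begin{equation*}
\condExpec{W_t(\beta,f)}{\cF_s}=\sum_{u\in\cN(s)}\e^{\beta X_u(s)-c(\beta)s}\int_\R f\left(x\sqrt{1-\tfrac{s}{t}}-\beta\tfrac{s}{\sqrt t}+\tfrac{X_u(s)}{\sqrt t}\right)\frac{\e^{-x^2/2}}{\sqrt{2\pi}}\d{x}\neq W_s(\beta,f).
\end{equation*}
Consequently Lemma~\ref{lem:biggins_lemma} does not apply to the raw increments, and the summability of $\sum_k\E|W_{k+1}(\beta,f)-W_k(\beta,f)|^p$ cannot be obtained this way; the drift terms $\condExpec{W_{k+1}(\beta,f)}{\cF_k}-W_k(\beta,f)$ are of order $k^{-1/2}$ and are not summable. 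The paper circumvents this with a two-time-scale argument: it compares $W_t(\beta,f)$ with $\condExpec{W_t(\beta,f)}{\cF_s}$ for $s\asymp t^{\alpha}$, $\alpha<1/2$ (so $s^2=o(t)$) --- this difference \emph{is} a sum of conditionally centered terms, so Biggins plus Borel--Cantelli applies --- and proves separately (Lemma~\ref{lem:almost_sure_convergence_of_functional_1_cond_Fs}) that $\condExpec{W_t(\beta,f)}{\cF_s}\to W_\infty(\beta,f)$ almost surely, using the localization of all time-$s$ particles in $[-\beta_c s,\beta_c s]$ and the condition $s^2=o(t)$ to kill the perturbation of the Gaussian integral. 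Your proposal is missing this second ingredient entirely.

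A secondary, related overstatement: you assert that the fluctuation $\sup_{t\in[k,k+1]}|W_t(\beta,f)-W_k(\beta,f)|$ tends to $0$ almost surely. The paper's Lemma~\ref{lem:almost_sure_convergence_of_functional_1_brutal_bounds} only shows that over windows of length $\delta$ the limsup of this supremum is at most $\varepsilon(\|f\|_\infty+\|f'\|_\infty)W_\infty(\beta)$ with $\delta=\delta(\varepsilon)$ small; one needs a mesh refining with $\varepsilon$, the regularization $f\in\cC^1_c$, and a union bound over $\cN([s,s+\delta])$ controlling particle displacements, not a single Doob inequality over unit intervals (Doob is unavailable for the same non-martingale reason). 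Your reduction from bounded continuous $f$ to compactly supported $f$ and the final $L^p$ step by domination are fine.
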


\begin{remark}
    It should be possible to generalize \eqref{eq:almost_sure_convergence_of_functional_1} to functionals that depend on all the trajectories of the particles.
    More precisely, if $F : \cC([0, 1]) \to \R$ is a bounded continuous functional, then we expect the following limit to hold, $\P$-almost surely and in $L^p$ for any $p \in [1, 2]$ such that $p < \beta_c^2/\beta^2$,
    \begin{equation}\label{eq:almost_sure_convergence_of_functional_2}
        \lim_{t \to \infty} \sum_{u \in \cN(t)} \e^{\beta X_u(t) - c(\beta)t} F\left(\frac{X_u(st) - \beta st}{\sqrt{t}}, s \in [0, 1]\right) = W_\infty(\beta) \Expec{F\left(B_s, s \in [0, 1]\right)},
    \end{equation}
    where $(B_t)_{t \geq 0}$ is a standard Brownian motion under $\P$.
    The convergence in probability \eqref{eq:almost_sure_convergence_of_functional_2} is proved by Pain in \cite[Section~C]{Pain2018} for the branching random walk.
    The left-hand side of \eqref{eq:almost_sure_convergence_of_functional_2} has also been studied at $\beta = \beta_c$ by Madaule in \cite[Theorem~1.2]{Madaule2016} and at $\beta > \beta_c$ by Chen-Madaule-Mallein in \cite[Theorem~1.1]{ChenMadauleMallein2019}, both for the branching random walk.
\end{remark}

A direct consequence of Theorem~\ref{th:almost_sure_convergence_of_functional_1} is that the particles which contribute to the additive martingale $W_t(\beta)$ are those at distance $O(\sqrt{t})$ from $\beta t$.
We see this by taking $f$ the indicator function of an interval.

Regarding the almost sure convergence in Theorem~\ref{th:almost_sure_convergence_of_functional_1}, we will actually prove a stronger result.
Namely, $\P$-almost surely, for any bounded continuous function $f$, the limit \eqref{eq:almost_sure_convergence_of_functional_1} holds.
We decompose the proof into three lemmas. 

\begin{lemma}\label{lem:almost_sure_convergence_of_functional_1_cond_Fs}
    Assume $f$ to be continuous with compact support.
    Then, taking $t = t(s)$ such that $s^2 = o(t)$ when $s$ goes to infinity, we have
    \begin{equation*}
        \lim_{s \to \infty} \condExpec{W_t(\beta, f)}{\cF_s} = W_\infty(\beta, f) \quad \P\text{-almost surely}.
    \end{equation*}
\end{lemma}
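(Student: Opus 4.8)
The plan is to compute $\condExpec{W_t(\beta, f)}{\cF_s}$ explicitly and then reduce the statement to an almost sure decay estimate supplied by the barrier bound \eqref{eq:prob_under_barrier}. Grouping the particles of $\cN(t)$ by their ancestor in $\cN(s)$, using the branching property at time $s$ so that, conditionally on $\cF_s$, each $u \in \cN(s)$ starts an independent branching Brownian motion from $X_u(s)$, and then applying the many-to-one formula \eqref{eq:many-to-one} together with the Girsanov transform already used in the proof of Theorem~\ref{th:phase_transition_additive_martingales}, I would obtain
\begin{equation*}
    \condExpec{W_t(\beta, f)}{\cF_s} = \sum_{u \in \cN(s)} \e^{\beta X_u(s) - c(\beta)s}\, \psi_{s,t}\!\left(X_u(s) - \beta s\right), \qquad \psi_{s,t}(v) := \Expec{f\!\left(\frac{v + B_{t-s}}{\sqrt{t}}\right)},
\end{equation*}
where $B$ is a standard Brownian motion. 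Note that $\sum_{u \in \cN(s)} \e^{\beta X_u(s) - c(\beta)s} = W_s(\beta) \to W_\infty(\beta)$ almost surely by Theorem~\ref{th:phase_transition_additive_martingales}.

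Writing $B_{t-s}/\sqrt{t} \overset{d}{=} \sqrt{1 - s/t}\,N$ with $N$ a standard Gaussian variable and using that $f$ is bounded and uniformly continuous, one checks that for every $\varepsilon > 0$ there are $\delta > 0$ and $s_0 \geq 0$ such that $\left|\psi_{s,t}(v) - \bar f\right| \leq \varepsilon$ for all $s \geq s_0$ and all $|v| \leq \delta\sqrt{t}$, where $\bar f := \int_\R f(x)\e^{-x^2/2}/\sqrt{2\pi}\,\d{x}$; the assumption $s^2 = o(t)$ is exactly what makes $s/t \to 0$ here, hence $\sqrt{1 - s/t} \to 1$. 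Splitting $\cN(s)$ according to whether $|X_u(s) - \beta s| \leq \delta\sqrt{t}$ or not and bounding $|\psi_{s,t}| \leq \|f\|_\infty$ on the complement, I get
\begin{equation*}
    \left|\condExpec{W_t(\beta,f)}{\cF_s} - \bar f\, W_s(\beta)\right| \leq \varepsilon\, W_s(\beta) + 2\|f\|_\infty\, R_s(\delta), \qquad R_s(\delta) := \sum_{\substack{u \in \cN(s)\\ |X_u(s) - \beta s| > \delta\sqrt{t}}} \e^{\beta X_u(s) - c(\beta)s}.
\end{equation*}
Since $\bar f\, W_\infty(\beta) = W_\infty(\beta, f)$, the lemma follows once we show $R_s(\delta) \to 0$ almost surely (then let $\varepsilon \downarrow 0$ along a sequence).

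The decay of $R_s(\delta)$ is the crux and is where \eqref{eq:prob_under_barrier} enters. Because $s^2 = o(t)$ we have $\sqrt{t(s)}/s \to \infty$, so $\delta\sqrt{t(s)} \geq (\beta_c - \beta)s + L$ for all $s$ large; on the event $A_L$ of \eqref{eq:prob_under_barrier} every particle then satisfies $X_u(s) < \beta_c s + L \leq \beta s + \delta\sqrt{t}$, so the contribution to $R_s(\delta)$ of the particles with $X_u(s) - \beta s > \delta\sqrt{t}$ vanishes for $s$ large. Applying \eqref{eq:prob_under_barrier} to the mirror image $(-X_u(t))$, which has the same law as the branching Brownian motion since the process is invariant under $x \mapsto -x$, produces a lower barrier $A_L'$ on which $X_u(s) > -\beta_c s - L \geq \beta s - \delta\sqrt{t}$ for $s$ large, killing the remaining part of $R_s(\delta)$. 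As $\P(A_L), \P(A_L') \to 1$ when $L \to \infty$ and these families of events are increasing, $\bigcup_L A_L$ and $\bigcup_L A_L'$ have full measure, hence on a full-measure event $R_s(\delta) = 0$ for all $s$ large; in particular $R_s(\delta) \to 0$ almost surely.

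The argument is essentially routine once the formula of the first step is in place; the only two points demanding care are the uniform control of $\psi_{s,t}$ on the window $\{|v| \leq \delta\sqrt{t}\}$ and the fact that this window eventually overtakes the linear barrier $\beta_c s + L$ for every fixed $L$, both of which rest precisely on the hypothesis $s^2 = o(t)$. For $\beta > 0$ the negative part of $R_s(\delta)$ could alternatively be dispatched by the crude bound $\sum_{X_u(s) < \beta s - \delta\sqrt{t}} \e^{\beta X_u(s) - c(\beta)s} \leq n(s)\,\e^{(\beta^2/2 - 1)s - \beta\delta\sqrt{t}}$ together with the fact that $(\e^{-t}n(t))_{t \geq 0}$ is an almost surely bounded martingale, but invoking the mirrored barrier disposes of both tails at once.
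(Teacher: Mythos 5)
Your proof is correct and follows essentially the same route as the paper's: compute $\condExpec{W_t(\beta,f)}{\cF_s}$ via the branching property, many-to-one and a Girsanov tilt, use uniform continuity of $f$ together with $s^2 = o(t)$ to replace the Gaussian average by $\int f \, \mathrm{d}\gamma$ on a spatial window around $\beta s$, and kill the particles outside the window by an almost sure barrier on the extremal positions. The only (harmless) difference is that you invoke the quantitative barrier bound \eqref{eq:prob_under_barrier} together with a monotone union over $L$, where the paper uses the almost sure statement \eqref{eq:domination_extremal_particle} and its mirror image, and that your window $\{|X_u(s)-\beta s|\leq\delta\sqrt{t}\}$ replaces the paper's $\{|X_u(s)|\leq\beta_c s\}$.
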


\begin{proof}
    We start by dealing with another process, defined, for $0 \leq s \leq t$, by
    \begin{equation*}
        \tilde{W}_{t, s}(\beta, f) = \sum_{u \in \cN(t)} \e^{\beta X_u(t) - c(\beta)t} f\left(\frac{X_u(t) - \beta t}{\sqrt{t}}\right) \mathds{1}_{|X_u(s)| \leq \beta_c s}.
    \end{equation*}
    Note that, in the above formula, we can replace $\beta_c$ with any $\beta' > \beta_c$ without changing the arguments that follow.
    Using the branching property and the many-to-one formula \eqref{eq:many-to-one}, we rewrite its conditional expectation with respect to $\cF_s$ as
    \begin{multline}\label{eq:almost_sure_convergence_of_functional_1_approx}
        \sum_{u \in \cN(s)} \e^{\beta X_u(s) - c(\beta) s} \mathds{1}_{|X_u(s)| \leq \beta_c s} \condExpec{\sum_{\substack{v \in \cN(t) \\ v \geq u}} \e^{\beta(X_v(t) - X_u(s)) - c(\beta)(t - s)} f\left(\frac{X_v(t) - \beta t}{\sqrt{t}}\right)}{\cF_s} \\
        = \sum_{u \in \cN(s)} \e^{\beta X_u(s) - c(\beta) s} \mathds{1}_{|X_u(s)| \leq \beta_c s} \int_\R f\left(x \sqrt{1 - \frac{s}{t}} - \beta \frac{s}{\sqrt{t}} + \frac{X_u(s)}{\sqrt{t}}\right) \frac{\e^{-x^2/2}}{\sqrt{2 \pi}} \d{x}.
    \end{multline}
    Let $\varepsilon > 0$.
    Since $f$ is continuous with compact support, there exists a constant $C = C(f, \varepsilon) > 0$ such that for all $t = t(s) \geq C s^2$, for all $x \in \R$ and for all $|y| \leq \beta_c s$,
    \begin{equation*}
        \left|f\left(x \sqrt{1 - \frac{s}{t}} - \beta \frac{s}{\sqrt{t}} + \frac{y}{\sqrt{t}}\right) - f(x)\right| \leq \varepsilon.
    \end{equation*}
    In view of \eqref{eq:almost_sure_convergence_of_functional_1_approx}, this directly implies that, for all $t \geq C s^2$,
    \begin{equation}\label{eq:almost_sure_convergence_of_functional_1_approx_bound}
        \left|\condExpec{\tilde{W}_{t,s}(\beta, f)}{\cF_s} - \sum_{u \in \cN(s)} \e^{\beta X_u(s) - c(\beta) s} \mathds{1}_{|X_u(s)| \leq \beta_c s} \int_\R f(x) \frac{\e^{-x^2/2}}{\sqrt{2 \pi}} \d{x}\right| \leq \varepsilon W_s(\beta).
    \end{equation}
    By \eqref{eq:domination_extremal_particle}, the quantity $M(s) - \beta_c s$ tends almost surely to $-\infty$ as $s$ goes to infinity.
    In the same way, the sum of $\beta_c s$ and the lowest position of the particles at time $s$ tends almost surely to $\infty$ as $s$ goes to infinity.
    Consequently, the random variable $\mathds{1}_{\exists u \in \cN(s), |X_u(s)| > \beta_c s}$ converges almost surely to $0$.
    Thanks to \eqref{eq:almost_sure_convergence_of_functional_1_approx_bound} and the almost sure convergence of the additive martingale, taking $t = t(s)$ such that $s^2 = o(t)$, these considerations yield
    \begin{equation*}
        \lim_{s \to \infty} \condExpec{\tilde{W}_{t,s}(\beta, f)}{\cF_s} = W_\infty(\beta, f) \quad \P\text{-almost surely}.
    \end{equation*}
    Since, once again, the random variable $\mathds{1}_{\exists u \in \cN(s), |X_u(s)| > \beta_c s}$ converges almost surely to $0$ when $s$ goes to infinity, it is straightforward to deduce that
    \begin{equation*}
        \lim_{s \to \infty} \condExpec{W_t(\beta, f)}{\cF_s} = W_\infty(\beta, f) \quad \P\text{-almost surely},
    \end{equation*}
    which concludes the proof.
\end{proof}

\begin{lemma}\label{lem:almost_sure_convergence_of_functional_1_discretization}
    Assume $f$ to be continuous with compact support.
    Then, for any mesh span $\delta > 0$, the sequence $(W_{k \delta}(\beta, f))_{k \in \N}$ converges almost surely to $W_\infty(\beta, f)$.
\end{lemma}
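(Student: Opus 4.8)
The plan is to split each mesh point along an auxiliary sequence of times $s_k \to \infty$:
\[
W_{k\delta}(\beta, f) = \condExpec{W_{k\delta}(\beta, f)}{\cF_{s_k}} + \Bigl(W_{k\delta}(\beta, f) - \condExpec{W_{k\delta}(\beta, f)}{\cF_{s_k}}\Bigr),
\]
and to treat the two terms separately. If $s_k$ is chosen with $s_k^2 = o(k\delta)$, the first term converges $\P$-almost surely to $W_\infty(\beta, f)$ by Lemma~\ref{lem:almost_sure_convergence_of_functional_1_cond_Fs}; I would then show the second (martingale-type) term vanishes $\P$-almost surely by a Borel--Cantelli argument resting on an $L^p$ bound. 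First I would fix, once and for all, an exponent $p \in (1, 2]$ with $p < \beta_c^2/\beta^2$ (possible since $\beta < \beta_c$), and set $\eta := p\,c(\beta) - c(p\beta) = (p-1)\bigl(1 - p\beta^2/\beta_c^2\bigr) > 0$.

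\textbf{The $L^p$ estimate.} For $0 \le s \le t$ I would decompose $W_t(\beta, f) = \sum_{u \in \cN(s)} \e^{\beta X_u(s) - c(\beta)s}\,\Xi^{(u)}_{t,s}$, where $\Xi^{(u)}_{t,s} := \sum_{v \in \cN(t),\, v \ge u} \e^{\beta(X_v(t) - X_u(s)) - c(\beta)(t-s)} f\bigl((X_v(t) - \beta t)/\sqrt{t}\bigr)$. By the branching property the $\Xi^{(u)}_{t,s}$, $u \in \cN(s)$, are independent given $\cF_s$, so, taking the conditional expectation term by term and applying the conditional Biggins inequality (Lemma~\ref{lem:biggins_lemma}) to the centered variables $\e^{\beta X_u(s) - c(\beta)s}\bigl(\Xi^{(u)}_{t,s} - \condExpec{\Xi^{(u)}_{t,s}}{\cF_s}\bigr)$, I get
\[
\condExpec{\bigl|W_t(\beta, f) - \condExpec{W_t(\beta, f)}{\cF_s}\bigr|^p}{\cF_s} \le 2^p \sum_{u \in \cN(s)} \e^{p(\beta X_u(s) - c(\beta)s)}\, \condExpec{\bigl|\Xi^{(u)}_{t,s} - \condExpec{\Xi^{(u)}_{t,s}}{\cF_s}\bigr|^p}{\cF_s}.
\]
Since $|f| \le \|f\|_\infty$, conditionally on $\cF_s$ one has $|\Xi^{(u)}_{t,s}| \le \|f\|_\infty W_{t-s}(\beta)$ for an independent copy of the additive martingale at time $t-s$, hence $|\condExpec{\Xi^{(u)}_{t,s}}{\cF_s}| \le \|f\|_\infty$, and Proposition~\ref{prop:additive_martingales_bounded_in_Lp} bounds every conditional $p$-th moment in the sum by a deterministic constant $D_p = D_p(p, \beta, \|f\|_\infty)$. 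Using $\sum_{u \in \cN(s)} \e^{p(\beta X_u(s) - c(\beta)s)} = \e^{-\eta s} W_s(p\beta)$ together with $\Expec{W_s(p\beta)} = 1$ (many-to-one formula~\eqref{eq:many-to-one}), taking expectations yields $\Expec{\bigl|W_t(\beta, f) - \condExpec{W_t(\beta, f)}{\cF_s}\bigr|^p} \le 2^p D_p\, \e^{-\eta s}$.

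\textbf{Conclusion.} I would then take $s_k := \tfrac{2}{\eta}\log(k+2)$ and $t = k\delta$: this pairing satisfies $s_k^2 = o(k\delta)$, and the estimate above becomes $2^p D_p (k+2)^{-2}$, summable in $k$. By Markov's inequality and Borel--Cantelli, $W_{k\delta}(\beta, f) - \condExpec{W_{k\delta}(\beta, f)}{\cF_{s_k}} \to 0$ $\P$-almost surely; and since $(s_k, k\delta)$ falls within the scope of Lemma~\ref{lem:almost_sure_convergence_of_functional_1_cond_Fs} (take $t = t(s)$ to be any increasing interpolation with $t(s_k) = k\delta$, for which $s^2 = o(t(s))$ because $k\delta$ grows exponentially in $s_k$), the conditional expectations converge $\P$-almost surely to $W_\infty(\beta, f)$. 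Adding the two limits completes the proof.

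\textbf{Main difficulty.} The crude bound $|\Xi^{(u)}_{t,s}| \le \|f\|_\infty W_{t-s}(\beta)$ only gives a bounded, not a vanishing, conditional $p$-th moment, so all the decay has to come from the prefactor $\sum_u \e^{p(\beta X_u(s) - c(\beta)s)} = \e^{-\eta s} W_s(p\beta)$; this requires $\eta > 0$, equivalently $c(p\beta) < p\,c(\beta)$, which forces $p < \beta_c^2/\beta^2$ --- precisely the regime where Proposition~\ref{prop:additive_martingales_bounded_in_Lp} supplies the uniform $L^p$ control and where $\Expec{W_s(p\beta)} = 1$ keeps the expectation finite. Balancing the resulting rate $\e^{-\eta s}$ against the constraint $s_k^2 = o(k\delta)$ needed to invoke Lemma~\ref{lem:almost_sure_convergence_of_functional_1_cond_Fs} is the one genuinely delicate point, and the logarithmic choice of $s_k$ leaves ample slack.
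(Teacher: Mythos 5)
Your proposal is correct and follows essentially the same route as the paper's proof: decompose $W_{k\delta}(\beta,f)$ around its conditional expectation given $\cF_{s_k}$, control the fluctuation term via the conditional Biggins inequality together with the uniform $L^p$ bound of Proposition~\ref{prop:additive_martingales_bounded_in_Lp}, conclude by Markov and Borel--Cantelli, and handle the conditional-expectation term with Lemma~\ref{lem:almost_sure_convergence_of_functional_1_cond_Fs}. The only differences are cosmetic: the paper takes $s = k^\alpha$ with $0<\alpha<1/2$ where you take $s_k$ logarithmic (both choices satisfy $s^2 = o(k\delta)$ and give summable bounds), and you make explicit the step $\Expec{W_s(p\beta)} = 1$ that the paper leaves implicit.
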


\begin{proof}
    We show that $W_t(\beta, f)$ is asymptotically concentrated around its conditional expectation with respect to $\cF_s$, where $\log t \ll s \ll t$.
    To this end, let us define
    \begin{align*}
        \Delta_{t, s} &= W_t(\beta, f) - \condExpec{W_t(\beta, f)}{\cF_s} \\
        &= \sum_{u \in \cN(s)} \e^{\beta X_u(s) - c(\beta)s} \left(Y_{t, s}^{(u)} - \condExpec{Y_{t, s}^{(u)}}{\cF_s}\right),
    \end{align*}
    where, for all $u \in \cN(s)$,
    \begin{equation*}
        Y_{t, s}^{(u)} = \sum_{\substack{v \in \cN(t) \\ v \geq u}} \e^{\beta(X_v(t) - X_u(s)) - c(\beta)(t - s)} f\left(\frac{X_v(t) - \beta t}{\sqrt{t}}\right).
    \end{equation*}
    For any $p > 1$,
    \begin{equation*}
        \condExpec{\left|Y_{t, s}^{(u)}\right|^p}{\cF_s} \leq \|f\|_\infty^p \E|W_{t-s}(\beta)|^p.
    \end{equation*}
    Let us fix $p \in (1, 2]$ such that $p < \beta_c^2/\beta^2$.
    By Proposition~\ref{prop:additive_martingales_bounded_in_Lp}, $W_{s}(\beta)$ is bounded in $L^p$, uniformly in $s$.
    Besides, conditionally on $\cF_s$, the random variables $Y_{t, s}^{(u)}$, $u \in \cN(s)$, are independent.
    We then proceed as we did for \eqref{eq:phase_transition_cond_expec_F_s}: using Lemma~\ref{lem:biggins_lemma}, we obtain
    \begin{align*}
        \condExpec{|\Delta_{t, s}|^p}{\cF_s} &\leq 4^p \sum_{u \in \cN(s)} \left(\e^{\beta X_u(s) - c(\beta)s}\right)^p \condExpec{\left|Y_{t, s}^{(u)}\right|^p}{\cF_s} \\
        &\leq C W_{s}(p \beta) \e^{-(p-1)(1 - p\beta^2/\beta_c^2)s},
    \end{align*}
    for some constant $C = C(\beta, p, f) > 0$. Taking \eg $t = k\delta$ and $s = k^\alpha$ with $0 < \alpha < 1/2$, we deduce from the Borel-Cantelli lemma that $\Delta_{t, s}$ converges almost surely to $0$ as $k$ goes to infinity.
    By Lemma~\ref{lem:almost_sure_convergence_of_functional_1_cond_Fs}, since $s^2 = o(t)$, this implies that $W_{k\delta}(\beta, f)$ converges almost surely to $W_\infty(\beta, f)$.
\end{proof}

The next lemma is useful to straighten the convergence in discrete time into a convergence in continuous time.
Obviously, we do not need such a result to prove the counterpart of Theorem~\ref{th:almost_sure_convergence_of_functional_1} for the branching random walk.

\begin{lemma}\label{lem:almost_sure_convergence_of_functional_1_brutal_bounds}
    Assume furthermore $f$ to be continuously differentiable with compact support and define, for $0 \leq s \leq t$,
    \begin{equation*}
        \hat{W}_{t, s}(\beta, f) = \sum_{u \in \cN(t)} \e^{\beta X_u(t) - c(\beta)t} f\left(\frac{X_u(s) - \beta s}{\sqrt{s}}\right).
    \end{equation*}
    Then, for any $\varepsilon > 0$, there exists $\delta > 0$ such that, $\P$-almost surely,
    \begin{align}
        &\limsup_{\substack{s \to \infty \\ s \in \delta \N}} \sup_{t \in [s, s + \delta]} \left|W_t(\beta, f) - \hat{W}_{t, s}(\beta, f)\right| \leq \varepsilon \|f'\|_\infty W_\infty(\beta), \label{eq:almost_sure_convergence_of_functional_1_brutal_bound_1} \\
        &\limsup_{\substack{s \to \infty \\ s \in \delta \N}} \sup_{t \in [s, s + \delta]} \left|\hat{W}_{t, s}(\beta, f) - W_s(\beta, f)\right| \leq \varepsilon \|f\|_\infty W_\infty(\beta), \label{eq:almost_sure_convergence_of_functional_1_brutal_bound_2}
    \end{align}
    where $\|\cdot\|_\infty$ denotes the uniform norm.
\end{lemma}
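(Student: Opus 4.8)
The plan is to reduce both inequalities to a single concentration mechanism, the one already at work behind \eqref{eq:phase_transition_cond_expec_F_s} and in the proof of Lemma~\ref{lem:almost_sure_convergence_of_functional_1_discretization}. Fix, once and for all, $p\in(1,2]$ with $p<\beta_c^2/\beta^2$. I would first record the following fact, which will be used repeatedly: if, for each $v\in\cN(s)$, $\zeta^{(v)}$ is a non-negative random variable that is a measurable functional of the subtree rooted at $v$ (so that, given $\cF_s$, the $\zeta^{(v)}$ are i.i.d.\ copies of some $\zeta$) and $\Expec{\zeta^p}<\infty$, then, $\P$-almost surely,
\begin{equation*}
    \sum_{v\in\cN(s)}\e^{\beta X_v(s)-c(\beta)s}\,\zeta^{(v)}\ \xrightarrow[\substack{s\to\infty\\ s\in\delta\N}]{}\ \Expec{\zeta}\,W_\infty(\beta),
\end{equation*}
and the same holds if the sum is multiplied by any polynomially vanishing prefactor such as $s^{-1/2}$ or $s^{-3/2}$. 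Indeed, the $\cF_s$-conditional expectation equals $\Expec{\zeta}\,W_s(\beta)$, which converges to $\Expec{\zeta}\,W_\infty(\beta)$ by Theorem~\ref{th:phase_transition_additive_martingales}; for the fluctuation, conditionally on $\cF_s$ the variables $\zeta^{(v)}-\Expec{\zeta}$ are independent and centered, so Lemma~\ref{lem:biggins_lemma} together with the computation of \eqref{eq:phase_transition_cond_expec_F_s} gives $\Expec{\bigl|\sum_v\e^{\beta X_v(s)-c(\beta)s}(\zeta^{(v)}-\Expec{\zeta})\bigr|^p}\le 2^p\,\Expec{|\zeta-\Expec{\zeta}|^p}\,\e^{(p-1)(p\beta^2/\beta_c^2-1)s}$, whose values along $s\in\delta\N$ are summable because the exponent is negative; a Borel--Cantelli argument finishes it.

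For \eqref{eq:almost_sure_convergence_of_functional_1_brutal_bound_2}, write $\Supp f\subseteq[-R,R]$ and decompose over ancestors at time $s$:
\begin{equation*}
    \hat W_{t,s}(\beta,f)-W_s(\beta,f)=\sum_{v\in\cN(s)}\e^{\beta X_v(s)-c(\beta)s}\,f\!\left(\tfrac{X_v(s)-\beta s}{\sqrt s}\right)\bigl(W^{(v)}_{t-s}(\beta)-1\bigr),
\end{equation*}
where $(W^{(v)}_r(\beta))_{r\ge0}$ is the additive martingale of the recentered subtree rooted at $v$ (as in \eqref{eq:definition_of_W_s^(u)}). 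Bounding $|f|\le\|f\|_\infty\mathds 1_{|X_v(s)-\beta s|\le R\sqrt s}$ and $\sup_{t\in[s,s+\delta]}|W^{(v)}_{t-s}(\beta)-1|\le\Xi^{(v)}_\delta:=\sup_{r\in[0,\delta]}|W^{(v)}_r(\beta)-1|$, one gets $\sup_{t\in[s,s+\delta]}|\hat W_{t,s}(\beta,f)-W_s(\beta,f)|\le\|f\|_\infty\sum_{v\in\cN(s)}\e^{\beta X_v(s)-c(\beta)s}\Xi^{(v)}_\delta$. By Doob's $L^p$-inequality and Proposition~\ref{prop:additive_martingales_bounded_in_Lp} one has $\Expec{\Xi_\delta^p}<\infty$, so the concentration fact yields $\limsup_{s\in\delta\N}\sup_{t\in[s,s+\delta]}|\hat W_{t,s}(\beta,f)-W_s(\beta,f)|\le\|f\|_\infty\,\Expec{\Xi_\delta}\,W_\infty(\beta)$. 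Since the first branching time is a.s.\ positive, $t\mapsto W_t(\beta)$ is right-continuous at $0$ with $W_0(\beta)=1$, hence $\Xi_\delta\downarrow 0$ almost surely as $\delta\downarrow0$; as $\Xi_\delta\le\Xi_1\in L^1$, dominated convergence gives $\Expec{\Xi_\delta}\to0$, and choosing $\delta$ with $\Expec{\Xi_\delta}\le\varepsilon$ gives \eqref{eq:almost_sure_convergence_of_functional_1_brutal_bound_2}.

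For \eqref{eq:almost_sure_convergence_of_functional_1_brutal_bound_1}, I would use that $f$ is $\|f'\|_\infty$-Lipschitz and supported in $[-R,R]$, so the $u$-term of $W_t(\beta,f)-\hat W_{t,s}(\beta,f)$ is at most $\|f'\|_\infty\e^{\beta X_u(t)-c(\beta)t}\,|a_t(u)-a_s(u)|\,\mathds 1_{E_u}$, with $a_r(u):=(X_u(r)-\beta r)/\sqrt r$ and $E_u:=\{|a_t(u)|\le R\}\cup\{|a_s(u)|\le R\}$, and then split $|a_t(u)-a_s(u)|\le\frac{|X_u(t)-X_u(s)|+\beta(t-s)}{\sqrt t}+|X_u(s)-\beta s|\bigl(\tfrac1{\sqrt s}-\tfrac1{\sqrt t}\bigr)$. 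Summing over $u\in\cN(t)$ and regrouping by the ancestor $v\in\cN(s)$, the key observation is that $\sum_{w\in\cN^{(v)}(r)}\e^{\beta X^{(v)}_w(r)-c(\beta)r}|X^{(v)}_w(r)|\le\Psi^{(v)}_\delta$ for $r\le\delta$, where $\Psi^{(v)}_\delta:=\bigl(\sup_{r\le\delta}\max_{w\in\cN^{(v)}(r)}|X^{(v)}_w(r)|\bigr)\sup_{r\le\delta}W^{(v)}_r(\beta)$ (just bound $|X^{(v)}_w(r)|$ by the maximal displacement and the remaining sum by $W^{(v)}_r(\beta)$). On the event $|X_u(s)-\beta s|\le R\sqrt s$ the last term is $\le R\delta/(2s)$; on the event $|X_u(t)-\beta t|\le R\sqrt t$ one bounds $|X_u(s)-\beta s|\le R\sqrt t+\beta\delta+|X_u(t)-X_u(s)|$. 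After this routine case analysis, every contribution is dominated, up to a prefactor of order $s^{-1/2}$ or $s^{-3/2}$, by one of the three quantities $W_t(\beta)$, $\sum_{v\in\cN(s)}\e^{\beta X_v(s)-c(\beta)s}\sup_{r\le\delta}W^{(v)}_r(\beta)$, or $\sum_{v\in\cN(s)}\e^{\beta X_v(s)-c(\beta)s}\Psi^{(v)}_\delta$, each of which converges a.s.\ to a finite limit by the concentration fact (note $W_t(\beta)=\sum_{v\in\cN(s)}\e^{\beta X_v(s)-c(\beta)s}W^{(v)}_{t-s}(\beta)\le\sum_v\e^{\beta X_v(s)-c(\beta)s}\sup_{r\le\delta}W^{(v)}_r(\beta)$). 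Since each carries a vanishing prefactor, the first $\limsup$ is in fact $0$ almost surely, so \eqref{eq:almost_sure_convergence_of_functional_1_brutal_bound_1} holds for every $\delta>0$; in particular the $\delta$ chosen above works for both estimates. The main obstacle will be verifying the $p$-th moment hypotheses of the concentration fact: $\Expec{\Xi_\delta^p}<\infty$ is Doob plus Proposition~\ref{prop:additive_martingales_bounded_in_Lp}, but $\Expec{(\Psi_\delta)^p}<\infty$ requires a Hölder split combining the fact that the maximal displacement of a branching Brownian motion over a bounded time window has finite moments of all orders (a many-to-one estimate with Gaussian tails) with the $L^{p'}$-boundedness of $W_r(\beta)$ for some $p'$ strictly larger than $p$ — which is precisely where the strict inequality $p<\beta_c^2/\beta^2$ is used — together with the somewhat tedious but elementary bookkeeping of the several vanishing remainder terms in the first estimate.
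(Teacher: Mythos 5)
Your proof of \eqref{eq:almost_sure_convergence_of_functional_1_brutal_bound_2} is essentially the paper's: the same decomposition over $\cN(s)$, Doob's maximal inequality plus Proposition~\ref{prop:additive_martingales_bounded_in_Lp} to control $\Xi_\delta=\sup_{r\le\delta}|W_r(\beta)-1|$ in $L^p$, the Biggins--Borel--Cantelli concentration for the fluctuation, and smallness of $\delta$ from right-continuity of the martingale at $0$. Your proof of \eqref{eq:almost_sure_convergence_of_functional_1_brutal_bound_1}, however, takes a genuinely different route and is in fact sharper. The paper factors the Gibbs-weighted sum crudely as $W_t(\beta)$ times $\sup_u\sup_t|\tilde{X}_u(t)-X_u(s)|/\sqrt{s}$ and controls that supremum over all roughly $\e^s$ particles by a union bound, a Gaussian tail and Borel--Cantelli; since the maximum of $\e^{s}$ increments of variance $\delta$ is of order $\sqrt{\delta s}$, this only yields a limsup of order $\sqrt{\delta}\,\|f'\|_\infty W_\infty(\beta)$ and forces $\delta\lesssim\varepsilon^2$. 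You instead keep the Gibbs weights, regroup by the ancestor at time $s$, and feed the resulting conditionally i.i.d.\ functionals $\Psi_\delta^{(v)}$ into the same concentration mechanism; because the Gibbs-weighted average of the displacement is $O(1)$ rather than $O(\sqrt{s})$, the prefactors $s^{-1/2}$, $s^{-1}$, $s^{-3/2}$ drive the limsup to $0$ for every $\delta>0$, so only \eqref{eq:almost_sure_convergence_of_functional_1_brutal_bound_2} actually constrains $\delta$. The price is the moment bound $\Expec{\Psi_\delta^p}<\infty$, which your H\"older split correctly supplies (all moments of the running maximal displacement over $[0,\delta]$ are finite by a union bound, Lemma~\ref{lem:particles_alive_in_an_interval} and a Gaussian tail, combined with $L^{pq'}$-boundedness of the additive martingale for $pq'<\beta_c^2/\beta^2$). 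A side benefit is that your Term-C case analysis handles explicitly the $1/\sqrt{s}-1/\sqrt{t}$ discrepancy between the two normalizations, which the paper absorbs into an unexplained $o(1)$.
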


\begin{proof}
    We first prove \eqref{eq:almost_sure_convergence_of_functional_1_brutal_bound_1}.
    Applying the mean value inequality to $f$, we obtain
    \begin{equation*}
        \left|W_t(\beta, f) - \hat{W}_{t, s}(\beta, f)\right| \leq \| f' \|_\infty \sum_{u \in \cN(t)} \e^{\beta X_u(t) - c(\beta)t} \frac{|X_u(t) - X_u(s)| + \beta |t - s|}{\sqrt{s}}.
    \end{equation*}
    Then, for any mesh span $\delta > 0$, we have the following bound,
    \begin{multline}\label{eq:almost_sure_convergence_of_functional_1_brutal_bound_1_sup}
        \sup_{t \in [s, s + \delta]} \left|W_t(\beta, f) - \hat{W}_{t, s}(\beta, f)\right|
        \\ \leq \|f'\|_\infty \left(\sup_{t \in [s, s + \delta]} W_t(\beta)\right) \left(\sup_{u \in \cN([s, s + \delta])} \sup_{t \in [s, s + \delta]} \frac{|\tilde{X}_u(t) - X_u(s)|}{\sqrt{s}}\right) + o(1),
    \end{multline}
    where $\cN([s, s + \delta])$ is the set of particles that have been alive between times $s$ and $s + \delta$, and $\tilde{X}_u$ is the trajectory of $u$ extended after its death by a Brownian motion independent of all the rest.
    Applying the union bound, the many-to-one formula \eqref{eq:many-to-one} and Lemma~\ref{lem:particles_alive_in_an_interval}, we obtain
    \begin{align*}
        \Prob{\sup_{u \in \cN([s, s + \delta])} \sup_{t \in [s, s+\delta]} \frac{|\tilde{X}_u(t) - X_u(s)|}{\sqrt{s}} > \varepsilon} &\leq C \e^s \Prob{\sup_{t \in [s, s+\delta]} \frac{|B_t - B_s|}{\sqrt{s}} > \varepsilon} \\
        &\leq C \exp(s - \varepsilon^2 s/2 \delta),
    \end{align*}
    for some constant $C = C(\mu, \delta)$.
    Then, choosing $\delta > 0$ such that $1 - \varepsilon^2/2\delta < 0$, the Borel-Cantelli lemma yields
    \begin{equation*}
        \limsup_{\substack{s \to \infty \\ s \in \delta \N}} \sup_{u \in \cN([s, s + \delta])} \sup_{t \in [s, s + \delta]} \frac{|\tilde{X}_u(t) - X_u(s)|}{\sqrt{s}} \leq \varepsilon \quad \P\text{-almost surely},
    \end{equation*}
    Coming back to \eqref{eq:almost_sure_convergence_of_functional_1_brutal_bound_1_sup}, this yields \eqref{eq:almost_sure_convergence_of_functional_1_brutal_bound_1}.
    
    We now prove \eqref{eq:almost_sure_convergence_of_functional_1_brutal_bound_2}, using the following bound
    \begin{align}
        \sup_{t \in [s, s + \delta]} \left|\hat{W}_{t, s}(\beta, f) - W_s(\beta, f)\right| &\leq \|f\|_\infty \sum_{u \in \cN(s)} \e^{\beta X_u(s) - c(\beta)s} \underbrace{\sup_{t \in [s, s + \delta]} \left|W_{t-s}^{(u)}(\beta) - 1\right|}_{=: S_{s, \delta}^{(u)}}, \label{eq:almost_sure_convergence_of_functional_1_brutal_bound_2_sup}
    \end{align}
    where $W_{t-s}^{(u)}(\beta)$ is defined in \eqref{eq:definition_of_W_s^(u)}.
    By Doob's maximal inequality, for all $u \in \cN(s)$,
    \begin{equation*}
        \condExpec{\left(S_{s,\delta}^{(u)}\right)^p}{\cF_s} \leq \left(\frac{p}{p-1}\right)^p \E\left|W_\delta(\beta) - 1\right|^p.
    \end{equation*}
    By dominated convergence, we can choose $\delta > 0$ small enough so that the above quantity is bounded by $\varepsilon^p$.
    Hence, using Hölder's inequality,
    \begin{equation}\label{eq:almost_sure_convergence_of_functional_1_bound_for_S}
        \limsup_{\substack{s \to \infty \\ s \in \delta \N}} \sum_{u \in \cN(s)} \e^{\beta X_u(s) - c(\beta)s} \condExpec{S_{s,\delta}^{(u)}}{\cF_s} \leq \varepsilon W_\infty(\beta) \quad \P\text{-almost surely}.
    \end{equation}
    Now, conditionally on $\cF_s$, the random variables $S_{s, \delta}^{(u)}$, $u \in \cN(s)$, are independent.
    We can then apply Lemma~\ref{lem:biggins_lemma} in the same way we did in \eqref{eq:phase_transition_cond_expec_F_s}. This yields
    \begin{equation*}
        \condExpec{\left|\sum_{u \in \cN(s)} \e^{\beta X_u(s) - c(\beta)s} \left(S_{s, \delta}^{(u)} - \condExpec{S_{s, \delta}^{(u)}}{\cF_s}\right)\right|^p}{\cF_s} \leq C \e^{(p-1)(p\beta^2/\beta_c^2 - 1)s} W_s(p \beta),
    \end{equation*}
    for any $p \in [1, 2]$, where $C = C(p, \delta) > 0$.
    Choosing $p > 1$ small enough, the Borel-Cantelli lemma yields
    \begin{equation}\label{eq:almost_sure_convergence_of_functional_1_bound_for_T}
        \lim_{\substack{s \to \infty \\ s \in \delta \N}} \sum_{u \in \cN(s)} \e^{\beta X_u(s) - c(\beta)s} \left(S_{s, \delta}^{(u)} - \condExpec{S_{s, \delta}^{(u)}}{\cF_s}\right) = 0 \quad \P\text{-almost surely}.
    \end{equation}
    Combining \eqref{eq:almost_sure_convergence_of_functional_1_brutal_bound_2_sup}, \eqref{eq:almost_sure_convergence_of_functional_1_bound_for_S} and \eqref{eq:almost_sure_convergence_of_functional_1_bound_for_T}, we obtain \eqref{eq:almost_sure_convergence_of_functional_1_brutal_bound_2}.
\end{proof}

\begin{proof}[Proof of Theorem~\ref{th:almost_sure_convergence_of_functional_1}]
    Let us denote by $\cC_c(\R)$ the set of continuous functions from $\R$ to $\R$ with compact support and by $\cC_c^1(\R)$ its intersection with the set of continuously differentiable functions.
    By Lemma~\ref{lem:almost_sure_convergence_of_functional_1_discretization} and Lemma~\ref{lem:almost_sure_convergence_of_functional_1_brutal_bounds}, if $f \in \cC_c^1(\R)$, then $(W_t(\beta, f))_{t \geq 0}$ converges almost surely to $W_\infty(\beta, f)$.
    Since $\cC_c^1(\R)$ is dense in $\cC_c(\R)$ for the uniform norm, the almost sure convergence still holds for any $f \in \cC_c(\R)$.
    Now consider a countable set $\{f_k\}$ which is dense in $\cC_c(\R)$.
    The event $\Omega'$ on which $(W_t(\beta))_{t \geq 0}$ converges to $W_\infty(\beta)$ and $(W_t(\beta, f_k))_{t \geq 0}$ converges to $W_\infty(\beta, f_k)$ for every $k$ has probability $1$.
    
    Let us fix a realization $\omega \in \Omega'$.
    Then, for any $f \in \cC_c(\R)$, the process $W_t(\beta, f) = W_t(\beta, f)(\omega)$ converges to $W_\infty(\beta, f) = W_\infty(\beta, f)(\omega)$.
    In terms of measures, we can define $\mu_t$ and $\mu_\infty$ the measures on $\R$ which associate to any bounded continuous function $f$ the quantities $W_t(\beta, f)$ and $W_\infty(\beta, f)$, respectively.
    We have proved that, for any $f \in \cC_c(\R)$, the quantity $\mu_t(f)$ converges to $\mu_\infty(f)$ as $t$ goes to infinity.
    Since these measures have finite mass, this convergence still holds for any bounded continuous functions.
    Hence the almost sure convergence stated in Theorem~\ref{th:almost_sure_convergence_of_functional_1}.
    
    Now, let $p \in (1, 2]$ be such that $p < \beta_c^2/\beta^2$.
    For any $t \geq 0$,
    \begin{equation}\label{eq:almost_sure_convergence_of_functional_1_domination}
        |W_t(\beta, f)| \leq \|f\|_\infty \sup_{s \geq 0} W_s(\beta).
    \end{equation}
    By Doob's maximal inequality and Proposition~\ref{prop:additive_martingales_bounded_in_Lp}, the right-hand side of \eqref{eq:almost_sure_convergence_of_functional_1_domination} is in $L^p$.
    By dominated convergence, we deduce the convergence of $W_t(\beta, f)$ in $L^p$ from the almost sure one.
\end{proof}

\begin{corollary}
    Assume $f$ to be integrable with respect to the measure $\Pd{\gamma}{x} := \e^{-x^2/2}/\sqrt{2 \pi} \d{x}$, but not necessarily bounded.
    Then, we have the following convergence in $L^1$
    \begin{equation}\label{eq:L1_convergence_of_functional_1}
        \lim_{t \to \infty} W_t(\beta, f) = W_\infty(\beta) \int_\R f(x) \Pd{\gamma}{x}.
    \end{equation}
\end{corollary}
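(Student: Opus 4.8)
The plan is to reduce to Theorem~\ref{th:almost_sure_convergence_of_functional_1} by a truncation argument, the crucial observation being that the $L^1$-size of the truncation error does not depend on $t$.

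For $N > 0$, set $f_N := (-N) \vee (f \wedge N)$, a bounded continuous function, and write $f = f_N + r_N$ with $r_N := f - f_N$, so that $0 \le |r_N| \le |f|\mathds{1}_{|f| > N}$. By linearity of $h \mapsto W_t(\beta, h)$,
\[
W_t(\beta, f) - W_\infty(\beta, f) = \bigl(W_t(\beta, f_N) - W_\infty(\beta, f_N)\bigr) + W_t(\beta, r_N) - W_\infty(\beta)\int_\R r_N \Pd{\gamma}{x}.
\]
First I would bound the last two terms uniformly in $t$. Since $|W_t(\beta, r_N)| \le W_t(\beta, |r_N|)$ and $|r_N| \ge 0$, the many-to-one formula~\eqref{eq:many-to-one} gives
\[
\Expec{W_t(\beta, |r_N|)} = \e^{-\beta^2 t/2}\, \Expec{\e^{\beta B_t}\, |r_N|\!\left(\tfrac{B_t - \beta t}{\sqrt{t}}\right)} = \int_\R |r_N|(x) \Pd{\gamma}{x},
\]
where the second equality follows by completing the square (equivalently, a Girsanov tilt under which $(B_t - \beta t)/\sqrt t$ is a standard Gaussian). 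Recalling that $(W_t(\beta))_{t \ge 0}$ is an $L^1$-convergent martingale with $W_0(\beta) = 1$, so that $\Expec{W_\infty(\beta)} = 1$ (Theorem~\ref{th:phase_transition_additive_martingales}), and that $\bigl|\int_\R r_N \Pd{\gamma}{x}\bigr| \le \int_\R |r_N| \Pd{\gamma}{x}$, we obtain
\[
\Expec{\bigl|W_t(\beta, f) - W_\infty(\beta, f)\bigr|} \le \Expec{\bigl|W_t(\beta, f_N) - W_\infty(\beta, f_N)\bigr|} + 2\int_\R |r_N|(x) \Pd{\gamma}{x}.
\]

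Finally, given $\varepsilon > 0$, since $|r_N| \le |f| \in L^1(\gamma)$ and $|r_N| \to 0$ pointwise as $N \to \infty$, dominated convergence lets me choose $N$ with $2\int_\R |r_N| \Pd{\gamma}{x} < \varepsilon/2$; for this fixed $N$, $f_N$ is bounded and continuous, so Theorem~\ref{th:almost_sure_convergence_of_functional_1} applied with $p = 1$ gives $\Expec{|W_t(\beta, f_N) - W_\infty(\beta, f_N)|} \to 0$, hence this is $< \varepsilon/2$ for $t$ large, which proves~\eqref{eq:L1_convergence_of_functional_1}. There is no serious obstacle here: the only point requiring care is the uniformity in $t$ of the error term, and this is exactly delivered by the identity $\Expec{W_t(\beta, |r_N|)} = \int_\R |r_N| \Pd{\gamma}{x}$, valid for every $t$. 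If one prefers not to assume $f$ continuous, the same estimate with $f_N$ replaced by an arbitrary bounded continuous function, together with the density of $\cC_c(\R)$ in $L^1(\gamma)$, yields the conclusion unchanged.
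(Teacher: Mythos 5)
Your proof is correct and follows essentially the same route as the paper: approximate $f$ in $L^1(\gamma)$ by a bounded continuous function, use the many-to-one formula (via the exact identity $\Expec{W_t(\beta,|h|)} = \int_\R |h| \d{\gamma}$, uniform in $t$) to control the approximation error, and conclude with the $L^1$ convergence of Theorem~\ref{th:almost_sure_convergence_of_functional_1}. The only cosmetic difference is that you truncate $f$ (which requires continuity of $f$), but your closing remark replacing $f_N$ by a dense bounded continuous approximant is exactly the paper's argument and covers the general measurable case.
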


\begin{proof}
    Fix $\varepsilon > 0$.
    There exists a bounded continuous function $f_\varepsilon$ such that the $L^1$ norm of $|f - f_\varepsilon|$ with respect to $\gamma$ is bounded by $\varepsilon$.
    We have
    \begin{align*}
        \E\left|W_t(\beta, f) - W_\infty(\beta, f)\right| &\leq \E \left|W_t(\beta, f) - W_t(\beta, f_\varepsilon)\right| + \E\left|W_t(\beta, f_\varepsilon) - W_\infty(\beta, f_\varepsilon)\right| + \varepsilon \\
        &\leq 2\varepsilon + \E\left|W_t(\beta, f_\varepsilon) - W_\infty(\beta, f_\varepsilon)\right|,
    \end{align*}
    where the last bound is obtained thanks to the many-to-one formula \eqref{eq:many-to-one}.
    We conclude by applying Theorem~\ref{th:almost_sure_convergence_of_functional_1} and by letting $\varepsilon$ go to $0$.
\end{proof}

\subsection{Growth rates in the branching Brownian motion}\label{sct:growth_rates}

Let $0 \leq \beta < \beta_c$.
The following statement tells us how the number of particles at distance $O(1)$ from $\beta t$ grows with $t$.
As in Theorem~\ref{th:almost_sure_convergence_of_functional_1}, it turns out that $W_\infty(\beta)$ describes this growth.
In \cite[Theorem~2]{AsmussenKaplan1976b}, Asmussen-Kaplan obtained the counterpart of Theorem~\ref{th:growth_rates} for the branching random walk at $\beta = 0$.
Later, Biggins generalized their result to any $\beta \geq 0$ in \cite[Theorem~B]{Biggins1979}.
The case of the multidimensional branching random walk was treated by Uchiyama in \cite{Uchiyama1982}.

\begin{theorem}\label{th:growth_rates}
    Let $f : \R \to \R$ be continuous and bounded.
    Then, we have the following convergence, almost surely and in $L^p$ for any $p \in [1, 2]$ such that $p < \beta_c^2/\beta^2$,
    \begin{equation}\label{eq:growth_rates}
        \lim_{t \to \infty} \underbrace{\sqrt{t} \e^{-(1 - \beta^2/2)t} \sum_{u \in \cN(t)} f(X_u(t) - \beta t)}_{=: V_t} = \underbrace{W_\infty(\beta) \int_\R f(x) \frac{\e^{-\beta x}}{\sqrt{2 \pi}} \d{x}}_{=: V_\infty}.
    \end{equation}
\end{theorem}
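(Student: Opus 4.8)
\emph{Proof strategy.} The plan is to follow the three-step scheme of Theorem~\ref{th:almost_sure_convergence_of_functional_1}. The observation that organizes everything is that $V_t$ is itself a weighted additive-martingale sum with a $t$-dependent test function concentrating at the origin: writing $g(y) := \e^{-\beta y} f(y)$ and $\phi_t(x) := \sqrt{t}\,g(\sqrt{t}\,x)$, the identity $\e^{-(1-\beta^2/2)t} = \e^{-\beta(X_u(t)-\beta t)}\,\e^{\beta X_u(t) - c(\beta) t}$ gives
\[
V_t = \sum_{u \in \cN(t)} \e^{\beta X_u(t) - c(\beta) t}\,\phi_t\!\left(\frac{X_u(t) - \beta t}{\sqrt{t}}\right),
\]
while $\int_\R \phi_t(x)\,\frac{\e^{-x^2/2}}{\sqrt{2\pi}}\,\d{x} = \int_\R g(y)\,\frac{\e^{-y^2/(2t)}}{\sqrt{2\pi}}\,\d{y} \to \frac1{\sqrt{2\pi}}\int_\R f(y)\e^{-\beta y}\,\d{y}$. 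This already shows why $W_\infty(\beta)$ (the total mass) and the tilted Gaussian kernel appear. As in Theorem~\ref{th:almost_sure_convergence_of_functional_1}, I would first establish the convergence for $f \in \cC_c^1(\R)$, then pass to $\cC_c(\R)$ by density and to the general bounded case (under the implicit assumption that $x\mapsto f(x)\e^{-\beta x}$ is integrable) by the same measure-convergence argument used at the end of that proof.

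\emph{Step 1 (conditional expectation).} Conditioning on $\cF_s$ and applying the many-to-one formula \eqref{eq:many-to-one} inside each subtree rooted at $u \in \cN(s)$, $\condExpec{V_t}{\cF_s}$ becomes a sum over $u \in \cN(s)$ whose $u$-term is a Gaussian integral against the time-$(t-s)$ heat kernel started from $X_u(s)$, evaluated near $\beta t$. Setting $a_u := X_u(s) - \beta s$ and $\tau := t - s$, the splitting $(2\pi\tau)^{-1/2}\exp(-(w + \beta\tau - a_u)^2/(2\tau)) = \e^{\beta a_u}\,\e^{-\beta w}\,\e^{-\beta^2\tau/2}\,(2\pi\tau)^{-1/2}\exp(-(w - a_u)^2/(2\tau))$ shows that, after multiplying by $\e^{\tau}$ and by the prefactor $\sqrt{t}\,\e^{-(1-\beta^2/2)t}$, the $\e^{\pm(1-\beta^2/2)\tau}$ factors cancel and one is left with $\sqrt{t}/\sqrt{2\pi\tau}\to 1/\sqrt{2\pi}$ and a weight $\e^{\beta X_u(s) - c(\beta)s}$. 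Restricting, exactly as in Lemma~\ref{lem:almost_sure_convergence_of_functional_1_cond_Fs}, to the almost surely eventually full event $\{|X_u(s)| \le \beta_c s\ \forall u \in \cN(s)\}$ makes $a_u^2/\tau = O(s^2/t)$, so that for $t = t(s)$ with $s^2 = o(t)$ dominated convergence yields $\condExpec{V_t}{\cF_s} \to W_s(\beta)\,\frac1{\sqrt{2\pi}}\int_\R f(y)\e^{-\beta y}\,\d{y}$ almost surely; letting $s \to \infty$ and using $W_s(\beta)\to W_\infty(\beta)$ identifies the limit as $V_\infty$.

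\emph{Step 2 (concentration along a grid).} Decompose $V_t - \condExpec{V_t}{\cF_s} = \sum_{u \in \cN(s)} \e^{\beta X_u(s) - c(\beta)s}\,(\Xi^{(u)}_{t,s} - \condExpec{\Xi^{(u)}_{t,s}}{\cF_s})$, where, conditionally on $\cF_s$, the $\Xi^{(u)}_{t,s}$ are independent and each is the functional $V_{t-s}$ evaluated on the shifted function $f_u := f(a_u + \cdot\,)$ for an independent copy of the BBM. For $p \in (1,2]$ with $p < \beta_c^2/\beta^2$, Lemma~\ref{lem:biggins_lemma} produces the usual decaying factor $\e^{-(p-1)(1-p\beta^2/2)s}$ times a term of $W_s(p\beta)$, bounded by Proposition~\ref{prop:additive_martingales_bounded_in_Lp}. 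The crux is a uniform bound $\condExpec{|\Xi^{(u)}_{t,s}|^p}{\cF_s} \le C$: the $L^1$ case is immediate from \eqref{eq:many-to-one} (the Step~1 computation run with $|f|$ is bounded uniformly in $t$ and in $a_u$, since $\exp(-(w-a_u)^2/(2\tau)) \le 1$), but unlike in Lemma~\ref{lem:almost_sure_convergence_of_functional_1_discretization} there is no martingale domination of the $p$-th moment — the natural majorant $\sqrt{t-s}\,\e^{-(1-\beta^2/2)(t-s)}\,n^{(u)}(t-s)$ is not bounded even in $L^1$, precisely because of the $\sqrt{t}$ prefactor. I would handle this with the many-to-two formula \eqref{eq:many-to-two}: bounding $|\Xi^{(u)}_{t,s}|$ by $\|f\|_\infty\,\sqrt{t-s}\,\e^{-(1-\beta^2/2)(t-s)}\,N_u$ with $N_u := \#\{v \ge u : v\in\cN(t),\ X_v(t)-\beta t \in \Supp f\}$, one computes that the two-particle part of $\condExpec{N_u^2}{\cF_s}$ decays like $t^{-1}\e^{(2-\beta^2)(t-s)}$ (the dominant contribution coming from lineages that split early), which after the factor $\e^{-(p-1)(1-p\beta^2/2)s}$ and on the restricted event is summable along $t = k\delta$, $s = k^\alpha$ with $\alpha \in (0,1/2)$; an alternative is a nested application of Lemma~\ref{lem:biggins_lemma} inside the subtree. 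Borel--Cantelli then gives $V_{k\delta}\to V_\infty$ almost surely for every $\delta > 0$; carrying this out simultaneously for a countable dense family of functions upgrades it to convergence of the associated measures, hence \eqref{eq:growth_rates} along $\delta\N$ for all $f \in \cC_c(\R)$, and the same estimates show $(V_{k\delta})_k$ is bounded in $L^p$.

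\emph{Step 3 (continuous time and $L^p$).} It remains to interpolate, by ``brutal bounds'' as in Lemma~\ref{lem:almost_sure_convergence_of_functional_1_brutal_bounds}: for each $\varepsilon>0$ there is $\delta>0$ with $\limsup_{s\in\delta\N}\sup_{t\in[s,s+\delta]}|V_t - V_s|$ small almost surely. As $f$ is compactly supported, only particles whose recentred position $X_v(\cdot)-\beta\,\cdot$ stays in a fixed bounded window over $[s,s+\delta]$ matter, and the displacement of such a particle varies by at most the oscillation of its Brownian path over a time step plus $\beta\delta$; this is controlled uniformly over $\cN([s,s+\delta])$ by a union bound together with \eqref{eq:many-to-one} and Lemma~\ref{lem:particles_alive_in_an_interval} (the relevant first moment $\sqrt{s}\,\e^{-(1-\beta^2/2)s}\,\Expec{\#\{v\in\cN([s,s+\delta]): \exists t'\in[s,s+\delta],\ X_v(t')-\beta t' \text{ in the window}\}}$ stays bounded, by the Step~1 computation), while $\sqrt{t}/\sqrt{s}\to1$ and $\e^{-(1-\beta^2/2)(t-s)}\to1$ contribute only lower-order errors. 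Combined with the mean-value inequality for $f\in\cC_c^1$ and the boundedness of $V_{k\delta}(\psi)$ for a continuous bump $\psi\ge\mathds{1}$ on the window, this gives \eqref{eq:growth_rates} for $f\in\cC_c^1$, then for all admissible $f$ by the density/measure argument. The $L^p$ convergence for $p<\beta_c^2/\beta^2$ follows from $L^p$-boundedness of $(V_t)_{t\ge0}$ — deduced from that of $(V_{k\delta})_k$ and the increment estimates above — together with the almost sure convergence, via uniform integrability of the $p$-th powers. I expect the uniform $L^p$ bound on the subtree contributions in Step~2 to be the main obstacle: it is the one place where the $\sqrt{t}$ normalization genuinely breaks the additive-martingale comparison available in Theorem~\ref{th:almost_sure_convergence_of_functional_1} and forces a second-moment (many-to-two) input.
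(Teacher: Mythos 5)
Your skeleton (the three lemmas mirroring Theorem~\ref{th:almost_sure_convergence_of_functional_1}) is the paper's, and your Step~1 is essentially Lemma~\ref{lem:almost_sure_convergence_growth_rates_cond_Fs}. But Step~2 contains a genuine error, and it sits exactly where you identify "the main obstacle". Your many-to-two computation asserts that the two-particle part of $\condExpec{N_u^2}{\cF_s}$ is of order $t^{-1}\e^{(2-\beta^2)(t-s)}$, the dominant contribution coming from early-splitting lineages. This is only true for $\beta<1$. For $\beta\in(1,\beta_c)$ the exponent $2T-r-\beta^2T^2/(T+r)$ (with $T=t-s$ and $r$ the splitting time) is maximized at the interior point $r=(\beta-1)T$, where it equals $(3-2\beta)T$; after the normalization $T\e^{-(2-\beta^2)T}$ one is left with $\e^{(1-\beta)^2T}\to\infty$. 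So the second moment of the subtree contribution blows up exponentially precisely in the regime where the theorem only claims $L^p$ convergence for $p<\beta_c^2/\beta^2<2$ — unsurprisingly, since $W_t(\beta)$ itself is not bounded in $L^2$ there. The detour through many-to-two is also unnecessary: the martingale domination you declare unavailable does exist, provided you choose the tilted majorant rather than the raw particle count. Since $f$ has compact support, $f(x)\leq\|f\|_\infty\mathds{1}_{x\geq-L}\leq\|f\|_\infty\e^{\beta(x+L)}$, so $\sum_{v\geq u}f(X_v(t)-\beta t)$ is dominated by a constant times $\e^{(1-\beta^2/2)t}\e^{\beta X_u(s)-c(\beta)s}W^{(u)}_{t-s}(\beta)$; the offending $\sqrt{t}$ prefactor then survives only as a polynomial factor $t^{p/2}$ in front of $\e^{-(p-1)(1-p\beta^2/\beta_c^2)s}W_s(p\beta)\Expec{W_{t-s}(\beta)^p}$, which is harmless against $\e^{-ct^{\alpha}}$ when $s=t^{\alpha}$. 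This is the paper's entire Step~2, and it needs nothing beyond Proposition~\ref{prop:additive_martingales_bounded_in_Lp}.

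Step~3 also has a gap: you interpolate along a fixed mesh $\delta\N$, which is exactly what the paper warns does not work here. Unlike in Lemma~\ref{lem:almost_sure_convergence_of_functional_1_brutal_bounds}, the argument of $f$ is not divided by $\sqrt{t}$, and the union bound over the $\approx\e^{s}$ particles alive in $[s,s+\delta]$ only controls the maximal path oscillation at a constant level $\approx\sqrt{2\delta}$ (Borel--Cantelli forces $\varepsilon>\sqrt{2\delta}$), which does not vanish as $s\to\infty$; moreover the resulting error is multiplied by an indicator sum of the very type $V_t(\mathds{1}_J)$ you are trying to bound off the grid, so the argument is circular. The paper resolves both problems at once with the refined grid $t_{i,j}=i+j/i^{10}$: the mesh $i^{-10}$ makes the uniform oscillation $\varepsilon_i=1/i=o(1/\sqrt{t})$, which is small enough that the crude one-sided domination by $\sqrt{t}\,W_t(\beta)$ (growing like $\sqrt{t}$) still yields a vanishing bound $\sqrt{t}\,W_t(\beta)(\varepsilon_i+\beta|t-t_{i,j}|)\to0$. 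You should replace your fixed mesh by such a polynomially refined partition and your second-moment bound by the pointwise exponential domination.
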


\begin{corollary}\label{cor:growth_rates}
    The convergence \eqref{eq:growth_rates} actually holds in $L^1$ as soon as $f$ is integrable with respect to the measure $\e^{-\beta x} \d{x}$.
\end{corollary}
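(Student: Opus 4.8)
The plan is to reduce to the bounded continuous case already covered by Theorem~\ref{th:growth_rates}, using the same density argument as in the corollary following Theorem~\ref{th:almost_sure_convergence_of_functional_1}. Set $\Pd{\nu}{x} := \e^{-\beta x}\d{x}$ and, for a measurable function $g$ that is $\nu$-integrable, write
\begin{equation*}
    V_t(g) := \sqrt{t}\,\e^{-(1-\beta^2/2)t}\sum_{u \in \cN(t)} g(X_u(t) - \beta t), \qquad V_\infty(g) := W_\infty(\beta)\int_\R g(x)\,\frac{\e^{-\beta x}}{\sqrt{2\pi}}\d{x},
\end{equation*}
so that $V_\infty(f)$ is well defined (and in $L^1$, since $\Expec{W_\infty(\beta)} = 1$) and the claim is $V_t(f) \to V_\infty(f)$ in $L^1$. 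Given $\varepsilon > 0$, I would first pick a bounded continuous function $f_\varepsilon$, in fact with compact support, such that $\int_\R |f - f_\varepsilon|\,\e^{-\beta x}\d{x} \le \varepsilon$; this is possible since compactly supported continuous functions are dense in $L^1(\nu)$, the measure $\nu$ being $\sigma$-finite and Radon. Then
\begin{equation*}
    \E\left|V_t(f) - V_\infty(f)\right| \le \E\left|V_t(f - f_\varepsilon)\right| + \E\left|V_t(f_\varepsilon) - V_\infty(f_\varepsilon)\right| + \E\left|V_\infty(f - f_\varepsilon)\right|.
\end{equation*}

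The key point is a bound on the first term that is uniform in $t$. Using $\left|V_t(g)\right| \le V_t(|g|)$ and the many-to-one formula \eqref{eq:many-to-one}, I get $\Expec{V_t(|g|)} = \sqrt{t}\,\e^{-(1-\beta^2/2)t}\,\e^t\,\Expec{|g|(B_t - \beta t)}$; writing out the Gaussian density of $B_t - \beta t \sim \mathcal{N}(-\beta t, t)$, the factors $\sqrt{t}$, $\e^{-(1-\beta^2/2)t}$, $\e^t$ and $\e^{-\beta^2 t/2}$ cancel exactly, leaving the identity
\begin{equation*}
    \Expec{V_t(|g|)} = \int_\R |g|(x)\,\frac{\e^{-x^2/(2t)}}{\sqrt{2\pi}}\,\e^{-\beta x}\d{x} \le \frac{1}{\sqrt{2\pi}}\int_\R |g|(x)\,\e^{-\beta x}\d{x}.
\end{equation*}
Taking $g = f - f_\varepsilon$ bounds the first term by $\varepsilon/\sqrt{2\pi}$, uniformly in $t$. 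For the last term, $\left|V_\infty(f - f_\varepsilon)\right| \le W_\infty(\beta)\,\frac{1}{\sqrt{2\pi}}\int_\R |f - f_\varepsilon|\,\e^{-\beta x}\d{x}$ since $W_\infty(\beta) \ge 0$, and $\Expec{W_\infty(\beta)} = 1$ in the subcritical regime by Theorem~\ref{th:phase_transition_additive_martingales}, so this term is at most $\varepsilon/\sqrt{2\pi}$. The middle term tends to $0$ as $t \to \infty$ by Theorem~\ref{th:growth_rates} applied to the bounded continuous function $f_\varepsilon$.

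Combining the three bounds gives $\limsup_{t \to \infty}\E\left|V_t(f) - V_\infty(f)\right| \le 2\varepsilon/\sqrt{2\pi}$, and letting $\varepsilon \downarrow 0$ yields the desired $L^1$ convergence. I do not expect a genuine obstacle here, this corollary being essentially routine: the only steps needing a word of care are the density of bounded continuous functions in $L^1(\e^{-\beta x}\d{x})$, which is standard, and the bookkeeping of the Gaussian exponents in the many-to-one computation, which is precisely the manipulation already underlying the first-moment asymptotics behind Theorem~\ref{th:growth_rates}.
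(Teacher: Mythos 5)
Your proof is correct and takes essentially the same route as the paper, which establishes this corollary exactly as it does the $L^1$ corollary following Theorem~\ref{th:almost_sure_convergence_of_functional_1}: approximate $f$ in $L^1(\e^{-\beta x}\d{x})$ by a bounded continuous $f_\varepsilon$, control the two error terms uniformly in $t$ via the many-to-one formula, and apply Theorem~\ref{th:growth_rates} to $f_\varepsilon$. Your explicit Gaussian computation giving $\Expec{V_t(|g|)} \leq (2\pi)^{-1/2}\int_\R |g|(x)\,\e^{-\beta x}\d{x}$ is precisely the uniform bound the paper leaves implicit.
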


Below, we introduce three lemmas which are very similar to Lemma~\ref{lem:almost_sure_convergence_of_functional_1_cond_Fs}, Lemma~\ref{lem:almost_sure_convergence_of_functional_1_discretization}, Lemma~\ref{lem:almost_sure_convergence_of_functional_1_brutal_bounds}.
The proofs of Theorem~\ref{th:growth_rates} and Corollary~\ref{cor:growth_rates} then follow from the arguments of Section~\ref{sct:position_under_gibbs_measure}.

\begin{lemma}\label{lem:almost_sure_convergence_growth_rates_cond_Fs}
    Assume $f : \R \to \R$ to be integrable with respect to the measure $\e^{-\beta x} \d{x}$.
    Then, taking $t = t(s)$ such that $s^2 = o(t)$ as $t$ goes to infinity, we have
    \begin{equation*}
        \lim_{s \to \infty} \condExpec{V_t}{\cF_s} = V_\infty \quad \P\text{-almost surely}.
    \end{equation*}
\end{lemma}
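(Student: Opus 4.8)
The plan is to follow the proof of Lemma~\ref{lem:almost_sure_convergence_of_functional_1_cond_Fs} almost verbatim, the only genuine difference being that here $f$ is merely $\e^{-\beta x}\d x$-integrable rather than compactly supported. First I would compute $\condExpec{V_t}{\cF_s}$ explicitly. Writing $V_t$ as a sum over $u\in\cN(s)$ of the contributions of the descendants $v\geq u$ and using the branching property together with the many-to-one formula~\eqref{eq:many-to-one} (applied to the independent branching Brownian motion started from $X_u(s)$), one gets, after the change of variable $y=X_u(s)+w-\beta t$ and completing the square in the Gaussian density,
\begin{equation*}
    \condExpec{V_t}{\cF_s} = \sqrt{t}\sum_{u\in\cN(s)} \e^{\beta X_u(s)-c(\beta)s}\int_\R f(y)\e^{-\beta y}\frac{\e^{-(y-X_u(s)+\beta s)^2/(2(t-s))}}{\sqrt{2\pi(t-s)}}\d y,
\end{equation*}
the one algebraic point being that the $u$-independent exponential factors satisfy $-(1-\beta^2/2)t+(t-s)-\beta^2(t-s)/2=-(1-\beta^2/2)s$, so they recombine with $\e^{\beta X_u(s)-\beta^2 s}$ into the additive-martingale weight $\e^{\beta X_u(s)-c(\beta)s}$. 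Note also that $\sqrt{t/(t-s)}\to 1$ since $s^2=o(t)$.

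Next, as in Lemma~\ref{lem:almost_sure_convergence_of_functional_1_cond_Fs}, I would introduce the truncated quantity $\tilde V_{t,s}$ obtained by inserting $\mathds{1}_{|X_u(s)|\leq\beta_c s}$ in the sum over $\cN(t)$ defining $V_t$; its conditional expectation is given by the same formula with the sum restricted to $\{u\in\cN(s):|X_u(s)|\leq\beta_c s\}$. Since the Gaussian density is bounded by $(2\pi(t-s))^{-1/2}$ and $\sqrt{t/(t-s)}$ is eventually $\leq 2$, the difference $|\condExpec{V_t}{\cF_s}-\condExpec{\tilde V_{t,s}}{\cF_s}|$ is bounded, for $s$ large, by $\bigl(\int_\R |f(y)|\e^{-\beta y}\d y\bigr)\sum_{u\in\cN(s),\,|X_u(s)|>\beta_c s}\e^{\beta X_u(s)-c(\beta)s}$. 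By~\eqref{eq:domination_extremal_particle} and its symmetric analogue for the lowest particle, the indicator $\mathds{1}_{\exists u\in\cN(s),\,|X_u(s)|>\beta_c s}$ tends to $0$ $\P$-almost surely, so this last sum is eventually empty and the difference tends to $0$ $\P$-almost surely.

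It then remains to prove $\condExpec{\tilde V_{t,s}}{\cF_s}\to V_\infty$ $\P$-almost surely. On $\{|X_u(s)|\leq\beta_c s\}$ one has $|X_u(s)-\beta s|\leq(\beta_c+\beta)s$, and the crux is the deterministic statement
\begin{equation*}
    \sup_{|z|\leq(\beta_c+\beta)s}\left|\sqrt{\tfrac{t}{t-s}}\int_\R f(y)\e^{-\beta y}\frac{\e^{-(y-z)^2/(2(t-s))}}{\sqrt{2\pi}}\d y-\int_\R f(y)\frac{\e^{-\beta y}}{\sqrt{2\pi}}\d y\right|\xrightarrow[s\to\infty]{}0 .
\end{equation*}
This I would prove by dominated convergence along an arbitrary extracted sequence: for $|z_n|\leq(\beta_c+\beta)s_n$ one has $(y-z_n)^2/(t(s_n)-s_n)\leq 2y^2/(t_n-s_n)+2(\beta_c+\beta)^2 s_n^2/(t_n-s_n)\to 0$ since $s_n^2=o(t_n)$, so the integrand converges pointwise to $f(y)\e^{-\beta y}/\sqrt{2\pi}$ while being dominated by $|f(y)|\e^{-\beta y}/\sqrt{2\pi}\in L^1(\d y)$. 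Multiplying this bound by $\sum_{u\in\cN(s),\,|X_u(s)|\leq\beta_c s}\e^{\beta X_u(s)-c(\beta)s}$, which is at most the $\P$-a.s.-convergent (hence $\P$-a.s. bounded) positive martingale $W_s(\beta)$ and which in fact converges to $W_\infty(\beta)$ $\P$-almost surely — again because its complement in $W_s(\beta)$ is eventually empty — gives $\condExpec{\tilde V_{t,s}}{\cF_s}\to W_\infty(\beta)\int_\R f(y)\e^{-\beta y}\d y/\sqrt{2\pi}=V_\infty$ $\P$-almost surely, which is the claim.

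The main obstacle, relative to the compactly supported case treated in Lemma~\ref{lem:almost_sure_convergence_of_functional_1_cond_Fs}, is precisely this last convergence: the uniform-continuity argument used there is unavailable since $f$ may be unbounded, so one must run a dominated-convergence argument whose dominating function $|f|\e^{-\beta x}$ does not depend on $z$, and one must check that the Gaussian exponent is controlled uniformly over $|z|\leq(\beta_c+\beta)s$ — which is exactly the role of the hypothesis $s^2=o(t)$. All the remaining steps are the bookkeeping of Section~\ref{sct:position_under_gibbs_measure}.
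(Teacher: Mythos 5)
Your proposal is correct and follows essentially the same route as the paper: the same explicit computation of $\condExpec{V_t}{\cF_s}$ via the Markov property and the many-to-one formula (with the same recombination of exponentials into the weight $\e^{\beta X_u(s)-c(\beta)s}$), followed by dominated convergence with dominating function $|f(y)|\e^{-\beta y}/\sqrt{2\pi}$, using $s^2=o(t)$ to kill the Gaussian exponent uniformly over the particles at time $s$. The only cosmetic difference is that you insert an explicit truncation $\mathds{1}_{|X_u(s)|\leq\beta_c s}$, whereas the paper invokes directly the almost sure asymptotics $\sup_{u\in\cN(s)}|X_u(s)|\sim\sqrt{2}\,s$ to get the same uniform control.
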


\begin{proof}
    Using the Markov property of the branching Brownian motion and the many-to-one formula \eqref{eq:many-to-one}, we obtain
    \begin{equation}\label{eq:growth_rates_cond_Fs_computation}
        \condExpec{V_t}{\cF_s} = \sqrt{\frac{t}{t-s}} \sum_{u \in \cN(s)} \e^{\beta X_u(s) - c(\beta)s} \int_\R f(x) \frac{\e^{-\beta x}}{\sqrt{2 \pi}} \e^{-(X_u(s) - x - \beta s)^2/2(t-s)} \d{x}.
    \end{equation}
    Since $\sup_{u \in \cN(s)} |X_u(s)|$ is almost surely equivalent to $\sqrt{2}s$ and $s^2 = o(t)$, for all $x \in \R$,
    \begin{equation*}
        \lim_{s \to \infty} \sup_{u \in \cN(s)} \left|1 - \e^{-(X_u(s) - x - \beta s)^2/2(t-s)}\right| = 0 \quad \P\text{-almost surely}.
    \end{equation*}
    By dominated convergence, \eqref{eq:growth_rates_cond_Fs_computation} converges almost surely to $V_\infty$ as $s$ goes to infinity.
\end{proof}

Similarly to Lemma~\ref{lem:almost_sure_convergence_of_functional_1_discretization}, we can prove that for any mesh span $\delta > 0$ and for $f$ continuous with compact support, the sequence $(V_{k \delta})_{k \in \N}$ converges almost surely to $V_\infty$.
However, contrary to what we obtained in Lemma~\ref{lem:almost_sure_convergence_of_functional_1_brutal_bounds}, we will not have a good control of $\left|V_t - V_s\right|$, uniformly in $t \in [s, s+\delta]$.
This comes from the fact that $|f(X_u(t) - \beta t) - f(X_u(s) - \beta s)|$ is not small enough in general.
We can overcome this obstacle by using an appropriate partition of the time $[0, \infty)$. 

For instance, define $t_{i,j} = i + j/i^{10}$ for all integers $i$ and $j$ such that $i \geq 1$ and $0 \leq j \leq i^{10}$.
In order to simplify the notations, define $(t_k)_{k \geq 1}$ the sequence of the consecutive terms of $\{t_{i, j}\}$.

\begin{lemma}
    Assume $f$ to be continuous with compact support.
    Then $(V_{t_k})_{k \geq 1}$ converges almost surely to $V_\infty$.
\end{lemma}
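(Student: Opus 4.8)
The plan is to follow the proof of Lemma~\ref{lem:almost_sure_convergence_of_functional_1_discretization}, but to carry out the Borel--Cantelli argument directly along the whole sequence $(t_k)$ instead of along an arithmetic progression. This is possible because inside $[i,i+1]$ the points $t_{i,j}$ have mesh $i^{-10}$, so there are only $i^{10}+1$ of them; the polynomial loss incurred by summing over $j$ is absorbed by the geometric decay coming from concentration. This is exactly the way around the obstacle flagged above: since the argument of $f$ in $V_t$ is not rescaled by a power of $t$, we have no uniform control of $|V_t-V_s|$ for $t\in[s,s+\delta]$ as in Lemma~\ref{lem:almost_sure_convergence_of_functional_1_brutal_bounds}, so we never try to interpolate between mesh points.

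Fix $p\in(1,2]$ with $p\beta^2<\beta_c^2$ (possible since $\beta<\beta_c$) and put $s_i:=i^{\alpha}$ for some $\alpha\in(0,1/2)$, so that $s_i\to\infty$, $s_i^2=o(i)$ and $s_i<t_{i,j}$ for $i$ large. For $t\in[i,i+1]$ set $r=t-s_i$ and decompose $V_t$ along the ancestors at time $s_i$: by the branching property,
\begin{equation*}
V_t=\sqrt{\tfrac{t}{r}}\,\e^{-(1-\beta^2/2)s_i}\sum_{u\in\cN(s_i)}V^{(u)}_r,
\end{equation*}
where, conditionally on $\cF_{s_i}$, the $V^{(u)}_r$, $u\in\cN(s_i)$, are independent and $V^{(u)}_r$ has the law of the functional $V_r$ of a branching Brownian motion started from $X_u(s_i)-\beta s_i$. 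Since the prefactor $\sqrt{t/r}\,\e^{-(1-\beta^2/2)s_i}$ is the same for every $u$ and the centered summands $V^{(u)}_r-\condExpec{V^{(u)}_r}{\cF_{s_i}}$ are conditionally independent, Lemma~\ref{lem:biggins_lemma} applies to $\Delta_{t,s_i}:=V_t-\condExpec{V_t}{\cF_{s_i}}$. Bounding $\condExpec{|V^{(u)}_r|^p}{\cF_{s_i}}$ by a shift-dependent $L^p$ estimate of the form $\sup_{r\ge0}\Expec{|V_r^{[c]}|^p}\le C(1+|c|)^{K}\e^{p\beta c}$ — the analogue for $V$ of the $L^p$-boundedness in Proposition~\ref{prop:additive_martingales_bounded_in_Lp}, with $c=X_u(s_i)-\beta s_i$ — and then summing over $u$ via the many-to-one formula exactly as in~\eqref{eq:phase_transition_cond_expec_F_s}, one obtains, after taking expectations,
\begin{equation*}
\Expec{|\Delta_{t,s_i}|^p}\le C\left(\tfrac{t}{t-s_i}\right)^{p/2}(1+s_i)^{2K}\,\e^{(1-p)(1-p\beta^2/\beta_c^2)s_i},
\end{equation*}
uniformly over $t\in[i,i+1]$, hence uniformly in $j$.

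Since $(1-p)(1-p\beta^2/\beta_c^2)<0$ and $t/(t-s_i)\to1$, this bound is $\le C'i^{2K\alpha}\e^{-c\,i^{\alpha}}$ for a constant $c>0$, so for every $\varepsilon>0$,
\begin{equation*}
\sum_{i\ge1}\sum_{j=0}^{i^{10}}\Prob{|\Delta_{t_{i,j},s_i}|>\varepsilon}\le\varepsilon^{-p}\sum_{i\ge1}(i^{10}+1)\,C'i^{2K\alpha}\e^{-c\,i^{\alpha}}<\infty,
\end{equation*}
and by Borel--Cantelli, $\P$-almost surely $\max_{0\le j\le i^{10}}|\Delta_{t_{i,j},s_i}|\to0$ as $i\to\infty$. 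For the conditional expectation, the formula established in the proof of Lemma~\ref{lem:almost_sure_convergence_growth_rates_cond_Fs},
\begin{equation*}
\condExpec{V_t}{\cF_{s_i}}=\sqrt{\tfrac{t}{t-s_i}}\sum_{u\in\cN(s_i)}\e^{\beta X_u(s_i)-c(\beta)s_i}\int_\R f(x)\frac{\e^{-\beta x}}{\sqrt{2\pi}}\,\e^{-(X_u(s_i)-x-\beta s_i)^2/2(t-s_i)}\d{x},
\end{equation*}
together with the fact that $\sup_{u\in\cN(s_i)}|X_u(s_i)|$ is almost surely $O(s_i)$ (by~\eqref{eq:domination_extremal_particle} and its mirror image for the lowest particle) and $s_i^2=o(t-s_i)$, shows that the Gaussian factor and the prefactor tend to $1$ uniformly over $x\in\Supp f$, $u\in\cN(s_i)$ and $t\in[i,i+1]$; hence $\max_{0\le j\le i^{10}}|\condExpec{V_{t_{i,j}}}{\cF_{s_i}}-W_{s_i}(\beta)\int_\R f(x)\e^{-\beta x}/\sqrt{2\pi}\,\d{x}|\to0$ almost surely, and $W_{s_i}(\beta)\to W_\infty(\beta)$ makes the right-hand side converge to $V_\infty$. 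Combining, $\max_{0\le j\le i^{10}}|V_{t_{i,j}}-V_\infty|\to0$; since the tail of $(t_k)$ is the concatenation of the blocks $\{t_{i,j}:0\le j\le i^{10}\}$ with $i\to\infty$, this is the asserted convergence $V_{t_k}\to V_\infty$.

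The step I expect to be the main obstacle is the shift-dependent $L^p$ bound $\sup_{r\ge0}\Expec{|V_r^{[c]}|^p}\le C(1+|c|)^{K}\e^{p\beta c}$. In Lemma~\ref{lem:almost_sure_convergence_of_functional_1_discretization} the corresponding quantity was dominated by $\|f\|_\infty$ times a genuine additive martingale of the subtree; here there is no such cheap domination, since bounding the indicator of $\Supp f$ by an exponential is far too lossy — $f$ is supported on both sides of its bulk around $\beta r$, not only below it — so one really has to rerun the $L^p$-boundedness argument of Section~\ref{sct:position_under_gibbs_measure} (in the spirit of the second method in the proof of Theorem~\ref{th:phase_transition_additive_martingales}) with $f$ replaced by $f(\cdot+c)$, and verify that shifting $f$ by $c$ only produces the factor $\e^{p\beta c}$ up to polynomial corrections in $|c|$. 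This is precisely where the hypothesis $p\beta^2<\beta_c^2$ is used. A secondary, bookkeeping point is that every estimate must be uniform in $j$; this is ensured by letting the coarsening time $s_i$ depend on $i$ only, and by observing that over an interval of unit length the $t$-dependence of $\condExpec{V_t}{\cF_{s_i}}$ and of the bound on $\Delta_{t,s_i}$ is negligible.
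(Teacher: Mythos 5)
Your overall skeleton is the paper's: compare $V_{t_{i,j}}$ with its conditional expectation at an intermediate time $s\approx t^{\alpha}$, control the fluctuation by Lemma~\ref{lem:biggins_lemma} and Markov's inequality, sum over the polynomially many mesh points and apply Borel--Cantelli, and handle the conditional expectation exactly as in Lemma~\ref{lem:almost_sure_convergence_growth_rates_cond_Fs}. The difference, and the gap, is the moment estimate you feed into Biggins' inequality. Your argument rests entirely on the shift-dependent bound $\sup_{r\ge 0}\Expec{|V_r^{[c]}|^p}\le C(1+|c|)^{K}\e^{p\beta c}$, which you yourself flag as the main obstacle and do not prove; as written, the load-bearing step of the proof is missing, and establishing it would require an $L^p$ argument of essentially the same difficulty as the lemma itself.

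Moreover, your reason for rejecting the ``cheap'' domination is not correct: this is precisely what the paper does. Since $f$ vanishes outside $[-L,L]$, one has $f(x)\le \|f\|_\infty\,\mathds{1}_{x\ge -L}\le \|f\|_\infty\,\e^{\beta(x+L)}$ for all $x$; the fact that $f$ also vanishes far above $\beta t$ is irrelevant for an upper bound, so there is no ``two-sided'' obstruction. This turns each descendant sum $\sum_{v\ge u}f(X_v(t)-\beta t)$ into (a constant times) an additive martingale of the subtree, and together with Proposition~\ref{prop:additive_martingales_bounded_in_Lp} it yields the bound \eqref{eq:almost_sure_convergence_of_functional_2_discretization_2}, namely $C\,t^{p/2}\,\e^{-(p-1)(1-p\beta^2/\beta_c^2)s}\,W_s(p\beta)\,\Expec{W_{t-s}(\beta)^p}$. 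The only loss relative to your hoped-for sharp estimate is the factor $t^{p/2}$ (the exponential bound forgoes the $1/\sqrt{t}$ Gaussian density factor that makes $V_t$ of order one), and this polynomial is harmless: with $s=t^{\alpha}$ the decay $\e^{-(p-1)(1-p\beta^2/\beta_c^2)t^{\alpha}}$ beats $t^{p/2}$ times the $i^{10}+1$ mesh points per unit interval, so your own Borel--Cantelli computation goes through verbatim with this elementary bound in place of the unproved one. With that substitution (and keeping your treatment of $\condExpec{V_t}{\cF_{s_i}}$, which is a minor uniform-in-$t\in[i,i+1]$ variant of Lemma~\ref{lem:almost_sure_convergence_growth_rates_cond_Fs}), the proof is complete and coincides with the paper's.
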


\begin{proof}
    Let $L > 0$ be such that the support of $f$ is included in $[-L, L]$ and fix $p \in (1, 2]$ such that $p < \beta_c^2/\beta^2$.
    For all $t \geq s \geq 0$,
    \begin{align}
        \condExpec{\left|V_t - \condExpec{V_t}{\cF_s}\right|^p}{\cF_s} &\leq 2^p t^{p/2} \e^{-p(1 - \beta^2/2)t} \sum_{u \in \cN(s)} \condExpec{\left| \sum_{\substack{v \in \cN(t) \\ v \geq u}} f(X_v(t) - \beta t) \right|^p}{\cF_s} \label{eq:almost_sure_convergence_of_functional_2_discretization_1} \\
        &\leq C t^{p/2} \e^{-(p - 1)(1 - p\beta^2/\beta_c^2)s} W_s(p \beta) \Expec{W_{t - s}(\beta)^p}, \label{eq:almost_sure_convergence_of_functional_2_discretization_2}
    \end{align}
    for some constant $C = C(\beta, p, f, L) > 0$.
    The bound \eqref{eq:almost_sure_convergence_of_functional_2_discretization_1} is obtained thanks to the same arguments as in the proof of Lemma~\ref{lem:almost_sure_convergence_of_functional_1_discretization}.
    As for \eqref{eq:almost_sure_convergence_of_functional_2_discretization_2}, we have used the inequalities
    \begin{equation*}
        f(x) \leq \|f\|_\infty \mathds{1}_{x \geq -L} \leq \|f\|_\infty \e^{\beta (x + L)}, \quad x \in \R,
    \end{equation*}
    in order to express the bound in terms of the additive martingale.
    Recall that $W_t(\beta)$ is bounded in $L^p$, uniformly in $t \geq 0$, by Proposition~\ref{prop:additive_martingales_bounded_in_Lp}.
    In the same vein as for Lemma~\ref{lem:almost_sure_convergence_of_functional_1_discretization}, we choose $t = t_k$ and $s = t_k^\alpha$, with $0 < \alpha < 1/2$.
    This enables us to conclude thanks to the Borel-Cantelli lemma and Lemma~\ref{lem:almost_sure_convergence_growth_rates_cond_Fs}, since $s^2 = o(t)$.
\end{proof}

\begin{lemma}\label{lem:almost_sure_convergence_of_functional_2_brutal_bounds}
    Assume furthermore $f$ to be continuously differentiable with compact support and define, for $0 \leq s \leq t$,
    \begin{equation*}
        \hat{V}_{t, s} = \sqrt{t} \e^{-(1 - \beta^2/2)t} \sum_{u \in \cN(t)} f(X_u(s) - \beta s).
    \end{equation*}
    Then, almost surely,
    \begin{align}
        &\lim_{k \to \infty} \sup_{t \in [t_k, t_{k+1}]} \left|V_t - \hat{V}_{t, t_k}\right| = 0, \label{eq:almost_sure_convergence_of_functional_2_brutal_bound_1} \\
        &\lim_{k \to \infty} \sup_{t \in [t_k, t_{k+1}]} \left|\hat{V}_{t, t_k} - V_{t_k}\right| = 0. \label{eq:almost_sure_convergence_of_functional_2_brutal_bound_2}
    \end{align}
\end{lemma}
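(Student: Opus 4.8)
The plan is to follow the scheme of Lemma~\ref{lem:almost_sure_convergence_of_functional_1_brutal_bounds}, adapted to two features of the present setting: since there is no weight $\e^{\beta X_u(t)}$ producing a bounded martingale, one must use the compact support of $f$ to keep the number of contributing particles under control; and the mesh $\delta_k := t_{k+1}-t_k$ now tends to $0$, which will play the role of the $1/\sqrt s$ normalization of the previous lemma in the Borel--Cantelli estimates. Fix $L$ with $\Supp f\subseteq[-L,L]$ and write $V_t(g) := \sqrt t\,\e^{-(1-\beta^2/2)t}\sum_{u\in\cN(t)} g(X_u(t)-\beta t)$ for bounded $g$; recall from the previous lemma that $(V_{t_k}(g))_k$ converges almost surely, hence is almost surely bounded, for every continuous compactly supported $g$, and that $\Expec{V_{t_k}(g)}=O(1)$ by the many-to-one formula \eqref{eq:many-to-one}.

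The key preliminary step is to exhibit an almost surely bounded dominating quantity. For $u'\in\cN(t_k)$ set $n^{(u')}(r) := \#\{v\in\cN(t_k+r): v\geq u'\}$, $N_k^{(u')} := \#\{v\in\cN([t_k,t_{k+1}]): v\geq u'\}$ and $T_k^{(u')} := \sup_{r\in[0,\delta_k]}|n^{(u')}(r)-1|$; conditionally on $\cF_{t_k}$ these are, over $u'\in\cN(t_k)$, independent and identically distributed, and independent of $\cF_{t_k}$, with $\Expec{N_k}=1+O(\delta_k)$ by Lemma~\ref{lem:particles_alive_in_an_interval} and, by elementary second moment computations using that $\mu$ has a finite second moment, $\Var{N_k}=O(\delta_k)$ and $\Expec{T_k^p}=O(\delta_k)$. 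Put $I_k := \{u'\in\cN(t_k): X_{u'}(t_k)-\beta t_k\in[-L-2,L+2]\}$ and $\bar{V}_k := \sqrt{t_{k+1}}\,\e^{-(1-\beta^2/2)t_k}\sum_{u'\in I_k} N_k^{(u')}$. I would split $\bar{V}_k$ into its $\cF_{t_k}$-conditional mean, which equals $(1+o(1))V_{t_k}(\mathds{1}_{[-L-2,L+2]})$ and so is almost surely bounded by $(1+o(1))V_{t_k}(g)$ for a fixed continuous compactly supported $g\geq\mathds{1}_{[-L-2,L+2]}$, plus a centered remainder $R_k$. Applying the conditional Biggins inequality \eqref{eq:conditional_biggins_lemma} with some $p\in(1,2]$ such that $p<\beta_c^2/\beta^2$ and taking expectations, one gets $\Expec{|R_k|^p}\leq C\,t_{k+1}^{(p-1)/2}\,\e^{-(p-1)(1-\beta^2/2)t_k}\,\delta_k^{p/2}$; since $1-\beta^2/2>0$ and there are only $O(i^{10})$ indices $k$ with $t_k\in[i,i+1)$, this is summable in $k$, so Markov's inequality and Borel--Cantelli give $R_k\to 0$ almost surely and hence that $\bar{V}_k$ is almost surely bounded. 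The same argument with $T_k^{(u')}$ in place of $N_k^{(u')}$ and the window $[-L,L]$ yields $\sqrt{t_{k+1}}\,\e^{-(1-\beta^2/2)t_k}\sum_{u'\in\cN(t_k),\,X_{u'}(t_k)-\beta t_k\in[-L,L]} T_k^{(u')}\to 0$ almost surely.

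For \eqref{eq:almost_sure_convergence_of_functional_2_brutal_bound_1}, let $\tilde X_u$ be the trajectory of $u$ extended past its death by an independent Brownian motion and $\epsilon_k := \sup_{u\in\cN([t_k,t_{k+1}])}\sup_{r\in[t_k,t_{k+1}]}|\tilde X_u(r)-X_u(t_k)|$; the union bound, the many-to-one formula \eqref{eq:many-to-one} and Lemma~\ref{lem:particles_alive_in_an_interval} give $\Prob{\epsilon_k>\eta}\leq C\,\e^{t_k}\e^{-\eta^2/(2\delta_k)}$, summable for each fixed $\eta>0$, so $\epsilon_k\to 0$ almost surely. By the mean value inequality, $|f(X_u(t)-\beta t)-f(X_u(t_k)-\beta t_k)|\leq\|f'\|_\infty(|X_u(t)-X_u(t_k)|+\beta(t-t_k))$, and this is nonzero only if the segment joining its two arguments meets $[-L,L]$; hence, on $\{\epsilon_k\leq 1\}$, only particles whose ancestor in $\cN(t_k)$ lies in $I_k$ contribute, and grouping by that ancestor gives $\sup_{t\in[t_k,t_{k+1}]}|V_t-\hat{V}_{t,t_k}|\leq\|f'\|_\infty(\epsilon_k+\beta\delta_k)\bar{V}_k$, which tends to $0$ almost surely. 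For \eqref{eq:almost_sure_convergence_of_functional_2_brutal_bound_2}, grouping $\hat{V}_{t,t_k}$ by the ancestor in $\cN(t_k)$ and setting $\rho_k(t):=\sqrt{t/t_k}\,\e^{-(1-\beta^2/2)(t-t_k)}$ gives
\begin{equation*}
    \hat{V}_{t,t_k}-V_{t_k} = \rho_k(t)\sqrt{t_k}\,\e^{-(1-\beta^2/2)t_k}\sum_{u'\in\cN(t_k)} f(X_{u'}(t_k)-\beta t_k)\bigl(n^{(u')}(t-t_k)-1\bigr) + (\rho_k(t)-1)V_{t_k}.
\end{equation*}
Since $\beta<\beta_c$, one has $\sup_{t\in[t_k,t_{k+1}]}\rho_k(t)=O(1)$ and $\sup_{t\in[t_k,t_{k+1}]}|\rho_k(t)-1|\to 0$, so the last term vanishes almost surely as $V_{t_k}$ is bounded; bounding $|f|\leq\|f\|_\infty\mathds{1}_{[-L,L]}$ and $\sup_{t\in[t_k,t_{k+1}]}|n^{(u')}(t-t_k)-1|\leq T_k^{(u')}$ dominates the first term, uniformly in $t\in[t_k,t_{k+1}]$, by a constant times $\sqrt{t_k}\,\e^{-(1-\beta^2/2)t_k}\sum_{u'\in\cN(t_k),\,X_{u'}(t_k)-\beta t_k\in[-L,L]} T_k^{(u')}$, which tends to $0$ almost surely by the preliminary step.

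The main obstacle is precisely the almost sure uniform-in-$k$ boundedness of the dominating quantity $\bar{V}_k$: there is no martingale whose running supremum dominates it, in contrast with the bound $\sup_t W_t(\beta)<\infty$ available in the previous lemma, so it must be extracted from the already-established almost sure convergence of $(V_{t_k})_k$ together with the Borel--Cantelli argument above, which genuinely uses both $\delta_k\to 0$ and $\beta<\beta_c$ (so that the exponential factors $\e^{-(p-1)(1-\beta^2/2)t_k}$ beat the $O(i^{10})$ indices in each unit interval). The accompanying moment estimates $\Var{N_k},\,\Expec{T_k^p}=O(\delta_k)$, though routine, also need to be carried out.
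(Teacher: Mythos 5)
Your proof is correct, and it keeps the paper's overall skeleton — the same mesh $\{t_{i,j}\}$, the mean value inequality for \eqref{eq:almost_sure_convergence_of_functional_2_brutal_bound_1}, a union bound plus Borel--Cantelli for the particle oscillations over the short intervals, and a conditional Biggins/Borel--Cantelli treatment of the descendant-count fluctuations for \eqref{eq:almost_sure_convergence_of_functional_2_brutal_bound_2} — but it replaces the paper's key domination device. In the paper, the support indicators are bounded by $\mathds{1}_{X_u(t) - \beta t \in \Supp f} + \mathds{1}_{X_u(t_{i,j}) - \beta t_{i,j} \in \Supp f} \leq 2\e^{\beta X_u(t) - \beta^2 t + \beta L}$, so the whole expression in \eqref{eq:almost_sure_convergence_of_functional_2_brutal_bound_1} is dominated by $2\|f'\|_\infty \sqrt{t}\, W_t(\beta)(\varepsilon_i + \beta|t - t_{i,j}|)$: the almost sure boundedness of $W_t(\beta)$ comes for free, but the leftover $\sqrt{t}$ forces the specific choice $\varepsilon_i = 1/i = o(1/\sqrt{t_{i,j}})$ in the oscillation estimate. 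You instead keep the indicator window and establish, by an extra conditional-mean-plus-Biggins preliminary step resting on the already proved almost sure convergence of $(V_{t_k}(g))_k$, that the windowed count $\bar V_k$ is almost surely bounded, after which $\epsilon_k \to 0$ suffices; this is a legitimate alternative, at the price of that preliminary step and of the (routine, correctly flagged) moment estimates $\Var{N_k}, \Expec{T_k^p} = O(\delta_k)$, which indeed hold under \eqref{eq:assumption_offspring_distribution}. For \eqref{eq:almost_sure_convergence_of_functional_2_brutal_bound_2} the paper is terse: it bounds the difference of the two sums by $\|f\|_\infty \sum_{u \in \cN(s)}(W^{(u)}_{t-s}(0) - 1)$ and invokes Doob's maximal inequality as in Lemma~\ref{lem:almost_sure_convergence_of_functional_1_brutal_bounds}; taken literally this requires restricting to particles with $X_u(t_k) - \beta t_k$ near $\Supp f$ (otherwise the prefactor $\sqrt{t}\,\e^{-(1-\beta^2/2)t}$ against $\#\cN(t_k) \approx \e^{t_k}$ is fatal) and it silently ignores the change of prefactor between $\hat V_{t,t_k}$ and $V_{t_k}$. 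Your version makes both points explicit — the $[-L,L]$ window with $T_k^{(u')} = \sup_r |n^{(u')}(r) - 1|$, and the $(\rho_k(t)-1)V_{t_k}$ term — which is a genuine improvement in rigor over the paper's sketch. One small remark: the constraint $p < \beta_c^2/\beta^2$ plays no role in your Borel--Cantelli estimates (any $p \in (1,2]$ works, since the decisive factor is $\e^{-(p-1)(1-\beta^2/2)t_k}$ with $1 - \beta^2/2 > 0$); it is harmless but superfluous.
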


The proof of Lemma~\ref{lem:almost_sure_convergence_of_functional_2_brutal_bounds} is similar to the one of Lemma~\ref{lem:almost_sure_convergence_of_functional_1_brutal_bounds}.
Therefore, we gloss over the details of the arguments.

\begin{proof}
    For all $t \geq s \geq 0$,
    \begin{multline}\label{eq:almost_sure_convergence_of_functional_2_mean_value}
        \left|V_t - \hat{V}_{t, s}\right|
        \leq \|f'\|_\infty \sqrt{t} \e^{-(1 - \beta^2/2)t} \sum_{u \in \cN(t)} \left(\mathds{1}_{X_u(t) - \beta t \in \Supp f} + \mathds{1}_{X_u(s) - \beta s \in \Supp f}\right) \\
        \times \left(|X_u(t) - X_u(s)| + \beta|t - s|\right).
    \end{multline}
    Now, recall that $\{t_k\} = \{t_{i, j}\}$ with $t_{i, j} := i + j/i^{10}$ and let $i \geq 1$, $0 \leq j \leq i^{10}-1$.
    Define $\varepsilon_i = 1/i$.
    This choice ensures, first, that $\varepsilon_i = o(1/\sqrt{t_{i,j}})$ and, second, that $\varepsilon_i$ is large enough in comparison with $\sqrt{t_{i, j+1} - t_{i, j}}$ so that we can apply the Borel-Cantelli lemma.
    In the end, this yields, for $i$ large enough and for $0 \leq j \leq i^{10}-1$,
    \begin{equation*}
        \sup_{u \in \cN([t_{i, j}, t_{i, j+1}])} \sup_{t \in [t_{i, j}, t_{i, j+1}]} |\tilde{X}_u(t) - X_u(t_{i, j})| \leq \varepsilon_i,
    \end{equation*}
    where $\tilde{X}_u$ is the trajectory of $u$ extended after its death by a Brownian motion independent of all the rest.
    In particular, there exists $L > 0$ such that, almost surely, for $i$ large enough, for $0 \leq j \leq i^{10}-1$ and $t_{i, j} \leq t \leq t_{i, j+1}$,
    \begin{equation*}
        \mathds{1}_{X_u(t) - \beta t \in \Supp f} + \mathds{1}_{X_u(t_{i, j}) - \beta t_{i, j} \in \Supp f} \leq 2 \mathds{1}_{X_u(t) - \beta t > -L} \leq 2 \e^{\beta X_u(t) - \beta^2 t + \beta L}.
    \end{equation*}
    Then, \eqref{eq:almost_sure_convergence_of_functional_2_mean_value} becomes
    \begin{equation*}
        \left|V_t - \hat{V}_{t, t_{i, j}}\right| \leq 2 \|f'\|_\infty \sqrt{t} W_t(\beta) (\varepsilon_i + \beta|t - t_{i, j}|\big) \quad \text{almost surely},
    \end{equation*}
    from which we deduce \eqref{eq:almost_sure_convergence_of_functional_2_brutal_bound_1}.
    
    For the convergence \eqref{eq:almost_sure_convergence_of_functional_2_brutal_bound_2}, we use the following bound
    \begin{equation*}
        \left|\sum_{u \in \cN(t)} f(X_u(s) - \beta s) - \sum_{u \in \cN(s)} f(X_u(s) - \beta s)\right| \leq \|f\|_\infty \sum_{u \in \cN(s)} \left(W_{t-s}^{(u)}(0) - 1\right),
    \end{equation*}
    where $W_{t-s}^{(u)}(0)$, $u \in \cN(s)$, are defined in \eqref{eq:definition_of_W_s^(u)}.
    Conditionally on $\cF_s$, these processes have the same distribution as $W_{t-s}(0)$.
    We can then apply Doob's maximal inequality in the same vein as done for \eqref{eq:almost_sure_convergence_of_functional_1_brutal_bound_2}.
    In the end, this yields \eqref{eq:almost_sure_convergence_of_functional_2_brutal_bound_2}.
\end{proof}

\section{Fluctuations of the additive martingales}\label{sct:fluctuations}

In this section, we determine the suitable rate function $\gamma(t)$ such that $\gamma(t)(W_\infty(\beta) - W_t(\beta))$ converges to a non-degenerate distribution as $t$ goes to infinity, where $\beta \in [0, \beta_c)$ is fixed.
Furthermore, we identify this limit distribution.
Essentially, there are two different regimes, depending on whether $\beta$ is smaller or larger than $\beta_c/2$.
The fluctuations are first Gaussian and then $\alpha$-stable with $1 < \alpha < 2$.

Following the notations of \cite{IksanovKoleskoMeiners2020}, given $(X_t)_{t \geq 0}$ and $X$ random variables, we write
\begin{equation}\label{eq:definition_weak_convergence_in_probability}
    \condLaw{X_t}{\cF_t} \xrightarrow[t \to \infty]{w} \condLaw{X}{\cF_\infty} \quad \text{in probability}
\end{equation}
if for every bounded continuous function $\phi : \R \to \R$, the conditional expectation $\condExpec{\phi(X_t)}{\cF_t}$ converges in probability to $\condExpec{\phi(X)}{\cF_\infty}$ as $t$ goes to infinity.
As explained in \cite[Section~2.2]{IksanovKoleskoMeiners2020}, it is not difficult to see that \eqref{eq:definition_weak_convergence_in_probability} implies that $X_t$ converges in distribution to $X$ as $t$ goes to infinity.

In the same way, we write
\begin{equation}\label{eq:definition_weak_convergence_almost_surely}
    \condLaw{X_t}{\cF_t} \xrightarrow[t \to \infty]{w} \condLaw{X}{\cF_\infty} \quad \text{almost surely}
\end{equation}
if for every bounded continuous function $\phi : \R \to \R$, the conditional expectation $\condExpec{\phi(X_t)}{\cF_t}$ converges almost surely to $\condExpec{\phi(X)}{\cF_\infty}$ as $t$ goes to infinity.
Clearly, the convergence \eqref{eq:definition_weak_convergence_almost_surely} also implies that $X_t$ converges in distribution to $X$ as $t$ goes to infinity.

\begin{theorem}\label{th:fluctuations}
    Let $N$ be a standard Gaussian variable independent of $\cF_\infty$ and recall that $K := \sum_{k \geq 0} \mu(k)k(k-1)$.
    \begin{enumerate}
        \item If $0 \leq \beta < \sqrt{2}/2$, then
        \begin{equation}\label{eq:fluctuations_1}
            \condLaw{\e^{(1-\beta^2)t/2} (W_\infty(\beta) - W_t(\beta))}{\cF_t} \xrightarrow[t \to \infty]{w} \condLaw{\sigma N}{\cF_\infty} \quad \text{almost surely},
        \end{equation}
        where $\sigma^2 = \left(\frac{K}{1 - \beta^2} - 1\right) W_\infty(2\beta)$.
        \item If $\beta = \sqrt{2}/2$, then
        \begin{equation}\label{eq:fluctuations_2}
            \condLaw{t^{1/4} \e^{t/4} (W_\infty(\beta) - W_t(\beta))}{\cF_t} \xrightarrow[t \to \infty]{w} \condLaw{\sigma N}{\cF_\infty} \quad \text{in probability},
        \end{equation}
        where $\sigma^2 = (2K - 1) \sqrt{2/\pi} Z_\infty$.
        \item Assume that the branching is binary $\mu = \delta_2$.
        If $\sqrt{2}/2 < \beta < \sqrt{2}$, then
        \begin{equation}\label{eq:fluctuations_3}
            \e^{c(\beta)t - \beta m(t)} \left(W_\infty(\beta) - W_t(\beta)\right) \xrightarrow[t \to \infty]{} (C Z_\infty)^{\beta/\beta_c} S \quad \text{in distribution},
        \end{equation}
        where $m(t)$ is defined in \eqref{eq:definition_of_m(t)} and $S$ is a non-degenerate $\beta_c/\beta$-stable random variable.
    \end{enumerate}
\end{theorem}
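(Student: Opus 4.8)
The plan is to start from the branching decomposition \eqref{eq:definition_of_W_infty^(u)}, which gives, for every $t \geq 0$,
\begin{equation*}
    W_\infty(\beta) - W_t(\beta) = \sum_{u \in \cN(t)} \e^{\beta X_u(t) - c(\beta)t} \bigl(W_\infty^{(u)}(\beta) - 1\bigr),
\end{equation*}
where, conditionally on $\cF_t$, the variables $W_\infty^{(u)}(\beta) - 1$ are i.i.d.\ centered copies of $W_\infty(\beta) - 1$. After multiplying by the relevant rate $\gamma(t)$, the object of interest becomes a conditionally independent weighted sum $\sum_{u \in \cN(t)} w_u \bigl(W_\infty^{(u)}(\beta) - 1\bigr)$ with weights $w_u = \gamma(t) \e^{\beta X_u(t) - c(\beta)t}$ satisfying $\sum_u w_u^2 = \gamma(t)^2 \e^{-(1-\beta^2)t} W_t(2\beta)$. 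For $\gamma(t) = \e^{(1-\beta^2)t/2}$ this equals $W_t(2\beta)$, which converges $\P$-almost surely to $W_\infty(2\beta)$ since $2\beta < \beta_c$ (Theorem~\ref{th:phase_transition_additive_martingales}); for $\beta = \sqrt 2/2$ and $\gamma(t) = t^{1/4}\e^{t/4}$ it equals $\sqrt t\, W_t(\beta_c)$, which converges in $\P$-probability to $\sqrt{2/\pi}\, Z_\infty$ by Proposition~\ref{prop:convergence_of_the_critical_additive_martingale}.

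For parts (i) and (ii) I would run a conditional central limit theorem via characteristic functions. Writing $\phi(\theta) = \Expec{\e^{\iu \theta (W_\infty(\beta) - 1)}}$, the conditional characteristic function of $\gamma(t)(W_\infty(\beta) - W_t(\beta))$ given $\cF_t$ is $\prod_{u \in \cN(t)} \phi(\theta w_u)$. In the regime of (i) and (ii) one has $\beta \leq \sqrt 2/2$, hence $\beta_c^2/\beta^2 \geq 4$, so Proposition~\ref{prop:additive_martingales_bounded_in_Lp} gives $W_\infty(\beta) \in L^2$ and $\phi(\theta) = 1 - \tfrac{v}{2}\theta^2 + o(\theta^2)$ with $v = \Var(W_\infty(\beta))$. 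Using the $\P$-almost sure bound $M(t) - \beta_c t \to -\infty$ from \eqref{eq:domination_extremal_particle} in case (i), respectively the tightness of $M(t) - m(t)$ (Corollary~\ref{cor:tightness_of_M(t)-m(t)}) in case (ii), I would check that $\max_u w_u \to 0$, which makes the $o(\cdot)$ terms negligible uniformly and yields $\prod_u \phi(\theta w_u) \to \exp\!\bigl(-\tfrac{v}{2}\theta^2 \lim_t \sum_u w_u^2\bigr)$. It then remains to compute $v$: by \eqref{eq:second_moment_of_additive_martingales} together with Fatou's lemma and the identity $W_t(\beta) = \condExpec{W_\infty(\beta)}{\cF_t}$, one gets $\Expec{W_\infty(\beta)^2} = K/(1-\beta^2)$, so $v = K/(1-\beta^2) - 1$; this produces exactly $\sigma^2 = v\, W_\infty(2\beta)$ in case (i) and, using $\beta^2 = 1/2$ and the limit above, $\sigma^2 = (2K-1)\sqrt{2/\pi}\, Z_\infty$ in case (ii). The limiting characteristic function is $\e^{-\sigma^2 \theta^2/2} = \condExpec{\e^{\iu\theta\sigma N}}{\cF_\infty}$, and the conditional Lévy-continuity argument of \cite[Section~2.2]{IksanovKoleskoMeiners2020} upgrades this to \eqref{eq:fluctuations_1} (convergence $\P$-almost surely, since $W_t(2\beta)$ converges almost surely) and to \eqref{eq:fluctuations_2} (convergence in $\P$-probability, since $\sqrt t\, W_t(\beta_c)$ only converges in probability).

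Part (iii) requires a different mechanism: here $2\beta > \beta_c$, the Lindeberg condition fails because each extremal particle contributes a non-vanishing term $\e^{\beta(M(t)-m(t))}(W_\infty^{(\cdot)}(\beta)-1)$ (recall that $M(t) - m(t)$ is tight), and the fluctuations are governed by the front. I would work with
\begin{equation*}
    D_t := \e^{c(\beta)t - \beta m(t)}\bigl(W_\infty(\beta) - W_t(\beta)\bigr) = \sum_{u \in \cN(t)} \e^{\beta(X_u(t) - m(t))} \bigl(W_\infty^{(u)}(\beta) - 1\bigr),
\end{equation*}
and fix an exponent $p \in \bigl(\beta_c/\beta,\ \min(2,\beta_c^2/\beta^2)\bigr)$, which is non-empty because $\beta_c/\beta \in (1,2)$; then $W_\infty(\beta) \in L^p$ and, since $p\beta > \beta_c$, Proposition~\ref{prop:asymptotic_mean_number_decoration} gives $\Expec{\sum_j \e^{p\beta \Delta_j}} < \infty$. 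Split $D_t = D_t^{\geq -A} + D_t^{<-A}$ according to whether $X_u(t) - m(t) \geq -A$. For the tail, conditioning on $\cF_t$ and applying Lemma~\ref{lem:biggins_lemma} gives $\Expec{\lvert D_t^{<-A}\rvert^p} \leq 2^p \Expec{\lvert W_\infty(\beta) - 1\rvert^p}\, \Expec{\sum_{u :\, X_u(t) - m(t) < -A} \e^{p\beta(X_u(t)-m(t))}}$, and a first-moment estimate in the spirit of Proposition~\ref{prop:first_moment_estimate_extremal_process} (via the many-to-one formula and Gaussian bounds) shows the last expectation is $\leq C\e^{-cA}$ uniformly in $t$; the same bound controls the tail of the would-be limit $D_\infty^{<-A} := \sum_{k :\, \xi_k < -A} \e^{\beta \xi_k}(\tilde W^{(k)}(\beta) - 1)$. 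For the front, only $\cE_t([-A,\infty)) < \infty$ particles are involved and, conditionally on $\cF_t$, the $W_\infty^{(u)}(\beta)$ are i.i.d.\ copies of $W_\infty(\beta)$; combining the joint convergence $(\cE_t, W_t(\beta)) \to (\cE_\infty, W_\infty(\beta))$ of Corollary~\ref{cor:joint_convergence_extremal_process_additive_martingale} with the appending of fresh i.i.d.\ copies of $W_\infty(\beta)$ and the continuous mapping theorem (for $A$ outside the almost surely countable set of atom heights of $\cE_\infty$) yields $D_t^{\geq -A} \xrightarrow{d} D_\infty^{\geq -A}$. Letting $A \to \infty$ through the standard approximation lemma for convergence in distribution gives $D_t \xrightarrow{d} D_\infty := \sum_k \e^{\beta \xi_k}(\tilde W^{(k)}(\beta) - 1)$. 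Finally, inserting the decorated Poisson representation \eqref{eq:definition_of_the_limit_extremal_process} of $\cE_\infty$ and collapsing each decoration into $Y_i := \sum_j \e^{\beta \Delta_{ij}}(\tilde W^{(ij)}(\beta) - 1)$ --- i.i.d., centered, independent of $\cP$ and $Z_\infty$, with $\Expec{\lvert Y_i\rvert^p} < \infty$ (hence $\Expec{\lvert Y_i\rvert^{\beta_c/\beta}} < \infty$) by Lemma~\ref{lem:biggins_lemma} --- one is left with $D_\infty = (CZ_\infty)^{\beta/\beta_c} \sum_i \e^{\beta p_i} Y_i$, and the exponential formula for Poisson processes, exactly as in the proof of Corollary~\ref{cor:asymptotic_mean_number_decoration} but now with signed increments $Y_i$, identifies $S := \sum_i \e^{\beta p_i} Y_i$ as a $\beta_c/\beta$-stable random variable independent of $Z_\infty$. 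This is \eqref{eq:fluctuations_3}.

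I expect the main difficulty to lie in part (iii), in two places. First, the front estimate: one must transport the ``future'' variables $W_\infty^{(u)}(\beta)$ jointly with the extremal configuration to the limit, which means combining Corollary~\ref{cor:joint_convergence_extremal_process_additive_martingale} with the conditional i.i.d.\ structure given $\cF_t$ and carefully handling the boundary at level $-A$. Second, the uniform-in-$t$ tail bound, which needs a first-moment estimate for $\sum_u \e^{p\beta(X_u(t)-m(t))}$ restricted below the front, of the same flavour as --- but not directly implied by --- Proposition~\ref{prop:first_moment_estimate_extremal_process}. A further technical nuisance is that for $\beta \in [1,\sqrt 2)$ one has $W_\infty(\beta) \notin L^2$, so throughout part (iii) every moment estimate must be run with the exponent $p < 2$ above through Lemma~\ref{lem:biggins_lemma} rather than with genuine $L^2$ computations; this is also the reason the simple variance bookkeeping of (i)--(ii) cannot be pushed past $\beta = \beta_c/2$.
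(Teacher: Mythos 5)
Your treatment of parts (i) and (ii) is essentially the paper's own argument: the branching decomposition \eqref{eq:definition_of_W_infty^(u)}, the conditional characteristic function $\prod_{u}\phi(\theta w_u)$, the second-moment identity $\Expec{W_\infty(\beta)^2}=K/(1-\beta^2)$ from \eqref{eq:second_moment_of_additive_martingales}, the control $\max_u w_u\to 0$ via $M(t)-\beta_c t\to-\infty$ (resp.\ via the logarithmic correction in $m(t)$ at $\beta=\sqrt2/2$), and the identification $\sum_u w_u^2 = W_t(2\beta)$, resp.\ $\sqrt t\,W_t(\beta_c)$. For part (iii) your architecture (cut at a level $-A$ near the front, transport the front particles together with their attached i.i.d.\ copies of $W_\infty(\beta)$ to $\cE_\infty$, then identify the decorated Poisson sum as stable via the exponential formula and the Lévy--Khintchine triplet) is also the one used in Sections~\ref{sct:extremal_regime} and~\ref{sct:extremal_fluctuations_stable}, up to your hard cutoff replacing the paper's soft cutoffs $\chi_\ell^\pm$.

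There is, however, a genuine gap in your tail estimate for $D_t^{<-A}$. You claim that a many-to-one computation gives $\Expec{\sum_{u:\,X_u(t)-m(t)<-A}\e^{p\beta(X_u(t)-m(t))}}\leq C\e^{-cA}$ uniformly in $t$. This is false: the expected number of particles in a unit window at height $y$ below $m(t)$ is of order $t\,\e^{-\sqrt2 y}$ (the same polynomial blow-up that forces the truncation event in Proposition~\ref{prop:first_moment_estimate_extremal_process}), so the unrestricted first moment is of order $t\,\e^{-(p\beta-\beta_c)A}$, and the factor $t$ destroys the uniformity you need before letting $A\to\infty$ after $t\to\infty$. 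The argument cannot be run on plain expectations; one must first localize on a high-probability event. The paper does this in Lemma~\ref{lem:extremal_fluctuations_negligible_particles} by choosing two exponents $\beta_c/\beta<p<q<2\wedge\beta_c^2/\beta^2$, restricting to the event $\{\sum_u\e^{p\beta(X_u(t)-m(t))}\leq M\}$ --- whose complement has small probability uniformly in $t$ by Corollary~\ref{cor:tightness_of_the_supercritical_additive_martingales}, itself resting on the convergence of the rescaled supercritical additive martingales (Proposition~\ref{prop:convergence_of_the_supercritical_additive_martingales}, note the $t^{3\beta/(2\sqrt2)}$ normalization that absorbs exactly the polynomial factor above) --- and only then applying Markov and Lemma~\ref{lem:biggins_lemma} with exponent $q$, gaining the factor $\e^{(q-p)\beta(\ell+1)}$. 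An alternative fix is to intersect with $\{M(t)-m(t)\leq y\}$ and invoke \eqref{eq:first_moment_estimate_extremal_process} directly, controlling $\Prob{M(t)-m(t)>y}$ by Corollary~\ref{cor:tightness_of_M(t)-m(t)}. Either way, this truncation is the missing ingredient; without it the step "letting $A\to\infty$ through the standard approximation lemma" does not go through. The remainder of your part (iii) (the replacement of $W_{t-at}$-type quantities by their limits, the marked point process convergence, $\E|Y_1|^p<\infty$ for some $p>\beta_c/\beta$, and the stable identification using that $Y_1$ is centered) is consistent with the paper's Lemmas~\ref{lem:extremal_fluctuations_one_side_compact_support_2} and~\ref{lem:stable_fluctuations} and Theorem~\ref{th:extremal_fluctuation_stable}.
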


\begin{remark}
    \begin{enumerate}
        \item It should be possible to show that the convergence \eqref{eq:fluctuations_3} still holds if we replace the binary branching with the assumption \eqref{eq:assumption_offspring_distribution}.
        We make this assumption to use Corollary~\ref{cor:tightness_of_the_supercritical_additive_martingales}.
        We also need the offspring distribution to satisfy $\mu(0) = 0$ in order to use the results of Section~\ref{sct:convergence_of_the_extremal_process}.
        
        \item Any stable distribution is infinitely divisible, \ie for each $k \geq 1$, it can be expressed as the distribution of $X_1 + \cdots + X_k$, where $X_1, \ldots, X_k$ are independent and identically distributed.
        The Lévy-Khintchine representation \cite[Theorem~8.1]{Sato1999} then associates to any stable distribution $\lambda$ a unique \emph{generating triplet} $(A, \nu, \gamma)$ which characterizes $\lambda$, where $A \geq 0$, $\nu$ is a measure on $\R$ satisfying
        \begin{equation*}
            \nu(\{0\}) = 0 \quad \text{and} \quad \int_\R (x^2 \wedge 1) \Pd{\nu}{x} < \infty,
        \end{equation*}
        and $\gamma \in \R$.
        In Section~\ref{sct:extremal_fluctuations_stable}, we show that the random variable $S$ that appears in \eqref{eq:fluctuations_3} is infinitely divisible and use its generating triplet to obtain its stability.
        
        \item It is well-known that, to any infinitely divisible distribution $\lambda$, there corresponds a unique Lévy process $(X_t)_{t \geq 0}$ such that $X_1$ has distribution $\lambda$.
        Moreover, if $(X_t)_{t \geq 0}$ is a Lévy process and $(A, \nu, \gamma)$ is the generating triplet of $X_1$, then $(tA, t\nu, t\gamma)$ is the generating triplet of $X_t$ for any $t \geq 0$.
        In view of this remark, a careful reading of the proof of Theorem~\ref{th:extremal_fluctuation_stable} below reveals that
        \begin{equation*}
            (C Z_\infty)^{\beta/\beta_c} S \overset{d}{=} X_{Z_\infty},
        \end{equation*}
        where $(X_t)_{t \geq 0}$ is a $\beta_c/\beta$-stable Lévy process independent of $Z_\infty$.
        This makes the link with \cite[Corollary~1.5]{MaillardPain2021}, where Maillard-Pain obtained the fluctiations of the rescaled critical martingale around its limit in probability.
        Indeed, they showed that
        \begin{equation*}
            \sqrt{t}(\sqrt{t} W_t(\beta_c) - \sqrt{2/\pi} Z_\infty) \xrightarrow[t \to \infty]{} X_{Z_\infty} \quad \text{in distribution},
        \end{equation*}
        where $X := (X_t)_{t \geq 0}$ is a Cauchy process independent of $Z_\infty$.
        In particular, $X$ is a $1$-stable Lévy process, \ie $X_1$ has $1$-stable distribution.
    \end{enumerate}
\end{remark}

The convergences \eqref{eq:fluctuations_1} and \eqref{eq:fluctuations_2} were already obtained by Hartung-Klimovsky in \cite[Theorem~1.4, Theorem~1.7]{HartungKlimovsky2017}, with complex parameter $\beta$, by appealing to the Lindeberg-Feller condition.
We favor a more elementary approach and compute an asymptotic expansion of the characteristic function conditionally on $\cF_t$.
This is the purpose of Section~\ref{sct:gaussian_fluctuations} and Section~\ref{sct:gaussian_boundary_case}.
In \cite{IksanovKoleskoMeiners2020}, Iksanov-Kolesko-Meiners obtained a slight variation of \eqref{eq:fluctuations_3} for the branching random walk, for complex $\beta$ too.
Their proof is based on Madaule's work \cite{Madaule2017} about the convergence of the extremal process.
We reproduce their arguments in Section~\ref{sct:extremal_regime}.

\subsection{Gaussian fluctuations}\label{sct:gaussian_fluctuations}

Assume that $0 \leq \beta < \sqrt{2}/2$ and define $\gamma(t) = \e^{(1-\beta^2)t/2}$ for all $t \geq 0$.
Let us compute the characteristic function of the random variable $\gamma(t)(W_\infty(\beta) - W_t(\beta))$ conditionally on $\cF_t$.
To this end, we use the decomposition \eqref{eq:definition_of_W_infty^(u)} of $W_\infty(\beta)$ in terms of $W_\infty^{(u)}(\beta)$, $u \in \cN(t)$.
Recall that the latter are independent copies of $W_\infty(\beta)$ and that they are independent of $\cF_t$.
Let $\theta \geq 0$. We have
\begin{align}
    \condExpec{\exp\left(\iu \theta \gamma(t)(W_\infty(\beta) - W_t(\beta))\right)}{\cF_t} &= \prod_{u \in \cN(t)} \condExpec{\exp\left(\iu \theta \gamma(t) \e^{\beta X_u(t) - c(\beta)t} (W_\infty^{(u)}(\beta) - 1)\right)}{\cF_t} \notag \\
    &= \prod_{u \in \cN(t)} \phi\left(\theta \gamma(t) \e^{\beta X_u(t) - c(\beta)t}\right), \label{eq:fluctuations_of_additive_martingales_characteristic_function}
\end{align}
where $\phi$ is the characteristic function of $W_\infty(\beta) - 1$.
The latter is a centered random variable and admits a second moment that can be computed thanks to \eqref{eq:second_moment_of_additive_martingales}.
This leads to the following expansion
\begin{equation}\label{eq:fluctuations_of_additive_martingales_characteristic_function_expansion}
    \phi(\lambda) = 1 - \frac{\lambda^2}{2} \left(\frac{K}{1 - \beta^2} - 1\right) + R_1(\lambda),
\end{equation}
where $K = \sum_{k \geq 0} \mu(k)k(k-1)$ and $R_1(\lambda) = o(\lambda^2)$ as $\lambda$ goes to $0$.
Evaluating the expansion \eqref{eq:fluctuations_of_additive_martingales_characteristic_function_expansion} in
\begin{equation*}
    \lambda = \theta \gamma(t) \e^{\beta X_u(t) - c(\beta)t}
\end{equation*}
for each $u \in \cN(t)$, we can rewrite \eqref{eq:fluctuations_of_additive_martingales_characteristic_function} as
\begin{multline}\label{eq:fluctuations_of_additive_martingales_characteristic_function_2}
    \condExpec{\exp\left(\iu \theta \gamma(t)(W_\infty(\beta) - W_t(\beta))\right)}{\cF_t} \\
    = \prod_{u \in \cN(t)} \underbrace{\left(1 - \frac{\theta^2}{2} \gamma(t)^2 \e^{2 \beta X_u(t) - 2 c(\beta) t} \left(\frac{K}{1 - \beta^2} - 1\right) + R_1\left(\theta \gamma(t) \e^{\beta X_u(t) - c(\beta) t}\right)\right)}_{=: z_u}.
\end{multline}
By \eqref{eq:domination_extremal_particle}, the quantity $M(t) - \sqrt{2}t$ converges almost surely to $-\infty$ as $t$ goes to infinity.
It is then straightforward to check that
\begin{equation}\label{eq:fluctuations_of_additive_martingales_assumption}
    \lim_{t \to \infty} \gamma(t) \e^{\beta M(t) - c(\beta)t} = 0 \quad \text{almost surely}.
\end{equation}
In particular, almost surely, for $t$ large enough and all $u \in \cN(t)$, we have $z_u \in \C \setminus \R_-$, which ensures that the principal value $\Log z_u$ is well defined.
This allows us to rewrite \eqref{eq:fluctuations_of_additive_martingales_characteristic_function_2} as
\begin{multline}\label{eq:fluctuations_of_additive_martingales_characteristic_function_1}
    \condExpec{\exp\left(\iu \theta \gamma(t)(W_\infty(\beta) - W_t(\beta))\right)}{\cF_t} \\
    = \exp\left(-\frac{\theta^2}{2} \gamma(t)^2 \sum_{u \in \cN(t)} \e^{2 \beta X_u(t) - 2c(\beta)t} \left(\frac{K}{1 - \beta^2} - 1\right) + \sum_{u \in \cN(t)} R_2\left(\theta \gamma(t) \e^{\beta X_u(t) - c(\beta)t}\right)\right),
\end{multline}
where $R_2(\lambda) = o(\lambda^2)$ as $\lambda$ goes to $0$.

Let us now justify that, when $t$ goes to infinity, we have
\begin{equation}\label{eq:fluctuations_of_additive_martingales_landau_sum_inversion}
    \sum_{u \in \cN(t)} R_2\left(\theta \gamma(t) \e^{\beta X_u(t) - c(\beta)t}\right) = o\left(\sum_{u \in \cN(t)} \gamma(t)^2 \e^{2 \beta X_u(t) - 2c(\beta)t}\right).
\end{equation}
Let $\varepsilon > 0$.
We can fix $\lambda_0 > 0$ such that for all $\lambda \in [0, \lambda_0]$, we have $|R_2(\lambda)| \leq \varepsilon \lambda^2$.
By \eqref{eq:fluctuations_of_additive_martingales_assumption}, there exists almost surely $t_0 \geq 0$ such that for all $t \geq t_0$ and for all $u \in \cN(t)$,
\begin{equation*}
    0 \leq \theta \gamma(t) \e^{\beta X_u(t) - c(\beta)t} \leq \lambda_0,
\end{equation*}
which implies that
\begin{equation*}
    \sum_{u \in \cN(t)} \left|R_2\left(\theta \gamma(t) \e^{\beta X_u(t) - c(\beta)t}\right)\right| \leq \varepsilon \sum_{u \in \cN(t)} \gamma(t)^2 \e^{2 \beta X_u(t) - 2c(\beta)t}.
\end{equation*}
Hence \eqref{eq:fluctuations_of_additive_martingales_landau_sum_inversion}.
Coming back to \eqref{eq:fluctuations_of_additive_martingales_characteristic_function_1}, this leads to
\begin{equation}\label{eq:fluctuations_of_additive_martingales_characteristic_function_3}
    \condExpec{\exp\left(\iu \theta \gamma(t)(W_\infty(\beta) - W_t(\beta))\right)}{\cF_t} = \exp\left(-\frac{\theta^2}{2} W_t(2\beta) \left(\frac{K}{1 - \beta^2} - 1\right) + o\left(W_t(2\beta)\right)\right).
\end{equation}
Since the additive martingales converge almost surely, \eqref{eq:fluctuations_of_additive_martingales_characteristic_function_3} yields the following almost sure limit
\begin{equation*}
    \lim_{t \to \infty} \condExpec{\exp\left(\iu \theta \gamma(t)(W_\infty(\beta) - W_t(\beta))\right)}{\cF_t} = \exp\left(-\frac{\theta^2}{2} W_\infty(2\beta) \left(\frac{K}{1 - \beta^2} - 1\right)\right).
\end{equation*}
By appealing to Lévy's continuity theorem, we obtain \eqref{eq:fluctuations_1}.

\subsection{Gaussian boundary case}\label{sct:gaussian_boundary_case}

Here, we investigate the case $\beta = \sqrt{2}/2$.
Define $\gamma(t) = t^{1/4} \e^{t/4}$ for all $t \geq 0$.
The calculation \eqref{eq:fluctuations_of_additive_martingales_characteristic_function_3} performed in the last section is still valid as soon as \eqref{eq:fluctuations_of_additive_martingales_assumption} holds.
We use the following lemma to show that \eqref{eq:fluctuations_of_additive_martingales_assumption} indeed holds with high probability.
It is a straightforward consequence of Proposition~\ref{prop:speed_extremal_particle_2}.

\begin{lemma}\label{lem:speed_extremal_particle_sharper}
    For any $\delta > 0$, the event $A_{\delta, t} := \{M(t) \leq m(t) + \delta \log t\}$ occurs with high probability, \ie its probability converges to $1$ as $t$ goes to infinity.
\end{lemma}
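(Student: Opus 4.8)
The plan is to obtain this as an immediate corollary of Proposition~\ref{prop:speed_extremal_particle_2}, whose content is precisely that the quantiles of $M(t) - m(t)$ stay bounded. First I would fix $\delta > 0$ and an arbitrary level $\alpha \in (0, 1)$. By Proposition~\ref{prop:speed_extremal_particle_2}, there exist a constant $C_\alpha > 0$ and a time $t_0 = t_0(\alpha) \geq 0$ such that the $\alpha$-quantile satisfies $q_\alpha(t) \leq C_\alpha$ for all $t \geq t_0$. Since $\log t \to \infty$, one can then choose $t_1 = t_1(\alpha, \delta) \geq t_0$ so that $\delta \log t \geq C_\alpha$, and hence $\delta \log t \geq q_\alpha(t)$, for every $t \geq t_1$.

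It then remains to chain two elementary observations. By monotonicity of the map $x \mapsto \Prob{M(t) - m(t) \leq x}$ and the very definition of $q_\alpha(t)$, for all $t \geq t_1$,
\[
\Prob{A_{\delta, t}} = \Prob{M(t) - m(t) \leq \delta \log t} \;\geq\; \Prob{M(t) - m(t) \leq q_\alpha(t)} = \alpha .
\]
Therefore $\liminf_{t \to \infty} \Prob{A_{\delta, t}} \geq \alpha$, and since $\alpha \in (0, 1)$ was arbitrary, letting $\alpha \uparrow 1$ yields $\Prob{A_{\delta, t}} \to 1$, which is the claim. There is essentially no obstacle here; the only point to keep in mind is that the $O(1)$ bound in Proposition~\ref{prop:speed_extremal_particle_2} is stated for a fixed level $\alpha$, so one must fix $\alpha$ first, pass to the liminf, and only then send $\alpha$ to $1$ — no uniformity in $\alpha$ is needed.
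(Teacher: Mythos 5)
Your proof is correct and is exactly the deduction the paper intends: the text simply asserts that the lemma is a straightforward consequence of Proposition~\ref{prop:speed_extremal_particle_2}, and your argument (fix $\alpha$, use boundedness of $q_\alpha(t)$ to get $\delta \log t \geq q_\alpha(t)$ eventually, then let $\alpha \uparrow 1$ after taking the liminf) is the standard way to spell that out. No gaps.
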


Let $\delta > 0$.
We rewrite
    \begin{equation}\label{eq:fluctuations_gaussian_boundary_case_assumption_A2}
        \gamma(t) \e^{\beta M(t) - c(\beta)t} = \e^{\beta(M(t) - \sqrt{2}t + 2^{-3/2} \log t)}.
    \end{equation}
    On the event $A_{\delta, t}$, the exponent in the right-hand side of \eqref{eq:fluctuations_gaussian_boundary_case_assumption_A2} is smaller than $\beta (\delta - 1/\sqrt{2}) \log t$.
    Hence, taking $0 < \delta < 1/\sqrt{2}$, it tends to $-\infty$ as $t$ goes to infinity, which implies that
    \begin{equation*}
        \lim_{t \to \infty} \gamma(t) \e^{\beta M(t) - c(\beta)t} \mathds{1}_{A_{\delta, t}} = 0.
    \end{equation*}
    In other words, on $A_{\delta, t}$, the convergence \eqref{eq:fluctuations_of_additive_martingales_assumption} holds and \eqref{eq:fluctuations_of_additive_martingales_characteristic_function_3} is still valid:
    \begin{equation*}
        \condExpec{\exp\left(\iu \theta \gamma(t)(W_\infty(\beta) - W_t(\beta))\right)}{\cF_t} \mathds{1}_{A_{\delta, t}} = \exp\left(-\frac{\theta^2}{2} (2K - 1) \sqrt{t} W_t(\beta_c) + o\left(\sqrt{t} W_t(\beta_c)\right)\right) \mathds{1}_{A_{\delta, t}}.
    \end{equation*}
    Then, by Proposition~\ref{prop:convergence_of_the_critical_additive_martingale} and Lemma~\ref{lem:speed_extremal_particle_sharper},
    \begin{equation*}
        \lim_{t \to \infty} \condExpec{\exp\left(\iu \theta \gamma(t)(W_\infty(\beta) - W_t(\beta))\right)}{\cF_t} = \exp\left(-\frac{\theta^2}{2} (2K - 1) \sqrt{2/\pi} Z_\infty\right) \quad \text{in probability},
    \end{equation*}
    that is \eqref{eq:fluctuations_2}.

\subsection{Extremal regime, characterization of the fluctuations}\label{sct:extremal_regime}

Assume that $\mu = \delta_2$ and fix $\sqrt{2}/2 < \beta < \sqrt{2}$.
In this section, we reproduce the arguments of \cite{IksanovKoleskoMeiners2020} where Iksanov-Kolesko-Meiners established the fluctuations in the case of the branching random walk.
In the end, this will lead to the following lemma.

\begin{lemma}\label{lem:extremal_fluctuations_characterization}
    We have
    \begin{equation*}
        \e^{c(\beta)t - \beta m(t)} \left(W_\infty(\beta) - W_t(\beta)\right) \xrightarrow[t \to \infty]{} X_{\mathrm{ext}} \quad \text{in distribution},
    \end{equation*}
    where
    \begin{equation}\label{eq:definition_of_X_ext}
        X_{\mathrm{ext}} := \sum_{k = 1}^\infty \e^{\beta \xi_k} (W_k - 1),
    \end{equation}
    $\xi_k$, $k \geq 1$, are the atoms of the limit extremal process $\cE_\infty$ defined in \eqref{eq:definition_of_the_limit_extremal_process}, and $W_k$, $k \geq 1$, are independent copies of $W_\infty(\beta)$, independent of $\cE_\infty$.
\end{lemma}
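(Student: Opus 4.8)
\emph{Plan of proof.} The starting point is the branching decomposition \eqref{eq:definition_of_W_infty^(u)}, which for every $t\ge0$ gives
\[
W_\infty(\beta)-W_t(\beta)=\sum_{u\in\cN(t)}\e^{\beta X_u(t)-c(\beta)t}\bigl(W_\infty^{(u)}(\beta)-1\bigr),
\]
where, conditionally on $\cF_t$, the random variables $W_\infty^{(u)}(\beta)$, $u\in\cN(t)$, are i.i.d.\ copies of $W_\infty(\beta)$, hence centred around $1$. Multiplying by $\e^{c(\beta)t-\beta m(t)}$ and writing $\zeta_u(t):=X_u(t)-m(t)$, the left-hand side of the lemma becomes $X_{\mathrm{ext}}(t):=\sum_{u\in\cN(t)}\e^{\beta\zeta_u(t)}\bigl(W_\infty^{(u)}(\beta)-1\bigr)$, i.e.\ the pairing of the function $(x,w)\mapsto\e^{\beta x}(w-1)$ with the marked point process $\sum_{u\in\cN(t)}\delta_{(\zeta_u(t),\,W_\infty^{(u)}(\beta))}$. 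Since $\cE_t=\sum_u\delta_{\zeta_u(t)}$ converges in distribution to $\cE_\infty$ by Theorem~\ref{th:convergence_of_the_extremal_process} and the marks are i.i.d.\ and independent of $\cF_t$, a standard i.i.d.-marking argument shows the marked process converges in distribution to $\sum_{k\ge1}\delta_{(\xi_k,\,W_k)}$, with $W_k$ i.i.d.\ copies of $W_\infty(\beta)$ independent of $\cE_\infty$; the target $X_{\mathrm{ext}}$ of \eqref{eq:definition_of_X_ext} is the pairing of the same function with this limit. The whole difficulty is that this function is neither bounded (in $w$) nor compactly supported (in $x$), so one must truncate.

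Fix once and for all $p\in\bigl(\beta_c/\beta,\ \min(2,\beta_c^2/\beta^2)\bigr)$; since $\beta_c/2<\beta<\beta_c$, this interval is non-empty. By Proposition~\ref{prop:additive_martingales_bounded_in_Lp}, $W_\infty(\beta)-1\in L^p$; by Corollary~\ref{cor:asymptotic_mean_number_decoration} applied with exponent $p\beta>\beta_c$, $\sum_{k\ge1}\e^{p\beta\xi_k}<\infty$ almost surely. For $A>0$ set
\[
X_{\mathrm{ext}}^{-A}(t):=\sum_{u\in\cN(t),\ \zeta_u(t)>-A}\e^{\beta\zeta_u(t)}\bigl(W_\infty^{(u)}(\beta)-1\bigr),\qquad X_{\mathrm{ext}}^{-A}:=\sum_{k\ge1,\ \xi_k>-A}\e^{\beta\xi_k}(W_k-1),
\]
and run the usual three steps: (i) for each fixed $A$, $X_{\mathrm{ext}}^{-A}(t)$ converges in distribution to $X_{\mathrm{ext}}^{-A}$ as $t\to\infty$; (ii) $X_{\mathrm{ext}}^{-A}$ converges in probability to $X_{\mathrm{ext}}$ as $A\to\infty$; (iii) for every $\varepsilon>0$, $\lim_{A\to\infty}\limsup_{t\to\infty}\Prob{|X_{\mathrm{ext}}(t)-X_{\mathrm{ext}}^{-A}(t)|>\varepsilon}=0$. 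Step~(i) follows from the convergence in distribution of the marked point process restricted to $(-A,\infty)\times\R$ (here $-A$ is almost surely a continuity level of $\cE_\infty$, and tightness of the number of contributing particles comes from that of $M(t)-m(t)$, Corollary~\ref{cor:tightness_of_M(t)-m(t)}) after replacing the marks by their truncation at a level $N$, the $N$-truncation error being controlled in $L^1$, uniformly in $t$, by the conditional Biggins inequality \eqref{eq:conditional_biggins_lemma} and the first-moment estimate \eqref{eq:first_moment_estimate_extremal_process}. Step~(ii), and the almost-sure convergence of the series defining $X_{\mathrm{ext}}$, follow at once from \eqref{eq:conditional_biggins_lemma} applied conditionally on $\cE_\infty$: $\condExpec{|X_{\mathrm{ext}}-X_{\mathrm{ext}}^{-A}|^p}{\cE_\infty}\le 2^p\,\Expec{|W_\infty(\beta)-1|^p}\sum_{k:\,\xi_k\le-A}\e^{p\beta\xi_k}$, whose right-hand side tends to $0$ almost surely.

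Step~(iii) is the heart of the argument and the step I expect to be the main obstacle. Conditionally on $\cF_t$ the difference $X_{\mathrm{ext}}(t)-X_{\mathrm{ext}}^{-A}(t)=\sum_{u\in\cN(t),\,\zeta_u(t)\le-A}\e^{\beta\zeta_u(t)}(W_\infty^{(u)}(\beta)-1)$ is a sum of independent centred variables, so \eqref{eq:conditional_biggins_lemma} gives
\[
\condExpec{\bigl|X_{\mathrm{ext}}(t)-X_{\mathrm{ext}}^{-A}(t)\bigr|^p}{\cF_t}\le 2^p\,\Expec{|W_\infty(\beta)-1|^p}\sum_{u\in\cN(t),\ \zeta_u(t)\le-A}\e^{p\beta\zeta_u(t)}.
\]
The difficulty is that, by the many-to-one formula \eqref{eq:many-to-one}, the expectation of the right-hand sum is \emph{unbounded in $t$} for every fixed $A$: it is dominated by atypically high particles. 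Following \cite{IksanovKoleskoMeiners2020}, one cures this by restricting to the $\cF_t$-measurable event $G_{y,t}:=\{M(t)-m(t)\le y\}$, which by Corollary~\ref{cor:tightness_of_M(t)-m(t)} has probability at least $1-\delta$ for $y=y(\delta)$ large. On $G_{y,t}$ the extremal process is supported in $(-\infty,y]$, so an integration by parts together with \eqref{eq:first_moment_estimate_extremal_process} (and the bounds $\e^{-x^2/4t}\le1$, $\e^{x/2}\le1$ for $x\le0$) yields
\[
\Expec{\mathds{1}_{G_{y,t}}\sum_{u\in\cN(t),\ \zeta_u(t)\le-A}\e^{p\beta\zeta_u(t)}}\le C(y)\int_{-\infty}^{-A}\e^{(p\beta-\beta_c)x}(y-x+1)\d{x}=:\eta(A,y),
\]
which is finite because $p\beta>\beta_c$, does not depend on $t$, and tends to $0$ as $A\to\infty$. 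Hence $\Prob{|X_{\mathrm{ext}}(t)-X_{\mathrm{ext}}^{-A}(t)|>\varepsilon}\le\Prob{G_{y,t}^c}+\varepsilon^{-p}2^p\Expec{|W_\infty(\beta)-1|^p}\eta(A,y)$, and letting $t\to\infty$, then $A\to\infty$, then $\delta\to0$ gives step~(iii). Combining (i)--(iii) through the standard $3\varepsilon$-estimate, $\Expec{\phi(X_{\mathrm{ext}}(t))}\to\Expec{\phi(X_{\mathrm{ext}})}$ for every bounded continuous $\phi$, which is the assertion of the lemma. The only genuinely delicate point is the one isolated in step~(iii): the naive moment bound on the tail diverges, and one must condition on a good front event and invoke the sharp first-moment estimate \eqref{eq:first_moment_estimate_extremal_process}; the rest is bookkeeping with Lemma~\ref{lem:biggins_lemma}, the $L^p$-boundedness of Proposition~\ref{prop:additive_martingales_bounded_in_Lp}, and the summability $\sum_k\e^{p\beta\xi_k}<\infty$ from Corollary~\ref{cor:asymptotic_mean_number_decoration}.
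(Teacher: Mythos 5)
Your argument is correct and follows the same overall architecture as the paper's proof (which reproduces Iksanov--Kolesko--Meiners): split $\sum_{u \in \cN(t)} \e^{\beta(X_u(t)-m(t))}(W_\infty^{(u)}(\beta)-1)$ into an extremal window and a tail, treat the window via convergence of the extremal process with i.i.d.\ marks, control the limit series and the window-to-full passage by the conditional Biggins inequality \eqref{eq:conditional_biggins_lemma} together with $\sum_k \e^{p\beta\xi_k}<\infty$, and conclude by the double-limit theorem \cite[Theorem~3.2]{Billingsley1999}. Where you genuinely diverge is in the tail estimate, i.e.\ your step~(iii), which plays the role of Lemma~\ref{lem:extremal_fluctuations_negligible_particles}: the paper restricts to the complement of the event $\{\sum_u \e^{p\beta(X_u(t)-m(t))}>M\}$ and invokes tightness of the supercritical additive martingale (Corollary~\ref{cor:tightness_of_the_supercritical_additive_martingales}, itself resting on the quoted Proposition~\ref{prop:convergence_of_the_supercritical_additive_martingales}), using two exponents $\beta_c/\beta<p<q$; you instead condition on the front event $\{M(t)-m(t)\le y\}$ (Corollary~\ref{cor:tightness_of_M(t)-m(t)}) and integrate the Cortines--Hartung--Louidor bound \eqref{eq:first_moment_estimate_extremal_process} by a layer-cake computation, which I checked gives exactly your $\eta(A,y)\to 0$ since $p\beta>\beta_c$. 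Your route trades one external input for another that the paper already uses elsewhere (in Lemma~\ref{lem:renormalized_subcritical_overlap_contribution}), so it is self-consistent and needs only a single exponent $p$; the paper's route is shorter once Corollary~\ref{cor:tightness_of_the_supercritical_additive_martingales} is granted. Two further, minor, differences: you use sharp cutoffs at $-A$ rather than the continuous functions $\chi_\ell^\pm$ of \eqref{eq:extremal_fluctuations_definition_of_f_K}, which forces the (correctly flagged) remark that $-A$ is a.s.\ not an atom of $\cE_\infty$, or a sandwiching by continuous cutoffs to stay within Lemma~\ref{lem:extremal_fluctuations_one_side_compact_support}; and in step~(i) you truncate the marks at level $N$ so that the test function is bounded before applying the marking argument, controlling the truncation error on the same good front event --- this is actually more careful than the paper, which applies Lemma~\ref{lem:extremal_fluctuations_one_side_compact_support_2} (stated for bounded $f$) directly to $(x,y)\mapsto \e^{\beta x}\chi_\ell^+(x)(y-1)$. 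Note only that the uniform-in-$t$ $L^1$ control of your mark-truncation error also requires the restriction to $\{M(t)-m(t)\le y\}$, since the unrestricted expectation of $\cE_t([-A,\infty))$ grows polynomially in $t$; you cite \eqref{eq:first_moment_estimate_extremal_process}, which contains precisely this restriction, so the intended argument goes through.
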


Using the decomposition \eqref{eq:definition_of_W_s^(u)}, we rewrite
\begin{equation*}
    \gamma(t)(W_\infty(\beta) - W_t(\beta)) = \sum_{u \in \cN(t)} \e^{\beta(X_u(t) - m(t))} \left(W_\infty^{(u)}(\beta) - 1\right).
\end{equation*}
We define, for $\ell \in \R$,
\begin{equation}\label{eq:extremal_fluctuations_definition_of_f_K}
    \chi_\ell^+(x) = \begin{cases}
        0 &\text{if } x \leq \ell, \\
        x - \ell &\text{if } \ell \leq x \leq \ell+1, \\
        1 &\text{if } x \geq \ell+1,
    \end{cases}
\end{equation}
and $\chi_\ell^- = 1 - \chi_\ell^+$. We decompose
\begin{align}
    \gamma(t)(W_\infty(\beta) - W_t(\beta)) &= \sum_{u \in \cN(t)} \e^{\beta(X_u(t) - m(t))} \left(W_\infty^{(u)}(\beta) - 1\right) \chi_\ell^+(X_u(t) - m(t)) \notag \\
    &\quad + \sum_{u \in \cN(t)} \e^{\beta(X_u(t) - m(t))} \left(W_\infty^{(u)}(\beta) - 1\right) \chi_\ell^-(X_u(t) - m(t)) \notag \\
    &=: Y_{t, \ell} + R_{t, \ell}. \label{eq:extremal_fluctuations_decomposition}
\end{align}
The following lemma states that $R_{t, \ell}$ is negligible as $\ell$ goes to $-\infty$.
In other words, only the particles whose positions are at distance $O(1)$ from the extremal one contribute to $\gamma(t)(W_\infty(\beta) - W_t(\beta))$.
This explains why our description of these fluctuations follows from the results of Section~\ref{sct:convergence_of_the_extremal_process}.

\begin{lemma}\label{lem:extremal_fluctuations_negligible_particles}
    For all $\delta > 0$, we have $\lim_{\ell \to -\infty} \limsup_{t \to \infty} \Prob{|R_{t, \ell}| > \delta} = 0$.
\end{lemma}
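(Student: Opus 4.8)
The plan is to obtain, for a well-chosen exponent $p\in(1,2]$, a bound on $\condExpec{|R_{t,\ell}|^p}{\cF_t}$ that is uniform in $t$ and vanishes as $\ell\to-\infty$, up to an event of small probability. First I would fix $p\in(1,2]$ with $\beta_c/\beta<p<\beta_c^2/\beta^2$; such a $p$ exists precisely because $\sqrt2/2<\beta<\sqrt2$, and I set $a:=p\beta-\beta_c>0$. By Proposition~\ref{prop:additive_martingales_bounded_in_Lp} we have $W_\infty(\beta)\in L^p$, so $c_p:=\E|W_\infty(\beta)-1|^p<\infty$ (recall $\E W_\infty(\beta)=1$ by Theorem~\ref{th:phase_transition_additive_martingales}). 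Recalling $R_{t,\ell}=\sum_{u\in\cN(t)}\e^{\beta(X_u(t)-m(t))}\chi_\ell^-(X_u(t)-m(t))\,(W_\infty^{(u)}(\beta)-1)$ and that, conditionally on $\cF_t$, the $W_\infty^{(u)}(\beta)$ are i.i.d.\ copies of $W_\infty(\beta)$ independent of $\cF_t$, the conditional Biggins inequality \eqref{eq:conditional_biggins_lemma} gives
\[
\condExpec{|R_{t,\ell}|^p}{\cF_t}\leq 2^p c_p\sum_{u\in\cN(t)}\e^{p\beta(X_u(t)-m(t))}\chi_\ell^-(X_u(t)-m(t))^p\leq 2^p c_p\int_{-\infty}^{\ell+1}\e^{p\beta x}\cE_t(\d{x}),
\]
using $\chi_\ell^-(z)^p\leq\mathds{1}_{z\leq\ell+1}$. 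It remains to control the expectation of the right-hand side. Note that a direct many-to-one estimate of this integral diverges in $t$ (like $t^{3/2}$), so the good event of the extremal process is genuinely needed.

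Next I would decompose, by a layer-cake argument and Fubini, $\int_{-\infty}^{\ell+1}\e^{p\beta x}\cE_t(\d{x})\leq\int_{-\infty}^{\ell+1}p\beta\e^{p\beta r}\cE_t([r,\infty))\d{r}$, and restrict to the $\cF_t$-measurable event $\{M(t)-m(t)\leq y\}$ for a constant $y\geq0$ to be chosen. Applying Proposition~\ref{prop:first_moment_estimate_extremal_process} with $x=r$ (legitimate as $r\leq\ell+1\leq0\leq y$ once $\ell\leq-1$), and bounding $\e^{-r^2/4t}\leq1$ and $\e^{r/2}\leq1$, one gets, uniformly in $t$,
\[
\Expec{\mathds{1}_{M(t)-m(t)\leq y}|R_{t,\ell}|^p}\leq 2^{p+1}c_p\,C''\,p\beta\,(y+1)\int_{-\infty}^{\ell+1}\e^{(p\beta-\beta_c)r}(y-r+1)\d{r}.
\]
Since $a=p\beta-\beta_c>0$, the last integral equals $\e^{a(\ell+1)}\bigl(\tfrac{y-\ell}{a}+\tfrac1{a^2}\bigr)\leq C(y)\,\e^{a\ell}(|\ell|+1)$ for a constant $C(y)$ independent of $t$; hence $\Expec{\mathds{1}_{M(t)-m(t)\leq y}|R_{t,\ell}|^p}\leq C'(y)\,\e^{a\ell}(|\ell|+1)$ uniformly in $t$, for all $\ell\leq-1$.

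Finally, given $\eta>0$, the tightness of $M(t)-m(t)$ (Corollary~\ref{cor:tightness_of_M(t)-m(t)}) lets me fix $y\geq0$ with $\sup_t\Prob{M(t)-m(t)>y}<\eta$. Markov's inequality then yields $\Prob{|R_{t,\ell}|>\delta}\leq\eta+\delta^{-p}C'(y)\,\e^{a\ell}(|\ell|+1)$ for all $t$ and all $\ell\leq-1$. Taking $\limsup_{t\to\infty}$ and then $\limsup_{\ell\to-\infty}$ (using $a>0$) gives $\limsup_{\ell\to-\infty}\limsup_{t\to\infty}\Prob{|R_{t,\ell}|>\delta}\leq\eta$, and letting $\eta\to0$ concludes. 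The main obstacle is exactly the point where the naive first-moment bound blows up in $t$: it is circumvented by the layer-cake reduction to the tail $\cE_t([r,\infty))$ combined with Proposition~\ref{prop:first_moment_estimate_extremal_process} on the tight event $\{M(t)-m(t)\leq y\}$. The delicate balance is that convergence of the resulting $\d{r}$-integral forces $p\beta>\beta_c$ while $W_\infty(\beta)\in L^p$ forces $p<\beta_c^2/\beta^2$, and these two constraints are simultaneously satisfiable only because $\beta<\beta_c$.
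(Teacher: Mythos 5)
Your proof is correct, but it follows a genuinely different route from the paper's. The paper also starts from the conditional Biggins inequality, but it works with two exponents $1<\beta_c/\beta<p<q<2\wedge\beta_c^2/\beta^2$: it applies Biggins at exponent $q$, bounds $\e^{q\beta x}\chi_\ell^-(x)\leq\e^{(q-p)\beta(\ell+1)}\e^{p\beta x}$, and then restricts to the complement of the event $\{\sum_{u\in\cN(t)}\e^{p\beta(X_u(t)-m(t))}>M\}$, whose uniformly small probability comes from Corollary~\ref{cor:tightness_of_the_supercritical_additive_martingales}, i.e.\ ultimately from the distributional convergence of the supercritical additive martingales (Proposition~\ref{prop:convergence_of_the_supercritical_additive_martingales}); on that event the remaining sum is bounded by $M$ and the prefactor $\e^{(q-p)\beta(\ell+1)}$ does the job. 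You instead keep a single exponent $p\in(\beta_c/\beta,\,2\wedge\beta_c^2/\beta^2)$, convert the sum into a tail integral of $\cE_t$ by a layer-cake/Fubini argument, truncate on $\{M(t)-m(t)\leq y\}$, and invoke the Cortines--Hartung--Louidor first-moment estimate (Proposition~\ref{prop:first_moment_estimate_extremal_process}) together with tightness of $M(t)-m(t)$ (Corollary~\ref{cor:tightness_of_M(t)-m(t)}). Both truncation devices are legitimate and both external inputs are available in the paper under the standing assumption $\mu=\delta_2$ of this section; your variant has the advantage of needing only one exponent and of producing an explicit, $t$-uniform decay rate $\e^{(p\beta-\beta_c)\ell}(|\ell|+1)$ in $\ell$, while the paper's variant avoids any moment estimate on the extremal process and is shorter once Corollary~\ref{cor:tightness_of_the_supercritical_additive_martingales} is in hand. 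All the individual steps you use (conditional centering of the $W_\infty^{(u)}(\beta)$, the bound $\chi_\ell^-(z)^p\leq\mathds{1}_{z\leq\ell+1}$, the applicability of \eqref{eq:first_moment_estimate_extremal_process} for $r\leq 0\leq y$, and the evaluation of the resulting integral) check out.
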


\begin{proof}
    Let $\varepsilon > 0$ and fix two real numbers $p$ and $q$ such that $1 < \beta_c/\beta < p < q < 2 \wedge \beta_c^2/\beta^2$.
    By Proposition~\ref{prop:additive_martingales_bounded_in_Lp}, the additive martingale $(W_t(\beta))_{t \geq 0}$ and its limit are bounded in $L^q$.
    Besides, by Corollary~\ref{cor:tightness_of_the_supercritical_additive_martingales} the process $(\sum_{u \in \cN(t)} \e^{p\beta(X_u(t) - m(t))})_{t \geq 0}$ is tight, \ie there exists $M > 0$ such that
    \begin{equation*}
        \sup_{t \geq 0} \P\Biggl(\underbrace{\sum_{u \in \cN(t)} \e^{p \beta(X_u(t) - m(t))} > M}_{=: \cQ_t}\Biggr) \leq \varepsilon.
    \end{equation*}
    We have
    \begin{align}
        \Prob{|R_{t, \ell}| > \delta} &\leq \Prob{|R_{t, \ell}| \mathds{1}_{\cQ_t^c} > \delta} + \varepsilon \notag \\
        &\leq \frac{1}{\delta^q} \Expec{|R_{t, \ell}|^q \mathds{1}_{\cQ_t^c}} + \varepsilon, \label{eq:Markov_on_R_t,K}
    \end{align}
    by Markov's inequality.
    Using Lemma~\ref{lem:biggins_lemma}, we obtain
    \begin{align*}
        \Expec{|R_{t, \ell}|^q \mathds{1}_{\cQ_t^c}} &\leq 2^q \E|W_\infty(\beta) - 1|^q \Expec{\sum_{u \in \cN(t)} \e^{q \beta(X_u(t) - m(t))} \chi_\ell^-(X_u(t) - m(t))^q \mathds{1}_{\cQ_t^c}} \\
        &\leq 2^q \E|W_\infty(\beta) - 1|^q \e^{(q - p) \beta (\ell+1)} \Expec{\sum_{u \in \cN(t)} \e^{p \beta(X_u(t) - m(t))} \mathds{1}_{\cQ_t^c}}.
    \end{align*}
    By definition of the event $\cQ_t$ and since $q > p$, the above quantity converges to $0$ as $\ell$ goes to $-\infty$, uniformly in $t \geq 0$.
    Then, coming back to \eqref{eq:Markov_on_R_t,K}, it yields
    \begin{equation*}
        \limsup_{\ell \to -\infty} \limsup_{t \to \infty} \Prob{|R_{t, \ell}| > \delta} \leq \varepsilon,
    \end{equation*}
    which concludes, since $\varepsilon$ is arbitrarily small.
\end{proof}

\begin{lemma}\label{lem:extremal_fluctuations_one_side_compact_support}
    Let $f : \R \to \R$ be a continuous function.
    If there exists $a \in \R$ such that $f(x) = 0$ for all $x \leq a$, then the process $\cE_t(f)$ converges in distribution to $\cE_\infty(f)$ as $t$ goes to infinity.
\end{lemma}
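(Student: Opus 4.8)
The plan is to approximate the function $f$ — which may have non-compact support and may even be unbounded — by compactly supported functions obtained by cutting off its right tail, and then to use the tightness of $M(t)-m(t)$ to show that the cutoff error is negligible, uniformly in $t$. Since $f$ vanishes on a left half-line, the limiting object $\cE_\infty(f)$ will turn out to be an almost surely finite sum, so the statement is meaningful.

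For $L>a$, I would set $f_L := f\cdot\chi_L^-$, with $\chi_L^{\pm}$ as in \eqref{eq:extremal_fluctuations_definition_of_f_K}. Because $f$ vanishes on $(-\infty,a]$ and $\chi_L^-$ vanishes on $[L+1,\infty)$, the function $f_L$ is continuous with support contained in $[a,L+1]$, hence $f_L\in\cC_c(\R)$. By Theorem~\ref{th:convergence_of_the_extremal_process} and the definition of the vague topology, $\cE_t(f_L)$ converges in distribution to $\cE_\infty(f_L)$ as $t\to\infty$, for each fixed $L$.

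It then remains to let $L\to\infty$. On the one hand, by Remark~\ref{rem:limit_of_the_extremal_process} the atoms $\xi_1\ge\xi_2\ge\cdots$ of $\cE_\infty$ have an almost surely finite maximum $\xi_1$, and $\cE_\infty$ is a Radon measure, so only finitely many of them lie in the compact set $[a,\xi_1]$; thus $\cE_\infty(f_L)=\cE_\infty(f)$ for every $L\ge\xi_1$, which shows that $\cE_\infty(f)=\sum_{k\,:\,\xi_k\ge a}f(\xi_k)$ is almost surely finite and that $\cE_\infty(f_L)\to\cE_\infty(f)$ almost surely, hence in distribution. On the other hand, $\cE_t(f)-\cE_t(f_L)=\cE_t(f\chi_L^+)$ and $\chi_L^+$ vanishes on $(-\infty,L]$, so on the event $\{M(t)-m(t)<L\}$ — on which every atom of $\cE_t$ lies below $L$ — one has $\cE_t(f)=\cE_t(f_L)$ exactly. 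Hence
\begin{equation*}
\Prob{\cE_t(f)\neq\cE_t(f_L)}\le\Prob{M(t)-m(t)\ge L},
\end{equation*}
and Corollary~\ref{cor:tightness_of_M(t)-m(t)} gives $\lim_{L\to\infty}\sup_{t\ge 0}\Prob{M(t)-m(t)\ge L}=0$.

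Combining these three facts through the standard approximation lemma for convergence in distribution (applicable since $\cE_t(f)$ and $\cE_t(f_L)$ are defined on the same probability space) then yields $\cE_t(f)\to\cE_\infty(f)$ in distribution. I do not expect a genuine obstacle; the one point worth emphasizing is that the cutoff error costs nothing on $\{M(t)-m(t)<L\}$, so the tightness of $M(t)-m(t)$ alone suffices to control it and no first-moment estimate on $\cE_t([x,\infty))$ is needed here — in contrast with Lemma~\ref{lem:extremal_fluctuations_negligible_particles} and Proposition~\ref{prop:convergence_extremal_process_cond_F_s}.
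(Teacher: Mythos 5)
Your proof is correct and follows essentially the same route as the paper's: cut off $f$ at level $L$ with $\chi_L^-$, apply Theorem~\ref{th:convergence_of_the_extremal_process} to the compactly supported truncation, use Remark~\ref{rem:limit_of_the_extremal_process} to pass to the limit in $L$ for $\cE_\infty$, and control the truncation error via $\Prob{M(t)-m(t)\geq L}$ together with the approximation theorem \cite[Theorem~3.2]{Billingsley1999}. The only cosmetic difference is that you observe $\cE_t(f)=\cE_t(f_L)$ exactly on $\{M(t)-m(t)<L\}$, while the paper bounds $\Prob{|\cE_t(f)-\cE_t(\chi_L^- f)|>\delta}$ by $\Prob{\cE_t([L,\infty))\geq 1}$ — the same estimate.
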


\begin{proof}
    We use the function $\chi_L^-$ defined in \eqref{eq:extremal_fluctuations_definition_of_f_K}.
    By \cite[Theorem~3.2]{Billingsley1999}, it is sufficient to show the three following steps:
    \begin{enumerate}
        \item\label{step:extremal_fluctuations_one_side_compact_support_1} for all $L \in \R$, $\cE_t(\chi_L^- f)$ converges in distribution to $\cE_\infty(\chi_L^- f)$ as $t$ goes to infinity,
        \item\label{step:extremal_fluctuations_one_side_compact_support_2} $\cE_\infty(\chi_L^- f)$ converges in distribution to $\cE_\infty(f)$ as $L$ goes to infinity,
        \item\label{step:extremal_fluctuations_one_side_compact_support_3} for all $\delta > 0$, $\lim_{L \to \infty} \limsup_{t \to \infty} \Prob{\left|\cE_t(f) - \cE_t(\chi_L^- f)\right| > \delta} = 0$.
    \end{enumerate}
    
    Since the function $\chi_L^-f$ is continuous with compact support, Theorem~\ref{th:convergence_of_the_extremal_process} yields \ref{step:extremal_fluctuations_one_side_compact_support_1}.
    By Remark~\ref{rem:limit_of_the_extremal_process}, the number of atoms of $\cE_\infty$ in $[a, \infty)$ is almost surely finite.
    Since $f(x) = 0$ for all $x \leq a$, this implies that $\cE_\infty(\chi_L^- f)$ converges almost surely to $\cE_\infty(f)$ as $L$ goes to infinity.
    Hence \ref{step:extremal_fluctuations_one_side_compact_support_2}.
    Finally, for all $\delta > 0$ and all $L > 0$,
    \begin{align}
        \limsup_{t \to \infty} \Prob{\left|\cE_t(f) - \cE_t(\chi_L^- f)\right| > \delta} &\leq \limsup_{t \to \infty} \Prob{\cE_t([L, \infty)) \geq 1} \notag \\
        &\leq \limsup_{t \to \infty} \Prob{M(t) - m(t) \geq L}. \label{eq:extremal_fluctuations_K_large_enough_2}
    \end{align}
    By Theorem~\ref{th:convergence_of_M(t)-m(t)}, the above quantity converges to $0$ as $L$ goes to infinity.
    Hence \ref{step:extremal_fluctuations_one_side_compact_support_3}.
\end{proof}

\begin{remark}\label{rem:bonnefont_on_extremal_process}
    As explained by Bonnefont in \cite[Remark~2.6]{Bonnefont2022}, one can show a stronger result than Lemma~\ref{lem:extremal_fluctuations_one_side_compact_support}.
    Namely, the convergence in distribution of $\cE_t(f)$ to $\cE_\infty(f)$ still holds if we only assume that $f$ is continuous and that there exists $\beta' > \beta_c$ such that $f(x) = O(e^{\beta'x})$ as $x$ goes to $-\infty$.
\end{remark}

\begin{remark}\label{rem:extremal_fluctuations_one_side_compact_support_jointly}
    If $f : \R \to \R$ is such as in Lemma~\ref{lem:extremal_fluctuations_one_side_compact_support}, then for all $\beta \geq 0$, the pair $(\cE_t(f), W_t(\beta))$ converges jointly in distribution to $(\cE_\infty(f), W_\infty(\beta))$.
    Indeed, the arguments in the proof of Lemma~\ref{lem:extremal_fluctuations_one_side_compact_support} hold for this pair since
    \begin{equation*}
        (\cE_t(\chi_L^- f), W_t(\beta)) \xrightarrow[t \to \infty]{} (\cE_\infty(\chi_L^- f), W_\infty(\beta)) \quad \text{in distribution},
    \end{equation*}
    by Corollary~\ref{cor:joint_convergence_extremal_process_additive_martingale}.
\end{remark}

We now define two point processes on $\R^2$ by
\begin{equation*}
    \cE_t^* = \sum_{u \in \cN(t)} \delta_{\left(X_u(t) - m(t), W_\infty^{(u)}(\beta)\right)} \quad \text{and} \quad \cE_\infty^* = \sum_{k \geq 1} \delta_{\left(\xi_k, W_k\right)},
\end{equation*}
where $W_k$, $k \geq 1$, are independent copies of $W_\infty(\beta)$, independent of $\cE_\infty$.

\begin{lemma}\label{lem:extremal_fluctuations_one_side_compact_support_2}
    Let $f : \R^2 \to \R$ be a bounded continuous function.
    If there exists $a \in \R$ such that $f(x, y) = 0$ for all $x \leq a$ and all $y \in \R$, then the process $\cE_t^*(f)$ converges in distribution to $\cE_\infty^*(f)$ as $t$ goes to infinity.
\end{lemma}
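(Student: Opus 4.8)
The plan is to mirror the three-step scheme used for Lemma~\ref{lem:extremal_fluctuations_one_side_compact_support}, the new ingredient being that the marks $W_\infty^{(u)}(\beta)$ are controlled by conditioning on $\cF_t$, where they become i.i.d.\ copies of $W_\infty(\beta)$ (recall \eqref{eq:definition_of_W_infty^(u)}). With $\chi_L^-$ as in \eqref{eq:extremal_fluctuations_definition_of_f_K}, set $g_L(x,y) := \chi_L^-(x)\,f(x,y)$; this is bounded and continuous on $\R^2$ and vanishes for $x \leq a$ and for $x \geq L+1$. By \cite[Theorem~3.2]{Billingsley1999} it suffices to prove: \textbf{(i)} for each fixed $L$, $\cE_t^*(g_L) \to \cE_\infty^*(g_L)$ in distribution as $t \to \infty$; \textbf{(ii)} $\cE_\infty^*(g_L) \to \cE_\infty^*(f)$ in distribution as $L \to \infty$; \textbf{(iii)} for every $\delta > 0$, $\lim_{L \to \infty} \limsup_{t \to \infty} \Prob{|\cE_t^*(f) - \cE_t^*(g_L)| > \delta} = 0$.

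Steps (ii) and (iii) should go exactly as in Lemma~\ref{lem:extremal_fluctuations_one_side_compact_support}. For (ii): since $f(x, \cdot) \equiv 0$ for $x \leq a$, only atoms with $\xi_k \geq a$ contribute to $\cE_\infty^*(f) = \sum_k f(\xi_k, W_k)$, and by Remark~\ref{rem:limit_of_the_extremal_process} there are $\P$-a.s.\ finitely many such atoms and they admit a (random) upper bound; hence for $L$ large enough (depending on the realization) we have $\chi_L^-(\xi_k) = 1$ for all contributing $k$, so $\cE_\infty^*(g_L) = \cE_\infty^*(f)$ a.s., giving a.s.\ and hence distributional convergence. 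For (iii): the difference $\cE_t^*(f) - \cE_t^*(g_L) = \sum_{u \in \cN(t)} \chi_L^+(X_u(t) - m(t))\, f(X_u(t) - m(t), W_\infty^{(u)}(\beta))$ involves only particles with $X_u(t) - m(t) > L$, so it vanishes on the event $\{M(t) - m(t) \leq L\}$; thus $\Prob{|\cE_t^*(f) - \cE_t^*(g_L)| > \delta} \leq \Prob{M(t) - m(t) > L}$, which tends to $0$ as $L \to \infty$, uniformly in $t$, by the tightness of $M(t) - m(t)$ (Corollary~\ref{cor:tightness_of_M(t)-m(t)}; see also Theorem~\ref{th:convergence_of_M(t)-m(t)}).

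The substance is step (i). Fix $L$, abbreviate $g = g_L$, and let $[a, b]$ (with $b = L+1$) be a compact interval outside of which $g(x, \cdot) \equiv 0$. For $\theta \in \R$, conditioning on $\cF_t$ and using that, given $\cF_t$, the marks $W_\infty^{(u)}(\beta)$, $u \in \cN(t)$, are i.i.d.\ copies of $W_\infty(\beta)$ independent of $\cF_t$, one gets $\condExpec{\e^{\iu \theta \cE_t^*(g)}}{\cF_t} = \prod_{u \in \cN(t)} \psi_\theta(X_u(t) - m(t))$, where $\psi_\theta(x) := \Expec{\e^{\iu \theta g(x, W_\infty(\beta))}}$ is bounded by $1$, continuous (dominated convergence), and equal to $1$ off $[a, b]$. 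Hence this conditional characteristic function equals $F_\theta(\cE_t)$, where $F_\theta(\nu) := \prod_{x \in \Supp \nu} \psi_\theta(x)$ is a finite product whenever $\nu([a,b]) < \infty$. Choosing deterministic points $a' < a$ and $b' > b$, the functional $F_\theta$ depends only on $\nu|_{[a', b']}$ and is continuous, for the vague topology, at every point measure $\nu$ assigning no mass to $\{a', b'\}$ and finite mass to $[a', b']$: such a $\nu$ restricts continuously to a finite point measure on the compact set $[a',b']$, on which the atom positions depend continuously and $\psi_\theta$ is continuous. Since $\cE_\infty$ satisfies both properties a.s.\ (finiteness above any level by Remark~\ref{rem:limit_of_the_extremal_process}, and $\Prob{\cE_\infty(\{a'\}) > 0} = \Prob{\cE_\infty(\{b'\}) > 0} = 0$ because the law of each atom $\xi_k$ is non-atomic), Theorem~\ref{th:convergence_of_the_extremal_process} together with the continuous mapping theorem gives $F_\theta(\cE_t) \to F_\theta(\cE_\infty)$ in distribution, whence $\Expec{\e^{\iu \theta \cE_t^*(g)}} = \Expec{F_\theta(\cE_t)} \to \Expec{F_\theta(\cE_\infty)}$ by bounded convergence. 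Finally, conditioning $\cE_\infty^*(g) = \sum_k g(\xi_k, W_k)$ on $\cE_\infty$ identifies $\Expec{F_\theta(\cE_\infty)}$ with $\Expec{\e^{\iu \theta \cE_\infty^*(g)}}$, so Lévy's continuity theorem yields (i).

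I expect the only delicate point to be the vague-continuity of $F_\theta$ on a set of full $\cE_\infty$-measure, i.e.\ making rigorous that vague convergence of $\cE_t$ forces the finitely many atoms lying in the compact set $[a,b] \subset (a',b')$ to converge, with their exact number stabilizing; this is precisely where the two structural facts about $\cE_\infty$ — finitely many atoms above any level, and no atom at a prescribed deterministic point — enter. Everything else is routine bookkeeping parallel to Lemma~\ref{lem:extremal_fluctuations_one_side_compact_support}.
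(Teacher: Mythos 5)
Your proposal is correct in outline, but it follows a genuinely different route from the paper. The paper never redoes the truncation scheme at the level of the marked process: for non-negative $f$ it conditions on $\cF_t$ inside the \emph{Laplace} transform, writes $\Expec{\e^{-\cE_t^*(f)}} = \Expec{\e^{-\cE_t(-\log\varphi)}}$ with $\varphi(x) := \Expec{\e^{-f(x, W_\infty(\beta))}}$, notes that $-\log\varphi$ is continuous, bounded and vanishes for $x \leq a$, and then simply invokes Lemma~\ref{lem:extremal_fluctuations_one_side_compact_support}; signed $f$ is treated afterwards through $f^+$, $f^-$ and convergence of the bivariate Laplace transform. This buys a very short proof in which all truncation and tightness work is inherited from the unmarked case, at the mild cost of the positive/negative-part detour (a detour which is forced there: the analogous reduction with characteristic functions fails because $\psi_\theta$ may vanish, so one cannot take a continuous logarithm). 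Your route instead repeats the three-step scheme of Lemma~\ref{lem:extremal_fluctuations_one_side_compact_support} for $\cE_t^*$; your steps (ii) and (iii) are fine, and your step (i) via the conditional characteristic function $F_\theta(\cE_t)$ and the continuous mapping theorem is viable, but two points need shoring up. First, $F_\theta$ must be defined with multiplicities, and its continuity at $\P$-almost every realization of $\cE_\infty$ only holds along sequences of \emph{point} measures; you should therefore work in the vaguely closed subspace of integer-valued Radon measures (where vague convergence to a point measure putting no mass at $a'$, $b'$ does force the atoms in $[a', b']$ to stabilize in number and converge), rather than on all of $\cM$, before applying the continuous mapping theorem. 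Second, the claim that $\cE_\infty$ charges no fixed deterministic point is not available in the paper as stated; either justify it via the diffuse intensity of the Poisson part of \eqref{eq:definition_of_the_limit_extremal_process} (Campbell's formula, conditioning on $Z_\infty$ and the decorations), or sidestep it by choosing $a'$, $b'$ outside the Lebesgue-null set of levels that are charged with positive probability, which exists since the atom set is almost surely countable. With these repairs your argument goes through and is somewhat more self-contained, whereas the paper's reduction is shorter and delegates all topological issues to the already proven scalar lemma.
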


\begin{proof}
    First assume $f$ to be non-negative.
    In order to obtain the convergence in distribution of $\cE_t^*(f)$ towards $\cE_\infty^*(f)$, we show the pointwise convergence of the associated Laplace transforms on $(0, \infty)$.
    Since $f : \R^2 \to \R_+$ is an arbitrary bounded continuous function, it is sufficient to show that
    \begin{equation}\label{eq:extremal_fluctuations_laplace_transform_1}
        \Expec{\e^{-\cE_t^*(f)}} \xrightarrow[t \to \infty]{} \Expec{\e^{-\cE_\infty^*(f)}}.
    \end{equation}
    Let us rewrite the left-hand side of \eqref{eq:extremal_fluctuations_laplace_transform_1} so that we can apply Lemma~\ref{lem:extremal_fluctuations_one_side_compact_support}:
    \begin{align}
        \Expec{\e^{-\cE_t^*(f)}} &= \E\condExpec{\exp\left(-\sum_{u \in \cN(t)} f\left(X_u(t) - m(t), W_\infty^{(u)}(\beta)\right)\right)}{\cF_t} \notag \\
        &= \Expec{\prod_{u \in \cN(t)} \varphi\left(X_u(t) - m(t)\right)}, \label{eq:extremal_fluctuations_laplace_transform_2}
    \end{align}
    where, for all $x \in \R$,
    \begin{equation*}
        \varphi(x) := \Expec{\e^{-f\left(x, W_\infty(\beta)\right)}}.
    \end{equation*}
    The function $\varphi$ clearly takes its values in $(0, 1]$.
    We can rewrite \eqref{eq:extremal_fluctuations_laplace_transform_2} as
    \begin{align}
        \Expec{\e^{-\cE_t^*(f)}} = \Expec{\exp \sum_{u \in \cN(t)} \log \varphi\left(X_u(t) - m(t)\right)} = \Expec{\e^{-\cE_t(- \log \varphi)}}. \label{eq:extremal_fluctuations_laplace_transform_3}
    \end{align}
    By dominated convergence, the function $\varphi$ is continuous on $\R$, so is $-\log \varphi$.
    In addition, by assumption on $f$, there exists $a \in \R$ such that $-\log \varphi(x) = 0$ for all $x \leq a$.
    Then, by Lemma~\ref{lem:extremal_fluctuations_one_side_compact_support}, letting $t$ go to infinity, the Laplace transform \eqref{eq:extremal_fluctuations_laplace_transform_3} converges to
    \begin{equation*}
        \Expec{\e^{-\cE_\infty(- \log \varphi)}} = \Expec{\e^{-\cE_\infty^*(f)}}.
    \end{equation*}
    Hence the result in the case where $f$ is non-negative.
    To treat the general case, we use the positive and negative parts of $f$, denoted by $f^+$ and $f^-$.
    By the above, for any non-negative real coefficients $\lambda_1$ and $\lambda_2$,
    \begin{equation*}
        \cE_t^*(\lambda_1 f^+ + \lambda_2 f^-) \xrightarrow[t \to \infty]{} \cE_\infty^*(\lambda_1 f^+ + \lambda_2 f^-) \quad \text{in distribution}.
    \end{equation*}
    Then, the (multivariate) Laplace transform of the couple $\left(\cE_t^*(f^+), \cE_t^*(f^-)\right)$ converges pointwise to that of $\left(\cE_\infty^*(f^+), \cE_\infty^*(f^-)\right)$ on $\R_+^2$.
    This entails the corresponding convergence in distribution (see \eg \cite[Theorem~6]{Kozakiewicz1947}).
    By taking the difference between the two components, we deduce the convergence stated in Lemma~\ref{lem:extremal_fluctuations_one_side_compact_support_2}.
\end{proof}

\begin{remark}\label{rem:extremal_fluctuations_one_side_compact_support_jointly_2}
    If $f : \R^2 \to \R$ is such as in Lemma~\ref{lem:extremal_fluctuations_one_side_compact_support_2}, then for all $\beta \geq 0$, the pair $(\cE_t^*(f), W_t(\beta))$ converges jointly in distribution to $(\cE_\infty^*(f), W_\infty(\beta))$.
    Indeed, fixing $\lambda > 0$, the same calculations as in the proof of Lemma~\ref{lem:extremal_fluctuations_one_side_compact_support_2} lead to
    \begin{equation*}
        \Expec{\e^{-\cE_t^*(f) - \lambda W_t(\beta)}} = \Expec{\e^{-\cE_t(-\log \varphi) - \lambda W_t(\beta)}}.
    \end{equation*}
    Hence, by Remark~\ref{rem:extremal_fluctuations_one_side_compact_support_jointly},
    \begin{equation*}
        \lim_{t \to \infty} \Expec{\e^{-\cE_t^*(f) - \lambda W_t(\beta)}} = \Expec{\e^{-\cE_\infty(-\log \varphi) - \lambda W_\infty(\beta)}} = \Expec{\e^{-\cE_\infty^*(f) - \lambda W_\infty(\beta)}}.
    \end{equation*}
    This joint convergence will be useful in Section~\ref{sct:renormalized_subcritical_overlap_2}.
\end{remark}

Now, let us show that the series defining $X_{\mathrm{ext}}$ in \eqref{eq:definition_of_X_ext} converges almost surely to a non-degenerate limit.
Note that, given $\cE_\infty$, the sequence $\left(\sum_{k = 1}^n \e^{\beta \xi_k} (W_k - 1)\right)_{n \geq 1}$ is a martingale since its elements are sums of independent centered random variables.
Moreover, it is bounded in $L^p$ for any $p \in (\beta_c/\beta, 2 \wedge \beta_c^2/\beta^2)$.
Indeed, using Lemma~\ref{lem:biggins_lemma}, we obtain
\begin{equation*}
    \sup_{n \geq 1} \condExpec{\left|\sum_{k=1}^n \e^{\beta \xi_k} (W_k - 1)\right|^p}{\cE_\infty} \leq 2^p \E\left|W_\infty(\beta) - 1\right|^p \sum_{k \geq 1} \e^{p\beta \xi_k},
\end{equation*}
which is finite, according to Proposition~\ref{prop:additive_martingales_bounded_in_Lp} and Corollary~\ref{cor:asymptotic_mean_number_decoration}.
Therefore, conditionally on $\cE_\infty$, it converges almost surely to a random variable as $n$ goes to infinity and then also unconditionally.

It remains to show the convergence \eqref{eq:fluctuations_3}.
Let $\ell \in \R$. Recall that we have the following decomposition
\begin{equation*}
    \gamma(t)(W_\infty(\beta) - W_t(\beta)) = Y_{t, \ell} + R_{t, \ell},
\end{equation*}
with $Y_{t, \ell}$ and $R_{t, \ell}$ defined in \eqref{eq:extremal_fluctuations_decomposition}.
By Lemma~\ref{lem:extremal_fluctuations_negligible_particles} and \cite[Theorem~3.2]{Billingsley1999}, it is sufficient to show that for all $\ell \in \R$,
\begin{equation}\label{eq:fluctuations_3_1}
    Y_{t,\ell} \xrightarrow[t \to \infty]{} Y_\ell := \sum_{k \geq 1} \e^{\beta \xi_k} \chi_\ell^+(\xi_k) (W_k - 1) \quad \text{in distribution},
\end{equation}
and that
\begin{equation}\label{eq:fluctuations_3_2}
    Y_\ell \xrightarrow[\ell \to -\infty]{} X_{\mathrm{ext}} \quad \text{in distribution}.
\end{equation}
Note that the sum $Y_\ell$ is almost surely well defined, whatever $\ell \in \R$, since it has almost surely a finite number of terms, by Remark~\ref{rem:limit_of_the_extremal_process}.
Applying Lemma~\ref{lem:extremal_fluctuations_one_side_compact_support_2} to the function $(x, y) \mapsto \e^{\beta x} \chi_\ell^+(x) (y - 1)$, we obtain \eqref{eq:fluctuations_3_1}.
We now show that the convergence \eqref{eq:fluctuations_3_2} holds in $L^p$, given $\cE_\infty$.
To this end, we apply Lemma~\ref{lem:biggins_lemma} combined with Fatou's lemma:
\begin{align}
    \condExpec{\left|X_{\mathrm{ext}} - Y_\ell\right|^p}{\cE_\infty} &= \condExpec{\lim_{n \to \infty} \left|\sum_{k = 1}^n \e^{\beta \xi_k} \chi_\ell^-(\xi_k) (W_k - 1)\right|^p}{\cE_\infty} \notag \\
    &\leq \liminf_{n \to \infty} \left(2^p \sum_{k = 1}^n \e^{p \beta \xi_k} \mathds{1}_{\xi_k \leq \ell+1} \E \left|W_\infty(\beta) - 1\right|^p \right) \notag \\
    &= 2^p \E\left|W_\infty(\beta) - 1\right|^p \sum_{k \geq 1} \e^{p \beta \xi_k} \mathds{1}_{\xi_k \leq \ell+1}. \label{eq:biggins_lemma_plus_fatou_lemma}
\end{align}
The point process $\cE_\infty$ has almost surely no accumulation point.
Therefore, almost surely, the above expression is the remainder of a series which, by Corollary~\ref{cor:asymptotic_mean_number_decoration}, converges.
This shows that the above quantity converges almost surely to $0$ as $\ell$ goes to $-\infty$.
Hence \eqref{eq:fluctuations_3_2}.
This concludes the proof of Lemma~\ref{lem:extremal_fluctuations_characterization}.

\subsection{Extremal regime, stable fluctuations}\label{sct:extremal_fluctuations_stable}

In this section, we still assume $\beta_c/2 < \beta < \beta_c$ and $\mu = \delta_2$.
We go deeper into the analysis than Iksanov-Kolesko-Meiners \cite{IksanovKoleskoMeiners2020} and show that, given $Z_\infty$, the fluctuations of the additive martingale around its almost sure limit are asymptotically $\alpha$-stable with $\alpha = \beta_c/\beta \in (1, 2)$.
Using the notations introduced in Theorem~\ref{th:convergence_of_the_extremal_process}, we can rewrite the limit $X_{\mathrm{ext}}$ defined in \eqref{eq:definition_of_X_ext} as
\begin{equation*}
    X_{\mathrm{ext}} = (C Z_\infty)^{\beta/\beta_c} \sum_{i \geq 1} \e^{\beta p_i} \underbrace{\sum_{j \geq 1} \e^{\beta \Delta_{ij}} (W_{ij} - 1)}_{=: Y_i},
\end{equation*}
where $W_{ij}$, $i, j \geq 1$, are independent copies of $W_\infty(\beta)$ which are independent of $\cE_\infty$.

\begin{lemma}\label{lem:stable_fluctuations}
    There exists $p > \alpha$ such that $\E |Y_1|^p < \infty$.
\end{lemma}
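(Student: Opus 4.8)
The plan is to choose an exponent $p$ with $\alpha < p \le 2$ that is also small enough for $W_\infty(\beta)$ to lie in $L^p$, and then to bound $\E|Y_1|^p$ by combining the conditional version of Biggins' inequality (Lemma~\ref{lem:biggins_lemma}) with the moment estimate on the decoration provided by Corollary~\ref{cor:asymptotic_mean_number_decoration}. Since $\beta_c/2 < \beta < \beta_c$ we have $1 < \alpha < 2$, hence $\alpha < \alpha^2$ and $\alpha < 2$, so the interval $(\alpha, 2 \wedge \alpha^2)$ is non-empty; fix $p$ in it. By Proposition~\ref{prop:additive_martingales_bounded_in_Lp}, which applies because $p < \alpha^2 = \beta_c^2/\beta^2$, the variable $W_\infty(\beta)$ lies in $L^p$, and by Theorem~\ref{th:phase_transition_additive_martingales} it is an $L^1$ limit with mean $1$. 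Thus $W_\infty(\beta) - 1$ is centered and $c_p := \E\left|W_\infty(\beta) - 1\right|^p < \infty$.

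Next I would argue conditionally on $\cE_\infty$. By construction the variables $W_{1j}$, $j \ge 1$, are i.i.d.\@ copies of $W_\infty(\beta)$ and are independent of $\cE_\infty$, hence of the points $(\Delta_{1j})_{j \ge 1}$; so, conditionally on $\cE_\infty$, the summands $\e^{\beta \Delta_{1j}}(W_{1j} - 1)$ are independent and centered. Applying the conditional Biggins inequality \eqref{eq:conditional_biggins_lemma} to the partial sums gives, for every $n \ge 1$,
\begin{equation*}
    \condExpec{\left|\sum_{j=1}^n \e^{\beta \Delta_{1j}}(W_{1j} - 1)\right|^p}{\cE_\infty} \le 2^p c_p \sum_{j=1}^n \e^{p \beta \Delta_{1j}}.
\end{equation*}
Since $p\beta > \beta_c$, Corollary~\ref{cor:asymptotic_mean_number_decoration} applied to the decoration $\cD_1$ with the exponent $p\beta$ in place of $\beta$ shows that $\sum_{j \ge 1} \e^{p\beta \Delta_{1j}}$ has finite expectation, in particular is almost surely finite. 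Hence, conditionally on $\cE_\infty$, the partial sums form an $L^p$-bounded martingale, so they converge almost surely (and in $L^p$) to $Y_1$, and letting $n \to \infty$ above together with Fatou's lemma yields
\begin{equation*}
    \condExpec{|Y_1|^p}{\cE_\infty} \le 2^p c_p \sum_{j \ge 1} \e^{p \beta \Delta_{1j}}.
\end{equation*}

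Finally I would take expectations and conclude
\begin{equation*}
    \E|Y_1|^p \le 2^p c_p \, \Expec{\sum_{j \ge 1} \e^{p \beta \Delta_{1j}}} < \infty,
\end{equation*}
again by Corollary~\ref{cor:asymptotic_mean_number_decoration}, which proves the lemma with this value of $p$. I do not expect a genuine obstacle here: the only points requiring attention are the non-emptiness of the admissible range for $p$ — which is exactly where the hypothesis $\alpha \in (1,2)$, i.e.\@ $\beta_c/2 < \beta < \beta_c$, enters, together with $p \le 2$ being needed for Biggins' inequality and $p < \beta_c^2/\beta^2$ for $W_\infty(\beta) \in L^p$ — and the passage from the partial sums to the infinite sum, which is handled by Fatou's lemma and the almost sure finiteness of $\sum_{j} \e^{p\beta \Delta_{1j}}$.
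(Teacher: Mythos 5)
Your argument is correct and follows essentially the same route as the paper: pick $p \in (\alpha, 2 \wedge \alpha^2)$ so that $W_\infty(\beta) - 1$ is centered with finite $p$-th moment (Proposition~\ref{prop:additive_martingales_bounded_in_Lp}), apply the conditional Biggins inequality together with Fatou's lemma to get $\condExpec{|Y_1|^p}{\cD_1} \leq 2^p \E|W_\infty(\beta)-1|^p \sum_j \e^{p\beta\Delta_{1j}}$, and conclude via Corollary~\ref{cor:asymptotic_mean_number_decoration} since $p\beta > \beta_c$. The only cosmetic difference is that you condition on $\cE_\infty$ rather than on $\cD_1$, which changes nothing.
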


\begin{proof}
    By Proposition~\ref{prop:additive_martingales_bounded_in_Lp}, for $p \in [1, 2]$ such that $\alpha  < p < \alpha^2$, we have $\E\left|W_\infty(\beta) - 1\right|^p < \infty$.
    Applying Lemma~\ref{lem:biggins_lemma} and Fatou's lemma in the same way we did in \eqref{eq:biggins_lemma_plus_fatou_lemma}, we obtain
    \begin{equation*}
        \condExpec{|Y_1|^p}{\cD_1} \leq 2^p \E\left|W_\infty(\beta) - 1\right|^p \sum_{j \geq 1} \e^{p \beta \Delta_{1j}}.
    \end{equation*}
    By Corollary~\ref{cor:asymptotic_mean_number_decoration}, the expectation of the above quantity is finite since $p\beta > \beta_c$.
\end{proof}

\begin{theorem}\label{th:extremal_fluctuation_stable}
    The random variable $S := \sum_{i \geq 1} \e^{\beta p_i} Y_i$ is $\beta_c/\beta$-stable.
\end{theorem}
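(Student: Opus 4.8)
The plan is to compute the characteristic function of $S$ explicitly and to recognise it as that of a strictly $\alpha$-stable law, where $\alpha := \beta_c/\beta \in (1,2)$ (the range following from $\beta_c/2 < \beta < \beta_c$).

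Two features of $Y_1 = \sum_{j\geq1}\e^{\beta\Delta_{1j}}(W_{1j}-1)$ drive everything. Conditionally on the decoration $\cD_1$ it is a series of independent copies of the centred variable $W_\infty(\beta)-1$, so $\E[Y_1]=0$; and by Lemma~\ref{lem:stable_fluctuations} there is $p\in(\alpha,2]$ with $\E|Y_1|^p<\infty$. Writing $h(\lambda):=\Expec{\e^{\iu\lambda Y_1}}$ for its characteristic function, the vanishing first moment gives the elementary estimate $|1-h(\lambda)| = |\Expec{\e^{\iu\lambda Y_1}-1-\iu\lambda Y_1}| \leq C\min(|\lambda|^p,1)$ for all $\lambda\in\R$ and some $C>0$. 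I would then show $\Expec{\e^{\iu\theta S}} = \exp\bigl(-\beta_c\int_\R(1-h(\theta\e^{\beta x}))\e^{-\beta_c x}\d{x}\bigr)$ for every $\theta\in\R$: since the Poisson process $\cP=\sum_i\delta_{p_i}$ is independent of $(Y_i)_{i\geq1}$, conditioning on $\cP$ makes $S=\sum_i\e^{\beta p_i}Y_i$ a series of independent centred terms with deterministic coefficients, and because $\Expec{\sum_i\e^{p\beta p_i}} = \beta_c\int_\R\e^{(p\beta-\beta_c)x}\d{x}<\infty$ (here $p\beta>\beta_c$) one has $\sum_i\e^{p\beta p_i}<\infty$ almost surely, so by Lemma~\ref{lem:biggins_lemma} and the Fourier bound the partial sums form an $L^p$-bounded martingale exactly as in Section~\ref{sct:extremal_regime}, whence $\condExpec{\e^{\iu\theta S}}{\cP} = \prod_i h(\theta\e^{\beta p_i})$. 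The same bound gives $\int_\R|1-h(\theta\e^{\beta x})|\beta_c\e^{-\beta_c x}\d{x}<\infty$ (split the line at a point beyond which $|\theta|\e^{\beta x}$ is small, use $|1-h(\lambda)|\leq C|\lambda|^p$ there, integrable at $-\infty$ as $p\beta>\beta_c$, and $|1-h(\lambda)|\leq2$ on the rest), so the exponential formula for Poisson processes (\cite[Section~0.5]{Bertoin1996}) yields the displayed identity after taking expectations.

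Next, for $\theta>0$ the substitution $\lambda=\theta\e^{\beta x}$ transforms the integral into $\Expec{\e^{\iu\theta S}} = \exp(-\alpha C_0\theta^\alpha)$ with $C_0:=\int_0^\infty(1-h(\lambda))\lambda^{-1-\alpha}\d{\lambda}$, an absolutely convergent integral since its integrand is $O(\lambda^{p-1-\alpha})$ near $0$ (using $p>\alpha$) and $O(\lambda^{-1-\alpha})$ near $\infty$. Writing $C_0=a-\iu b$ with $a=\int_0^\infty\Expec{1-\cos(\lambda Y_1)}\lambda^{-1-\alpha}\d{\lambda}\geq0$, and using $h(-\lambda)=\overline{h(\lambda)}$ for $\theta<0$, one obtains $\Expec{\e^{\iu\theta S}} = \exp\bigl(-\alpha a|\theta|^\alpha(1-\iu\tfrac{b}{a}\operatorname{sgn}(\theta))\bigr)$; this has no drift term $\iu\tau\theta$, so it is the characteristic function of a strictly $\alpha$-stable law — equivalently $S$ is infinitely divisible with Gaussian component $0$ and Lévy measure of the form $(c_+\mathds{1}_{x>0}+c_-\mathds{1}_{x<0})|x|^{-1-\alpha}\d{x}$, and \cite[Theorem~14.3]{Sato1999} applies. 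Finally $a>0$: otherwise $|\Expec{\e^{\iu\theta S}}|\equiv1$ forces $S$ to be constant, hence $b=0$ and $S=0$ a.s., hence $Y_1=0$ a.s., contradicting the non-degeneracy of $W_\infty(\beta)$ for $\beta<\beta_c$ (Theorem~\ref{th:phase_transition_additive_martingales}). Thus $S$ is a non-degenerate $\beta_c/\beta$-stable random variable.

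The delicate point is the identity $\condExpec{\e^{\iu\theta S}}{\cP}=\prod_i h(\theta\e^{\beta p_i})$ together with the validity of the exponential formula, in the face of $\sum_i\e^{\beta p_i}=\infty$ almost surely — the series defining $S$ converges only through the cancellation produced by $\E Y_i=0$. This is exactly where the strict inequality $p>\alpha$ supplied by Lemma~\ref{lem:stable_fluctuations} is indispensable, both to make $\sum_i\e^{p\beta p_i}$ almost surely finite and to supply the $O(\lambda^{p-1-\alpha})$ integrability of $1-h(\lambda)$ at the origin, which is what replaces a naive absolutely convergent Poisson integral (the latter would diverge here since $\alpha>1$).
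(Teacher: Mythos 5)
Your argument is essentially the paper's: condition on the Poisson process, use the centring of $Y_1$ together with the moment bound $\E|Y_1|^p<\infty$ for some $p>\alpha$ from Lemma~\ref{lem:stable_fluctuations} to make the exponential formula applicable, and read off $\alpha$-stability from the resulting characteristic function (the paper passes through the L\'evy--Khintchine triplet via Fubini and the change of variable $y=xY$, where you instead identify the strictly stable form $\exp(-\alpha a|\theta|^\alpha(1-\iu\tfrac{b}{a}\operatorname{sgn}\theta))$ directly; both routes are fine, and your non-degeneracy argument $a>0$ is a welcome addition the paper omits). One intermediate claim is wrong as written: $\Expec{\sum_i\e^{p\beta p_i}}=\beta_c\int_\R\e^{(p\beta-\beta_c)x}\d{x}$ is \emph{infinite}, since $p\beta>\beta_c$ makes the integrand blow up at $+\infty$; the almost sure finiteness of $\sum_i\e^{p\beta p_i}$ is nevertheless true and should be obtained as in Corollary~\ref{cor:asymptotic_mean_number_decoration}, by noting that $\cP$ has almost surely finitely many atoms in $[0,\infty)$ while $\Expec{\sum_i\e^{p\beta p_i}\mathds{1}_{p_i<0}}<\infty$. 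With that correction the proof goes through.
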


\begin{proof}
    Let us fix $\theta \in \R$ and compute the characteristic function of $S$.
    For $i \geq 1$, we set $q_i = \e^{\beta p_i}$.
    Note that the point process $\cQ := \sum_i \delta_{q_i}$ is a Poisson point process with intensity $\alpha x^{-1-\alpha} \mathds{1}_{x > 0} \d{x}$.
    By conditioning on $\cQ$ and by dominated convergence, we obtain
    \begin{equation}\label{eq:stable_fluctuations_characteristic_function}
        \Expec{\e^{\iu \theta S}} = \Expec{\prod_{i \geq 1} \phi(\theta q_i)},
    \end{equation}
    where $\phi$ denotes the characteristic function of $Y := Y_1$.
    
    Let $0 < \varepsilon < 1$ and define
    \begin{equation*}
        \phi_\varepsilon(x) = \begin{cases}
            |\phi(x)| &\text{if } |\phi(x)| > \varepsilon \\
            \varepsilon &\text{if } |\phi(x)| \leq \varepsilon.
        \end{cases}
    \end{equation*}
    Let $\psi_\varepsilon : \R \to \C$ be a measurable function such that ${\exp}\circ{\psi_\varepsilon} = \phi_\varepsilon$ so that
    \begin{equation}\label{eq:stable_fluctuations_characteristic_function_2}
        \Expec{\prod_{i \geq 1} \phi_\varepsilon(\theta q_i)} = \Expec{\exp \sum_{i \geq 1}  \psi_\varepsilon(\theta q_i)}.
    \end{equation}
    In order to use the exponential formula for Poisson point processes (see \eg \cite[Section~0.5]{Bertoin1996}), let us show that $x \mapsto (1 - \phi_\varepsilon(\theta x)) x^{-1-\alpha}$ is integrable on $(0, \infty)$.
    Since $\alpha > 0$ and $\phi_\varepsilon$ is bounded, it is sufficient to check the integrability on $(0, 1]$.
    By Lemma~\ref{lem:stable_fluctuations}, there exists $p > \alpha$ such that $\E|Y|^p < \infty$.
    Since $Y$ is centered, we deduce that $1 - \phi_\varepsilon(\theta x) = O(|x|^p)$ as $x$ goes to $0$.
    Hence the integrability of $x \mapsto (1 - \phi_\varepsilon(\theta x)) x^{-1-\alpha}$.
    By the exponential formula, \eqref{eq:stable_fluctuations_characteristic_function_2} becomes
    \begin{equation}\label{eq:stable_fluctuations_characteristic_function_3}
        \Expec{\prod_{i \geq 1} \phi_\varepsilon(\theta q_i)} = \exp\left(-\alpha \int_0^\infty (1 - \phi_\varepsilon(\theta x)) x^{-1-\alpha} \d{x}\right).
    \end{equation}
    By dominated convergence, letting $\varepsilon$ go to $0$, \eqref{eq:stable_fluctuations_characteristic_function} and \eqref{eq:stable_fluctuations_characteristic_function_3} yield
    \begin{equation}\label{eq:stable_fluctuations_characteristic_function_4}
        \Expec{\e^{\iu \theta S}} = \exp\left( - \alpha \int_0^\infty (1 - \phi(\theta x)) x^{-1-\alpha} \d{x}\right).
    \end{equation}

    Note that, since $Y$ is centered, we can rewrite $1 - \phi(\theta x) = \Expec{1 + \iu \theta xY - \e^{\iu \theta x Y}}$.
    This allows us to apply Fubini's theorem to \eqref{eq:stable_fluctuations_characteristic_function_4}.
    Indeed, we have $\E|Y|^p < \infty$ with $p > \alpha > 1$ and then
    \begin{equation*}
        \E\left|1 + \iu \theta xY - \e^{\iu \theta x Y}\right| = \begin{cases}
            O(|x|) &\text{as $x$ goes to $\infty$}, \\
            O(|x|^p) &\text{as $x$ goes to $0$}.
        \end{cases}
    \end{equation*}
    Thus,
    \begin{equation}\label{eq:stable_fluctuations_characteristic_function_5}
        \Expec{\e^{\iu \theta S}} = \exp\left(-\alpha \E\int_0^\infty \left(1 + \iu \theta xY - \e^{\iu \theta x Y}\right) x^{-1-\alpha} \d{x}\right).
    \end{equation}
    The change of variable $y = xY$ turns \eqref{eq:stable_fluctuations_characteristic_function_5} into
    \begin{equation}\label{eq:stable_fluctuations_characteristic_function_6}
        \Expec{\e^{\iu \theta S}} = \exp\left(-\int_\R \left(1 + \iu \theta y - \e^{\iu \theta y}\right) \Pd{\nu}{y}\right),
    \end{equation}
    where
    \begin{equation*}
        \Pd{\nu}{y} := |y|^{-1-\alpha} \left(c_+ \mathds{1}_{y > 0} + c_- \mathds{1}_{y < 0}\right) \d{y} \quad \text{and} \quad \begin{cases}
            c_+ = \alpha \Expec{|Y|^\alpha \mathds{1}_{Y > 0}}, \\
            c_- = \alpha \Expec{|Y|^\alpha \mathds{1}_{Y < 0}}.
        \end{cases}
    \end{equation*}
    In order to follow the notations of \cite{Sato1999}, we rewrite \eqref{eq:stable_fluctuations_characteristic_function_6} as
    \begin{equation*}
        \Expec{\e^{\iu \theta S}} = \exp\left(\iu \gamma \theta - \int_\R \left(1 + \iu \theta y \mathds{1}_{|y| \leq 1} - \e^{\iu \theta y}\right) \Pd{\nu}{y}\right),
    \end{equation*}
    where $\gamma \in \R$.
    By \cite[Theorem~8.1]{Sato1999}, the random variable $S$ is infinitely divisible with generating triplet $(0, \nu, \gamma)$ and is then, by \cite[Theorem~14.3]{Sato1999}, $\alpha$-stable.
    This concludes the proof of \eqref{eq:fluctuations_3} and of Theorem~\ref{th:fluctuations}.
\end{proof}

\section{Convergence of the rescaled overlap distribution}\label{sct:rescaled_overlap_distribution}

In this section, we study the \emph{overlap} between two particles $u$ and $v$ chosen under the Gibbs measure at inverse temperature $\beta \in [0, \beta_c)$ of the branching Brownian motion at time $t > 0$, \ie the quantity $d_{u \wedge v}/t$, where $d_{u \wedge v}$ denotes the deathtime of the last common ancestor of $u$ and $v$.
More precisely, we describe the asymptotic behavior of the random measure
\begin{equation*}
    \nu_{\beta, t} := \frac{1}{W_t(\beta)^2} \sum_{u, v \in \cN(t)} \e^{\beta(X_u(t) + X_v(t)) - 2 c(\beta) t} \delta_{d_{u \wedge v}/t},
\end{equation*}
called \emph{overlap distribution} at inverse temperature $\beta$.
The motivation comes from the mean-field spin glass models, where analogous notions of overlap are used.
The Sherrington-Kirkpatrick model, introduced in \cite{SherringtonKirkpatrick1975}, is certainly the most famous and has been studied through many approaches.
Among others, Parisi ideas made a great contribution to the understanding of this model, in particular by studying the probability distribution of the overlap in \cite{Parisi1983}.
Another strategy has been to introduce simplifications of the model.
The most drastic one is the random energy model (REM) of Derrida \cite{Derrida1980}, where the energies of the spin configurations are assumed to be independent.
Surprisingly, it contains some important effects which are observed in real spin glasses and it can be generalized in order to include correlations\footnote{See the generalized random energy model (GREM) introduced by Derrida-Gardner \cite{DerridaGardner1986}.}.
Derrida-Spohn noticed in \cite{DerridaSpohn1988} that the branching random walk and the branching Brownian motion share many properties with the random energy model.
Subsequently, both processes can serve as intermediate toy models for spin glass systems.
Based on these similarities, Derrida-Spohn predicted that the rescaled overlap distribution of the branching random walk converges in distribution to a random measure whose support is $\{0\}$ at high temperature ($\beta \leq \beta_c$) and $\{0, 1\}$ at low temperature ($\beta > \beta_c$).
This conjecture was proved by Mallein in \cite[Theorem~4.3]{Mallein2018}, with a description of the limit.
Its counterpart for the branching Brownian motion is a consequence of Bonnefont \cite[Theorem~1.1]{Bonnefont2022}.

In the following theorem, we describe the asymptotic fluctuations at high temperature of the overlap distribution.
Recall that the point process $\cE_\infty$ is defined in \eqref{eq:definition_of_the_limit_extremal_process}.

\begin{theorem}\label{th:renormalized_subcritical_overlap}
    Let $0 \leq \beta < \beta_c$ and $0 < a < 1$.
    \begin{enumerate}
        \item If $0 \leq \beta < \beta_c/2$, then
        \begin{equation}\label{eq:renormalized_subcritical_overlap_1}
            \e^{(1 - \beta^2)at} \nu_{\beta, t}([a, 1]) \xrightarrow[t \to \infty]{} \frac{1}{W_\infty(\beta)^2} W_\infty(2\beta) \Expec{W_\infty(\beta)^2} \quad \text{in probability},
        \end{equation}
        on the event of survival.
        \item If $\beta = \beta_c/2$, then
        \begin{equation}\label{eq:renormalized_subcritical_overlap_1.5}
            \sqrt{at} \e^{at/2} \nu_{\beta, t}([a, 1]) \xrightarrow[t \to \infty]{} \frac{1}{W_\infty(\beta)^2} \sqrt{2/\pi} Z_\infty \Expec{W_\infty(\beta)^2} \quad \text{in probability},
        \end{equation}
        on the event of survival.
        \item Assume that the branching is binary $\mu = \delta_2$. If $\beta_c/2 < \beta < \beta_c$, then
        \begin{equation}\label{eq:renormalized_subcritical_overlap_2}
            (at)^{3 \beta/\beta_c} \e^{(\beta_c - \beta)^2 at} \nu_{\beta, t}([a, 1]) \xrightarrow[t \to \infty]{} \frac{(C_1 Z_\infty)^{2\beta/\beta_c}}{W_\infty(\beta)^2} S \quad \text{in distribution},
        \end{equation}
        where $S$ is a non-degenerate $\beta_c/2\beta$-stable random variable independent of $\cF_\infty$ and $C_1$ is a positive constant.
    \end{enumerate}
\end{theorem}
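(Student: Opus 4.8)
The common starting point for the three regimes is a decomposition of the pairs of particles according to their common ancestor at time $s:=at$. For $u,v\in\cN(t)$ one has $d_{u\wedge v}>s$ if and only if $u$ and $v$ have a common ancestor alive at time $s$; grouping such pairs over $w\in\cN(s)$ and using the branching property gives, up to the diagonal contribution $u=v$ (which produces overlaps $>1$ and hence does not contribute to $\nu_{\beta,t}([a,1])$),
\begin{equation}\label{eq:overlap_decomposition_sketch}
    W_t(\beta)^2\,\nu_{\beta,t}([a,1]) = \e^{-(1-\beta^2)s}\sum_{w\in\cN(s)}\e^{2\beta X_w(s)-c(2\beta)s}\bigl(W^{(w)}_{t-s}(\beta)\bigr)^2\;-\;\e^{-(1-\beta^2)t}W_t(2\beta),
\end{equation}
where $2c(\beta)-c(2\beta)=1-\beta^2$ and, conditionally on $\cF_s$, the variables $W^{(w)}_{t-s}(\beta)$, $w\in\cN(s)$, are independent copies of $W_{t-s}(\beta)$ (notation of \eqref{eq:definition_of_W_s^(u)}) converging almost surely to $W_\infty(\beta)$ as $t\to\infty$ (note that $s=at\to\infty$ and $t-s=(1-a)t\to\infty$). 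Since $1-\beta^2>0$ and $\Expec{W_t(2\beta)}=1$, the second term in \eqref{eq:overlap_decomposition_sketch} is negligible after any of the normalizations in the statement, so everything reduces to the asymptotics of $U_{s,t}:=\sum_{w\in\cN(s)}\e^{2\beta X_w(s)-c(2\beta)s}(W^{(w)}_{t-s}(\beta))^2$. Its conditional expectation is $\condExpec{U_{s,t}}{\cF_s}=W_s(2\beta)\,\Expec{W_{t-s}(\beta)^2}$, and the three regimes come precisely from the fact that $W_s(2\beta)$ is subcritical, critical or supercritical according to whether $\beta<\beta_c/2$, $\beta=\beta_c/2$ or $\beta>\beta_c/2$.

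For (i) and (ii) I would combine this first moment with a conditional second moment estimate. By \eqref{eq:second_moment_of_additive_martingales}, $\Expec{W_{t-s}(\beta)^2}\to\Expec{W_\infty(\beta)^2}=K/(1-\beta^2)<\infty$ (finite as $\beta<1$), while $W_s(2\beta)\to W_\infty(2\beta)$ almost surely when $\beta<\beta_c/2$, and $\sqrt s\,W_s(\beta_c)\to\sqrt{2/\pi}\,Z_\infty$ in probability when $\beta=\beta_c/2$ (Proposition~\ref{prop:convergence_of_the_critical_additive_martingale}). For the fluctuations, fix $p\in(1,2)$ so that $W_{t-s}(\beta)$ is bounded in $L^{2p}$ (Proposition~\ref{prop:additive_martingales_bounded_in_Lp}, since $2p<\beta_c^2/\beta^2$); applying Lemma~\ref{lem:biggins_lemma} conditionally on $\cF_s$,
\begin{equation*}
    \condExpec{\bigl|U_{s,t}-\condExpec{U_{s,t}}{\cF_s}\bigr|^{p}}{\cF_s}\le C\sum_{w\in\cN(s)}\e^{p(2\beta X_w(s)-c(2\beta)s)}\le C\,\e^{(p-1)(2\beta M(s)-c(2\beta)s)}\,W_s(2\beta),
\end{equation*}
and $2\beta M(s)-c(2\beta)s=-(1-\sqrt2\beta)^2 s+2\beta\,(M(s)-\beta_c s)\to-\infty$ because $M(s)-\beta_c s\to-\infty$ (see \eqref{eq:domination_extremal_particle}). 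In case (i) this makes the right-hand side vanish almost surely; in case (ii), where $(1-\sqrt2\beta)^2=0$, one tracks the extra factor $s^{p/2}$ coming from the $\sqrt s$-normalization and uses the sharp bound $M(s)\le m(s)+\delta\log s$ with high probability (Lemma~\ref{lem:speed_extremal_particle_sharper}) together with $\sqrt s\,W_s(\beta_c)=O_{\P}(1)$. Hence $U_{s,t}$, resp.\ $\sqrt s\,U_{s,t}$, converges in probability to $W_\infty(2\beta)\Expec{W_\infty(\beta)^2}$, resp.\ $\sqrt{2/\pi}\,Z_\infty\Expec{W_\infty(\beta)^2}$, and dividing by $W_t(\beta)^2\to W_\infty(\beta)^2>0$ (on survival) yields \eqref{eq:renormalized_subcritical_overlap_1} and \eqref{eq:renormalized_subcritical_overlap_1.5}.

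For (iii) the supercriticality of $W_s(2\beta)$ dictates the normalization: using $m(s)=\beta_c s-\tfrac{3}{2\sqrt2}\log s$ from \eqref{eq:definition_of_m(t)}, one has $\e^{2\beta X_w(s)-c(2\beta)s}=s^{-3\beta/\beta_c}\e^{-(1-2\beta/\beta_c)^2 s}\e^{2\beta(X_w(s)-m(s))}$ and $(\beta_c-\beta)^2-(1-\beta^2)=(1-2\beta/\beta_c)^2$, so the left-hand side of \eqref{eq:renormalized_subcritical_overlap_2} equals $V_{at,t}/W_t(\beta)^2$ with $V_{s,t}:=\sum_{w\in\cN(s)}\e^{2\beta(X_w(s)-m(s))}(W^{(w)}_{t-s}(\beta))^2$. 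The key step is to prove that $(V_{at,t},W_t(\beta))$ converges in distribution to $(V_\infty,W_\infty(\beta))$, where $V_\infty:=\sum_{k\ge1}\e^{2\beta\xi_k}W_k^2$, with $\xi_k$ the atoms of $\cE_\infty$ and $W_k$ independent copies of $W_\infty(\beta)$ independent of $\cE_\infty$. I would do this in two stages. First, fixing $p\in(\beta_c/(2\beta),1)$ with $2p<\beta_c^2/\beta^2$ (possible since $\beta_c/2<\beta<\beta_c$), the finite-horizon decoration may be replaced by its limit: Lemma~\ref{lem:biggins_lemma} conditionally on $\cF_s$ bounds the $L^p$-distance between $V_{at,t}$ and $\cE^*_{at}(g):=\sum_{w\in\cN(at)}\e^{2\beta(X_w(at)-m(at))}(W^{(w)}_\infty(\beta))^2$, with $g(x,y):=\e^{2\beta x}y^2$, by $C\,\Expec{\bigl|W_{(1-a)t}(\beta)^2-W_\infty(\beta)^2\bigr|^p}\sum_{w\in\cN(at)}\e^{2\beta p(X_w(at)-m(at))}$, where the first factor vanishes by $L^{2p}$-convergence of the additive martingale and the second is tight since $2\beta p>\beta_c$ (Corollary~\ref{cor:tightness_of_the_supercritical_additive_martingales}, Corollary~\ref{cor:asymptotic_mean_number_decoration}). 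Second, $\cE^*_{at}(g)$ converges in distribution to $\cE^*_\infty(g)=V_\infty$, jointly with $W_t(\beta)$, by Lemma~\ref{lem:extremal_fluctuations_one_side_compact_support_2} and Remark~\ref{rem:extremal_fluctuations_one_side_compact_support_jointly_2}, after extending these from bounded functionals to the weight $g$ by truncating the $y$-variable and cutting $x$ off below, exactly as in the proof of Lemma~\ref{lem:extremal_fluctuations_negligible_particles}. Finally, $V_\infty$ is identified as stable as in Theorem~\ref{th:extremal_fluctuation_stable}: decomposing $\cE_\infty$ into the Poisson part $\cP=\sum_i\delta_{p_i}$ of intensity $\sqrt2\e^{-\sqrt2 x}\d{x}$, the decorations, and the shift $\tfrac1{\sqrt2}\log(CZ_\infty)$ gives $V_\infty=(CZ_\infty)^{2\beta/\beta_c}\sum_{i\ge1}\e^{2\beta p_i}\tilde Y_i$ with $\tilde Y_i:=\sum_{j\ge1}\e^{2\beta\Delta_{ij}}W_{ij}^2$ i.i.d.\ positive and independent of $\cP$ and $Z_\infty$; subadditivity of $x\mapsto x^p$ together with Corollary~\ref{cor:asymptotic_mean_number_decoration} and Proposition~\ref{prop:additive_martingales_bounded_in_Lp} gives $\Expec{\tilde Y_1^{\,p}}\le\Expec{W_\infty(\beta)^{2p}}\Expec{\sum_j\e^{2\beta p\Delta_j}}<\infty$, and then the change of variable $q=\e^{2\beta x}$ turns the image of $\cP$ into a Poisson point process of intensity proportional to $q^{-1-\beta_c/(2\beta)}\d{q}$, so that the exponential formula yields, for $\lambda>0$,
\begin{equation*}
    \Expec{\exp\Bigl(-\lambda\textstyle\sum_i\e^{2\beta p_i}\tilde Y_i\Bigr)}=\exp\Bigl(-\tfrac{\beta_c}{2\beta}\Expec{\tilde Y_1^{\,\beta_c/(2\beta)}}\textstyle\int_0^\infty(1-\e^{-\lambda y})y^{-1-\beta_c/(2\beta)}\d{y}\Bigr),
\end{equation*}
which by \cite{Sato1999} is the Laplace transform of a $\beta_c/(2\beta)$-stable random variable, independent of $\cF_\infty$. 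Dividing by $W_t(\beta)^2\to W_\infty(\beta)^2$ and absorbing $C$ and $\Expec{\tilde Y_1^{\,\beta_c/(2\beta)}}$ into $C_1$ and into the (unpinned) scale of $S$ gives \eqref{eq:renormalized_subcritical_overlap_2}.

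The main difficulty is case (iii): handling simultaneously the two diverging time scales $at$ and $(1-a)t$ --- which is exactly what forces the replacement of the finite-horizon decoration by its limit --- and extending the extremal-process convergence of Lemma~\ref{lem:extremal_fluctuations_one_side_compact_support_2} and Remark~\ref{rem:extremal_fluctuations_one_side_compact_support_jointly_2} to the unbounded weight $\e^{2\beta x}y^2$ (the asymptotic decorrelation between the decorations sitting near $m(s)$ and the martingale $W_\infty(\beta)$, which is carried by the bulk near $\beta s$, is already encoded in the independence of the decorations in the limit). For (i) and (ii) the only real work is the concentration estimate, which at the boundary $\beta=\beta_c/2$ additionally relies on the logarithmic upper bound on the maximal displacement.
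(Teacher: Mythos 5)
Your decomposition at time $at$, your treatment of the diagonal term, and the entire architecture of case (iii) match the paper's proof closely. In (iii) your variations are all legitimate and essentially equivalent to the paper's: replacing $W^{(w)}_{t-at}(\beta)$ by $W^{(w)}_{\infty}(\beta)$ globally before cutting off below level $\ell$ (rather than only on the upper part), handling the lower cutoff by the tightness argument of Lemma~\ref{lem:extremal_fluctuations_negligible_particles} instead of the event $E_{t,L}$ of Lemma~\ref{lem:renormalized_subcritical_overlap_contribution} (this shortcut does work here, because the weights $W_\infty^{(w)}(\beta)^2$ are positive with a finite $p$-th moment for some $p$ with $2\beta p>\beta_c$, so one can trade exponent down to some $p'\in(\beta_c,2\beta p)$ and gain a factor $\e^{(2\beta p-p')(\ell+1)}$ on a tightness event), and computing the stable Laplace transform directly from the marked Poisson process instead of invoking \cite[Proposition~8.7.a]{BolthausenSznitman2002}. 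Your acknowledgement that Lemma~\ref{lem:extremal_fluctuations_one_side_compact_support_2} must be extended to the unbounded weight $\e^{2\beta x}y^2$ by truncation is at least as careful as the paper itself is on that point.

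The one genuine gap is the concentration step in cases (i) and (ii). You apply Lemma~\ref{lem:biggins_lemma} with an exponent $p\in(1,2)$ to the summands $\e^{2\beta X_w(s)-c(2\beta)s}\bigl((W^{(w)}_{t-s}(\beta))^2-\Expec{W_{t-s}(\beta)^2}\bigr)$, which requires $W_{t-s}(\beta)$ to be bounded in $L^{2p}$ with $2p>2$. Proposition~\ref{prop:additive_martingales_bounded_in_Lp} is restricted to exponents in $[1,2]$ and cannot be cited for $2p>2$; more seriously, under the standing assumption \eqref{eq:assumption_offspring_distribution} (only a finite second moment of $\mu$) the additive martingale need not be bounded in any $L^q$ with $q>2$, since already $\Expec{W_1(\beta)^q}<\infty$ essentially forces $\sum_k\mu(k)k^q<\infty$. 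Cases (i) and (ii) are stated for general $\mu$, so this step fails as written. The paper circumvents it by computing the characteristic function of $Y_t$ conditionally on $\cF_{at}$ and using only the first-order expansion $\phi(\lambda)=1+\iu\lambda\Expec{W_\infty(\beta)^2}+o(\lambda)$, which needs nothing beyond $\Expec{W_\infty(\beta)^2}<\infty$ (Proposition~\ref{prop:phase_transition_easy_case}) together with $\e^{2\beta M(at)-c(2\beta)at}\to0$ almost surely; alternatively you could repair your route by truncating the variables $(W^{(w)}_\infty(\beta))^2$ at a fixed level, applying your $L^p$ bound to the bounded truncated parts, and controlling the tails by a conditional first moment. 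The rest of (i)--(ii), including the exponent bookkeeping at $\beta=\beta_c/2$ via Lemma~\ref{lem:speed_extremal_particle_sharper}, is fine.
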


\begin{remark}
    \begin{enumerate}
        \item It should be possible to show that the convergence \eqref{eq:renormalized_subcritical_overlap_2} still holds if we replace the binary branching with the assumption \eqref{eq:assumption_offspring_distribution}.
        We make this assumption to use Corollary~\ref{cor:tightness_of_the_supercritical_additive_martingales}.
        We also need the offspring distribution to satisfy $\mu(0) = 0$ in order to use the results of Section~\ref{sct:convergence_of_the_extremal_process}.
        
        \item We will see in Section~\ref{sct:renormalized_subcritical_overlap_2} that the constant appearing in \eqref{eq:renormalized_subcritical_overlap_2} can be expressed as
        \begin{equation*}
            C_1 = \Expec{C \left(\sum_{j \geq 1} \e^{2\beta \Delta_j} W_j^2\right)^{\beta_c/2\beta}},
        \end{equation*}
        where $\cD := \sum_{j \geq 1} \delta_{\Delta_j}$ and $C$ are defined in Theorem~\ref{th:convergence_of_the_extremal_process} and $W_j$, $j \geq 1$, are independent copies of $W_\infty(\beta)$, independent of $\cD$.
        
        \item A direct consequence of Theorem~\ref{th:renormalized_subcritical_overlap} is that for any $\beta \in [0, \beta_c)$ and $a \in (0, 1)$, the process $\nu_{\beta, t}([a, 1])$ converges in probability to $0$ as $t$ goes to infinity.
        Thanks to the monotone class theorem, we deduce that, as mentioned above,
        \begin{equation*}
            \nu_{\beta, t} \xrightarrow[t \to \infty]{} \delta_0 \quad \text{in probability},
        \end{equation*}
        for the topology of the weak convergence of measures.
    \end{enumerate}
\end{remark}

The proof of Theorem~\ref{th:renormalized_subcritical_overlap} is the subject of Section~\ref{sct:renormalized_subcritical_overlap_1} and Section~\ref{sct:renormalized_subcritical_overlap_2}.
In both, we use a rewriting of the distribution function
\begin{align}
    \nu_{\beta, t}([a, 1]) &= \frac{1}{W_t(\beta)^2} \sum_{u \in \cN(at)} \left(\sum_{\substack{v \in \cN(t)\\v \geq u}} \e^{\beta X_v(t) - c(\beta)t}\right)^2 \notag \\
    &= \frac{1}{W_t(\beta)^2} \sum_{u \in \cN(at)} \e^{2 \beta X_u(at) - 2 c(\beta) at} \underbrace{\left(\sum_{\substack{v \in \cN(t)\\v \geq u}} \e^{\beta(X_v(t) - X_u(at)) - c(\beta)(t - at)}\right)^2}_{= W_{t - at}^{(u)}(\beta)^2}. \label{eq:overlap_rewriting}
\end{align}
Recall that, conditionally on $\cF_{at}$, the random variables $W_{t - at}^{(u)}(\beta)$, $u \in \cN(at)$, are independent copies of $W_{t - at}(\beta)$.
Moreover, the limit $W_\infty^{(u)}(\beta) = \lim_{s \to \infty} W_{s - at}^{(u)}(\beta)$ is almost surely well defined and has the same distribution as $W_\infty(\beta)$.

\subsection{High temperature regime}\label{sct:renormalized_subcritical_overlap_1}

Assume that $0 \leq \beta < \beta_c/2$.
We first prove the convergence \eqref{eq:renormalized_subcritical_overlap_1}.
Since $\e^{(1 - \beta^2)at} = \e^{(2c(\beta) - c(2\beta))at}$, \eqref{eq:overlap_rewriting} yields
\begin{align*}
    \e^{(1 - \beta^2)at} \nu_{\beta, t}([a, 1]) &= \frac{1}{W_t(\beta)^2} \left(\sum_{u \in \cN(at)} \e^{2 \beta X_u(at) - c(2 \beta)at} W_\infty^{(u)}(\beta)^2\right. \\
    &\quad \left.+ \sum_{u \in \cN(at)} \e^{2 \beta X_u(at) - c(2 \beta)at} \left(W_{t - at}^{(u)}(\beta)^2 - W_\infty^{(u)}(\beta)^2\right)\right) \\
    &=: \frac{1}{W_t(\beta)^2} (Y_t + R_t).
\end{align*}
To prove \eqref{eq:renormalized_subcritical_overlap_1}, it is then sufficient to obtain the convergence in probability of $Y_t + R_t$ towards $W_\infty(2\beta) \Expec{W_\infty(\beta)^2}$.

Let us justify that we can ignore the term $R_t$.
We have,
\begin{equation*}
    \condExpec{|R_t|}{\cF_{at}} \leq W_{at}(2\beta) \E\left|W_{t - at}(\beta)^2 - W_\infty(\beta)^2\right|.
\end{equation*}
Since $\beta < \beta_c/2 < 1$, the process $W_{t - at}(\beta)$ converges to $W_\infty(\beta)$ in $L^2$ as $t$ goes to infinity, by Proposition~\ref{prop:phase_transition_easy_case}.
We deduce that the above quantity converges almost surely to $0$ and that $R_t$ converges in probability to $0$.

It remains to show that $Y_t$ converges in probability to $W_\infty(2\beta) \Expec{W_\infty(\beta)^2}$ as $t$ goes to infinity.
To this end, we fix $\theta \in \R$ and compute
\begin{equation*}
    \condExpec{\e^{\iu \theta Y_t}}{\cF_{at}} = \prod_{u \in \cN(at)} \phi(\theta \e^{2 \beta X_u(at) - c(2 \beta) at}),
\end{equation*}
where
\begin{equation*}
    \phi(\lambda) := \Expec{\e^{\iu \lambda W_\infty(\beta)^2}} = 1 + \iu \lambda \Expec{W_\infty(\beta)^2} + o(\lambda)
\end{equation*}
as $\lambda$ goes to $0$.
Since $\e^{2 \beta M(at) - c(2 \beta) at}$ converges almost surely to $0$ as $t$ goes to infinity, we can use the Taylor expansion of the principal value $\Log z$ in the same way as in Section~\ref{sct:gaussian_fluctuations}.
This yields
\begin{equation*}
    \condExpec{\e^{\iu \theta Y_t}}{\cF_{at}} = \exp\left(\iu \theta W_{at}(2\beta) \Expec{W_\infty(\beta)^2} + o\left(W_{at}(2\beta)\right)\right).
\end{equation*}
We deduce that $Y_t - W_{at}(2\beta) \Expec{W_\infty(\beta)^2}$ converges in probability to $0$ as $t$ goes to infinity, which concludes the proof of \eqref{eq:renormalized_subcritical_overlap_1}.

To prove \eqref{eq:renormalized_subcritical_overlap_1.5}, it is sufficient to repeat the above arguments and to apply Proposition~\ref{prop:convergence_of_the_critical_additive_martingale}.

\subsection{Extremal regime}\label{sct:renormalized_subcritical_overlap_2}

Assume that $\beta_c/2 < \beta < \beta_c$ and that $\mu = \delta_2$.
In the same vein as in the proof of \eqref{eq:renormalized_subcritical_overlap_1} and \eqref{eq:renormalized_subcritical_overlap_1.5}, we start by rewriting
\begin{equation*}
    (at)^{3 \beta/\beta_c} \e^{(\beta_c - \beta)^2 at} \nu_{\beta,t}([a, 1]) = \frac{1}{W_t(\beta)^2} \underbrace{\sum_{u \in \cN(at)} \e^{2 \beta (X_u(at) - m(at))} W_{t - at}^{(u)}(\beta)^2}_{=: Y_t}.
\end{equation*}

Let $\ell \in \R$. We decompose
\begin{align*}
    Y_t &=\sum_{u \in \cN(at)} \e^{2 \beta (X_u(at) - m(at))} W_\infty^{(u)}(\beta)^2 \chi_\ell^+(X_u(at) - m(at)) \\
    &\quad + \sum_{u \in \cN(at)} \e^{2 \beta (X_u(at) - m(at))} \left(W_{t - at}^{(u)}(\beta)^2 - W_\infty^{(u)}(\beta)^2\right) \chi_\ell^+(X_u(at) - m(at)) \\
    &\quad + \sum_{u \in \cN(at)} \e^{2 \beta (X_u(at) - m(at))} W_{t - at}^{(u)}(\beta)^2 \chi_\ell^-(X_u(at) - m(at)) \\
    &=: Y_{t, \ell} + R_{t, \ell}^1 + R_{t, \ell}^2,
\end{align*}
where $\chi_\ell^+$ and $\chi_\ell^-$ are defined in \eqref{eq:extremal_fluctuations_definition_of_f_K}.
Below, Lemma~\ref{lem:renormalized_subcritical_overlap_replacement} and Lemma~\ref{lem:renormalized_subcritical_overlap_contribution} state that $R_{t, \ell}^1$ and $R_{t, \ell}^2$ are negligible as $t$ goes to $\infty$ and $\ell$ goes to $-\infty$.
In other words, we can replace the random variables $W_{t - at}^{(u)}(\beta)$, $u \in \cN(at)$, with $W_\infty^{(u)}(\beta)$ and consider only the particles whose positions are at distance $O(1)$ from the extremal one.
Lemma~\ref{lem:renormalized_subcritical_overlap_limit} gives a first expression for the limit distribution \eqref{eq:renormalized_subcritical_overlap_2}.

\begin{lemma}\label{lem:renormalized_subcritical_overlap_replacement}
    For all $\ell \in \R$, $R_{t, \ell}^1$ converges in probability to $0$ as $t$ goes to infinity.
\end{lemma}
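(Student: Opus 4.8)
The plan is to adapt the truncation argument from the proof of Lemma~\ref{lem:extremal_fluctuations_negligible_particles}. The new feature is that here smallness does not come from sending $\ell\to-\infty$ — indeed $\ell$ is fixed and the cutoff $\chi_\ell^+$ will barely be used — but from the almost sure convergence $W_{t-at}^{(u)}(\beta)\to W_\infty^{(u)}(\beta)$ as $t\to\infty$. The main obstacle I anticipate is that $\beta$ may exceed $1$, so that $W_t(\beta)$ is not bounded in $L^2$; I will therefore carry out the whole estimate with a fractional exponent $q<1$.

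First I would fix $\delta,\varepsilon>0$, choose $p\in(\beta_c/\beta,\ 2\wedge\beta_c^2/\beta^2)$ — a nonempty interval precisely because $\beta_c/2<\beta<\beta_c$ — and set $q=p/2$, so that $q<1$ and $2q=p<\beta_c^2/\beta^2$. Since $p\beta>\beta_c$, the process $\bigl(\sum_{u\in\cN(s)}\e^{p\beta(X_u(s)-m(s))}\bigr)_{s\ge0}$ is tight (this is exactly the tightness already used in the proof of Lemma~\ref{lem:extremal_fluctuations_negligible_particles}, via Corollary~\ref{cor:tightness_of_the_supercritical_additive_martingales}), so there is $M>0$ with $\sup_{s\ge0}\Prob{\sum_{u\in\cN(s)}\e^{p\beta(X_u(s)-m(s))}>M}\le\varepsilon$. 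Writing $\cQ_t:=\{\sum_{u\in\cN(at)}\e^{p\beta(X_u(at)-m(at))}>M\}\in\cF_{at}$, Markov's inequality at exponent $q$ gives
\[
\Prob{|R_{t,\ell}^1|>\delta}\ \le\ \delta^{-q}\,\Expec{\mathds{1}_{\cQ_t^c}\,|R_{t,\ell}^1|^q}\ +\ \varepsilon .
\]

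To estimate the first term, I would bound $|R_{t,\ell}^1|$ by $\sum_{u\in\cN(at)}\e^{2\beta(X_u(at)-m(at))}\chi_\ell^+(X_u(at)-m(at))\,\bigl|W_{t-at}^{(u)}(\beta)^2-W_\infty^{(u)}(\beta)^2\bigr|$, apply the subadditivity of $x\mapsto x^q$ on $[0,\infty)$ (valid since $q\le1$), and condition on $\cF_{at}$. As $\cQ_t^c\in\cF_{at}$ and, given $\cF_{at}$, the pair $(W_{t-at}^{(u)}(\beta),W_\infty^{(u)}(\beta))$ has the law of $(W_{t-at}(\beta),W_\infty(\beta))$ for an independent branching Brownian motion, this yields, using $2q=p$ and $0\le\chi_\ell^+\le1$,
\[
\Expec{\mathds{1}_{\cQ_t^c}\,|R_{t,\ell}^1|^q}\ \le\ \eta(t)\,\Expec{\mathds{1}_{\cQ_t^c}\sum_{u\in\cN(at)}\e^{p\beta(X_u(at)-m(at))}}\ \le\ M\,\eta(t),\qquad \eta(t):=\Expec{\bigl|W_{t-at}(\beta)^2-W_\infty(\beta)^2\bigr|^q}.
\]
It then remains to check $\eta(t)\to0$ as $t\to\infty$: since $W_{(1-a)t}(\beta)\to W_\infty(\beta)$ a.s. by Theorem~\ref{th:phase_transition_additive_martingales}, one has $W_{(1-a)t}(\beta)^2-W_\infty(\beta)^2\to0$ a.s.; choosing $q'\in(q,\beta_c^2/(2\beta^2))$, Proposition~\ref{prop:additive_martingales_bounded_in_Lp} gives $\sup_t\Expec{|W_{(1-a)t}(\beta)^2-W_\infty(\beta)^2|^{q'}}<\infty$, so $\{|W_{(1-a)t}(\beta)^2-W_\infty(\beta)^2|^q\}_t$ is uniformly integrable and $\eta(t)\to0$. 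Combining the displays gives $\limsup_{t\to\infty}\Prob{|R_{t,\ell}^1|>\delta}\le\varepsilon$, and letting $\varepsilon\downarrow0$ then $\delta\downarrow0$ concludes.

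The hard point is thus confined to the range $1\le\beta<\beta_c$: one must avoid taking the $L^1$-norm of $W_{t-at}(\beta)^2-W_\infty(\beta)^2$ and instead run everything through $q=p/2<1$, the choice $q=p/2$ being exactly what makes the truncation event $\cQ_t^c$ — available thanks to the tightness of the $p$-th exponential sum — absorb the weights $\e^{2q\beta(\cdot)}$ term by term. Everything else is routine.
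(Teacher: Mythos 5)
Your argument is correct, but it runs along a different track than the paper's proof. The paper also conditions on $\cF_{at}$ and, for $\beta\ge 1$, raises $|R^1_{t,\ell}|$ to a power $p\in(\beta_c/2\beta,1/\beta^2)\subset(0,1]$ by subadditivity; but it then closes the estimate by observing that $\condExpec{|R^1_{t,\ell}|^p}{\cF_{at}}$ is bounded by $\E|W_{t-at}(\beta)^2-W_\infty(\beta)^2|^p$ times $\cE_{at}(f)$ for a continuous $f$ vanishing on a left half-line, and invoking Lemma~\ref{lem:extremal_fluctuations_one_side_compact_support} together with Slutsky's theorem (the prefactor tends to $0$, the extremal functional converges in distribution, so the product tends to $0$ in distribution), finishing with the Jensen trick $\Expec{|R^1_{t,\ell}|^p\wedge 1}\le\Expec{\condExpec{|R^1_{t,\ell}|^p}{\cF_{at}}\wedge 1}$. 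You instead recycle the truncation device of Lemma~\ref{lem:extremal_fluctuations_negligible_particles}: tightness of $\sum_{u}\e^{p\beta(X_u(s)-m(s))}$ (Corollary~\ref{cor:tightness_of_the_supercritical_additive_martingales}) supplies the event $\cQ_t$, and Markov at the fractional exponent $q=p/2$ makes the truncated sum absorb the weights exactly, leaving only $\E|W_{(1-a)t}(\beta)^2-W_\infty(\beta)^2|^q\to 0$, which you get from almost sure convergence plus uniform integrability. Your route avoids Lemma~\ref{lem:extremal_fluctuations_one_side_compact_support} altogether and treats the two ranges of $\beta$ in a single stroke, at the cost of importing the tightness/truncation machinery, which the paper reserves for $R^2_{t,\ell}$-type terms; the paper's route is shorter once Lemma~\ref{lem:extremal_fluctuations_one_side_compact_support} is available and only needs the crude case split $\beta<1$ versus $\beta\ge 1$. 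One cosmetic fix: when you choose $q'\in(q,\beta_c^2/(2\beta^2))$ you should also impose $q'\le 1$ (possible since $q<1$), because Proposition~\ref{prop:additive_martingales_bounded_in_Lp} only controls moments of order $2q'\in[1,2]$; as stated, for $\beta<1$ your interval allows $q'>1$, for which the cited proposition gives nothing.
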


\begin{proof}
    First assume that $\beta_c/2 < \beta < 1$.
    We have
    \begin{equation*}
        \condExpec{|R_{t, \ell}^1|}{\cF_{at}} \leq \E\left|W_{t - at}(\beta)^2 - W_\infty(\beta)^2\right| \sum_{u \in \cN(at)} \e^{2\beta(X_u(at) - m(at))} \chi_\ell^+(X_u(at) - m(at)).
    \end{equation*}
    By Proposition~\ref{prop:phase_transition_easy_case}, Lemma~\ref{lem:extremal_fluctuations_one_side_compact_support} and Slutsky's theorem, the above quantity converges in distribution to $0$ as $t$ goes to infinity.
    Then, using Jensen's inequality, we obtain
    \begin{equation*}
        \Expec{|R_{t, \ell}^1| \wedge 1} \leq \Expec{\condExpec{|R_{t, \ell}^1|}{\cF_{at}} \wedge 1} \xrightarrow[t \to \infty]{} 0.
    \end{equation*}
    Equivalently, we have $\lim_{t \to \infty} \Prob{|R_{t, \ell}^1| > \delta} = 0$, whatever $\delta > 0$.
    
    Now assume that $1 \leq \beta < \beta_c$.
    We can fix $p \in (\beta_c/2\beta, 1/\beta^2) \subset [0, 1]$ and replicate the above argument with $|R_{t, \ell}^1|$ raised to the power of $p$, using the subadditivity of $x \mapsto x^p$ on $\R_+$.
\end{proof}

\begin{lemma}\label{lem:renormalized_subcritical_overlap_contribution}
    For all $\delta > 0$, $\lim_{\ell \to -\infty} \limsup_{t \to \infty} \Prob{R_{t,\ell}^2 > \delta} = 0$.
\end{lemma}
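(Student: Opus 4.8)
The plan is to bound $R_{t,\ell}^2$ by first moments after introducing a truncation event on the maximal displacement, exactly in the spirit of the proof of Lemma~\ref{lem:extremal_fluctuations_negligible_particles}. The obstacle compared to that earlier lemma is that here the summands involve $W_{t-at}^{(u)}(\beta)^2$, which is not bounded in $L^1$ uniformly in $t$ when $\beta \geq 1/\sqrt{2}$, whereas in Lemma~\ref{lem:extremal_fluctuations_negligible_particles} the increments $W_\infty^{(u)}(\beta)-1$ were only raised to a power $q$ with $q\beta^2/\beta_c^2 < 1$. So the first step is to fix exponents: choose $p$ with $\beta_c/2\beta < p < 1$ (so that $(at)^{3\beta/\beta_c}\e^{(\beta_c-\beta)^2 at}$-type blow-ups are controlled) and then $q$ with $\beta_c/\beta < q < 2\wedge\beta_c^2/\beta^2$, so that by Proposition~\ref{prop:additive_martingales_bounded_in_Lp} the quantity $\sup_{t}\E|W_{t-at}(\beta)|^{q}$ is finite; we will estimate $\E[(R_{t,\ell}^2)^{p}]$ using subadditivity of $x\mapsto x^{p}$ on $\R_+$ together with $W_{t-at}^{(u)}(\beta)^{2p}$, choosing $p$ so that $2p < q$, i.e. $p$ small enough, which is compatible with $p>\beta_c/2\beta$ precisely because $\beta_c/\beta < \beta_c^2/\beta^2$ when $\beta<\beta_c$ and an easy check shows the window $(\beta_c/2\beta,\ q/2)$ is nonempty.

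Next, following Lemma~\ref{lem:extremal_fluctuations_negligible_particles}, introduce the tightness event: by Corollary~\ref{cor:tightness_of_the_supercritical_additive_martingales} applied at time $at$, the family $\bigl(\sum_{u\in\cN(at)}\e^{r\beta(X_u(at)-m(at))}\bigr)_{t\geq0}$ is tight for $r$ slightly larger than one, so for any $\varepsilon>0$ there is $M>0$ with $\P(\cQ_t)\leq\varepsilon$, where $\cQ_t := \{\sum_{u\in\cN(at)}\e^{p'\beta(X_u(at)-m(at))} > M\}$ for a suitable $p'$ with $1<p'<2$ and $2p < p' $ — here the value $p'$ must be chosen below the range where the process is tight, and one has $2p<1<p'$ for $p$ small enough. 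Then $\Prob{R_{t,\ell}^2 > \delta} \leq \varepsilon + \delta^{-p}\Expec{(R_{t,\ell}^2)^{p}\mathds{1}_{\cQ_t^c}}$ by Markov. On $\cQ_t^c$, using $\chi_\ell^-\leq\mathds{1}_{\cdot\leq\ell+1}$, subadditivity, and $\E[W_{t-at}(\beta)^{2p}]\leq C$, we get
\begin{equation*}
    \Expec{(R_{t,\ell}^2)^{p}\mathds{1}_{\cQ_t^c}} \leq C \,\Expec{\sum_{u\in\cN(at)} \e^{2p\beta(X_u(at)-m(at))}\mathds{1}_{X_u(at)-m(at)\leq\ell+1}\mathds{1}_{\cQ_t^c}} \leq C\,\e^{(2p-p')\beta(\ell+1)} M,
\end{equation*}
where in the last step we bounded $\e^{2p\beta x}\leq\e^{(2p-p')\beta(\ell+1)}\e^{p'\beta x}$ on $\{x\leq\ell+1\}$ (valid since $2p-p'<0$) and used the definition of $\cQ_t^c$. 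The resulting bound is uniform in $t$ and tends to $0$ as $\ell\to-\infty$ since $2p<p'$.

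Putting these together yields $\limsup_{t\to\infty}\Prob{R_{t,\ell}^2>\delta}\leq\varepsilon + C\delta^{-p}\e^{(2p-p')\beta(\ell+1)}M$, and then letting $\ell\to-\infty$ gives $\limsup_{\ell\to-\infty}\limsup_{t\to\infty}\Prob{R_{t,\ell}^2>\delta}\leq\varepsilon$; since $\varepsilon>0$ is arbitrary, the lemma follows. The main obstacle I anticipate is the bookkeeping of the three exponents $p<p'$ and $2p<q$: one must verify that the window $\beta_c/2\beta < p$, $2p<1<p'$, $2p<q<2\wedge\beta_c^2/\beta^2$ is nonempty for every $\beta\in(\beta_c/2,\beta_c)$, which reduces to the elementary inequality $\beta_c/\beta < \beta_c^2/\beta^2$ (true since $\beta<\beta_c$) together with $\beta_c/2\beta<1/2$; once these are pinned down, the rest is a routine first-moment estimate via Corollary~\ref{cor:tightness_of_the_supercritical_additive_martingales}, Proposition~\ref{prop:additive_martingales_bounded_in_Lp} and the many-to-one formula.
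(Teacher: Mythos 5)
Your overall strategy -- mimic Lemma~\ref{lem:extremal_fluctuations_negligible_particles} by introducing a tightness event via Corollary~\ref{cor:tightness_of_the_supercritical_additive_martingales} at time $at$, applying Markov's inequality to a fractional moment, and replacing Biggins's inequality by subadditivity of $x\mapsto x^p$, $p<1$, since the summands $W_{t-at}^{(u)}(\beta)^2$ are positive rather than centered -- is sound and genuinely different from the paper's route. The paper instead conditions on $\cF_{at}$ to reduce the claim to $\lim_{\ell\to-\infty}\limsup_{t\to\infty}\Prob{\cE_t(f_\ell)\geq\delta}=0$ with $f_\ell(x)=\e^{2\beta x}\chi_\ell^-(x)$, and proves that via Bonnefont's argument: the auxiliary event $E_{t,L}$, the truncated first-moment estimate of Proposition~\ref{prop:first_moment_estimate_extremal_process}, and tightness of $M(t)-m(t)$ (Corollary~\ref{cor:tightness_of_M(t)-m(t)}). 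Your route trades those inputs for the tightness of the supercritical additive martingales and a uniform $L^{2p}$ bound on $W_{t-at}(\beta)$, and it handles both cases $\beta<1$ and $\beta\geq 1$ with a single exponent, where the paper splits into two cases.

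However, the exponent bookkeeping as written is genuinely wrong, not just a typo. Since $\beta\in(\beta_c/2,\beta_c)$ gives $\beta_c/2\beta\in(1/2,1)$, the requirement $p>\beta_c/2\beta$ forces $2p>1$; your claims ``$2p<1<p'$'' and ``$\beta_c/2\beta<1/2$'' are false, so the parameter window you describe is empty. More importantly, the direction of the key inequality is inverted: with $2p<p'$ (your stated choice, ``valid since $2p-p'<0$''), the bound $\e^{2p\beta x}\leq \e^{(2p-p')\beta(\ell+1)}\e^{p'\beta x}$ on $\{x\leq\ell+1\}$ fails, and moreover the prefactor $\e^{(2p-p')\beta(\ell+1)}$ would diverge, not vanish, as $\ell\to-\infty$. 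The correct scheme, parallel to Lemma~\ref{lem:extremal_fluctuations_negligible_particles}, is $\beta_c/\beta<p'<2p<2\wedge\beta_c^2/\beta^2$ with $\beta_c/2\beta<p<1$: the lower bound $p'>\beta_c/\beta$ (not merely $p'>1$) is what makes $p'\beta>\beta_c$ so that Corollary~\ref{cor:tightness_of_the_supercritical_additive_martingales} applies to $\sum_{u\in\cN(at)}\e^{p'\beta(X_u(at)-m(at))}$; the upper bound $2p<2\wedge\beta_c^2/\beta^2$ gives $\sup_t\Expec{W_{t-at}(\beta)^{2p}}<\infty$ by Proposition~\ref{prop:additive_martingales_bounded_in_Lp}; and $p'<2p$ is what makes the prefactor $\e^{(2p-p')\beta(\ell+1)}$ tend to $0$ as $\ell\to-\infty$. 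This window is nonempty for every $\beta\in(\beta_c/2,\beta_c)$ because $\beta_c/\beta<2\wedge\beta_c^2/\beta^2$. With the inequalities corrected in this way (and the justification for $p>\beta_c/2\beta$ being precisely to leave room for such a $p'$, not the prefactor $(at)^{3\beta/\beta_c}\e^{(\beta_c-\beta)^2at}$, which is already absorbed in the definition of $R_{t,\ell}^2$), your argument goes through and provides a correct alternative proof.
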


\begin{proof}
    Note that if a process $(X_{t, \ell})$ takes its values in $\R_+$, then the following two statements are equivalent
    \begin{enumerate}
        \item\label{step:renormalized_subcritical_overlap_2_2_1} for any $\delta > 0$, $\lim_{\ell \to -\infty} \limsup_{t \to \infty} \Prob{X_{t, \ell} \geq \delta} = 0$,
        \item\label{step:renormalized_subcritical_overlap_2_2_2} $\lim_{\ell \to -\infty} \limsup_{t \to \infty} \Expec{X_{t, \ell} \wedge 1} = 0$.
    \end{enumerate}
    
    Assume that $\beta_c/2 < \beta < 1$.
    The case $1 \leq \beta < \beta_c$ can be treated in the same way by fixing $p \in (\beta_c/2\beta, 1/\beta^2) \subset [0, 1]$ and using the subadditivity of $x \mapsto x^p$ on $\R_+$.
    We have
    \begin{align*}
        \condExpec{R_{t, \ell}^2}{\cF_{at}} &\leq \sup_{s \geq 0} \Expec{W_s(\beta)^2} \underbrace{\sum_{u \in \cN(at)} \e^{2 \beta(X_u(at) - m(at))} \chi_\ell^-(X_u(at) - m(at))}_{= \cE_{at}(f_\ell)},
    \end{align*}
    where $f_\ell : x \mapsto \e^{2 \beta x} \chi_\ell^-(x)$.
    By Jensen's inequality, we have $\Expec{R_{t, \ell}^2 \wedge 1} \leq \Expec{\condExpec{R_{t, \ell}^2}{\cF_{at}} \wedge 1}$.
    Since \ref{step:renormalized_subcritical_overlap_2_2_1} and \ref{step:renormalized_subcritical_overlap_2_2_2} are equivalent, it is then sufficient to show that for all $\delta > 0$,
    \begin{equation}\label{eq:bonnefont_argument}
        \lim_{\ell \to -\infty} \limsup_{t \to \infty} \Prob{\cE_t(f_\ell) \geq \delta} = 0.
    \end{equation}
    This is proved by Bonnefont in \cite[Proposition~A.2]{Bonnefont2022}.
    We present the arguments for the sake of completeness.
    
    Let $\varepsilon > 0$ be such that $\beta_c + \varepsilon < 2\beta$.
    The key element is the introduction of the event
    \begin{equation*}
        E_{t, L} := \left\{\exists k \geq 0 : \cE_t([-L-k, \infty)] \geq \e^{(\beta_c + \varepsilon)(L+k)}\right\},
    \end{equation*}
    for $t \geq 0$ and $L > 0$.
    We have
    \begin{align}
        \Prob{\cE_t(f_{-L}) \geq \delta} &\leq \Prob{\cE_t(f_{-L}) \geq \delta, E_{t, L}^c} + \Prob{E_{t, L}, M(t) - m(t) \leq L} + \Prob{M(t) - m(t) > L} \notag \\
        &=: T_1 + T_2 + T_3. \label{eq:bonnefont_argument_T1_T2_T3}
    \end{align}
    
    Let us first control the term $T_1$. We have
    \begin{align*}
        \cE_t(f_{-L}) &\leq \sum_{u \in \cN(t)} \e^{2\beta(X_u(t) - m(t))} \mathds{1}_{X_u(t) - m(t) \leq -L+1} \\
        &\leq \sum_{u \in \cN(t)} \sum_{k \geq 0} \e^{-2\beta(L+k-1)} \mathds{1}_{-L-k \leq X_u(t) - m(t) \leq -L-k+1} \\
        &\leq \sum_{k \geq 0} \cE_t([-L-k, \infty)) \e^{-2\beta(L + k - 1)}.
    \end{align*}
    Thus, on the event $E_{t, L}^c$,
    \begin{equation*}
        \cE_t(f_{-L}) \leq \sum_{k \geq 0} \e^{(\beta_c + \varepsilon)(L + k)} \e^{-2\beta(L + k - 1)}.
    \end{equation*}
    The above quantity does not depend on $t$ and converges to $0$ as $L$ goes to infinity.
    Hence, letting $t$ and then $L$ go to infinity, the term $T_1$ in \eqref{eq:bonnefont_argument_T1_T2_T3} vanishes.
    
    Applying successively the union bound, Markov's inequality and \eqref{eq:first_moment_estimate_extremal_process}, we obtain
    \begin{align*}
        T_2 &\leq \sum_{k \geq 0} \Prob{\cE_t([-L-k, \infty)) \geq \e^{(\beta_c + \varepsilon)(L + k)}, M(t) - m(t) \leq L + k} \\
        &\leq \sum_{k \geq 0} \e^{-(\beta_c + \varepsilon)(L + k)} \Expec{\cE_t([-L-k, \infty)) \mathds{1}_{M(t) - m(t) \leq L + k}} \\
        &\leq \sum_{k \geq 0} C(L + k + 1)^2 \e^{-\varepsilon(L + k)},
    \end{align*}
    where $C$ is a positive constant.
    
    It remains to control the term $T_3$ in \eqref{eq:bonnefont_argument_T1_T2_T3}.
    By Corollary~\ref{cor:tightness_of_M(t)-m(t)}, the process $M(t) - m(t)$ is tight, \ie $T_3$ converges to $0$ as $L$ goes to infinity, uniformly in $t \geq 0$.
    This concludes the proof of \eqref{eq:bonnefont_argument}.
\end{proof}

\begin{lemma}\label{lem:renormalized_subcritical_overlap_limit}
    Recall that the point process $\cE_\infty = \sum_k \delta_{\xi_k}$ is defined in Theorem~\ref{th:convergence_of_the_extremal_process}.
    We have
    \begin{equation*}
        (at)^{3 \beta/\beta_c} \e^{(\beta_c - \beta)^2 at} \nu_{\beta, t}([a, 1]) \xrightarrow[t \to \infty]{} X_{\mathrm{over}} := \frac{1}{W_\infty(\beta)^2} \sum_{k \geq 1} \e^{2 \beta \xi_k} W_k^2 \quad \text{in distribution},
    \end{equation*}
    where $W_k$, $k \geq 1$, are independent copies of $W_\infty(\beta)$, independent of $W_\infty(\beta)$ and $\cE_\infty$.
\end{lemma}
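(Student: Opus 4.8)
Recall from the discussion preceding the statement that
\begin{equation*}
    (at)^{3\beta/\beta_c}\e^{(\beta_c-\beta)^2 at}\nu_{\beta,t}([a,1]) = \frac{Y_t}{W_t(\beta)^2}, \qquad Y_t = Y_{t,\ell} + R^1_{t,\ell} + R^2_{t,\ell},
\end{equation*}
where $Y_{t,\ell} = \cE_{at}^*(g_\ell)$ with $g_\ell(x,y) := \e^{2\beta x}\chi_\ell^+(x)\,y^2$ and $\cE_{at}^*$ is the point process of Section~\ref{sct:extremal_regime} evaluated at time $at$. Since $\mu = \delta_2$ there is no extinction, so $W_t(\beta) \to W_\infty(\beta) > 0$ almost surely. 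The plan is to establish the \emph{joint} convergence
\begin{equation*}
    (Y_t, W_{at}(\beta)) \xrightarrow[t\to\infty]{} (\Sigma, W_\infty(\beta)) \quad \text{in distribution}, \qquad \Sigma := \sum_{k\ge1}\e^{2\beta\xi_k}W_k^2,
\end{equation*}
and then conclude: by the continuous mapping theorem (the map $(x,w)\mapsto x/w^2$ being continuous on $\R\times(0,\infty)$ and $W_\infty(\beta) > 0$ almost surely) one gets $Y_t/W_{at}(\beta)^2 \to \Sigma/W_\infty(\beta)^2 = X_{\mathrm{over}}$ in distribution, and since $W_{at}(\beta)^2/W_t(\beta)^2 \to 1$ almost surely, Slutsky's theorem upgrades this to $Y_t/W_t(\beta)^2 \to X_{\mathrm{over}}$.

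For the joint convergence I would follow the scheme of the proof of Lemma~\ref{lem:extremal_fluctuations_characterization}, invoking \cite[Theorem~3.2]{Billingsley1999} in two stages. \emph{Stage one (fixed $\ell$).} The test function $g_\ell$ is continuous, non-negative, and vanishes on $(-\infty,\ell]$; although Lemma~\ref{lem:extremal_fluctuations_one_side_compact_support_2} and Remark~\ref{rem:extremal_fluctuations_one_side_compact_support_jointly_2} are stated for bounded test functions, their proofs use boundedness only through $\e^{-g_\ell}\le 1$: they pass through $\varphi(x) = \Expec{\e^{-g_\ell(x,W_\infty(\beta))}}\in(0,1]$, which is continuous by dominated convergence and satisfies $-\log\varphi\equiv 0$ on $(-\infty,\ell]$, so that Lemma~\ref{lem:extremal_fluctuations_one_side_compact_support} (and its joint version) still applies to $-\log\varphi$. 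Hence
\begin{equation*}
    (Y_{t,\ell}, W_{at}(\beta)) = (\cE_{at}^*(g_\ell), W_{at}(\beta)) \xrightarrow[t\to\infty]{} (\cE_\infty^*(g_\ell), W_\infty(\beta)) = (\Sigma_\ell, W_\infty(\beta)),
\end{equation*}
with $\Sigma_\ell := \sum_k \e^{2\beta\xi_k}\chi_\ell^+(\xi_k) W_k^2$. \emph{Stage two ($\ell\to-\infty$).} Since $\chi_\ell^+(\xi_k)\uparrow 1$ for each $k$, we have $\Sigma_\ell \uparrow \Sigma$, and $\Sigma<\infty$ almost surely: fixing $q$ with $\beta_c/(2\beta) < q < \min(1,\beta_c^2/(2\beta^2))$ (possible exactly because $\beta<\beta_c$), conditioning on $\cE_\infty$ and using subadditivity of $x\mapsto x^q$ on $\R_+$ gives $\condExpec{\Sigma^q}{\cE_\infty} \le \Expec{W_\infty(\beta)^{2q}}\sum_k \e^{2\beta q\xi_k}$, which is almost surely finite by Proposition~\ref{prop:additive_martingales_bounded_in_Lp} (as $2q<\beta_c^2/\beta^2$) and Corollary~\ref{cor:asymptotic_mean_number_decoration} (as $2\beta q>\beta_c$). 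Thus $\Sigma_\ell\to\Sigma$ almost surely, a fortiori $(\Sigma_\ell, W_\infty(\beta))\to(\Sigma, W_\infty(\beta))$ in distribution. Combining the two stages with the remainder estimates --- $R^1_{t,\ell}\to 0$ in probability for each $\ell$ by Lemma~\ref{lem:renormalized_subcritical_overlap_replacement}, and $\lim_{\ell\to-\infty}\limsup_{t\to\infty}\Prob{R^2_{t,\ell} > \delta} = 0$ for all $\delta>0$ by Lemma~\ref{lem:renormalized_subcritical_overlap_contribution} (recall $R^2_{t,\ell}\ge 0$) --- yields the announced joint convergence.

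The two steps I expect to require the most care are: (i) justifying that Lemma~\ref{lem:extremal_fluctuations_one_side_compact_support_2} and Remark~\ref{rem:extremal_fluctuations_one_side_compact_support_jointly_2} tolerate the unbounded (but non-negative) test function $g_\ell(x,y) = \e^{2\beta x}\chi_\ell^+(x)y^2$, a point at which a naive application fails and which is resolved by the remark about $\varphi$ above; and (ii) the almost sure finiteness of $\Sigma$, which forces one to work with a moment of order $q<1$, since $W_\infty(\beta)$ need not be square-integrable when $\beta\ge 1$. Everything else --- the $\cE^*$-machinery, the Billingsley-3.2 truncation argument, and the passage from $W_{at}(\beta)$ to $W_t(\beta)$ --- is routine and essentially parallels Section~\ref{sct:extremal_regime}.
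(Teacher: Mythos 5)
Your proposal is correct and follows essentially the same route as the paper: the same decomposition $Y_t = Y_{t,\ell} + R^1_{t,\ell} + R^2_{t,\ell}$, the same appeal to Lemma~\ref{lem:renormalized_subcritical_overlap_replacement}, Lemma~\ref{lem:renormalized_subcritical_overlap_contribution} and \cite[Theorem~3.2]{Billingsley1999}, the joint convergence of $(\cE_{at}^*, W_t(\beta))$ via Remark~\ref{rem:extremal_fluctuations_one_side_compact_support_jointly_2} and Slutsky, and a conditional moment bound (via Corollary~\ref{cor:asymptotic_mean_number_decoration} and Proposition~\ref{prop:additive_martingales_bounded_in_Lp}) giving almost sure finiteness of $\sum_k \e^{2\beta\xi_k}W_k^2$ and hence the $\ell\to-\infty$ limit. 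Your two refinements --- explicitly justifying that the $\cE^*$-convergence tolerates the test function $\e^{2\beta x}\chi_\ell^+(x)y^2$, unbounded in $y$ (a point the paper applies silently), and replacing the paper's case split $\beta<1$ versus $\beta\ge 1$ by a single exponent $q\in(\beta_c/2\beta,\min(1,\beta_c^2/2\beta^2))$ --- are sound but do not change the argument.
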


\begin{proof}
    By Lemma~\ref{lem:renormalized_subcritical_overlap_replacement}, Lemma~\ref{lem:renormalized_subcritical_overlap_contribution} and \cite[Theorem~3.2]{Billingsley1999}, it is sufficient to show that for all $\ell \in \R$,
    \begin{equation}\label{eq:renormalized_subcritical_overlap_2_1}
        \frac{Y_{t, \ell}}{W_t(\beta)^2} \xrightarrow[t \to \infty]{} \frac{1}{W_\infty(\beta)^2} \sum_{k \geq 1} \e^{2 \beta \xi_k} \chi_\ell^+(\xi_k) W_k^2 \quad \text{in distribution},
    \end{equation}
    and that
    \begin{equation}\label{eq:renormalized_subcritical_overlap_2_2}
        \frac{1}{W_\infty(\beta)^2} \sum_{k \geq 1} \e^{2 \beta \xi_k} \chi_\ell^+(\xi_k) W_k^2 \xrightarrow[\ell \to -\infty]{} X_{\mathrm{over}} \quad \text{in distribution}.
    \end{equation}
    
    By Remark~\ref{rem:extremal_fluctuations_one_side_compact_support_jointly_2}, the pair $(\cE_{at}^*, W_{at}(\beta))$ converges jointly in distribution to $(\cE_\infty^*, W_\infty(\beta))$.
    The same is true for $(\cE_{at}^*, W_t(\beta))$, by Slutsky's theorem.
    Hence \eqref{eq:renormalized_subcritical_overlap_2_1}.
    
    Let us show that the series $\sum_{k \geq 0} \e^{2 \beta \xi_k} W_k^2$ is almost surely convergent.
    It will follow that the convergence \eqref{eq:renormalized_subcritical_overlap_2_2} holds almost surely.
    If $\beta_c/2 < \beta < 1$, then
    \begin{equation*}
        \condExpec{\sum_{k \geq 1} \e^{2 \beta \xi_k} W_k^2}{\cE_\infty} = \sum_{k \geq 1} \e^{2 \beta \xi_k} \Expec{W_\infty(\beta)^2},
    \end{equation*}
    which is almost surely finite, by Corollary~\ref{cor:asymptotic_mean_number_decoration} and Proposition~\ref{prop:phase_transition_easy_case}.
    If $1 \leq \beta < \beta_c$, then we can find a real number $p$ such that $1/2 < \beta_c/2\beta < p < 1/\beta^2 \leq 1$.
    Since the function $x \mapsto x^p$ is subadditive on $\R_+$, we have
    \begin{equation*}
        \condExpec{\left|\sum_{k \geq 1} \e^{2 \beta \xi_k} W_k^2\right|^p}{\cE_\infty} \leq \sum_{k \geq 1} \e^{2p\beta \xi_k} \Expec{W_\infty(\beta)^{2p}},
    \end{equation*}
    which is almost surely finite, by Corollary~\ref{cor:asymptotic_mean_number_decoration} and Proposition~\ref{prop:additive_martingales_bounded_in_Lp}.
\end{proof}

Let us finish the proof of Theorem~\ref{th:renormalized_subcritical_overlap}.
In view of the definition of $\cE_\infty$ given in Theorem~\ref{th:convergence_of_the_extremal_process}, we can rewrite
\begin{equation}\label{eq:renormalized_subcritical_overlap_2_equal_in_distribution_1}
    X_{\mathrm{over}} = \frac{(C Z_\infty)^{2\beta/\beta_c}}{W_\infty(\beta)^2} \sum_{i\geq 1} \e^{2\beta(p_i + X_i)},
\end{equation}
where $X_i := \frac{1}{2\beta} \log \sum_j \e^{2 \beta \Delta_{ij}} W_{ij}^2$, $i \geq 1$, and where $W_{ij}$, $i, j \geq 1$, are independent copies of $W_\infty(\beta)$, independent of $\cF_\infty$, $\cP$ and $\cD_i$, $i \geq 1$.
Let us then apply \cite[Proposition~8.7.a]{BolthausenSznitman2002} as done for Corollary~\ref{cor:asymptotic_mean_number_decoration}.
The random variables $X_i$, $i \geq 1$, are independent and identically distributed, independent of $\cP$.
Furthermore, taking $p \in (\beta_c/\beta, 2 \wedge \beta_c^2/\beta^2)$, which is possible since $\beta_c/2 < \beta < \beta_c$, we have
\begin{equation*}
    \Expec{\e^{\beta_c X_1}} = \Expec{\left(\sum_{j \geq 1} \e^{2 \beta \Delta_{1j}} W_{1j}^2\right)^{\beta_c/2\beta}} \leq \Expec{\left(\sum_{j \geq 1} \e^{2 \beta \Delta_{1j}} W_{1j}^2\right)^{p/2}} \leq \Expec{\sum_{j \geq 1} \e^{p \beta \Delta_{1j}}} \Expec{W_\infty(\beta)^{p}},
\end{equation*}
where the last inequality is obtained by using the subadditivity of $x \mapsto x^{p/2}$ on $\R$ and the independence of $\cD_1$ and $W_{1j}$, $j \geq 1$.
By Corollary~\ref{cor:asymptotic_mean_number_decoration} and Proposition~\ref{prop:additive_martingales_bounded_in_Lp}, the above quantity is finite.
We can then apply \cite[Proposition~8.7.a]{BolthausenSznitman2002}, which gives
\begin{equation*}
    \sum_{i \geq 1} \delta_{p_i + X_i} \overset{d}{=} \sum_{i \geq 1} \delta_{p_i + \frac{1}{\beta_c} \log \Expec{\e^{\beta_c X_1}}}.
\end{equation*}
Thus, \eqref{eq:renormalized_subcritical_overlap_2_equal_in_distribution_1} becomes
\begin{equation}\label{eq:renormalized_subcritical_overlap_2_equal_in_distribution_2}
    X_{\mathrm{over}} \overset{d}{=} \frac{\left(C Z_\infty \Expec{\e^{\beta_c X_1}}\right)^{2\beta/\beta_c}}{W_\infty(\beta)^2} \sum_{i\geq 1} \e^{2\beta p_i}.
\end{equation}
Finally, by Corollary~\ref{cor:asymptotic_mean_number_decoration}, the sum $S := \sum_i \e^{2\beta p_i}$ is a non-degenerate $\beta_c/2\beta$-stable random variable.
The limit in distribution \eqref{eq:renormalized_subcritical_overlap_2} then follows from Lemma~\ref{lem:renormalized_subcritical_overlap_limit} and \eqref{eq:renormalized_subcritical_overlap_2_equal_in_distribution_2}.
This concludes the proof of Theorem~\ref{th:renormalized_subcritical_overlap}.

\bibliographystyle{abbrv}
\bibliography{biblio}

\end{document}